    \newtheorem{definition}{Definition}[section]
    \newtheorem{theorem}[definition]{Theorem}
    \newtheorem{proposition}[definition]{Proposition}
    \newtheorem{lemma}[definition]{Lemma}
    \newtheorem{corollary}[definition]{Corollary}
\theoremstyle{definition}
    \newtheorem{example}[definition]{Example}
    \newtheorem{remark}[definition]{Remark}
\newtheorem{question}[definition]{Question}
\newcommand*\bigcdot{ {\mathpalette\bigcdot@{.5}} }
\newcommand*\bigcdot@[2]{\mathbin{\vcenter{\hbox{\scalebox{#2}{$\m@th#1\bullet$}}}}}
\newcommand\RR{ \mathbb{R} }
\newcommand\SPH{ \mathbb{S} }
\newcommand\ZZ{ \mathbb{Z} }
\newcommand\cA{ \mathcal{A} }
\newcommand\cD{ \mathcal{D} }
\newcommand\cL{ \mathcal{L} }
\newcommand\cM{ \mathcal{M} }
\newcommand\cV{ \mathcal{V} }
\newcommand\cY{ \mathcal{Y} }
\newcommand\sA{ \mathscr{A} }
\newcommand\sB{ \mathscr{B} }
\newcommand\sC{ \mathscr{C} }
\newcommand\pre{ {\operatorname{pre}} }
\newcommand\Rp{ { \mathbb{R}_{>0} } }
\newcommand\Tr{\operatorname{Tr}}
\newcommand\Aut{\operatorname{Aut}}
\newcommand\End{\operatorname{End}}
\newcommand\inEnd{\operatorname{\underline{End}}}
\newcommand\Fun{ {\operatorname{Fun}} }
\newcommand\id{\operatorname{id}}
\newcommand\Id{\operatorname{Id}}
\newcommand\Ind{ {\operatorname{Ind} } }
\newcommand\Hom{\operatorname{Hom}}
\newcommand\inHom{\operatorname{\underline{Hom}} }
\newcommand\lp[1]{  {\mathcal{L}{#1}}  }
\newcommand\PrLcs{  {  \operatorname{Pr}^{\operatorname{L} }_{\omega,st} } }
\newcommand\PrLst{  {  \operatorname{Pr}^{\operatorname{L} }_{st} } }
\newcommand\Prop{\operatorname{Prop}}
\newcommand\colim{\operatorname{colim}}
\newcommand\fib{\operatorname{fib}}
\newcommand\Mod{\operatorname{Mod}}
\newcommand\Sp{ {\operatorname{Sp} } }
\newcommand\st{   {\operatorname{st}} }
\newcommand\Loc{ {\operatorname{Loc}} }
\newcommand\mhom{\operatorname{\mu hom}}
\newcommand\msh{\operatorname{\mu sh}}
\newcommand\ms{\operatorname{SS}}
\newcommand\msif{\operatorname{SS}^{\infty}}
\newcommand\msnz{ \dot{\operatorname{SS} }}
\newcommand\Op{\operatorname{Op}}
\newcommand\Sh{\operatorname{Sh}}
\newcommand\sHom{\mathscr{H}om}
\newcommand\wrap{\mathfrak{W}}
\newcommand\Coh{\operatorname{Coh}}
\newcommand\Spec{\operatorname{Spec}}
\newcommand\HH{ {\operatorname{HH} } }
\newcommand\PrLV[1][\cV]{  {  \operatorname{Pr}^{\operatorname{L} }_{\cV,st} } }
\newcommand\SD[1]{ {\operatorname{D_{#1}  }  } }
\newcommand\VD[1]{ {\operatorname{D}_{#1}  } }
\newcommand\LD[1]{ {\operatorname{D}'_{#1}  } }
\newcommand\dT{ {\dot{T} } }
\newcommand{\SC}{\mathcal{C}}
\newcommand{\bC}{\mathbb{C}}
\newcommand{\bN}{\mathbb{N}}
\newcommand{\bR}{\mathbb{R}}
\newcommand{\Arr}{\operatorname{\mathrm{Arr}} }
\newcommand{\Iduc}{\operatorname{\Id_{1_\SC}} }
\begin{document}


\title[Relative Calabi--Yau structure on microlocalization]{\textbf{Relative Calabi--Yau structure on microlocalization}\\\vspace{3mm} {\textbf{\footnotesize{-- An approach by contact isotopies --}}}}
\date{}
\author{Christopher Kuo}
\address{Department of Mathematics, University of Southern California}
\email{chrislpkuo@berkeley.edu}
\author{Wenyuan Li}
\address{Department of Mathematics, University of Southern California.}
\email{wenyuan.li@usc.edu}
\maketitle

\begin{abstract}
    For an oriented manifold $M$ and a compact subanalytic Legendrian $\Lambda \subseteq S^*M$, we construct a canonical strong smooth relative Calabi--Yau structure on the microlocalization at infinity and its left adjoint $m_\Lambda^l: \msh_\Lambda(\Lambda) \rightleftharpoons \Sh_\Lambda(M)_0 : m_\Lambda$ between compactly supported sheaves on $M$ with singular support on $\Lambda$ and microsheaves on $\Lambda$. We also construct a canonical strong Calabi--Yau structure on microsheaves $\msh_\Lambda(\Lambda)$. Our approach does not require local properness and hence does not depend on arborealization. We thus obtain a canonical smooth relative Calabi--Yau structure on the Orlov functor for wrapped Fukaya categories of cotangent bundles with Weinstein stops, such that the wrap-once functor is the inverse dualizing bimodule.
\end{abstract}

\tableofcontents

\section{Introduction}
    
\subsection{Context and background}
    This paper is the third in a series of study, following \cite{Kuo-Li-spherical, Kuo-Li-duality}. Our objective is to further investigate non-commutative geometric structure of the category of sheaves arising from the symplectic geometry of $(T^*M, \Lambda)$, where $\Lambda \subseteq S^*M$ is a subanalytic Legendrian subset in the ideal contact boundary $S^*M$ of the symplectic manifold $T^*M$. In this paper, we construct the relative Calabi--Yau structure, which menifests as a non-commutative analogue of Poincar\'e--Lefschetz duality.

    Following Kashiwara--Schapira \cite{KS}, given a real analytic manifold $M$, one can define a stable $\infty$-category $\Sh_\Lambda(M)$ of constructible sheaves on $M$ with subanalytic Legendrian singular support $\Lambda \subseteq S^*M$. One can also define another stable $\infty$-category $\msh_\Lambda(\Lambda)$ of microlocal sheaves on the Legendrian $\Lambda \subseteq S^*M$ \cite{Gui,Nadler-pants}, and there is a microlocalization functor with a left adjoint
    $$m_\Lambda^l: \msh_\Lambda(\Lambda) \rightleftharpoons \Sh_\Lambda(M) : m_\Lambda.$$

    The category of (microlocal) sheaves is found to be closely related to a number of central topics in mathematical physics, since Nadler--Zaslow discovered the relation between constructible sheaves and Fukaya categories on the cotangent bundle \cite{NadZas,Nad}.
    Recently, Ganatra--Pardon--Shende \cite{Ganatra-Pardon-Shende3} showed that the partially wrapped Fukaya category is equivalent to the category of compact objects in the unbounded dg category of sheaves. In particular, for contangent bundles with Legendrian stops,
    $$\mathrm{Perf}\,\mathcal{W}(T^*M, \Lambda)^\text{op} \simeq \Sh^c_\Lambda(M).$$
    The homological mirror symmetry conjecture \cite{KonHMS,AurouxAnti} predicts an equivalence of the Fukaya categories and categories of coherent sheaves on the mirror complex varieties. 
    On the other hand, in geometric representation theory, the Betti geometric Langlands program \cite{Ben-Zvi-Nadler-Betti} conjectures an equivalence of constructible sheaves on $\mathrm{Bun}_G(X)$ and quasi-coherent sheaves on $\mathrm{Loc}_{G^\vee}(X)$ with nilpotent microsupports.

    From the perspective of non-commutative geometry, categories of (microlocal) sheaves are expected to carry a relative Calabi--Yau structure, a notion introduced by Brav--Dyckerhoff \cite{Brav-Dyckerhoff1}. This is a non-commutative analogue of orientations that induce the Poincar\'{e}--Lefschetz duality and fiber sequence. Shende--Takeda have shown the existence of the Calabi--Yau structure using arborealization and explicit local computations on arboreal singularities, where they crucially require local properness of the sheaf of categories \cite{Shende-Takeda}. However, the global symplectic geometric and sheaf theoretic interpretation of the structure remains unknown, and it is also unclear whether their construction depends on the choice of arboreal skeleta. 
    
    In this paper, we will show a canonical relative Calabi--Yau structure and provide global geometric interpretations on the functors and transformations involved in the structure.

\subsection{Main results and corollaries}
    We will consider a compactly generated rigid symmetric monoidal stable $\infty$-category $\cV$ as our coefficient (for example, $\cV$ can be modules over any field or modules over any rigid $E_\infty$-ring spectrum).
    
    We now state the main result in the paper on relative Calabi--Yau structures, which will be elaborated further in Theorems \ref{thm:weak-cy-intro} and \ref{thm:strong-cy-intro}. As we have explained, this is a non-commutative version of orientations that induce the Poincar\'e--Lefschetz duality \cite{Brav-Dyckerhoff1}. We will consider the category of compactly supported sheaves on $M$ with microsupport on $\Lambda$, denoted by $\Sh_\Lambda(M)_0$, and the category of microsheaves on $\Lambda$, denoted by $\msh_\Lambda(\Lambda)$.

\begin{theorem}
    Let $M$ be an oriented real analytic manifold and $\Lambda \subseteq S^*M$ a compact subanalytic Legendrian. There exists a strong relative smooth Calabi--Yau structure on the adjunction pair of the microlocalization at infinity and the left adjoint
    $$m_\Lambda^l: \msh_\Lambda(\Lambda) \rightleftharpoons \Sh_\Lambda(M)_0 : m_\Lambda.$$
    There is also a strong smooth Calabi--Yau structure on $\msh_\Lambda(\Lambda)$. The Calabi--Yau structures are canonical under Legendrian deformations.
\end{theorem}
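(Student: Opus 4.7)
The plan is to unpack the strong relative smooth Calabi--Yau structure into two pieces: the construction of a negative cyclic class encoding the orientation of $M$, and a non-degeneracy statement, which we will prove by identifying the inverse dualizing bimodule of $\Sh_\Lambda(M)_0$ with the wrap-once Reeb flow using contact isotopies. The absolute Calabi--Yau structure on $\msh_\Lambda(\Lambda)$ will then be extracted from the cofiber sequence of Hochschild complexes associated to the adjunction $m_\Lambda^l \dashv m_\Lambda$.

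First, I would produce the cyclic class. In the earlier papers of this series, the Hochschild homology of $\Sh_\Lambda(M)_0$ and of $\msh_\Lambda(\Lambda)$ were identified with appropriate global sections over $M$ and over $\Lambda$, and the orientation of $M$ produces a fundamental class in compactly supported cohomology that lifts canonically to negative cyclic homology, orientation classes being fixed by cyclic rotation. This class will live in the relative negative cyclic homology of the pair: its image in the absolute Hochschild homology of $\Sh_\Lambda(M)_0$ should be the bulk orientation class, while its restriction to $\msh_\Lambda(\Lambda)$ records the induced orientation on the contact boundary.

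The main work, and the main obstacle, is the non-degeneracy: showing that the cycle class pairs with Hochschild cochains to give an equivalence between the inverse dualizing bimodule and a shift of the identity bimodule, compatibly on both sides of the adjunction. The approach by contact isotopies is as follows. The positive Reeb flow on $S^*M$, suitably regularized away from $\Lambda$, implements on sheaves a canonical wrap-once endofunctor $w$ of $\Sh_\Lambda(M)_0$. I would show that $w$ is the inverse dualizing bimodule, so that the cycle class exhibits the required trivialization up to the appropriate shift. The argument is that contact isotopies act canonically on sheaves via sheaf quantization, and the wrap-once Reeb flow is the contact-geometric incarnation of the Serre functor familiar from the wrapped Fukaya perspective; the hard part will be to package this identification globally and canonically without local models, which requires carefully tracking the continuation maps induced by the Reeb flow through the microlocalization adjunction, together with the compatibility of the cycle class with these continuation maps.

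Once the relative structure is in place, the absolute strong smooth Calabi--Yau structure on $\msh_\Lambda(\Lambda)$ follows by passing to the cofiber of the relative cyclic class in the long exact sequence of Hochschild complexes, together with the induced identification of the dualizing bimodule of $\msh_\Lambda(\Lambda)$ that comes from the bimodule fiber sequence attached to the adjunction. Canonicity under Legendrian isotopies is built into the construction: every step is functorial with respect to the family of contact isotopies preserving $\Lambda$, so the cycle class and the bimodule identification are transported canonically along Legendrian deformations.
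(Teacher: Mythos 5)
Your outline of the weak non-degeneracy step is in the right direction: identifying the inverse dualizing bimodule of $\Sh_\Lambda(M)_0$ with the wrap-once functor via sheaf quantization of contact isotopies is exactly how the paper proceeds (there it is done through Fourier--Mukai kernels on $M\times M$, the Sato--Sabloff fiber sequence and Sabloff--Serre duality). But there are two genuine gaps. First, and most seriously, you assume the strong (negative cyclic) lift rather than constructing it. Your premise that $\HH_*(\Sh_\Lambda(M)_0)$ and $\HH_*(\msh_\Lambda(\Lambda))$ ``were identified with global sections over $M$ and over $\Lambda$'' is false: these Hochschild homologies are loop-space-like traces, and the orientation class only enters through an acceleration (constant-orbit) morphism $\Gamma(\Lambda,1_\Lambda)\to \HH_*(\msh_\Lambda(\Lambda))[1-n]$ (and its relative analogue), which must itself be built. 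Saying ``orientation classes are fixed by cyclic rotation'' is precisely the statement needing proof, and being fixed up to homotopy is not enough: one needs a coherent lift to the $S^1$-homotopy fixed points. The paper's core work is to give an explicit cyclic-object model of the $S^1$-action on the trace, to construct a second cyclic object modeling constant orbits out of cut-off Hamiltonian push-offs $\check T_{\epsilon/n}$ of the diagonal kernel (whose well-definedness already requires singular-support estimates showing that only constant broken Reeb orbits of total length $\le\epsilon$ contribute, via gapped non-characteristic deformation), to prove the $S^1$-action on that constant-orbit object is trivial (support of the relevant $\mhom$ lies in the fixed locus of the cyclic rotation), and finally to prove the acceleration map is $S^1$-equivariant. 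None of this is in your plan, and without it the lift of the weak class to $\HH_*(\msh_\Lambda(\Lambda),\Sh_\Lambda(M)_0)^{S^1}$ is unsubstantiated.

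Second, the absolute Calabi--Yau structure on $\msh_\Lambda(\Lambda)$ does not follow formally from the relative one by ``passing to the cofiber in the long exact sequence.'' In the Brav--Dyckerhoff definition the relative non-degeneracy only forces equivalences of bimodules over $\Sh_\Lambda(M)_0$ (between $\Id^!$, $(m_\Lambda^l m_\Lambda)^!$, $\mathrm{Cofib}(c^!)$ and the corresponding shifts of $\mathrm{Fib}(c)$, $m_\Lambda^l m_\Lambda$, $\Id$); it does not yield $\Id^!_{\msh_\Lambda(\Lambda)}\simeq \Id_{\msh_\Lambda(\Lambda)}[1-n]$, nor even smoothness of $\msh_\Lambda(\Lambda)$. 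The paper proves these directly, by the doubling theorem identifying $\msh_\Lambda(\Lambda)$ with a sheaf category $\Sh_{\widehat\Lambda_\cup}(M)$ on which Sabloff--Serre duality applies, and then checks compatibility of the two non-degeneracy isomorphisms so that a single class in relative Hochschild homology controls both. You would need to add an argument of this kind (or some substitute) for the microsheaf side, and likewise your canonicity claim should be checked against the actual mechanism, which compares acceleration morphisms under the restriction functors of a Legendrian movie rather than invoking functoriality in families of contact isotopies preserving $\Lambda$.
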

\begin{remark}
    When $M = N \times \bR$ and $\Lambda \subseteq J^1(N) \subseteq S^*(N \times \bR)$, our result does not require the Legendrian to be horizontally displaceable in the sense of \cite{EESduality}. In fact, there are horizontally non-displaceable Legendrians in $J^1(N)$ such that $\Sh_\Lambda(N \times \bR)_0$ is non-trivial. However, for certain horizontally non-displaceable Legendrians like the zero section $N \subset J^1(N)$, it is true that $\Sh_\Lambda(N \times \bR)_0$ is trivial, and the relative Calabi--Yau structure reduces to the absolute Calabi--Yau structure on microsheaves. See \cite{LiEstimate} for detailed discussions.
\end{remark}

Let $\Sh_\Lambda^b(M)_0$ and $\msh_\Lambda^b(\Lambda)$ be the full subcategories of (micro)sheaves with perfect stalks, which are equivalent to the proper modules (or pseudo-perfect modules) of the categories of compact objects. Then we automatically get the following result on proper Calabi--Yau structures.

\begin{corollary}
    Let $M$ be an oriented real analytic manifold and $\Lambda \subseteq S^*M$ a compact subanalytic Legendrian. There exists a strong relative proper Calabi--Yau structure on the microlocalization at infinity functor
    $$m_\Lambda: \Sh_\Lambda^b(M)_0 \to \msh_\Lambda^b(\Lambda).$$
    There is also a strong proper Calabi--Yau structure on $\msh_\Lambda^b(\Lambda)$. The Calabi--Yau structures are canonical under Legendrians deformations.
\end{corollary}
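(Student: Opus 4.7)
The plan is to deduce the corollary as a formal consequence of the Theorem, using the standard equivalence between smooth and proper relative Calabi--Yau structures under passage to pseudo-perfect modules.

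First, I would identify the categories of bounded (micro)sheaves with pseudo-perfect modules over the respective subcategories of compact objects. Concretely, a (micro)sheaf has perfect stalks iff, viewed as a $\cV$-valued functor on compact generators via the Yoneda pairing, it lands in $\Perf(\cV)$; hence
\[
\Sh_\Lambda^b(M)_0 \simeq \Fun^{\mathrm{ex}}\bigl(\Sh_\Lambda^c(M)_0^{\mathrm{op}}, \Perf\cV\bigr), \qquad \msh_\Lambda^b(\Lambda) \simeq \Fun^{\mathrm{ex}}\bigl(\msh_\Lambda^c(\Lambda)^{\mathrm{op}}, \Perf\cV\bigr).
\]
Under these equivalences, the right adjoint $m_\Lambda$ restricts to bounded objects as precomposition with the compact restriction of its left adjoint $m_\Lambda^l$, which preserves compact objects because $m_\Lambda$ preserves colimits.

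Second, I would invoke the Brav--Dyckerhoff duality principle: a smooth relative Calabi--Yau structure of dimension $n$ on a functor $f\colon \cA \to \cB$ between smooth $\cV$-linear stable categories induces, by a canonical and natural procedure, a proper relative Calabi--Yau structure of dimension $n$ on the precomposition functor $f^*\colon \cB^{\mathrm{prop}} \to \cA^{\mathrm{prop}}$ between pseudo-perfect modules. The same transformation exchanges absolute smooth and absolute proper structures. Applying this duality to the smooth relative Calabi--Yau structure on $(m_\Lambda^l, m_\Lambda)$ produced by the Theorem, and to the absolute smooth structure on $\msh_\Lambda^c(\Lambda)$, directly yields the asserted strong relative proper Calabi--Yau structure on $m_\Lambda$ and the strong absolute proper Calabi--Yau structure on $\msh_\Lambda^b(\Lambda)$.

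Third, canonicity under Legendrian deformations is inherited verbatim from the Theorem: the smooth-to-proper duality is natural in the underlying functor, so any equivalence of smooth structures coming from a contact isotopy translates into an equivalence of the induced proper structures. The only genuinely substantive ingredient is the smooth-to-proper duality for relative Calabi--Yau structures, which is a purely categorical result; granting it, no further sheaf-theoretic or symplectic input is needed, and the corollary is automatic, as advertised. The main potential obstacle is making the identification of $m_\Lambda$ with the precomposition $f^*$ on the nose (as opposed to up to a choice of presentation of the bounded categories), which is a bookkeeping matter handled once and for all by the Yoneda identifications above.
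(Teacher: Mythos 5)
Your proposal is correct and follows essentially the same route as the paper: identify $\Sh_\Lambda^b(M)_0$ and $\msh_\Lambda^b(\Lambda)$ with pseudo-perfect modules over the compact objects (the paper's Theorem \ref{thm:perfcompact}, due to Nadler and Ganatra--Pardon--Shende) and then transport the strong smooth (relative) Calabi--Yau structure of the main theorem through the smooth-to-proper duality, with canonicity inherited by naturality. The only nuance is attribution: the absolute smooth-to-proper statement is Brav--Dyckerhoff (Theorem \ref{thm:left-right-cy}), while the relative version you invoke is the paper's Theorem \ref{thm:smooth-induce-proper}, due to Keller--Wang.
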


    Using the result of Ganatra--Pardon--Shende \cite[Theorem 1.1, 1.4 \& Proposition 7.24]{Ganatra-Pardon-Shende3}, we can deduce a strong relative Calabi--Yau structure on the Orlov functor on partially wrapped Fukaya categories of the cotangent bundle with Weinstein stops.

\begin{corollary}
    Let $M$ be a closed oriented real analytic manifold and $F \subseteq S^*M$ a Weinstein hypersurface. There exists a canonical strong relative smooth Calabi--Yau structure on the Orlov functor
    $$\cup_F: \mathcal{W}(F) \to \mathcal{W}(T^*M, F),$$
    and there is a canonical strong smooth Calabi--Yau structure on $\mathcal{W}(F)$.
\end{corollary}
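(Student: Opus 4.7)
The plan is to transport the Calabi--Yau structure established in the main theorem across the Ganatra--Pardon--Shende equivalence between wrapped Fukaya categories of cotangent bundles with Weinstein stops and categories of constructible sheaves. The identification requires three steps: matching the relevant categories, matching the Orlov functor with the left adjoint to microlocalization, and verifying that the matching is functorial enough to carry Calabi--Yau data.

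First I would set $\Lambda = \mathfrak{c}_F \subseteq S^*M$ to be the core Legendrian of the Weinstein hypersurface $F$. By \cite[Theorem 1.1]{Ganatra-Pardon-Shende3}, the wrapped Fukaya category $\mathcal{W}(T^*M, F)$ is identified with the full subcategory of compact objects of $\Sh_\Lambda(M)_0$ (the category of compactly supported sheaves microsupported in $\Lambda$ on a compact $M$, or its natural analogue otherwise). By \cite[Theorem 1.4]{Ganatra-Pardon-Shende3} together with \cite[Proposition 7.24]{Ganatra-Pardon-Shende3}, the wrapped Fukaya category $\mathcal{W}(F)$ of the Weinstein stop itself is identified with the compact objects of $\msh_\Lambda(\Lambda)$. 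Under these identifications, the same Proposition~7.24 identifies the Orlov cup functor $\cup_F: \mathcal{W}(F) \to \mathcal{W}(T^*M, F)$ with (the restriction to compact objects of) the left adjoint $m_\Lambda^l$ to microlocalization.

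Second, applying the main theorem to $\Lambda = \mathfrak{c}_F$ yields a canonical strong smooth relative Calabi--Yau structure on the adjunction $m_\Lambda^l \dashv m_\Lambda$ and a canonical strong smooth Calabi--Yau structure on $\msh_\Lambda(\Lambda)$. Since smooth Calabi--Yau structures on presentable stable $\infty$-categories generated by their compact objects are equivalent data to such structures on the small stable $\infty$-categories of compact objects (via the standard passage between $\PrLcs$ and small idempotent-complete stable categories), these structures restrict to strong smooth Calabi--Yau structures on the compact parts, hence transport through the equivalences above to $\cup_F: \mathcal{W}(F) \to \mathcal{W}(T^*M, F)$ and $\mathcal{W}(F)$.

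The main obstacle I anticipate is bookkeeping rather than content: one has to check that the Ganatra--Pardon--Shende equivalences are compatible with the adjoint pair structure and preserve the $\cV$-linear, $\mathbb{E}_\infty$-categorical data on which the Calabi--Yau structure is defined, so that the class giving the Calabi--Yau structure on the sheaf side pulls back to a class of the expected form on the Fukaya side. In particular, identifying the orientation/trace data underlying the smooth Calabi--Yau structure in sheaf-theoretic terms with the corresponding trace on the Fukaya side requires invoking the compatibility of the GPS equivalence with microlocalization and its adjoints, and checking that the canonical nature under Legendrian deformations in the main theorem matches Hamiltonian/contact isotopies of the Weinstein stop. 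Once this compatibility is in place, the corollary follows formally.
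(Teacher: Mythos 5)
Your proposal matches the paper's own deduction: the paper obtains this corollary exactly by citing Ganatra--Pardon--Shende (Theorems 1.1, 1.4 and Proposition 7.24) to identify $\mathcal{W}(F)$, $\mathcal{W}(T^*M,F)$ and the Orlov functor with the compact objects of $\msh_\Lambda(\Lambda)$, $\Sh_\Lambda(M)_0$ and $m_\Lambda^l$ for $\Lambda = \mathfrak{c}_F$, and then transporting the strong (relative) Calabi--Yau structure of the main theorem through the standard passage between compactly generated categories and their compact objects, with canonicity under Weinstein homotopies handled via the Legendrian-deformation invariance. The only bookkeeping point your write-up (like the paper) elides is that the GPS equivalence is with the opposite category, $\mathrm{Perf}\,\mathcal{W}(T^*M,\Lambda)^{\mathrm{op}} \simeq \Sh^c_\Lambda(M)$, which is harmless since smooth Calabi--Yau data pass to opposite categories.
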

\begin{remark}
    Using the Legendrian surgery formula of Asplund--Ekholm \cite{AsplundEkholm} built on previous results \cite{EkholmLekili,EkholmHolo}, we know that, when $M$ is an open manifold, our result also deduces a strong relative smooth Calabi--Yau structure on Legendrian contact dg algebras of a singular Legendrian $\Lambda \subseteq S^*M$ (with the Liouville filling $T^*M$ viewed as a subcritical Weinstein manifold) and the attaching spheres $S$ of the top dimensional Legendrian handles
    $$\mathcal{A}(S) \to \mathcal{A}(\Lambda, S).$$
    We conjecture this to be compatible with the weak relative Calabi--Yau structure considered by Dimitroglou Rizell--Legout \cite{DRizellLegout} and the absolute Calabi--Yau structure on the attaching spheres by Legout \cite{Legout}.
    
    In particular, when $\Lambda$ is smooth, $\mathcal{A}(S) \simeq C_{-*}(\Omega_*\Lambda)$ and $\mathcal{A}(\Lambda, S) \simeq \mathcal{A}_{C_{-*}(\Omega_*\Lambda)}(\Lambda)$, though one will still need to compare the Calabi--Yau structure on $\cA(S)$ coming from symplectic geometry and the one on $C_{-*}(\Omega_*\Lambda)$ coming from string topology (this is not done in this paper).
\end{remark}
\begin{remark}
    For Legendrian links $\Lambda \subseteq \bR^3 \subseteq S^*\bR^2$, using the result that augmentations are sheaves \cite{AugSheaf}, we can deduce a proper Calabi--Yau structure on the $\mathcal{A}_\infty$-category of augmentations
    $$\mathcal{A}ug_+(\Lambda) \to \Loc^1(\Lambda).$$
    We conjecture this to be compatible with the relative Calabi--Yau structure considered by Ma--Sabloff \cite{MaSabloff} and independently by Chen \cite{Chen}.
    
    For general singular Legendrians in $S^*M$, using the Legendrian surgery formula, we can also deduce that there is a proper Calabi--Yau structure on finite dimensional representations (in particular augmentations)
    $$\Mod^\text{fd}(\mathcal{A}(\Lambda, S)) \to \Mod^\text{fd}(\mathcal{A}(S)).$$
    However, we remark that though objects are the same, we do not know from Legendrian surgery formula whether morphisms in the dg category $\Mod^1(\mathcal{A}_{\Bbbk[\pi_1(\Lambda)]}(\Lambda))$ agrees with morphisms in the $\cA_\infty$-category $\cA ug_+(\Lambda)$ defined in \cite{AugSheaf}\footnote{On the level of $\cA_\infty$-structures, only the comparison of the endomorphism of a single object is expected to follow from existing literature \cite[Proposition 14, Remark 27 \& Theorem 51]{EkholmLekili}; see also Dimitroglou Rizell's lecture note \href{https://conference.math.muni.cz/srni/files/archiv/2022/plenary\%20lectures/}{\textit{DG-Algebraic Aspects of Contact Invariants 3}} Section 5 for the version without loop space coefficients. However, for general morphisms between different objects, we do not know a written-down proof in the literature.}.
\end{remark}

We now explain our main result, which breaks down into two concrete parts as follows.

First, the weak smooth relative Calabi--Yau structure refers to a duality and fiber sequence that relates the identity bimodule and the inverse dualizing bimodule induced from a fundamental class in the relative Hochschild homology.

We construct the weak relative Calabi--Yau structure by showing the following functorial enhancement of the Sato--Sabloff fiber sequence (Theorem \ref{thm:sato-sabloff}) and the Sabloff--Serre duality (Theorem \ref{thm:sabloff-serre}) \cite[Section 4]{Kuo-Li-spherical}. Let $S_\Lambda^+ : \Sh_\Lambda(M)_0 \to \Sh_\Lambda(M)_0$ be the wrap-once functor, defined by a small positive Hamiltonian push-off of $\Lambda$ followed by the colimit of all positive Hamiltonian push-offs supported away from $\Lambda$ (Definition \ref{def:wrap-once}) \cite{Kuo-Li-spherical,Kuo-Li-duality}. We show that the wrap-once functor is the inverse dualizing bimodule, therefore proving a generalized version of the conjecture on inverse Serre functor by Seidel \cite{SeidelSH=HH}.

\begin{theorem}\label{thm:weak-cy-intro}
    Let $M$ be an $n$-dimensional oriented real analytic manifold and $\Lambda \subseteq S^*M$ a compact subanalytic Legendrian. Then there is a commutative diagram of colimit preserving functors on $\Sh_\Lambda(M)_0$ where the horizontal arrows induce fiber sequences
    \[\xymatrix{
    \Id_{\Sh_\Lambda(M)_0}^! \ar[r] \ar[d]^{\rotatebox{90}{$\sim$}} & m_\Lambda^l \Id_{\msh_\Lambda(\Lambda)}^! m_\Lambda \ar[r] \ar[d]^{\rotatebox{90}{$\sim$}} & (S_\Lambda^+)^![1] \ar[d]^{\rotatebox{90}{$\sim$}} \\
    S_\Lambda^+[-n] \ar[r] & m_\Lambda^l \Id_{\msh_\Lambda(\Lambda)} m_\Lambda[1-n] \ar[r] & \Id_{\Sh_\Lambda(M)_0}[1-n].
    }\]
    Here $\Id_{\Sh_\Lambda(M)_0}$ and $\Id_{\msh_\Lambda(\Lambda)}$ are the identity bimodules while $\Id_{\Sh_\Lambda(M)_0}^!$ and $\Id_{\msh_\Lambda(\Lambda)}^!$ are their inverse dualizing bimodules.
\end{theorem}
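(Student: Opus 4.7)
The plan is to upgrade the Sato--Sabloff fiber sequence and Sabloff--Serre duality from object-level statements to bimodule/endofunctor-level statements. The bottom row will be constructed directly as a functorial Sato--Sabloff, the top row by applying the inverse dualizing operation $(-)^!$ to the bottom row, and the vertical identifications will then follow from the absolute Calabi--Yau structure on microsheaves together with the uniqueness of fibers and cofibers.

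For the bottom row, I would interpret Theorem \ref{thm:sato-sabloff} as a fiber sequence of endofunctors $S_\Lambda^+[-1] \to m_\Lambda^l m_\Lambda \to \Id_{\Sh_\Lambda(M)_0}$, in which the middle map is the counit of the adjunction $m_\Lambda^l \dashv m_\Lambda$ and the fiber is the shifted wrap-once functor. This is a functorial repackaging of the spherical adjunction structure established in \cite{Kuo-Li-spherical, Kuo-Li-duality}, whose twist identifies the fiber of the counit with $S_\Lambda^+[-1]$. Shifting the entire sequence by $[1-n]$ yields the bottom row as a fiber sequence of bimodules on $\Sh_\Lambda(M)_0$.

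For the top row, I would apply the contravariant bimodule duality $(-)^!$ term by term to the unshifted bottom row. Because $m_\Lambda$ and $m_\Lambda^l$ are canonically dual as bimodules (a bimodule-level enhancement of Sabloff--Serre duality), the dual of $m^l \Id_\msh m$ is naturally $m^l \Id^!_\msh m$, producing a fiber sequence $\Id^!_{\Sh_\Lambda(M)_0} \to m^l \Id^!_\msh m \to (S_\Lambda^+)^![1]$. The middle vertical arrow is then furnished by the absolute smooth Calabi--Yau structure on $\msh_\Lambda(\Lambda)$, which has dimension $n-1$; this gives $\Id^!_\msh \simeq \Id_\msh[1-n]$ and hence $m^l \Id^!_\msh m \simeq m^l \Id_\msh m[1-n]$, matching the middle term of the bottom row. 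Uniqueness of fibers and cofibers in the two fiber sequences forces the remaining verticals $\Id^!_{\Sh_\Lambda(M)_0} \simeq S_\Lambda^+[-n]$ (the Seidel-type identification of the inverse dualizing bimodule with the wrap-once functor) and $(S_\Lambda^+)^![1] \simeq \Id[1-n]$, and it also yields commutativity of the diagram by construction.

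The main obstacle will be establishing the bimodule duality $(m^l \Id_\msh m)^! \simeq m^l \Id^!_\msh m$, i.e., proving that microlocalization and its left adjoint are canonically dual as bimodules. This is the functorial, bimodule-level version of Sabloff--Serre duality: it requires extracting a non-degenerate pairing between $\Sh_\Lambda(M)_0$ and $\msh_\Lambda(\Lambda)$ from the counit of $m_\Lambda^l \dashv m_\Lambda$, where the compactly supported condition defining $\Sh_\Lambda(M)_0$ plays an essential role in producing finiteness for the pairing. Once this duality is in place, both the Seidel-type identification of $\Id^!$ with the wrap-once functor and the commutativity of the diagram follow formally from the fiber sequence structure.
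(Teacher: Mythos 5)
Your overall architecture is close to the paper's: the bottom row is indeed the functorial Sato--Sabloff sequence (Proposition \ref{prop:fiber-sequence}), and the identification $(m_\Lambda^l m_\Lambda)^! \simeq m_\Lambda^l\,\Id_{\msh_\Lambda(\Lambda)}^!\,m_\Lambda$ that you flag as the main obstacle is in fact the easy, purely formal step: it follows from smoothness of $\msh_\Lambda(\Lambda)$ and continuity of the adjunction via the swapping identities (Lemma \ref{rem:bimod-adjunction}), not from a new nondegenerate pairing between $\Sh_\Lambda(M)_0$ and $\msh_\Lambda(\Lambda)$. The genuine gap is at the end: ``uniqueness of fibers and cofibers'' does not produce the outer vertical equivalences or the commutativity. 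Two fiber sequences whose middle terms are abstractly equivalent need not have equivalent fibers or cofibers; one needs a \emph{map} of fiber sequences, i.e.\ one must show that the middle equivalence $m_\Lambda^l\,\Id^!_{\msh_\Lambda(\Lambda)}\,m_\Lambda \simeq m_\Lambda^l\,\Id_{\msh_\Lambda(\Lambda)}\,m_\Lambda[1-n]$ intertwines the horizontal arrows of the two rows. That compatibility is precisely Proposition \ref{prop:cy-compatible}, and in the paper it is a non-formal computation carried out by unwinding both sides as Fourier--Mukai kernels on $M\times M$ (Lemmas \ref{lem:dual-bimod}, \ref{lem:dual-bimod-sheaf2}, \ref{lem:dual-bimod-microsheaf}, \ref{lem:bimod-adjoint-sheaf}); nothing in your proposal supplies it.

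Relatedly, the left vertical $\Id^!_{\Sh_\Lambda(M)_0}\simeq S_\Lambda^+[-n]$ --- the main new assertion of the theorem --- cannot be extracted by such a uniqueness argument; the paper proves it directly (Proposition \ref{prop:weak-rel-cy-nondeg}) by a kernel-level Sabloff--Serre duality argument showing that $\iota_{\widehat\Lambda\times-\widehat\Lambda}^*T_\epsilon(\Delta_*\omega_M)$ is left adjoint to $\pi_!(1_\Delta\otimes-)$, and this same argument is what establishes smoothness of $\Sh_\Lambda(M)_0$, a hypothesis you use implicitly when applying $(-)^!$ termwise to the bottom row but never verify. Likewise, the $(n-1)$-dimensional identification $\Id^!_{\msh_\Lambda(\Lambda)}\simeq\Id_{\msh_\Lambda(\Lambda)}[1-n]$ is not an available input: it is proved in the same section by the same kernel-level technique applied to the doubled Legendrian $\widehat\Lambda_\cup$ (Proposition \ref{prop:weak-cy-nondeg}, via Theorem \ref{doubling}). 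Once Propositions \ref{prop:weak-rel-cy-nondeg} and \ref{prop:weak-cy-nondeg} and the compatibility of Proposition \ref{prop:cy-compatible} are in place, the right vertical and the commutativity do follow formally by duality, which is the order of argument the paper actually takes --- roughly the reverse of yours.
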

\begin{remark}
    The horizontal fiber sequence has been obtained in our previous paper as part of spherical adjunction \cite{Kuo-Li-spherical}. However, the vertical isomorphism is a new result, which upgrades the Serre duality statement in \cite{Kuo-Li-spherical}. Our spherical adjunction and Serre duality statement \cite{Kuo-Li-spherical} requires $\Lambda \subseteq S^*M$ to be either swappable or fully stopped, but the relative Calabi--Yau property requires nothing other than compactness. This is because spherical adjunction asks the cotwist $S_\Lambda^+$ to be an equivalence, which is an extra requirement beyond relative Calabi--Yau. Spherical adjunction plus Serre duality and is only related to weak relative Calabi--Yau in the case of smooth and proper categories \cite[Theorem 1.18]{KPSsphericalCY}, which does not hold for $\Sh_\Lambda(M)_0$ in general.
\end{remark}

Second, the strong relative Calabi--Yau structure is an enhancement of the weak structure which means a lifting of the fundamental class from the relative Hochschild homology to the relative negative cyclic homology. 

We lift the weak Calabi--Yau structure to a strong one by understanding the circle action on the Hochschild homology of $\msh_\Lambda(\Lambda)$ and $\Sh_\Lambda(M)$ through sheaf theoretic operations. The key result is the construction of $S^1$-equivariant inclusions of constant orbits (also known as the acceleration morphism in Floer theory) in which the fundamental class lies.

\begin{theorem}\label{thm:strong-cy-intro}
    Let $M$ be an $n$-dimensional oriented real analytic manifold, $\Lambda \subseteq S^*M$ be a compact subanalytic Legendrian, and $\widehat\Lambda \subseteq T^*M$ be the conification of $\Lambda$ containing all compact strata in the zero section. Then there is a commutative diagram
    \[\xymatrix{
    \Gamma(\widehat\Lambda, 1_{\widehat\Lambda}) \ar[r] \ar[d] & \Gamma(\Lambda, 1_\Lambda) \ar[r] \ar[d] & \Gamma(\widehat\Lambda, 1_{\widehat\Lambda \setminus \Lambda})[1] \ar[d] \\
    \HH_*(\msh_\Lambda(\Lambda), \Sh_\Lambda(M)_0)[-n] \ar[r] & \HH_*(\msh_\Lambda(\Lambda))[1-n] \ar[r] & \HH_*(\Sh_\Lambda(M)_0)[1-n].
    }\]
    Here, all the morphisms are $S^1$-equivariant and the $S^1$-actions on the top are the trivial actions.
\end{theorem}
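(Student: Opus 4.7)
The plan is to realize both Hochschild homologies $\HH_*(\msh_\Lambda(\Lambda))$ and $\HH_*(\Sh_\Lambda(M)_0)$ as sections of a single cosheaf $\sHH_*(\msh)$ on the skeleton $\widehat\Lambda$, and to construct all three vertical maps simultaneously from one sheafified unit trace $1_{\widehat\Lambda}[-n] \to \sHH_*(\msh)$. The bottom horizontal fiber sequence then becomes $\Gamma$ of the open-closed triangle for the pair $(\widehat\Lambda, \Lambda)$ applied to this cosheaf, tautologically matching the top row; the $S^1$-equivariance will follow from the fact that this unit trace factors canonically through negative cyclic homology.

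Concretely, the first step is to view $\msh$ as a cosheaf of $\cV$-linear dg categories on $\widehat\Lambda$, restricting to the usual microsheaf cosheaf on $\Lambda$. Since $\HH$ commutes with colimits of dg categories, this produces a cosheaf $\sHH_*(\msh)$ on $\widehat\Lambda$ satisfying
$$\Gamma(\Lambda, \sHH_*(\msh)|_\Lambda) \simeq \HH_*(\msh_\Lambda(\Lambda)), \qquad \Gamma(\widehat\Lambda, j_! \sHH_*(\msh)|_{\widehat\Lambda \setminus \Lambda}) \simeq \HH_*(\Sh_\Lambda(M)_0),$$
where $j \colon \widehat\Lambda \setminus \Lambda \hookrightarrow \widehat\Lambda$ is the open complement; in particular $\Gamma(\widehat\Lambda, \sHH_*(\msh))$ computes the relative Hochschild homology of the adjunction $m_\Lambda^l \dashv m_\Lambda$ in the sense of Brav--Dyckerhoff. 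The vertical maps come from the canonical unit trace $1_\cV \to \HH_*(\mathcal{C})$ picking out the class of the identity. Being local in $\mathcal{C}$, this sheafifies to a morphism $1_{\widehat\Lambda}[-n] \to \sHH_*(\msh)$, where the shift $[-n]$ is forced by the local Calabi--Yau dimension of microsheaves at a smooth top-dimensional stratum, on which $\msh$ is Morita-equivalent to $\cV$ with the $n$-dimensional CY shift inherited from the ambient orientation of $M$. Applying $\Gamma$ and the open-closed decomposition then produces the three vertical arrows, and commutativity follows by naturality of the local-to-global construction.

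For the $S^1$-equivariance, one uses that the unit trace $1 \to \HH_*(\mathcal{C})$ is induced from the identity endomorphism of the identity bimodule via the trace; since the identity bimodule is tautologically cyclically invariant, this map lifts canonically to $\HC^-_*(\mathcal{C}) = \HH_*(\mathcal{C})^{hS^1}$. The lift is functorial in $\mathcal{C}$, sheafifies over $\widehat\Lambda$, and yields the claimed equivariant structure with the trivial $S^1$-action on the top row. The main obstacle is the sheaf-theoretic identification $\HH_*(\Sh_\Lambda(M)_0) \simeq \Gamma(\widehat\Lambda, j_! \sHH_*(\msh)|_{\widehat\Lambda \setminus \Lambda})$ together with its compatibility with the Brav--Dyckerhoff relative HH fiber sequence of the adjunction, because $\msh$ is not locally proper along the zero-section strata and the usual trace-duality shortcut via arborealization is unavailable. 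Instead, the contact-isotopy description of the wrap-once functor from the weak Calabi--Yau theorem above should be invoked to identify the trace of the identity bimodule with the appropriate microsupported constant sheaf, aligning the Calabi--Yau shifts coming from the orientation of $M$ and the Reeb co-orientation at $\Lambda$; once these identifications are in place, the commutative diagram and $S^1$-equivariance follow by naturality of the unit trace.
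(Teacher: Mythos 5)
Your route is essentially the Shende--Takeda local-to-global strategy, and it has a genuine gap at exactly the point this theorem is designed to avoid. The identifications $\Gamma(\Lambda, \sHH(\msh)|_\Lambda) \simeq \HH_*(\msh_\Lambda(\Lambda))$ and $\Gamma(\widehat\Lambda, j_!\sHH(\msh)|_{\widehat\Lambda\setminus\Lambda}) \simeq \HH_*(\Sh_\Lambda(M)_0)$ are not automatic: Hochschild homology is a localizing invariant and commutes with filtered colimits, but it does not satisfy co-descent for a general (co)sheaf of compactly generated categories; in the Shende--Takeda setting this is exactly where local properness of $\msh_\Lambda$, and hence arborealization, is used. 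For a general compact subanalytic Legendrian (the hypothesis here) no such local properness is available, and the paper's Remark \ref{rem:shende-takeda} even argues that global (co)sections of the (co)sheafified Hochschild homology are not invariant under non-characteristic Legendrian deformations, so one should not expect your cosheaf $\sHH(\msh)$ to compute the relative Hochschild homology of $m_\Lambda^l \dashv m_\Lambda$ in this generality. Likewise, the map $1_{\widehat\Lambda}[-n] \to \sHH(\msh)$ you posit is grounded in a ``local Calabi--Yau dimension'' of $\msh$ on smooth strata, which is again a local CY statement that would itself have to be proved (this is what the explicit arboreal computations do in \cite{Shende-Takeda}).

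The $S^1$-equivariance step also begs the question. The weak CY class is not the image of a canonical unit $1 \to \HH_*(\sC)$; per Remark \ref{rem:orientation-is-id} it is the image of $1 \in \HH^*$ under the nondegeneracy isomorphism supplied by Sabloff--Serre duality, and showing that the resulting composite $1_\cV \to \epsilon\,\epsilon^l \to \epsilon\,\eta[-d]$ is $S^1$-equivariant is the substantive content, not a tautology about cyclic invariance of the identity bimodule. The paper's actual proof never sheafifies $\HH$: it builds acceleration morphisms from explicit kernels $\mathrm{Fib}(\check{T}_\epsilon 1_\Delta \to T_\epsilon 1_\Delta)$ and $\check{T}_\epsilon 1_{\Delta_c}$ (Propositions \ref{prop:accelerate}--\ref{thm:accelerate-hochschild}), models the circle action by explicit cyclic objects via the Ayala--Francis trace (Lemmas \ref{lem:cyclic-homology}, \ref{lem:cyclic-homology-rel}), shows the constant-orbit cyclic objects carry the trivial action by singular-support estimates on broken Reeb orbits (Propositions \ref{prop:trivial-action}, \ref{prop:trivial-action-rel}), and then checks equivariance of the acceleration maps (Propositions \ref{prop:accelerate-equivariant}, \ref{prop:accelerate-equivariant-rel}). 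To salvage your approach you would need to supply the co-descent statement for $\HH$ without local properness and an independent construction of the equivariant local fundamental class, neither of which is sketched.
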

\begin{remark}
    Using the cyclic Deligne conjecture recently established by Brav--Rozenblyum \cite{BravRozenblyum}, we know that the (relative) Hochschild homologies and cohomologies are isomorphic and have the structure of framed $E_2$-algebras. In fact, we expect that the above commutative diagram is a also diagram of framed $E_2$-algebras.
\end{remark}

\begin{remark}
    We remark on the relationship between the acceleration morphisms and global (co)sections of (co)sheafified Hochschild homology that Shende--Takeda constructed \cite{Shende-Takeda}. For arboreal singularities, we expect that they are compatible. However, for general subanalytic isotropics, we think they will not be. Indeed, we think the global (co)section of the (co)sheafified Hochschild homology is not invariant under non-characteristic deformations of Legendrians. See Remark \ref{rem:shende-takeda}.
\end{remark}

\subsection{Some examples and applications}
We explain some examples and applications of our result of relative Calabi--Yau structures. 

First, our result can recover known constructions of relative Calabi--Yau structures in topology:
\begin{enumerate}
    \item the pair $\Loc(\partial M) \rightleftharpoons Loc(M)$ where $(M, \partial M)$ is an oriented manifold with boundary \cite{Brav-Dyckerhoff1};
    \item the pair $\bigoplus_{i=1}^k \Bbbk[\mu_i^{\pm 1}] \oplus \Bbbk[\lambda^{\pm 1}] \rightleftharpoons \mathscr{A}(D^2, x_1, \dots, x_k)$ where $\mathscr{A}(D^2; x_1, \dots, x_k)$ is the derived relative multiplicative preprojective algebra \cite{BezrukavnikovKapranov,BozecCalaqueScherotzke};
    \item the pair $\Loc(S^*_NM) \oplus \Loc(\partial M) \rightleftharpoons \mathscr{A}(M, N)$ where $M$ is an oriented manifold, $N \subset M$ is a submanifold and $\mathscr{A}(M, N)$ is the link dg category in Berest--Eshmatov--Yeung \cite{BerestEshmatovYeung,YeungComplete}.
\end{enumerate}

Second, our result is related to constructions of relative Calabi--Yau structures in algebraic geometry. Under homological mirror symmetry for toric varieties (the coherent-constructible correspondence) \cite{FLTZCCC,KuwaCCC,GammageShende}, we expect the relative Calabi--Yau structure on $\Sh_{\Lambda_\Sigma}(T^n) \rightleftharpoons \msh_{\Lambda_\Sigma}(\Lambda_\Sigma)$ to agree with the relative Calabi--Yau structure on $\Coh(X_\Sigma) \rightleftharpoons \Coh(\partial X_\Sigma)$. However, the comparison will be the topic for future studies.

Furthermore, building on the result of Brav--Dyckerhoff that Calabi--Yau categories induce shifted symplectic structures on the derived moduli stacks, and relative Calabi--Yau functors induce shifted Lagrangian structures between derived moduli stacks \cite{Brav-Dyckerhoff2}, we can show existence of symplectic structures and Lagrangian structures on the moduli spaces of constructible sheaves: 
\begin{enumerate}
    \item[(4)] the pair $\Loc(\Lambda) \rightleftharpoons \Sh_\Lambda(\Sigma)_0$ where $\Sigma$ is a surface with puncture and $\Lambda \subset S^*\Sigma$ is a Legendrian knot with vanishing Maslov class \cite{Shende-Treumann-Zaslow,Shende-Takeda}, which induces a symplectic structure on the moduli space of sheaves or the augmentation variety \cite{CGNSW};
    \item[(5)] the pair $\Loc(\Lambda) \rightleftharpoons \Sh_\Lambda(M)_0$ where $M$ is a 3-manifold and $\Lambda \subset S^*\Sigma$ is a Legendrian surface with vanishing Maslov class, which induces a Lagrangian structure on the map of moduli spaces of sheaves or the augmentation variety to the moduli of local systems \cite{AganagicEkholmNgVafa,Treumann-Zaslow}.
\end{enumerate}
We mention that Case (4) includes examples of the moduli space of framed local systems, the moduli space of Stokes data \cite{Shende-Treumann-Williams-Zaslow} and the symplectic leaves of Richardson and braid varieties \cite{CGGS}; Case (5) includes examples of the augmentation varieties of knot conormal tori \cite{AganagicEkholmNgVafa}, the Aganagic--Vafa Lagrangian and more generally the chromatic Lagrangian branes \cite{Treumann-Zaslow,SchraderShenZaslow}.

\subsection{Related works and discussion}
Categories with Calabi--Yau structures are expected to define a 2D topological field theory \cite{Kontsevich-Soibelman-Ainfty,Costello,LurieCobordism,Kontsevich-Takeda-Vlassopoulos1}. Fukaya categories correspond to the 2D topological A theory, whereas the mirror coherent sheaf categories correspond to the 2D topological B theory. This leads to numerous studies of Calabi--Yau properties arising from symplectic geometry.

Using arborealization for Legendrian stops in cotangent bundles \cite{Nadler-nonchar} and positively polarizable Weinstein manifolds with stops \cite{Arborealization}, it is known that strong (relative) Calabi--Yau structures exist:
\begin{enumerate}
    \item Shende--Takeda \cite{Shende-Takeda} constructed strong smooth/proper (relative) Calabi--Yau structures on the compact/proper objects in the microlocal sheaf categories for the arboreal skeleton.
\end{enumerate}
Other than the arborealization approach, there are also some known results on absolute Calabi--Yau structures in symplectic geometry using Floer theory:
\begin{enumerate}
    \item[(2)] For the wrapped Fukaya categories of non-degenerate Liouville manifolds $\mathcal{W}(X)$, Ganatra \cite{Ganatra1,Ganatra2} showed the existence of strong smooth Calabi--Yau structures on $\mathcal{W}(X)$.
    \item[(3)] For compact Fukaya categories of closed symplectic manifolds $\mathcal{F}(X)$, Abouzaid--Fukaya--Oh--Ohta--Ono \cite{AFOOO}, following Fukaya \cite{Fukaya-cyclic}, will construct a cyclic $\mathcal{A}_\infty$-structure on $\mathcal{F}(X)$, which, in characteristic 0, is weakly equivalent to a strong proper Calabi--Yau structure. Ganatra \cite{Ganatra2} showed the existence of strong proper Calabi--Yau structures over any field.
    \item[(4)] For the Legendrian contact dg algebra $\mathcal{A}(\Lambda)$, Legout \cite{Legout} showed the existence of weak smooth Calabi--Yau structures on $\mathcal{A}(\Lambda)$ for a Legendrian sphere $\Lambda$.
    \item[(5)] For the Tamarkin categories of compact domains $U \subseteq T^*M$ with contact boundary, Kuo--Shende--Zhang \cite{KuoShendeZhang} showed the existence of strong proper Calabi--Yau structures on $\mathcal{T}(U)$.
\end{enumerate}
On the other hand, less is known on the relative Calabi--Yau structures in symplectic geometry:
\begin{enumerate}
    \item[(6)] For the Fukaya--Seidel categories of Lefschetz fibrations $\mathcal{F}(X, \pi)$ and compact Fukaya categories of the fiber $\mathcal{F}(F)$, Seidel \cite{SeidelFukI} showed the existence of a weak relative proper Calabi--Yau structure on the cap functor $\cap_F: \mathcal{FS}(X, \pi) \to \mathcal{F}(F)$.
    \item[(7)] For the partially wrapped Fukaya categories of Weinstein manifolds with stops $\mathcal{W}(X, F)$ and wrapped Fukaya categories of the stop $\mathcal{W}(F)$, no results are known using Floer theory.
    \item[(8)] For Legendrian contact dg algebras over chains over based loop spaces $\mathcal{A}_{C_{-*}(\Omega_*\Lambda)}(\Lambda)$, the upcoming work of Ma--Sabloff \cite{MaSabloff} and Chen \cite{Chen}, following Ekholm--Etnyre--Sabloff \cite{Sabduality,EESduality}, will show a weak relative proper Calabi--Yau structure on category of augmentations $\mathcal{A}ug_+(\Lambda) \to \Loc_1(\Lambda)$, and an upcoming work of Dimitroglou Rizell--Legout \cite{DRizellLegout} will show a weak relative smooth Calabi--Yau structure on $C_{-*}(\Omega_*\Lambda) \to \mathcal{A}_{C_{-*}(\Omega_*\Lambda)}(\Lambda)$. 
    \item[(9)] Asplund \cite{Asplund-CY} showed a strong smooth Calabi--Yau structure for a certain class of examples of singular Legendrians by showing that its dg algebra is the relative Ginzburg algebra.
\end{enumerate}
Meanwhile, there are also certain structures closely related to the Calabi--Yau structure. For instance, Kontsevich--Takeda--Vlassopoulos \cite{Kontsevich-Takeda-Vlassopoulos1} defined the notion of pre-Calabi--Yau structures (Seidel \cite{SeidelFukI} also defined the equivalent notion of boundary algebras for finite dimensional $\mathcal{A}_\infty$-algebras), and Calabi--Yau structures induce pre-Calabi--Yau structures \cite{Kontsevich-Takeda-Vlassopoulos2}.
\begin{enumerate}
    \item[(10)] For the Fukaya--Seidel categories of Lefschetz fibrations $\mathcal{F}(X, \pi)$, Seidel \cite{SeidelFukI} showed that there is a pre-Calabi--Yau structure as well, which we expect to be closely related to the strong proper relative Calabi--Yau structure.
\end{enumerate}
Finally, given a strong (relative) Calabi--Yau structure, there is a framed $E_2$-structure on the (relative) Hochschild cohomology of the categories by Brav--Nozenblyum \cite{BravRozenblyum}. Related structures have also been studied in symplectic geometry.
\begin{enumerate}
    \item[(11)] For wrapped Fukaya categories of nondegenerate Liouville manifolds $\mathcal{W}(X)$, Abouzaid--Groman--Varolgunes \cite{AbGroVarol} showed that there is a framed $E_2$-structure on the symplectic cohomology $SH^*(X)$, which is isomorphic to the Hochschild cohomology.
    \item[(12)] For Legendrian contact dg algebra $\mathcal{A}_{\Bbbk[\pi_1(\Lambda)]}(\Lambda)$, Ng \cite{Ng-Linfinity} showed that for $\Lambda \subseteq \bR^3$ there is an $L_\infty$-structure on the commutative version $\mathcal{A}^\text{comm}_{\Bbbk[\pi_1(\Lambda)]}(\Lambda)$, which we expect to be related to the framed $E_2$-structure and pre-Calabi--Yau structure on the Hochschild cohomology.
    \item[(13)] For Legendrian contact dg algebra $\mathcal{A}_{\Bbbk[\pi_1(\Lambda)]}(\Lambda)$, Casals--Gao--Ng--Shen--Weng \cite{CGNSW} will show that for a large class of $\Lambda \subseteq \bR^3$ there is a holomorphic symplectic structure on the augmentation variety $\Spec H_0(\mathcal{A}^\text{comm}_{\Bbbk[\pi_1(\Lambda)]}(\Lambda))$, which we expect to follow from the shifted Lagrangian intersection of the moduli spaces when $\Lambda$ is a Legendrian knot.
\end{enumerate}

Our result shows the existence of strong relative smooth Calabi--Yau structures, which induce strong relative proper Calabi--Yau structures. Our construction does not depend on the local properness and local smoothness and does not depend on computations on explicit models of arboreal Lagrangian skeleta as in \cite{Shende-Takeda}. Actually, we believe that our approach aligns closely to the conjectural approach to relative Calabi--Yau structures in partially wrapped Floer theory, where some technical ingredients can be found in \cite{AbGan}.

\subsection*{Acknowledgement}
We would like to thank Mohammed Abouzaid, Roger Casals, Zhenyi Chen, Georgios Dimitroglou Rizell, John Francis, Marc Hoyois, Sheel Ganatra, Jason Ma, Lenny Ng, Nick Rozenblyum, Joshua Sabloff, Vivek Shende, Alex Takeda, Dima Tamarkin, Harold Williams, Eric Zaslow and Bingyu Zhang for helpful discussions. CK was supported by NSF grant DMS-1928930 when in residence at the SLMath during Spring 2024.

\section{Duality and Continuous Adjunction} \label{sec:duality bimodule}

We recall some algebraic background needed in the paper. Discussions on dualizability can be found in \cite{Hoyois-Scherotzke-Sibilla,Gaitsgory-Rozenblyum,Brav-Dyckerhoff2}, discussions on Hochschild invariants of dualizable objects can be found in \cite{Hoyois-Scherotzke-Sibilla,AyalaFrancis,Brav-Dyckerhoff2}, and discussions on Calabi--Yau structures can be found in \cite{Brav-Dyckerhoff1,Brav-Dyckerhoff2,BravRozenblyum}.

\subsection{Dualizability and adjunction}

Let $\SC$ be the symmetric monoidal $2$-category. (For our purpose, we only need the case when $\SC = \PrLst(\cV)$ for a rigid symmetric monoidal category $\cV$.) 
We recall the notion of dualizable objects, right and left dualizable morphisms and adjunctions, following \cite{JFS,Hoyois-Scherotzke-Sibilla,Brav-Dyckerhoff2}, where many results and arguments are parallel to the $1$-categorical setting \cite[Section 4.6]{Lurie-HA}; see also \cite[Chapter 1, Section 4]{Gaitsgory-Rozenblyum}.

\begin{definition}[{\cite[Definition 4.6.2]{Lurie-HA} \cite[Section 7]{JFS} \cite[Section 2.2]{Hoyois-Scherotzke-Sibilla}}]\label{smdual}
An object $X$ in $\SC$ is dualizable if there exists $Y \in \SC$ and a unit and a counit
$\eta: 1 \rightarrow X \otimes Y$ and $\epsilon: Y \otimes X \rightarrow 1$
such that the pair $(\eta,\epsilon)$ satisfies the standard triangle equality that the following compositions are identities
$$X \xrightarrow{\Id_X \otimes \, \eta} X \otimes Y \otimes X \xrightarrow{\epsilon\, \otimes \, \Id_X} X,$$
$$Y \xrightarrow{\eta\, \otimes \, \Id_Y} Y \otimes X \otimes Y \xrightarrow{\Id_Y \otimes \, \epsilon} Y.$$
\end{definition}
We note that these conditions uniquely classifies $(Y,\eta,\epsilon)$, and we will use the notation $(X^\vee, \eta_X, \epsilon_X)$ to denote its dual when there is no need to specify it. 

\begin{proposition}[{\cite[Proposition 4.6.2.1]{Lurie-HA} \cite[Section 2.1]{Brav-Dyckerhoff2}}]\label{dual-gives-inHom}
When $X$ is dualizable, the functor $X^\vee \otimes (-)$ is both the right and left adjoint of $X \otimes (-)$ since $(X^\vee)^\vee =X$.
In particular, for any $Y \in \SC$, we have $\inHom(X,Y) = Y \otimes X^\vee$.
\end{proposition}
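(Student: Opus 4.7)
The plan is to build the adjunction $(X \otimes -) \dashv (X^\vee \otimes -)$ directly out of the duality data $(\eta, \epsilon)$ promised by Definition \ref{smdual}, and then obtain the internal Hom formula by Yoneda. First I would propose candidate unit and counit. For any $Z, W \in \SC$, set
\[u_Z : Z \simeq 1 \otimes Z \xrightarrow{\eta \otimes \Id_Z} X \otimes X^\vee \otimes Z,\]
\[c_W : X \otimes X^\vee \otimes W \xrightarrow{\epsilon \otimes \Id_W} 1 \otimes W \simeq W,\]
using the symmetric monoidal structure to freely reorder tensor factors so that $u_Z$ is regarded as a map $Z \to (X^\vee \otimes -)(X \otimes Z)$ and $c_W$ as a map $(X \otimes -)(X^\vee \otimes W) \to W$.

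Next I would verify the two triangle identities for the adjunction $(X \otimes -) \dashv (X^\vee \otimes -)$. After tensoring the universal compositions with $\Id_Z$ (respectively $\Id_W$), each collapses to one of the two triangle equations of Definition \ref{smdual}, which are identities by hypothesis. This establishes the adjunction. Since $(\eta, \epsilon)$ also exhibits $X$ as a dual of $X^\vee$ (the triangle equations being symmetric in the two factors via the swap $X \otimes X^\vee \simeq X^\vee \otimes X$), the same argument applied to $X^\vee$ in place of $X$ gives the adjunction $(X^\vee \otimes -) \dashv (X \otimes -)$ as well. Hence $X^\vee \otimes (-)$ is both a left and a right adjoint of $X \otimes (-)$.

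Finally, the internal Hom formula follows from the universal property. By definition $\inHom(X, Y)$ corepresents the functor $Z \mapsto \Hom_\SC(X \otimes Z, Y)$, and the adjunction just constructed gives
\[\Hom_\SC(X \otimes Z, Y) \simeq \Hom_\SC(Z, X^\vee \otimes Y) \simeq \Hom_\SC(Z, Y \otimes X^\vee),\]
with the second equivalence from symmetry of $\otimes$. Yoneda then identifies the corepresenting object with $Y \otimes X^\vee$.

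I do not anticipate a real obstacle: the statement is a formal consequence of the triangle identities combined with the symmetry of $\otimes$, and is parallel to the $1$-categorical case treated in \cite[Proposition 4.6.2.1]{Lurie-HA}. The one point requiring mild care is that we work in the symmetric monoidal $2$-category $\SC = \PrLst(\cV)$, so the triangle identities and the Yoneda step hold only up to coherent invertible $2$-morphisms rather than on the nose; however, these coherences are part of the ambient symmetric monoidal $(\infty,2)$-categorical structure and do not require separate argument.
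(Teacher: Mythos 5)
Your proposal is correct: constructing the adjunction unit and counit from $\eta$ and $\epsilon$ (reordered via the symmetry), checking the triangle identities by tensoring the duality triangle identities with $\Id_Z$, using that the same data exhibits $(X^\vee)^\vee = X$, and then identifying $\inHom(X,Y) \simeq Y \otimes X^\vee$ by Yoneda is exactly the standard argument. The paper does not prove this statement but cites \cite[Proposition 4.6.2.1]{Lurie-HA} and \cite[Section 2.1]{Brav-Dyckerhoff2}, and your argument is essentially the proof given there, so there is nothing to correct.
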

\begin{remark}[{\cite[Equation (2.1) \& (2.2)]{Brav-Dyckerhoff2}}]\label{rem:dual-functor}
When $X$ and $Y$ are both dualizable, we thus have isomorphisms
$$\Hom(1_\SC, X^\vee \otimes Y) = \Hom(X, Y) = \Hom(X \otimes Y^\vee, 1_\SC),$$
where the morphism $f: X \to Y$ corresponds to 
\begin{align*}
\varphi_f: 1_\SC \xrightarrow{\eta_X} X \otimes X^\vee \xrightarrow{f \otimes \Id_{X^\vee}} Y \otimes X^\vee, \;\;
\psi_f: Y^\vee \otimes X \xrightarrow{\Id_{Y^\vee} \otimes f} Y^\vee \otimes Y \xrightarrow{\epsilon_Y} 1_\SC.
\end{align*}
In particular, under the equivalence 
$$\Hom(1_\SC, X \otimes X^\vee) = \Hom(X, X) = \Hom(X^\vee \otimes X, 1_\SC),$$
$\Id_X$ always corresponds to $\eta_X$ under the first isomorphism and $\epsilon_X$ under the second isomorphism.
\end{remark}

\begin{definition}[{\cite[Section 2.1]{Brav-Dyckerhoff2}}]\label{lem:dual-of-morphism}
Let $X$ and $Y$ be dualizable objects and $f: X \rightarrow Y$ be a morphism. 
We denote by $f^\vee: Y^\vee \rightarrow X^\vee$ by its imagine under the equivalence
$$\Hom(X,Y) = \Hom(1_\SC, X^\vee \otimes Y) = \Hom(Y^\vee, X^\vee).$$
We note that $(f^\vee)^\vee = f$.
\end{definition}

\begin{remark}[{\cite[Equation (2.3)]{Brav-Dyckerhoff2}}]\label{rem:dual-product}
Let $X$ and $Y$ be dualizable. Then $X \otimes Y$ is also dualizable and $(X \otimes Y)^\vee = Y^\vee \otimes X^\vee$.
In particular, the object $X^\vee \otimes X$ is self-dual and  $\eta_X^\vee = \epsilon_X$ so $\epsilon_X^\vee = \eta_X$.
\end{remark}

Now we recall the notion of adjoint morphisms, which we will be always working with.

\begin{definition}[{\cite[Section 7]{JFS}}]
A $1$-morphism $f: X \rightarrow Y$ is \textit{right adjoinable} if there exists a $1$-morphism $f^r: Y \rightarrow X$ and $2$-morphisms
$$u: \id_X \rightarrow f^r \circ f, \, c: f \circ f^r \rightarrow \id_Y$$
satisfying a similar triangle identities for dualizable objects.
The definition of being \textit{left adjoinable} is similar. In these cases, we say that $f^r$ (resp.~$f^l$) is the right (resp.~left) adjoint of $f$.
\end{definition}

For example, if $g: Z \rightarrow X$, $h: Z \rightarrow Y$ are morphisms, then the unit and counit induces
$$\Hom_{\Hom(Z,Y)}(h, f \circ g) = \Hom_{\Hom(Z,X)}(f^l \circ h, g)$$
which generalizes the standard identity for adjoint functors between categories.

\begin{definition}\label{def:push-forward}
We use the notation $F_!: \inHom(X,X) \rightarrow \inHom(Y,Y)$ to mean the morphism which,
under $\inHom(X,X) = X^\vee \otimes X$, corresponds to $f^{r\vee} \otimes f: X^\vee \otimes X \rightarrow Y^\vee \otimes Y$.
\end{definition}


We mention a lemma that will be convenient for us later.

\begin{lemma}[{\cite[Lemma 2.1]{Brav-Dyckerhoff2}}]\label{swapping}
Let $X$ and $Y$ be dualizable. For any morphism $f: X \rightarrow Y$ and $g: Y \rightarrow X$, we have 
\begin{align*}
\epsilon_Y \circ (\Id_{Y^\vee} \otimes f) = \epsilon_X \circ (f^\vee \otimes \Id_X): &\, Y^\vee \otimes X \rightarrow 1_\SC,\\ 
(g \otimes \Id_{Y^\vee}) \circ \eta_Y = (\Id_X \otimes \, g^\vee) \circ \eta_X: &\, 1_\SC \rightarrow X \otimes Y^\vee
\end{align*}
When $f: X \rightleftharpoons Y: f^r$ is an adjunction, we have
\begin{align*}
(f \otimes f^{r\vee}) \circ (\alpha \otimes \Id_{X^\vee}) \circ \eta_X = ((f\circ \alpha \circ f^r) \otimes \Id_{Y^\vee}) \circ \eta_Y &: 1_\SC \to Y \otimes Y^\vee.\\
\epsilon_Y \circ (\Id_{Y^\vee} \otimes\, \alpha) \circ (f^{\vee} \otimes f^r) = \epsilon_X \circ (\Id_{X^\vee} \otimes\, (f\circ \alpha \circ f^r)) &: X^\vee \otimes X \to 1_\SC.
\end{align*}
Finally, a morphism $f: X \rightarrow Y$ is right adjoinable 
if and only if $f^\vee: Y^\vee \rightarrow X^\vee$ is left adjoinable, and in this case, $(f^\vee)^l = (f^r)^\vee$.
\end{lemma}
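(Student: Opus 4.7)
The strategy is to derive every identity from the defining correspondence of the dual morphism together with the triangle (snake) relations of Definition \ref{smdual}. Unwinding Definition \ref{lem:dual-of-morphism} through Remark \ref{rem:dual-functor}, we see that $f^\vee$ is characterized by $\epsilon_X \circ (f^\vee \otimes \Id_X) = \psi_f$, where $\psi_f = \epsilon_Y \circ (\Id_{Y^\vee} \otimes f)$; this reads directly as the first identity of part one. The second identity follows by the companion characterization: $g^\vee$ is the morphism satisfying $(\Id_X \otimes g^\vee) \circ \eta_X = \varphi_g = (g \otimes \Id_{Y^\vee}) \circ \eta_Y$ for $g : Y \to X$.

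With those two identities in hand, the adjunction identities of the second part are pure diagram chasing in which the unit and counit of $f \dashv f^r$ play no role beyond defining $f^r$ as a 1-morphism $Y \to X$. For example, to prove the first, start from the right-hand side $((f \alpha f^r) \otimes \Id_{Y^\vee}) \circ \eta_Y$, factor $f\alpha$ out on the left, and apply the first identity of part one to $g = f^r$ to rewrite $(f^r \otimes \Id_{Y^\vee}) \circ \eta_Y = (\Id_X \otimes f^{r\vee}) \circ \eta_X$. The interchange law then produces $(f \otimes f^{r\vee}) \circ (\alpha \otimes \Id_{X^\vee}) \circ \eta_X$, which is the left-hand side. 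The companion identity for $\epsilon$ follows by the dual manipulation, this time using the second identity of part one.

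For the final statement, the operation $(-)^\vee$, which is contravariant on 1-morphisms but covariant on 2-morphisms, carries the unit $u : \Id_X \to f^r \circ f$ and counit $c : f \circ f^r \to \Id_Y$ of $f \dashv f^r$ to 2-morphisms $u^\vee : \Id_{X^\vee} \to f^\vee \circ (f^r)^\vee$ and $c^\vee : (f^r)^\vee \circ f^\vee \to \Id_{Y^\vee}$, using $(gh)^\vee = h^\vee g^\vee$ from Remark \ref{rem:dual-product}. These serve as the candidate unit and counit of an adjunction $(f^r)^\vee \dashv f^\vee$, and verifying their triangle identities reduces, via parts one and two applied to the relevant composite 1-morphisms, to the triangle identities for $(u, c)$.

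The main bookkeeping obstacle is the interaction of $(-)^\vee$ with composition — in particular, the contravariance on 1-morphisms means that the unit and counit effectively swap roles in a controlled way when passing between $f \dashv f^r$ and $(f^r)^\vee \dashv f^\vee$. Once this is pinned down, every identity in the lemma reduces to the two snake relations together with the defining correspondences of $f^\vee$.
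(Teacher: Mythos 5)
Your proof is correct and is essentially the expected argument: the paper offers no proof of its own (the lemma is imported from Brav--Dyckerhoff, Lemma 2.1), and the standard proof is exactly your unwinding of the defining correspondence for $f^\vee$ via $\varphi_f$, $\psi_f$ together with the triangle identities, with the two adjunction identities and the statement $(f^\vee)^l = (f^r)^\vee$ obtained by the same diagram chases and by dualizing the unit and counit. Only trivial slips: you twice invoke the ``first'' identity of part one where you mean the second (the $\eta$-identity, applied to $g = f^r$), and the paper's second adjunction identity as printed has its $X$/$Y$ labels (and hence source object) swapped --- your dual manipulation proves the correctly typed version $\epsilon_X \circ (\Id_{X^\vee} \otimes\, \alpha) \circ (f^{\vee} \otimes f^r) = \epsilon_Y \circ (\Id_{Y^\vee} \otimes\, (f\circ \alpha \circ f^r)) : Y^\vee \otimes Y \to 1_\SC$.
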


Then we recall the left and right dualizing morphism $f: X \to Y$ between dualizable objects. See \cite[Section 2.2]{Brav-Dyckerhoff2} and \cite[Proposition 4.6.4.4 \& 4.6.4.12]{Lurie-HA}.

\begin{definition}[{\cite[Section 2.2]{Brav-Dyckerhoff2}}]\label{def:dualizing-functor}
Let $X, Y \in \SC$ be dualizable, $f: X \rightarrow Y$ be a morphism and $\varphi_f: X \otimes Y^\vee \rightarrow 1_\SC$ its corresponding morphism under
$$\Hom(X,Y) = \Hom(X \otimes Y^\vee, 1_\SC).$$ 
If $\varphi_f$ is left dualizable, the left dualizing morphism of $f$, denoted by $f^! \in \Hom(X, Y)$, is the morphism corresponding to $\varphi_f^l$. 
Let $\psi_f: 1_\SC \to X^\vee \otimes Y$ the corresponding morphism of $f$ under 
$$\Hom(X,Y) = \Hom(1_\SC, X^\vee \otimes Y).$$ 
If $\psi_f$ is right dualizable, then the right dualizing morphism of $f$, denoted by $f^* \in \Hom(X, Y)$, is the morphism corresponding to $\psi_f^r$.
\end{definition}

\begin{definition}[{\cite[Section 2.2]{Brav-Dyckerhoff2}}]
We say that a dualizable object $X$ is smooth (resp.~proper) if $\Id_X: X \to X$ 
is left (resp.~right) dualizable. Equivalently, $X$ is smooth (resp.~proper) if $\epsilon_X: X \otimes X^\vee \to 1_\SC$ is left (resp.~right) adjoinable, or $\eta_X: 1_\SC \rightarrow X^\vee \otimes X$
is right (resp.~left) adjoinable.
\end{definition}

\begin{example}
When $X$ is smooth, the endomorphism $\Id_X^!: X \rightarrow X$ is given by the composition
$$X \xrightarrow{\Id_X \otimes \, \epsilon_X^l} X \otimes X^\vee \otimes X \xrightarrow{\epsilon_X \otimes \, \Id_X} X.$$
\end{example}

\begin{lemma} \label{rem:bimod-adjunction}
Assume $X$ is smooth and $f: X \rightarrow Y$ be right adjointable. Then 
$$(ff^r)^! = f \circ \Id_X^! \circ f^r.$$
\end{lemma}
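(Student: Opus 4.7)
The plan is to verify the identity by tracing through the canonical duality isomorphism $\Hom(Y, Y) \cong \Hom(1_\SC, Y \otimes Y^\vee)$ and matching the images of both sides. By Definition \ref{def:dualizing-functor}, $(ff^r)^!$ is the unique element of $\Hom(Y, Y)$ whose image under this isomorphism is $\varphi_{ff^r}^l$, where $\varphi_{ff^r}$ is the morphism in $\Hom(Y \otimes Y^\vee, 1_\SC)$ corresponding to $ff^r$. It therefore suffices to show that $f \circ \Id_X^! \circ f^r$ has image $\varphi_{ff^r}^l$.

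First, I would express $\varphi_{ff^r}$ in terms of $\epsilon_X = \varphi_{\Id_X}$ using the swap identity $\epsilon_Y \circ (\Id_{Y^\vee} \otimes f) = \epsilon_X \circ (f^\vee \otimes \Id_X)$ from Lemma \ref{swapping}. Postcomposing with $\Id_{Y^\vee} \otimes f^r$ and using the symmetry of the tensor product yields
$$\varphi_{ff^r} = \epsilon_X \circ (f^\vee \otimes f^r).$$
The right adjointability of $f$, combined with the last part of Lemma \ref{swapping}, gives that $f^\vee$ is left adjointable with $(f^\vee)^l = f^{r\vee}$, while the smoothness of $X$ gives the left adjointability of $\epsilon_X$. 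Taking left adjoints, I obtain
$$\varphi_{ff^r}^l = (f^{r\vee} \otimes f) \circ \epsilon_X^l.$$

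Next, I would compute the image of $f \circ \Id_X^! \circ f^r$ under the same duality isomorphism. By Remark \ref{rem:dual-functor}, this image equals $((f \circ \Id_X^! \circ f^r) \otimes \Id_{Y^\vee}) \circ \eta_Y$. Applying the adjunction swap identity in Lemma \ref{swapping} with $\alpha = \Id_X^!$, this becomes $(f \otimes f^{r\vee}) \circ (\Id_X^! \otimes \Id_{X^\vee}) \circ \eta_X$. By the defining property of $\Id_X^!$ in Definition \ref{def:dualizing-functor}, namely that $\Id_X^!$ corresponds to $\epsilon_X^l$ under $\Hom(X, X) \cong \Hom(1_\SC, X \otimes X^\vee)$, we have $(\Id_X^! \otimes \Id_{X^\vee}) \circ \eta_X = \epsilon_X^l$, so the expression reduces to $(f \otimes f^{r\vee}) \circ \epsilon_X^l$, matching $\varphi_{ff^r}^l$ after the symmetric swap identifying $Y^\vee \otimes Y$ with $Y \otimes Y^\vee$.

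The main technical hurdle is the bookkeeping of symmetric braidings between $A \otimes B$ and $B \otimes A$ across the various duality conventions used in Remark \ref{rem:dual-functor} and Definition \ref{def:dualizing-functor}. Conceptually, every step is a direct consequence of the swap identities in Lemma \ref{swapping} together with the defining properties of adjoints and duals, so no new geometric input is required beyond the assumption that $X$ is smooth and $f$ is right adjointable.
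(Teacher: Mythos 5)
Your proposal is correct and is exactly the computation the paper leaves implicit: its proof of Lemma \ref{rem:bimod-adjunction} is just ``direct computation using Lemma \ref{swapping}'', and your steps (rewriting $\varphi_{ff^r}$ as $\epsilon_X\circ(f^\vee\otimes f^r)$, passing to left adjoints via $(f^\vee)^l=f^{r\vee}$ and $\epsilon_X^l$, and matching with $((f\circ\Id_X^!\circ f^r)\otimes\Id_{Y^\vee})\circ\eta_Y$ through the adjunction identity) are precisely that computation. The only loose end is the braiding/ordering bookkeeping between $X\otimes X^\vee$ and $X^\vee\otimes X$, which you flag explicitly and which the paper itself treats loosely, so no substantive gap remains.
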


\begin{proof}
It is a direct computation using Lemma \ref{swapping}.
\end{proof}

\subsection{Hochschild homology and circle action}
For this section, we assume that $(\SC, \otimes, 1)$ is a symmetric monoidal category that admits geometric realization. (We only need the case when $\SC = \PrLst(\cV)$, which clearly satisfies the condition.) We recall the construction of Hochschild homology and circle actions on Hochschild homology, following \cite{Hoyois-Scherotzke-Sibilla,AyalaFrancis}.

\begin{definition}[{\cite[Section 2.2]{Hoyois-Scherotzke-Sibilla}}]
For a dualizable object $X$ and an endomorphism $f$, 
we define its trace $\Tr(f) \in \End(1_\SC)$ to be the composition
$$1_\SC \xrightarrow{\eta_X} X \otimes X^\vee \xrightarrow{f \otimes \,\Id} X \otimes X^\vee = X^\vee \otimes X \xrightarrow{\epsilon_X} 1_\SC.$$
For $f = \Id_X$, we call $\Tr(\Id_X)$ the Hochschild homology of $X$ and denote it by $\HH_*(X)$.
\end{definition} 

Assume that $X$ is smooth. Then there is an equivalence
\begin{align*}
\Hom_{\End(X)}(\Id_X^!, \Id_X) &= \Hom_{\Hom(1_\SC, X \otimes X^\vee) }(\epsilon_X^l, \eta_X) 
= \Hom_{\Hom(1_\SC, X \otimes X^\vee) }(\epsilon_X^l \circ \Id_{1_\SC}, \eta_X) \\
&= \Hom_{\End(1_\sC)}(\Id_{1_\SC}, \epsilon_X \circ \eta) = \Hom_{\End(1_\sC)}\left(\Id_{1_\SC}, \HH_*(X) \right).
\end{align*} 
Similarly, assume that $X$ is proper. Then 
$$\Hom_{\End(X)}(\Id_X^*, \Id_X) = \Hom_{\End(1_\SC)}(\HH_*(X), \Id_{1_\SC}).$$

For an adjunction $f: X \rightleftharpoons Y: f^r$, there is a map $\HH_*(X) \rightarrow \HH_*(Y)$ which is given by the composition \cite[Lemma 4.1]{Brav-Dyckerhoff2}
$$\HH_*(X) =\Tr(\Id_X) \to \Tr(\Id_X f^rf) \xrightarrow{\sim} \Tr(f \Id_X f^r) \xrightarrow{\sim} \Tr(\Id_Y ff^r) \to \Tr(\Id_Y) = \HH_*(Y).$$
Therefore, Hochschild homology is functorial under right dualizable morphisms, and one can define the relative Hochschild homology as follows.

\begin{definition}[{\cite[Section 2.3.3]{BravRozenblyum}}]
Let $f: X \rightleftharpoons Y: f^r$ be an adjunction.
Then the relative Hochschild homology $\HH_*(Y,X)$ is defined to be the cofiber of $\HH_*(X) \to \HH_*(Y)$, i.e.,
$\HH_*(Y,X)$ fits in the fiber sequence
$$\HH_*(X) \rightarrow \HH_*(Y) \rightarrow \HH_*(Y,X).$$
Here we surpass the dependence of $f$ in the notation.
\end{definition}

As shown in \cite[Theorem 2.14]{Hoyois-Scherotzke-Sibilla}, when $f$ is an automorphism, 
$\Tr(f) \in \End_\SC(1)$ admits a canonical functorial $S^1$-action.
We recall the construction of $S^1$-action using factorization homology by Ayala--Francis \cite{AyalaFrancisTrace}, following earlier works of \cite{AyalaFrancis,AyalaMazel-GeeRozenblyum}\footnote{We would like to thank Nick Rozenblyum for pointing out the reference of the construction, and we would like to thank Nick Rozenblyum and John Francis for explaining to us these works.}.

Recall the combinatorial model of the cyclic category $\boldsymbol{\Lambda}$, whose objects are $n \in \bN$ and morphisms are generated by
\begin{align*}
    d_i: [n] \to [n+1], \;s_i: [n+1] \to [n], \;t_{n}: [n] \to [n]
\end{align*}
such that the face morphism $d_i$, the degeneration morphism $s_i$ and the cyclic rotation $t_n$ satisfy the relations in the cyclic category (with the relation $t_n^n = \id$) \cite[Definition 6.1.1]{LodayCyclic} \cite[Section 3.1]{Kaledin}. Also recall the para-cyclic category $\boldsymbol{\Delta}_\circlearrowleft$ (also denoted by $\boldsymbol{\Lambda}_\infty$), whose objects are $n \in \bN$ and morphisms are generated by
\begin{align*}
    d_i: [n] \to [n+1], \;s_i: [n+1] \to [n], \;t_{n}: [n] \to [n]
\end{align*}
such that the face morphism $d_i$, the degeneration morphism $s_i$ and the cyclic rotation $t_n$ satisfy the relations in the para-cyclic category (without the relation $t_n^n = \id$) \cite[Section 1]{GetzlerJones} \cite[Section 3.1]{Kaledin}.

\begin{definition}
    Let $\boldsymbol{\Lambda}$ be the cyclic category. Then a cyclic object with value in $\cV$ is an object in $\Fun_\simeq(\boldsymbol{\Lambda}, \cV)$ where all the morphisms in $\boldsymbol{\Lambda}$ are sent to equivalences. Similarly, a para-cyclic object with value in $\cV$ is an object in $\Fun_\simeq(\boldsymbol{\Delta}_\circlearrowleft, \cV)$.
\end{definition}

\begin{proposition}[{\cite[Lemma 4.8, 4.9 \& Proposition 4.10]{Kaledin}}]\label{prop:paracyclic}
    Let $\widetilde{\boldsymbol{\Lambda}}$ be the groupoid completion of $\boldsymbol{\Lambda}$ and $\widetilde{\boldsymbol{\Delta}}_\circlearrowleft$ be the groupoid completion of $\boldsymbol{\Delta}_\circlearrowleft$. Then there are $S^1$-equivariant fiber sequences
    $$S^1 \to \boldsymbol{\Delta}_\circlearrowleft \to \boldsymbol{\Lambda}, \;\; S^1 \to \widetilde{\boldsymbol{\Delta}}_\circlearrowleft \to \widetilde{\boldsymbol{\Lambda}}$$
    such that $(\boldsymbol{\Delta}_\circlearrowleft)_{S^1} = \boldsymbol{\Lambda}$ and $(\widetilde{\boldsymbol{\Delta}}_\circlearrowleft)_{S^1} = \widetilde{\boldsymbol{\Lambda}}$. In addition, $\widetilde{\boldsymbol{\Delta}}_\circlearrowleft = \{*\}$ and $\widetilde{\boldsymbol{\Lambda}} = BS^1$. Hence
    $$\Fun_\simeq(\boldsymbol{\Delta}_\circlearrowleft, \cV)^{S^1} \simeq \Fun_\simeq(\boldsymbol{\Lambda}, \cV) \simeq \Fun(\widetilde{\boldsymbol{\Lambda}}, \cV) \simeq \Fun(BS^1, \cV),$$
    an object in $\cV$ with an $S^1$-action is equivalent to a cyclic object and is also equivalent to an $S^1$-invariant para-cyclic object with value in $\cV$.
\end{proposition}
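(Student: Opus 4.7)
The plan is to follow Kaledin's approach \cite{Kaledin} and establish the result in three stages: first, to describe the $S^1$-action on $\boldsymbol{\Delta}_\circlearrowleft$ and verify the fiber sequences and $S^1$-quotient identifications; second, to compute the two groupoid completions; third, to extract the equivalences of functor categories by applying universal properties.

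For the first stage, I would exhibit the $S^1$-action on $\boldsymbol{\Delta}_\circlearrowleft$ as the categorical action generated by the para-cyclic rotations $t_n$. At each object $[n]$, the rotation $t_n$ generates a free $\bZ$-subgroup of automorphisms, whereas in $\boldsymbol{\Lambda}$ the relation $t_n^n = \id$ cuts this down to $\bZ/n$. Thus the quotient map $\boldsymbol{\Delta}_\circlearrowleft \to \boldsymbol{\Lambda}$ can be described as the quotient by the compatible $\bZ$-action assembled from the $t_n$, which descends after passage to nerves to a circle action and gives $(\boldsymbol{\Delta}_\circlearrowleft)_{S^1} = \boldsymbol{\Lambda}$. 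A direct nerve-level analysis then yields the fiber sequence $S^1 \to \boldsymbol{\Delta}_\circlearrowleft \to \boldsymbol{\Lambda}$, and the same argument produces the analogous fiber sequence at the level of groupoid completions.

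For the second stage, the identification $\widetilde{\boldsymbol{\Lambda}} = BS^1$ is Connes' classical computation: one uses that $\boldsymbol{\Lambda}$ contains $\boldsymbol{\Delta}^{op}$ as a wide subcategory with contractible classifying space and that inverting the cyclic rotations contributes exactly one generating non-trivial automorphism loop. For the contractibility of $\widetilde{\boldsymbol{\Delta}}_\circlearrowleft$, the key observation is that the para-cyclic category behaves as the universal $\bZ$-cover of $\boldsymbol{\Lambda}$, so its groupoid completion is the universal cover of $BS^1 = \widetilde{\boldsymbol{\Lambda}}$, namely a point. Alternatively, applying the $S^1$-equivariant fiber sequence at the level of groupoid completions exhibits $\widetilde{\boldsymbol{\Delta}}_\circlearrowleft$ as the homotopy fiber of an endomorphism $BS^1 \to BS^1$ which is readily checked to be a weak equivalence, hence contractible.

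For the third stage, the equality $(\boldsymbol{\Delta}_\circlearrowleft)_{S^1} = \boldsymbol{\Lambda}$ and the universal property of $S^1$-quotients identify $\Fun_\simeq(\boldsymbol{\Lambda}, \cV)$ with the $S^1$-fixed points $\Fun_\simeq(\boldsymbol{\Delta}_\circlearrowleft, \cV)^{S^1}$. Since functors in $\Fun_\simeq$ send every morphism to an equivalence, they factor through the groupoid completion, yielding $\Fun_\simeq(\boldsymbol{\Lambda}, \cV) \simeq \Fun(\widetilde{\boldsymbol{\Lambda}}, \cV) \simeq \Fun(BS^1, \cV)$. The main obstacle is the combinatorial identification of $\widetilde{\boldsymbol{\Delta}}_\circlearrowleft$ with a point: it is not a priori clear that the additional freedom in the para-cyclic rotations causes all objects and morphisms to collapse once every morphism is inverted, and this must be pinned down either through an explicit contracting homotopy on the nerve or through the covering-space interpretation above. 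Once this contractibility is established, the remaining assertions are formal consequences of the universal properties of quotients, groupoid completions, and functor categories.
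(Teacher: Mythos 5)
The paper itself offers no proof of this proposition---it is quoted from Kaledin (Lemma 4.8, 4.9, Proposition 4.10)---so your sketch has to be judged against that route, and it has two genuine problems. First, the $S^1$-action is misidentified. An $S^1=B\bZ$-action on a category (with the given underlying object) amounts to a natural automorphism of the identity functor, and in $\boldsymbol{\Delta}_\circlearrowleft$ the rotations $t_n$ are \emph{not} natural in $[n]$: the paracyclic relations shift indices against the $d_i$ and $s_i$. The family that is central, and hence defines the circle action, is the full twist $\{t_n^n\}$. Your proposed quotient ``by the compatible $\bZ$-action assembled from the $t_n$'' would impose $t_n\simeq\id$ and collapse $\boldsymbol{\Delta}_\circlearrowleft$ onto (a version of) $\boldsymbol{\Delta}^{op}$, not onto $\boldsymbol{\Lambda}$; it is precisely the centrality of $t_n^n$ that both produces the $S^1$-action and makes the coinvariants $(\boldsymbol{\Delta}_\circlearrowleft)_{S^1}$ impose exactly the cyclic relation $t_n^n=\id$, i.e. equal $\boldsymbol{\Lambda}$. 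This identification is the content of the cited lemmas and is the missing idea in your first stage.

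Second, the covering-space argument for $\widetilde{\boldsymbol{\Delta}}_\circlearrowleft\simeq\{*\}$ fails: $\widetilde{\boldsymbol{\Lambda}}=BS^1\simeq K(\bZ,2)$ is simply connected, so ``the universal cover of $BS^1$'' is $BS^1$ itself, not a point; moreover the fiber of the groupoid-completed sequence is $S^1=B\bZ$, not a discrete $\bZ$, so the map is a principal $S^1$-bundle rather than a cover. The correct picture is that groupoid completion turns the sequence into the universal bundle $S^1\to ES^1\to BS^1$, and contractibility of $\widetilde{\boldsymbol{\Delta}}_\circlearrowleft$ is equivalent to the statement that the Borel map $\widetilde{\boldsymbol{\Lambda}}\to BS^1$ is an equivalence---which is Connes' theorem, i.e. exactly what your fallback ``readily checked to be a weak equivalence'' quietly assumes, making that version circular. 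One of the two computations ($|\boldsymbol{\Delta}_\circlearrowleft|\simeq *$ directly, or $|\boldsymbol{\Lambda}|\simeq BS^1$ directly) must be carried out honestly, and then the other follows from the fibration; this is how Kaledin proceeds. Your third stage---factoring $\Fun_\simeq$ through the groupoid completion and using the universal property of the $S^1$-quotient to get $\Fun_\simeq(\boldsymbol{\Delta}_\circlearrowleft,\cV)^{S^1}\simeq\Fun_\simeq(\boldsymbol{\Lambda},\cV)\simeq\Fun(BS^1,\cV)$---is correct as stated.
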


    We recall the topological models of the para-cyclic categories considered in Ayala--Mazel-Gee--Rozenblyum \cite{AyalaMazel-GeeRozenblyum} and Ayala--Francis \cite{AyalaFrancisTrace}.

\begin{remark}\label{rem:cyclic-combin-topol}
    Let $\cD(S^1)$ be the category of framed 1-disk refinements of $S^1$, whose objects are stratifications of $S^1$ by disks and morphisms are maps between stratified spaces \cite[Definition 1.18]{AyalaMazel-GeeRozenblyum}. By \cite[Observation 1.20]{AyalaMazel-GeeRozenblyum} or \cite[Observation 1.3 \& Corollary B.9]{AyalaFrancisTrace}, there exists an $S^1$-equivariant equivalence between $\infty$-categories
    $$\cD(S^1) \simeq \boldsymbol{\Delta}_\circlearrowleft, \; (D \to S^1) \mapsto \pi_0(\exp^{-1}(S^1 \setminus sk_0(D))).$$
\end{remark}
\begin{remark}\label{rem:disk-stratum-embed}
    Consider the categories $\cD(S^1)^{\triangleleft}$ and $\cD(S^1)^{\triangleright}$ defined by adjoining an initial or a final object to $\cD(S^1)$. Let $\operatorname{Disk}_{1/S^1}^\text{fr}$ be the category of framed embedded disks into $S^1$, whose objects are disjoint unions of framed embedded disks in $S^1$ and morphisms are framed embeddings \cite[Section 2.2]{AyalaFrancis}. By \cite[Corollary B.9]{AyalaFrancisTrace}, there is an $S^1$-equivariant equivalence
    $$\cD(S^1)^\triangleleft \simeq \operatorname{Disk}_{1/S^1}^\text{fr}, \;\; \{*\} \mapsto (\varnothing \hookrightarrow S^1),\;\; (D \to S^1) \mapsto (D \setminus sk_0(D) \hookrightarrow S^1).$$
\end{remark}

\begin{theorem}[Ayala--Francis {\cite[Theorem 2.6]{AyalaFrancisTrace}}]\label{thm:AF}
    Let $\SC$ be a symmetric monoidal category that admits geometric realizations. Let $X$ be a dualizable object in $\SC$. Then there is a canonical functorial $S^1$-action on $\HH_*(X) = \Tr(\Id_X)$ defined by a cyclic object $\Tr(\Id_X, -) : \boldsymbol{\Lambda} \to \End(1_\SC)$ with
    \begin{align*}
        \Tr(\Id_X, n): 1_\SC \xrightarrow{\eta_X^{\otimes n}} (X \otimes X^\vee)^{\otimes n} \xrightarrow{\tau_{2n}} (X^\vee \otimes X)^{\otimes n} \xrightarrow{\epsilon_X^{\otimes n}} 1_\SC,
    \end{align*}
    where $\tau_{2n}: (X \otimes X^\vee)^{\otimes n} \xrightarrow{\sim} (X^\vee \otimes X)^{\otimes n}$ is the cyclic permutation of order $2n$, such that the face maps $d_i$ and degeneration maps $s_i$ are induced by the triangle identities of the unit and counit, and $t_n$ is induced by the cyclic rotation on $(X \otimes X^\vee)^{\otimes n}$ of order $n$.
\end{theorem}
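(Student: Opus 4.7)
The plan is to explicitly build the cyclic object $\Tr(\Id_X,-): \boldsymbol{\Lambda} \to \End(1_\SC)$ with the claimed values and generators, and then invoke Proposition \ref{prop:paracyclic} to reinterpret it as the desired $S^1$-action on $\HH_*(X)$. I proceed in three steps.

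First, by Remark \ref{rem:cyclic-combin-topol} it suffices to construct a functor $\widetilde{\Tr}(\Id_X,-): \cD(S^1) \to \End(1_\SC)$ on the category of framed disk refinements, which is the topological incarnation of the para-cyclic category $\boldsymbol{\Delta}_\circlearrowleft$. To a refinement $D \to S^1$ with $n$ zero-strata I assign the composition
$$1_\SC \xrightarrow{\eta_X^{\otimes n}} (X \otimes X^\vee)^{\otimes n} \xrightarrow{\tau_{2n}} (X^\vee \otimes X)^{\otimes n} \xrightarrow{\epsilon_X^{\otimes n}} 1_\SC.$$
Geometrically, each zero-stratum records a coevaluation creating a pair $X \otimes X^\vee$, while $\tau_{2n}$ reshuffles the factors so that each one-stratum (arc) pairs an adjacent $X^\vee$ with the next $X$ and contributes an evaluation.

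Second, I verify functoriality with respect to the generators of $\boldsymbol{\Delta}_\circlearrowleft$. A face map $d_i$, which subdivides an arc by inserting a new marked point, corresponds to replacing an existing pair by the composition $\epsilon_X \circ (\Id_{X^\vee} \otimes \eta_X \otimes \Id_X)$; this respects the simplicial identities precisely by the triangle identities of Definition \ref{smdual}. A degeneracy $s_i$, which collapses a marked point, realizes the inverse snake identity. The cyclic rotation $t_n$ is realized by the symmetric monoidal cyclic permutation of $(X \otimes X^\vee)^{\otimes n}$ by one factor, whose compatibility with $\tau_{2n}$ is a direct bookkeeping exercise using only the braiding. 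Functoriality on the remaining composites then reduces to standard coherence of the symmetric monoidal structure.

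Third, I check $S^1$-equivariance. The topological $S^1$-action on $\cD(S^1)$ rotates disk configurations, and the prescribed formula for $\widetilde{\Tr}(\Id_X,n)$ is manifestly invariant under relabeling of the marked points, hence $S^1$-invariant as a para-cyclic object. By Proposition \ref{prop:paracyclic} it descends to a cyclic object $\Tr(\Id_X,-): \boldsymbol{\Lambda} \to \End(1_\SC)$, equivalently an $S^1$-equivariant structure on $\Tr(\Id_X,1) = \HH_*(X)$. Canonicity and functoriality in $X$ follow from the construction being expressed in terms of the universal data $(\eta_X,\epsilon_X)$ of a dualizable object.

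The main technical obstacle is the simplicial and cyclic coherence in step two, because the correct bookkeeping must be threaded through the long permutation $\tau_{2n}$. A clean way to bypass an ad hoc check is to reinterpret the construction as factorization homology $\int_{S^1} X$, using that a dualizable object in a symmetric monoidal category gives rise to a framed $E_1$-algebra whose cyclic bar construction computes $\HH_*(X)$; the $S^1$-action arises tautologically from rotating $S^1$, and one identifies the resulting cyclic object with $\Tr(\Id_X,-)$ via the formula above. This is precisely the viewpoint of Ayala--Francis \cite{AyalaFrancis,AyalaFrancisTrace}, and I would organize the proof along these lines.
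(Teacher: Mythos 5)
Your proposal is correct and, in its final form, takes essentially the same route as the paper: the paper likewise defines $\HH_*(X)$ as factorization homology over $\cD(S^1)^{\triangleleft}\simeq \operatorname{Disk}_{1/S^1}^{\mathrm{fr}}$, invokes Ayala--Francis's $S^1$-equivariant trace (their Theorem 2.6) to extend over $\cD(S^1)^{\triangleleft\triangleright}$ and unwind the formula with $\eta_X^{\otimes n}$, $\tau_{2n}$, $\epsilon_X^{\otimes n}$, and then passes through $\cD(S^1)\simeq\boldsymbol{\Delta}_\circlearrowleft$ and $(\boldsymbol{\Delta}_\circlearrowleft)_{S^1}=\boldsymbol{\Lambda}$ exactly as you do via Proposition \ref{prop:paracyclic}. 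The only caution is that the ``manifest'' $S^1$-invariance and the generator-by-generator coherence check in your steps two and three are not adequate as stated in the $\infty$-categorical setting; that coherent equivariance is precisely the nontrivial content of the Ayala--Francis theorem, on which both you (in your closing paragraph) and the paper ultimately rely.
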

\begin{remark}\label{rem:cyclic-object-map}
    The face morphism $d_i: \Tr(\Id_X, n) \to \Tr(\Id_X, n+1)$ is induced by the identity 
    $$(\epsilon_X \otimes \epsilon_X) \circ (\Id_{X^\vee} \otimes \,\eta_X \circ \Id_{X}) \xrightarrow{\sim} \epsilon_X.$$
    The degeneration morphism $s_i: \Tr(\Id_X, n+1) \to \Tr(\Id_X, n)$ is induced by the identity 
    $$\eta_X \xrightarrow{\sim} (\Id_X \otimes \,\epsilon_X \circ \Id_{X^\vee}) \circ (\eta_X \circ \,\eta_X).$$
    Finally, the cyclic rotation is induced by $\tau_{2n}^2: (X \otimes X^\vee)^{\otimes n} \to (X \otimes X^\vee)^{\otimes n}$ composed with the units and precomposed with the counits. See for instance \cite[Section 3.3--3.5]{Ben-Zvi-Nadler-Trace}.
\end{remark}

\begin{proof}[Proof of Theorem \ref{thm:AF}]
    Consider the categories $\cD(S^1)$ and $\operatorname{Disk}_{1/S^1}^\text{fr}$. 
    Following Remark \ref{rem:disk-stratum-embed}, the factorization homology of $X$ is defined by \cite[Observation 2.3]{AyalaFrancisTrace}
    $$\HH_*(X) \coloneqq \colim \left( \cD(S^1)^{\triangleleft} \xrightarrow{\sim} \operatorname{Disk}_{1/S^1}^\text{fr} \xrightarrow{\inEnd(X)} \SC \right).$$
    Ayala--Francis \cite[Theorem 2.6]{AyalaFrancisTrace} showed that there exists an $S^1$-invariant trace morphism from the factorization homology to $1_\SC$, or equivalently, that the colimit of the factorization homology over $\cD(S^1)^\triangleleft$ factors through $\cD(S^1)^{\triangleleft \triangleright}$:
    $$\HH_*(X) \coloneqq \colim \left( \cD(S^1)^{\triangleleft} \xrightarrow{\inEnd(X)} \SC \right) = \colim \left( \cD(S^1)^{\triangleleft \triangleright} \xrightarrow{\inEnd(X)} \SC \right).$$
    In particular, there exists a functor $\Tr(\Id_X, -): \cD(S^1)^{\triangleleft \triangleright} \to \End(1_\SC)$ by evaluating objects in the suspension category of framed 1-disk refinements. More precisely, since $X$ is dualizable which implies that $\inEnd(X) = X^\vee \otimes X$, it follows from \cite[Equation (10) \& (11)]{AyalaFrancisTrace} that the evaluation at the disk stratification $D \to S^1$ is given by
    \begin{align*}
        \Tr(\Id_X, D): 1_\SC \xrightarrow{\eta_X^{\otimes sk_0(D)}} (X^\vee \otimes X)^{\otimes \pi_0(D)} \xrightarrow{\tau_{2n}} (X \otimes X^\vee)^{\otimes \pi_0(S^1 \setminus sk_0(D)} \xrightarrow{\epsilon_X^{\otimes \pi_0(S^1 \setminus sk_0(D))}} 1_\SC,
    \end{align*}
    where $\tau_{2n}: sk_0(D) \xrightarrow{\sim} \pi_0(S^1 \setminus sk_0(D))$ is induced by Poincar\'e duality exchanging the 0-dimensional and 1-dimensional strata in $S^1$. Equivalently, we have a desuspension functor $\Tr(\Id_X, -): \cD(S^1) \to \End(1_\SC)$. For a pair of disk refinements $D \to D' \to S^1$, we have natural morphisms $\Tr(\Id_X, D) \to \Tr(\Id_X, D')$ that are induced by the triangle identities of the unit and counits.
    
    Let $\boldsymbol{\Delta}_\circlearrowleft$ be the paracyclic category. By Remark \ref{rem:cyclic-combin-topol}, there exists an $S^1$-equivariant equivalence $\cD(S^1) \simeq \boldsymbol{\Delta}_\circlearrowleft$.
    Thus we get an $S^1$-invariant paracyclic object $\Tr(\Id_X, -) : \boldsymbol{\Delta}_\circlearrowleft \to \End(1_\SC)$ such that
    \begin{align*}
        \Tr(\Id_X, n): \cV \xrightarrow{\eta_X^{\otimes n}} (X^\vee \otimes X)^{\otimes n} \xrightarrow{\tau_{2n}} (X \otimes X^\vee)^{\otimes n} \xrightarrow{\epsilon_X^{\otimes n}} \cV,
    \end{align*}
    Since $(\boldsymbol{\Delta}_\circlearrowleft)_{S^1} = \boldsymbol{\Lambda}$ by Proposition \ref{prop:paracyclic}, we obtain a cyclic object $\Tr(\Id_X, -) : \boldsymbol{\Lambda} \to \End(1_\SC)$. Thus, we explained how the theorem follows from \cite[Theorem 2.6]{AyalaFrancisTrace}.
\end{proof}

\begin{definition}[{\cite[Section 2.3.3]{BravRozenblyum}}]
For a dualizable object $X$, we define the $S^1$-coinvariant and $S^1$-invariant of the Hochschild homology 
$${\HH_*(X)}_{S^1} \, \text{and} \ {\HH_*(X)}^{S^1}$$
to be the cyclic homology and negative cyclic homology of $X$.
For dualizable objects $X, Y$ and an adjunction $f: X \rightleftharpoons Y: f^r$, since the $S^1$-action is functorial, we similarly consider the $S^1$-coinvariant and $S^1$-invariant of the relative Hochschild homology
$$\HH_*(Y,X)_{S^1} \ \text{and} \ \HH_*(Y,X)^{S^1}.$$
They are defined as the relative cyclic homology and
the relative negative cyclic homology.
\end{definition}

\begin{remark}
There are several definitions of coherent $S^1$-actions on Hochschild homology in the literature. Using the cobordism hypothesis \cite{LurieCobordism}, all of them should be identical. However, some of the identifications can also be shown directly. For example, the construction of Ayala--Francis using traces we used \cite{AyalaFrancisTrace} is compatible with Ayala--Mazel-Gee--Rozenblyum using bar comstructions \cite{AyalaMazel-GeeRozenblyum}, as shown in \cite[Proposition B.12]{AyalaFrancisTrace}. The construction of Ayala--Mazel-Gee--Rozenblyum \cite{AyalaMazel-GeeRozenblyum} and Nikolaus--Scholze \cite{NikolausScholze} can probably be identified by writing down both constructions explicitly (see \cite[Section 0.3]{AyalaMazel-GeeRozenblyum}). The comparison between the $S^1$-action by Ayala--Mazel-Gee--Rozenblyum \cite{AyalaMazel-GeeRozenblyum} and the classical mixed complex construction by Connes and Tsygan is shown by Hoyois \cite{Hoyois}. 

A comparion between the trace and the classical bar construction of Hochschild homology, on the other hand, has also appeared in \cite[Proposition 4.24]{Hoyois-Scherotzke-Sibilla}, but they did not compare the $S^1$-actions. In fact, the $S^1$-action defined in \cite[Proposition 2.16]{Hoyois-Scherotzke-Sibilla} invokes the cobordism hypothesis.
\end{remark}

\subsection{Smooth and proper Calabi--Yau structures}
For this section, we assume that the $2$-category $\SC$ is enriched in stable categories.
That is, for any $X, Y \in \SC$, the Hom-category $\Hom(X,Y)$ is stable.
In \cite[Section 3]{Hoyois-Scherotzke-Sibilla}, such a $2$-category is called linear.
(We will only need the case when $\SC = \PrLst(\cV)$ for a rigid symmetric monoidal category $\cV$, which clearly satisfies the condition.) 

\begin{definition}[{\cite[Definition 3.2 \& 3.5]{Brav-Dyckerhoff1}}] 
    Let $X$ be dualizable object in $\SC$ that is enriched in stable categories. Then
    \begin{enumerate}
      \item When $X$ is smooth, a weak $d$-dimensional smooth Calabi--Yau structure on $X$ is a morphism 
      $$\phi: \Id_{1_\SC}[d] \rightarrow \HH_*(X)$$
      such that the induced map $\Id_X^![d] \rightarrow \Id_X$ is an isomorphism;
      \item When $X$ is proper, a weak $d$-dimensional proper Calabi--Yau structure on $X$ is a morphism 
      $$\phi^*: \HH_*(X) \rightarrow \Id_{1_\SC}[-d]$$
      such that the induced map $\Id_X[d] \rightarrow \Id_X^*$ is an isomorphism;
      \item A strong $d$-dimensional smooth (resp.~proper) Calabi--Yau structure on $X$ is a weak smooth (resp.~proper)
      Calabi-Yau structure on $X$ and a lifting 
      $$\widetilde{\phi}: \Id_{1_\SC}[d] \rightarrow \HH_*(X)^{S^1} 
      (\text{resp.} \ \widetilde{\phi}^*: \HH_*(X)_{S^1} \rightarrow \Id_{1_\SC}[-d])$$
      to the negative cyclic homology (resp.~cyclic homology) of $X$.
    \end{enumerate}
\end{definition}

Smooth (resp.~proper) Calabi--Yau structures induce isomorphisms between Hochschild homology (resp.~the dual of Hochschild homology) and Hochschild cohomology. We recall the construction of Hochschild cohomology \cite[Section 2.1]{BravRozenblyum}.

\begin{definition}
For a not-necessarily dualizable object $X$, we denote by
$$\HH^*(X) \coloneqq \Hom_{\End(X)}(\Id_X, \Id_X)$$
and recall it the Hochschild cohomology of $X$. 
\end{definition}

\begin{remark}\label{cohomology-v-smooth}
When $X$ is dualizable, we have 
$$\HH^*(X) = \Hom_{\Hom\left(1_\SC, \inHom(X,X) \right)}(\eta_X, \eta_X) = \Hom_{\Hom\left(\inHom(X,X), 1_\SC \right)}(\epsilon_X, \epsilon_X).$$
When assuming further that $X$ is smooth, we have, for example,
$$\HH^*(X) = \Hom_{\End(1_\SC)}(\Id_{1_\SC}, \epsilon_X \epsilon_X^l).$$
\end{remark}

\begin{remark}\label{rem:orientation-is-id}
In practice, it is often easier to first find an equivalence $\Phi_X: \Id_X^![d] \xrightarrow{\sim} \Id_X$. Per Remark \ref{cohomology-v-smooth}, when $X$ is smooth and there is an equivalence $\Phi_X: \Id_X^![d] \xrightarrow{\sim} \Id_X$, the Hochschild homology class $\phi_X \in \HH_*(X)$ corresponding to $\Phi_X$ is tautologically the image of $1 \in \HH^*(X)$ under the induced isomorphism $$\Hom(\epsilon_X, \epsilon_X) \xrightarrow{\sim} \Hom(\epsilon_X^l, \epsilon_X^l) \xrightarrow{\Phi} \Hom(\epsilon_X^l, \eta_X)[-d].$$
To shows that the corresponding class $\phi_X \in \HH_*(X)$ gives a strong Calabi-Yau structure, it is thus equivalent to show that the map
$$1_\SC \rightarrow \epsilon_X \epsilon_X^l \xrightarrow{\Phi} \epsilon_X \eta_X[-d]$$
is $S^1$-equivariant. See also \cite[Section 4.1.1]{BravRozenblyum}. 
\end{remark}

We then consider the relative version of Calabi-Yau structures introduced in \cite{Brav-Dyckerhoff1}.
Let $f: X \rightleftharpoons Y : f^r$ be an adjunction and assume $X$ that is smooth. 
By Lemma \ref{rem:bimod-adjunction}, we know that 
$$(ff^r)^! \simeq f \Id_X^! f^r.$$
Assume further that $Y$ is smooth. Because taking $(-)^!$ is functorial on left dualizable morphisms, 
from the counit $c: ff^r \rightarrow \Id_Y$ we can obtain $c^!: \Id_Y^! \rightarrow (ff^r)^!$.
Recall also that, by Lemma \ref{swapping}, the map $F_!: \Hom(X,X) \rightarrow \Hom(Y,Y)$ is given 
by $\alpha \mapsto f \circ \alpha \circ f^r$. Hence, for a morphism $\phi: \Id_X^! \rightarrow \Id_X$, 
we have a morphism 
$$(ff^r)^! = f \Id_X^! f^r \xrightarrow{F_! \phi} f \Id_X f^r = ff^r.$$
Consider a morphism $\Iduc[d] \to \HH_*(X, Y)$. This is equivalent to a commutative diagram
\[\xymatrix{
\Iduc[d-1] \ar[r] \ar[d] & \HH_*(X) \ar[d] \\
0 \ar[r] & \HH_*(Y),
}\]
which means that there is a map $\phi: \Id_X^! \to \Id_X[1-d]$ such that the following induced map 
$$\Id_Y^! \xrightarrow{c^!} (ff^r)^! \xrightarrow{F_!\phi} ff^r[1-d] \xrightarrow{c} \Id_Y[1-d]$$
is null homotopic. A similar discussion can be made for the proper case.

\begin{definition}[{\cite[Definition 4.7 \& 4.11]{Brav-Dyckerhoff1}}]
    Let $f: X \rightleftharpoons Y: f^r$ be an adjunction.
    \begin{enumerate}
      \item If $X$ and $Y$ are smooth, a weak $d$-dimensional smooth Calabi--Yau structure on $f$ is a morphism
      $$\phi: \Iduc [d] \rightarrow \HH_*(X, Y)$$
      such that the induced vertical maps
          \[\xymatrix{
          \Id_Y^! \ar[r]^-{c^!} \ar[d] & (ff^r)^! \ar[r] \ar[d] & \mathrm{Cofib}(c^!) \ar[d] \\
          \mathrm{Fib}(c)[1-d] \ar[r] & ff^r[1-d] \ar[r]^-{c[1-d]} & \Id_Y[1-d]
          }\]
      are equivalences in $\End(Y)$;
      \item If $X$ and $Y$ are proper, a weak $d$-dimensional proper Calabi--Yau structure on $f$ is a morphism
      $$\phi^*: \HH_*(Y, X) \rightarrow \Iduc[-d]$$
      such that the induced vertical maps
          \[\xymatrix{
          \Id_X[d-1] \ar[r]^{u\hspace{5pt}} \ar[d] & f^rf[d-1] \ar[r] \ar[d] & \mathrm{Cofib}(u) \ar[d] \\
          \mathrm{Fib}(u^*) \ar[r] & (f^rf)^* \ar[r]^{\hspace{10pt}u^*} & \Id_X^*
          }\]
          are equivalences in $\End(X)$;      
      \item a strong $d$-dimensional smooth (resp.~proper) Calabi--Yau structure on $f$ 
      is a weak $d$-dimensional Calabi--Yau structure 
      together with a lifting to the negative cyclic homology (resp.~the cyclic homology) 
      $$\widetilde{\phi}: \Iduc[d] \rightarrow \HH_*(Y, X)^{S^1} (\text{resp.} \ 
      \widetilde{\phi}^*: \HH_*(Y, X)_{S^1} \rightarrow \Iduc[-d]).$$
    \end{enumerate}
\end{definition}

To have a parallel discussion of Remark \ref{rem:orientation-is-id}, we consider the relative Hochschild cohomology. See \cite[Section 2.2]{BravRozenblyum} for a detailed discussion.

\begin{definition}[{\cite[Section 2.2.2]{BravRozenblyum}}]
Let $f: X \rightarrow Y$ be an object of the arrow 2-category $\Arr(\SC) \coloneqq \Fun([1], \SC)$. Then, we define the relative Hochschild cohomology to be 
$$\HH^*(X,Y) \coloneqq \Hom_{\End_{\Arr(\SC)}(f)}(\Id_f, \Id_f).$$
Similar to the situation of relative homology, we surpass the dependence on $f$. 
\end{definition}

Unwrapping the diagram \cite[Equation (2.3)]{BravRozenblyum}, we see that $\End_{\Arr(\SC)}(f) = \Hom(X,X) \times_{\Hom(X,Y)} \Hom(Y,Y)$ where the maps are given by
$$\Hom(X,X) \xrightarrow{f \circ (-)} \Hom(X,Y) \xleftarrow{(-)\circ f} \Hom(Y,Y).$$
Recall that objects in a limit is given by compatible families of objects in the diagram and there is a similar description for Homs. In particular, under this equivalence, $F$ corresponds to the triple $(\Id_X, \Id_Y, f \circ \Id_X = f = \id_Y \circ f)$, and this implies that
$$\HH^*(X,Y) = \HH^*(X) \times_{\End(f)} \HH^*(Y).$$
Here the maps are given by
$$\Hom(\Id_X, \Id_X) \xrightarrow{f \bigcirc (-)} \Hom(f, f) \xleftarrow{(-)\bigcirc f} \Hom(Y,Y)$$
where we use `$\bigcirc$' to denote horizontal composition of $1$-morphisms on $2$-morphisms.

Assume now that $f: X \rightleftharpoons Y: f^r$ is an adjunction between two dualizable objects. Under the adjunction $\Hom(f, f) = \Hom(ff^r, \Id_Y)$, the map $HH^*(Y) \rightarrow \End(f)$ is given by composing with the counit $\epsilon_f: ff^r \rightarrow \Id_Y$,
$$\Hom(\Id_Y, \Id_Y) \xrightarrow{(-) \circ \,\epsilon_f} \Hom(ff^r, \Id_Y).$$
A similar discussion shows that the map $\HH^*(X) \rightarrow \End(f)$ can be identified as
$$\Hom(\Id_X, \Id_X) \xrightarrow{\eta_f \circ \,(-)} \Hom(\id_X, f^r f).$$
Combing the two, we obtain the following lemma.

\begin{lemma}[{\cite[Equation (4.2)]{BravRozenblyum}}]
For $f: X \rightleftharpoons Y: f^r$ an adjunction, there is a fiber sequence
$$\Hom\left(\mathrm{Cofib}(\epsilon_f), \Id_Y \right) \rightarrow \HH^*(X,Y) \rightarrow \HH^*(X).$$
\end{lemma}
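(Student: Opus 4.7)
The plan is to obtain the claimed fiber sequence directly from the pullback square for $\HH^*(X,Y)$ identified in the discussion preceding the lemma. Since the $2$-category $\SC$ is enriched in stable categories, the mapping spaces involved are objects of a stable category, where fibers and cofibers make sense and where pullback squares are equivalent to pushout squares.

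First, I would record the pullback description
$$\HH^*(X,Y) \simeq \HH^*(X) \times_{\End(f)} \HH^*(Y)$$
from the paragraph just above the lemma, together with the explicit identifications of the two structure maps: the map from $\HH^*(Y)$ is the composition
$$\Hom(\Id_Y, \Id_Y) \xrightarrow{(-)\circ \,\epsilon_f} \Hom(ff^r, \Id_Y) \simeq \Hom(f,f),$$
where the last equivalence uses the adjunction $f \dashv f^r$, and similarly the map from $\HH^*(X)$ is $\eta_f \circ (-)$. Stability then implies that taking fibers of the two parallel legs of a pullback square yields equivalent objects, so the fiber of $\HH^*(X,Y) \to \HH^*(X)$ is canonically identified with the fiber of $\HH^*(Y) \to \End(f)$.

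Next, I would compute this fiber. Precomposition with the counit $\epsilon_f: ff^r \to \Id_Y$ yields a map $\Hom(\Id_Y, \Id_Y) \to \Hom(ff^r, \Id_Y)$, and in a stable category the fiber of $\Hom(A, C) \to \Hom(B, C)$ induced by a morphism $B \to A$ is $\Hom(\mathrm{Cofib}(B \to A), C)$. Applying this with $A = \Id_Y$, $B = ff^r$, $C = \Id_Y$ identifies the fiber with $\Hom(\mathrm{Cofib}(\epsilon_f), \Id_Y)$. Combining with the previous paragraph gives the desired fiber sequence.

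I do not expect a serious obstacle: the only careful point is matching the adjunction isomorphism $\Hom(f,f) \simeq \Hom(ff^r, \Id_Y)$ so that the map from $\HH^*(Y)$ really is given by precomposition with $\epsilon_f$ (and not, say, by some twisted variant), which is what the discussion preceding the lemma establishes. Once that is in hand, the rest is formal manipulation of fibers in stable mapping spaces.
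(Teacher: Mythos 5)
Your argument is correct and follows essentially the same route as the paper: it relies on the pullback description $\HH^*(X,Y) \simeq \HH^*(X) \times_{\End(f)} \HH^*(Y)$ from the preceding discussion (which the paper rephrases as $\HH^*(X,Y)$ being the fiber of $\HH^*(Y)\oplus\HH^*(X)\to\End(f)$), identifies the fiber of $\HH^*(X,Y)\to\HH^*(X)$ with the fiber of precomposition by $\epsilon_f$, and recognizes that fiber as $\Hom(\mathrm{Cofib}(\epsilon_f),\Id_Y)$. No gaps; your extra care about matching the adjunction identification $\Hom(f,f)\simeq\Hom(ff^r,\Id_Y)$ is exactly the point the paper's preceding paragraphs settle.
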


\begin{proof}
Notice that $\HH^*(X,Y)$ is the fiber of $\HH^*(Y) \oplus \HH^*(X) \xrightarrow{(+,-)} \End(f)$ where we modify the maps above by the corresponding signs. Then the statement immediately follows.
\end{proof}

\begin{remark}
In the setting of our paper, as well as many other geometric situation \cite[Section 4]{BravRozenblyum}, the weak relative Calabi-Yau structure, $\phi: \Iduc \rightarrow \HH_*(Y,X)$ gives a weak relative Calabi-Yau structure on $X$ when composing with $\HH_*(Y,X) \rightarrow \HH_*(X)$. In this case, per Remark \ref{rem:orientation-is-id}, there is the commuting diagram between fiber sequences
$$
\xymatrix{
    \HH^*(X,Y) \ar[r]^{ }  \ar[d]^{(1 \mapsto \Phi) } & \HH^*(X) \ar[r]^{ }  \ar[d]^{(1 \mapsto \Phi) }   & \Hom(\fib(\epsilon_f), \Id_Y) \ar[d]^{ } \\
    \HH_*(X,Y)[-1 - n] \ar[r]^{ } & \HH_*(X)[-n] \ar[r]^{} & \HH_*(Y)[-n].
    }
$$ 
The left vertical arrow is an isomorphism because $\phi$ is a weak relative Calabi-Yau structure. The extra assumption of inducing a Calabi-Yau structure on $X$ implies the middle vertical arrow is an isomorphism as well. Thus, the third arrow is also an isomorphism. Both Calabi--Yau classes are the image of the units in the Hochschild cohomology.
\end{remark}

\begin{remark}
    We can write down the natural morphism
    $$\Hom(\Id_{X}^!, \Id_{X}) \longrightarrow \Hom((ff^r)^!, ff^r)$$
    induced by pre-composition and post-composition as follows. Consider the adjunction in Remark \ref{rem:bimod-adjunction}. We know that $\epsilon_X^l \to \eta_X$ induces a natural morphism
    $$(\epsilon_Y \circ (\Id_{Y^\vee} \otimes ff^r))^l \xrightarrow{\sim} (f^\vee \otimes f^r)^l \circ \epsilon_X^l \to (f^{r\vee} \otimes f) \circ \eta_X \xrightarrow{\sim} (\Id_{Y^\vee} \otimes ff^r) \circ \eta_Y.$$
\end{remark}

\subsection{Dualizable and compactly generated categories}

In this section, we restrict ourselves to dualizable presentable stable categories and in particular compactly generated presentable categories. Most of the materials can be found in \cite[Section 4]{Hoyois-Scherotzke-Sibilla}.

Fix a idempotent complete rigid symmetric monoidal category $\cV_0$ and consider the compactly generated rigid symmetric monoidal category $\cV = \Ind(\cV_0)$. The symmetric monoidal category we will be working on is the category $\SC = \PrLst(\cV)$ consisting of $\cV$-linear presentable stable $\infty$-categories with morphisms being left adjoints. When confusion is unlikely, we simply denote it as $\PrLst$.

\begin{remark}
A functor $f: \sB \rightarrow \sA$ in $\PrLst$ is left adjoinable if it admits a further left adjoint and right adjoinable if its right adjoint admits a further right adjoint. 
By the adjoint functor theorem \cite[Corollary 5.5.2.9]{Lurie-HTT}, up to some set-theoretic issues, being a left (resp.~right) adjoint is equivalent to colimit preserving (resp.~limit preserving). Thus, $f: \sB \rightarrow \sA$ is left dualizable if it is also limit preserving and is right dualizable if if it has a colimit preserving right adjoint. 
\end{remark}

\begin{example}\label{ex:small-functor}
Functors between compactly generated categories have a more interesting description. Let $f: B \rightarrow A$ be an exact functor. 
The functor induced by ind-completion $f: \sB \rightarrow \sA$ 
is tautologically colimit preserving, and one can check that its right adjoint is given by pre-composition 
$$f^*: \sB = \Fun^{ex}(B, \cV) \to \Fun^{ex}(A, \cV) = \sA, \;\; M \mapsto M \circ f$$
which is also colimit preserving. Here the superscript `ex' means exact functors.
\end{example}

Dualizable objects in $\PrLst$ are completely classified by compactly assembled categories \cite[Proposition D.7.3.1]{Lurie-SAG}. Here, a category is compactly assembled \cite[Definition 21.1.2.1]{Lurie-SAG} if there exists a compactly generated category $\sB = \Ind(B)$ so that $\sA$ is a retract of $\sB$. 
In this paper, the categories we will be working with are all compactly generated.

\begin{example}\label{ex:yoneda}
Consider a compactly generated category $\sA \coloneq \Ind(A) = \Fun^{ex}(A^{op}, \cV)$. Then the dual of $\sA$ is given by $\sA^\vee \coloneqq \Ind(A^{op})$, where the counit $\epsilon_\sA \in \Fun^{L}(\sA^\vee \otimes \sA, \cV)$ is given by $(X, Y) \mapsto \Hom(X, Y)$ under the equivalence \cite[Proposition 5.3.6.2]{Lurie-HA}
$$\Fun^L(\sA^\vee \otimes \sA, \cV) = \Fun^{ex}(A^{op} \otimes A, \cV),$$
and the unit is given by the diagonal bimodule under the equivalence \cite[Proposition 5.3.6.2]{Lurie-HA}
$$\Fun^L(\cV, \sA \otimes \sA^\vee) = \Fun^{ex}(\cV_0, \Fun^{ex}(A \otimes A^{op}, \cV)).$$
Here the supersecript `ex' means exact functors and the `L' means colimit-preserving functors.
\end{example}

The notion of dualizable morphisms in $\PrLst$ is straightforward.
For any dualizable category $\sA$, we have the following equivalences
$$\Fun^L(\cV, \sA \otimes \sB^\vee) = \Fun^L(\sB, \sA) = \Fun^L(\sA^\vee \otimes \sB, \cV).$$
See Remark \ref{rem:dual-functor} and Definition \ref{def:dualizing-functor}. In particular, under the equivalence 
$$\Fun^L(\cV, \sA \otimes \sA^\vee) = \Fun^L(\sA, \sA) = \Fun^L(\sA^\vee \otimes \sA, \cV),$$
the identity functor $\Id_\sA$ always corresponds to $\eta_\sA$ under the first isomorphism and to $\epsilon_\sA$ under the second isomorphism. For compactly generated categories, we can see that colimit preserving functors are equivalent to bimodules.

\begin{example}
An exact functor $f: B \to A$ induces a pull-back functor $F^*: \Fun^{ex}(A \otimes A^{op}, \cV) \to \Fun^{ex}(B \otimes B^{op}, \cV)$ and a push-forward functor $F_!: \Fun^{ex}(B \otimes B^{op}, \cV) \to \Fun^{ex}(A \otimes A^{op}, \cV)$ by \cite[Section 2.1]{Brav-Dyckerhoff1}
$$F^*M(X, Y) = M(fX, fY),\;\; F_!M(X, Y) = M(f^*\cY_X, f^*\cY_Y).$$
where $\cY: A \hookrightarrow \sA$ is the Yoneda embedding. By Example \ref{ex:small-functor} and \ref{ex:yoneda}, the push-forward functor $F_!$ of bimodules corresponds to the push-forward functor of colimit preserving functors $F_!: \Fun^L(\sB, \sB) \to \Fun^L(\sA, \sA)$ in Definition \ref{def:push-forward}.
\end{example}

The notion of proper and smooth for compactly generated categories \cite[Section 2.3]{Brav-Dyckerhoff2} recovers the usual notion for small categories. See \cite{Kontsevich-Soibelman-Ainfty,Ganatra1}.

\begin{example}
    When $\cV$ is compactly generated by $1$, $\sA$ is smooth if and only if $\eta_\sA(1) \in \sA^\vee \otimes \sA$ is a compact object. When $\sA$ is also compactly generated by $A$, by Example \ref{ex:yoneda}, it is smooth if and only if the diagonal bimodule $A_\Delta: (X, Y) \to \Hom(X, Y)$ is compact. When $\sA$ is compactly generated by $A$, $\sA$ is proper if and only if $\epsilon_\sA(X, Y)$ is compact for any $X, Y \in A$, or equivalently, by Example \ref{ex:yoneda}, $\Hom(X, Y) \in \cV$ is compact for any $X, Y \in A$.
\end{example} 



Therefore, one can define (relative) Calabi--Yau structures accordingly. For compactly generated categories, left and right Calabi--Yau structures are in particular related in the following way. Recall that for a small idempotent complete category $A$, the category of proper modules is $\Prop A \coloneqq \Fun^{ex}(A^{op}, \cV_0)$.

\begin{theorem}[Brav--Dyckerhoff {\cite[Theorem 3.1]{Brav-Dyckerhoff1}}]\label{thm:left-right-cy}
    Let $\sA = \Ind(A)$ be a compactly generated smooth category with a left Calabi-Yau structure. Then $\Prop \sA \coloneqq \Ind\Prop A$ has a canonically induced right Calabi-Yau structure.
\end{theorem}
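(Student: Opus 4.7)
The plan is to construct the proper Calabi--Yau class on $\Prop \sA$ by dualizing the smooth Calabi--Yau class on $\sA$, using the natural compatibility between compact and proper modules in the smooth setting. I would begin by rephrasing the given smooth Calabi--Yau structure as an equivalence of $\sA$-bimodules $\Phi: \Id_\sA^![d] \xrightarrow{\sim} \Id_\sA$, as in Remark \ref{rem:orientation-is-id}. Since $\sA$ is smooth, $\Id_\sA$ is compact in $\sA^\vee \otimes \sA$, so by Example \ref{ex:yoneda} the equivalence $\Phi$ can be realized concretely at the level of compact bimodules in $\Fun^{ex}(A \otimes A^{op}, \cV)$, namely as a shifted equivalence between the diagonal bimodule and its inverse dualizing bimodule.

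Next, I would use $\Phi$ to produce a trace pairing on $\Prop A$. For any $M, N \in \Prop A$, composition together with the trace induced by $\Phi$ should assemble into a pairing
$$\Hom_{\Prop \sA}(M, N) \otimes \Hom_{\Prop \sA}(N, M) \longrightarrow \Hom_{\Prop \sA}(M, M) \longrightarrow 1_\cV[-d],$$
where the second arrow is the trace obtained by pulling back $\Phi$ along the evaluation bimodule between $\Prop \sA$ and $\sA$ that arises from the defining inclusion $A \hookrightarrow \Prop A$. Assembling these pairings as $M, N$ range over $\Prop A$ and dualizing produces the candidate proper CY class $\phi^*: \HH_*(\Prop \sA) \to 1_\cV[-d]$.

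Then, I would verify the non-degeneracy condition, namely that $\phi^*$ induces an equivalence $\Id_{\Prop \sA}[d] \simeq \Id_{\Prop \sA}^*$ of $\Prop \sA$-bimodules. Since $\Prop \sA$ is compactly generated by $\Prop A$, this reduces to checking perfectness of the displayed pairing on pairs of objects in $\Prop A$, which in turn follows from $\Phi$ being a bimodule equivalence by a formal unwinding at the level of compact generators.

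The main obstacle will be precisely this last step: identifying $\Id_{\Prop \sA}^*$ with the dual of $\Id_{\Prop \sA}$ under the evaluation pairing between $\sA$ and $\Prop \sA$, and checking that the trace built from $\Phi$ implements exactly this duality. For the strong version of the theorem, the $S^1$-equivariance of $\phi^*$ would follow formally from that of the lift of $\phi$, because the passage from $\phi$ to $\phi^*$ consists entirely of dualities and trace operations, all of which are $S^1$-equivariant under the construction of Ayala--Francis recalled in Theorem \ref{thm:AF}.
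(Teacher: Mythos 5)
First, a point of reference: the paper does not prove this statement at all — it is quoted verbatim from Brav--Dyckerhoff \cite[Theorem 3.1]{Brav-Dyckerhoff1} and used as a black box. So the comparison can only be with the standard argument of Brav--Dyckerhoff, and your outline is indeed that argument in spirit: dualize the smooth Calabi--Yau class against the tautological pairing between $\sA$ and its proper modules, and deduce non-degeneracy from the smooth Serre-duality statement. Two steps, however, are stated in a way that would not survive being written out.

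First, there is in general no ``defining inclusion $A \hookrightarrow \Prop A$'': the Yoneda image of $A$ lands in $\Prop A = \Fun^{ex}(A^{op},\cV_0)$ only when $A$ is proper, which a smooth category need not be. What you actually need is the evaluation functor $A \otimes \Prop A \to \cV_0$, $(a,F)\mapsto F(a)$, which exists tautologically and lands in compact objects precisely because the modules are pseudo-perfect; after Ind-completion it is therefore right dualizable, and functoriality of the trace (Theorem \ref{thm:AF}) gives an $S^1$-equivariant pairing $\HH_*(\sA)\otimes \HH_*(\Prop\sA)\to \End(1_\cV)$, i.e.\ an equivariant map $\HH_*(\Prop\sA)\to \HH_*(\sA)^\vee$. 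This is also the missing ingredient in your ``assembling'' step: a functional on $\HH_*(\Prop\sA)$ is not determined by objectwise trace pairings $\Hom(M,N)\otimes\Hom(N,M)\to 1_\cV[-d]$ — one needs the cyclically coherent construction, which is exactly what composing the above map with the dual of the (negative cyclic) class $1_\cV[d]\to \HH_*(\sA)^{S^1}$ provides, using $(\HH_*(\sA)^{S^1})^\vee \simeq (\HH_*(\sA)^\vee)_{S^1}$ to land on coinvariants. Second, the non-degeneracy is not ``a formal unwinding'' of $\Phi$ being an equivalence: the content is that for $\sA$ smooth and $X$ pseudo-perfect one has $\Hom(-,X)^\vee \simeq \Hom(\Id_\sA^!(-),X)$ — this is precisely Proposition \ref{prop:leftdual-serre} (Brav--Dyckerhoff's Proposition 2.3) — together with the fact that smoothness forces pseudo-perfect modules to be perfect, so that this duality can be applied to the compact generators of $\Prop\sA$ and then combined with $\Phi: \Id_\sA^![d]\simeq \Id_\sA$ to identify $\Id_{\Prop\sA}[d]\simeq \Id_{\Prop\sA}^*$. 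You correctly flag this as the main obstacle, but it is an external input, not something that follows formally from $\Phi$ alone; with these two repairs your plan coincides with the cited proof.
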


\begin{proposition}[Brav--Dyckerhoff {\cite[Proposition 2.3]{Brav-Dyckerhoff1}, \cite[Corollary 2.6]{Brav-Dyckerhoff2}}]\label{prop:leftdual-serre}
    Let $\sA$ be a compactly generated smooth category and $X \in \sA$ is left proper, i.e.~$\Hom(-, X)^\vee$ is a continuous functor with continuous adjoint, then there is a natural isomorphism
    $$\Hom(-, X)^\vee \xrightarrow{\sim} \Hom(\Id_\sA^!(-), X).$$
\end{proposition}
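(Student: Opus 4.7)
The plan is to exploit the smoothness of $\sA$ to express $\Id_\sA^!$ as an explicit bimodule, and then use the defining adjunction $\epsilon_\sA^l \dashv \epsilon_\sA$ together with the identification $\sA^\vee \simeq \Fun^L(\sA, \cV)$ coming from dualizability (Proposition~\ref{dual-gives-inHom}).

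By the formula for $\Id_X^!$ recorded just after Lemma~\ref{rem:bimod-adjunction}, the endofunctor $\Id_\sA^!$ corresponds, under the canonical equivalence $\End(\sA) \simeq \sA^\vee \otimes \sA$, to the element $\epsilon_\sA^l(1_\cV)$. Next, the left-properness hypothesis on $X$ exhibits an object $X^{*} \in \sA^\vee \simeq \Fun^L(\sA, \cV)$ representing the continuous functor $\Hom_\sA(-, X)^\vee$, so that by the defining pairing $\epsilon_\sA(X^{*} \otimes Y) \simeq \Hom_\sA(Y, X)^\vee$ for every $Y \in \sA$.

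Combining these two facts, I would assemble the chain of natural isomorphisms
\[
\Hom_\sA(\Id_\sA^!(Y), X) \;\simeq\; \Hom_{\sA^\vee \otimes \sA}\bigl(\epsilon_\sA^l(1_\cV),\; X^{*} \otimes Y\bigr) \;\simeq\; \Hom_\cV\bigl(1_\cV,\; \epsilon_\sA(X^{*} \otimes Y)\bigr) \;\simeq\; \Hom_\sA(Y, X)^\vee,
\]
in which the first step identifies the Hom as a mapping space in the bimodule category $\sA^\vee \otimes \sA$ after pairing $X$ through its representative $X^{*}$, the second step is the adjunction $\epsilon_\sA^l \dashv \epsilon_\sA$, and the third is the defining property of $X^{*}$. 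Naturality in $Y$ then follows from the functoriality of all adjunction morphisms involved.

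The main technical obstacle is the first identification: verifying that for any endofunctor $F \in \End(\sA) \simeq \sA^\vee \otimes \sA$ corresponding to $M \in \sA^\vee \otimes \sA$, the mapping spaces $\Hom_\sA(F(Y), X)$ and $\Hom_{\sA^\vee \otimes \sA}(M, X^{*} \otimes Y)$ are naturally isomorphic. This is essentially the universal property of $X^{*}$ combined with the tensor-Hom adjunction in the compactly generated $\cV$-linear setting, but the careful matching of slots (the $\sA^\vee$-slot of $M$ pairing with $X^{*}$ and the $\sA$-slot with $Y$) is where the actual content lies. Once that identification is set up, the remaining equivalences are formal.
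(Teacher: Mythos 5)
The paper offers no proof of this proposition (it is quoted from Brav--Dyckerhoff), so the comparison is with their argument; measured against that, your proposal has a genuine gap, located exactly at the step you yourself single out. Steps (2) and (3) of your chain are formal and correct, but step (1) is not a consequence of ``the universal property of $X^*$ combined with the tensor--hom adjunction''. What the tensor--hom adjunction actually gives, for a kernel $M\in\sA^\vee\otimes\sA$ with associated endofunctor $F_M$, is a natural equivalence $\Hom_\sA(F_M(Y),X)\simeq\Hom_{\sA^\vee\otimes\sA}(M,H_{Y,X})$, where $H_{Y,X}$ is the bimodule whose value on compact objects $a,b$ (writing $\sA=\Ind(A)$) is $\Hom_\cV\bigl(\Hom_\sA(a,Y),\Hom_\sA(b,X)\bigr)$: the input $Y$ occupies the $\sA^\vee$-slot and the target $X$ the $\sA$-slot. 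This is not the elementary tensor $X^*\otimes Y$, whose slots are crossed, and the identity $\Hom_\sA(F_M(Y),X)\simeq\Hom_{\sA^\vee\otimes\sA}(M,X^*\otimes Y)$ you rely on is false for general $M$: already for $M=Z\otimes W$ with $Z\in\sA^\vee$ and $W\in\sA$ compact, the two sides are
$$\Hom_\cV\bigl(\epsilon_\sA(Z\otimes Y),\Hom_\sA(W,X)\bigr)\quad\text{versus}\quad\Hom_{\sA^\vee}(Z,X^*)\otimes\Hom_\sA(W,Y),$$
and taking $Y$ to be an infinite coproduct turns the first into a product and the second into a coproduct, so they differ whenever $X\neq 0$; left properness of $X$ does not help here, since $Y$ is arbitrary.

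Because (2) and (3) are automatic, your step (1) for the particular kernel $M=\epsilon_\sA^l(1_\cV)$ is literally equivalent to the proposition being proved: $\Hom_{\sA^\vee\otimes\sA}(\epsilon_\sA^l(1_\cV),X^*\otimes Y)\simeq\epsilon_\sA(X^*\otimes Y)\simeq\Hom_\sA(Y,X)^\vee$. So the entire content --- passing from what the adjunction really produces, namely $\epsilon_\sA(H_{Y,X})$, a Hochschild-type coend with coefficients in the bimodule $\Hom_\cV(\Hom_\sA(-,Y),\Hom_\sA(-,X))$, to the coend $\epsilon_\sA(X^*\otimes Y)\simeq\Hom_\sA(Y,X)^\vee$ --- is left unproved, and your argument is circular at that point. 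This is precisely where smoothness must interact with the second half of left properness (the continuity of the right adjoint of $\Hom(-,X)^\vee$), a hypothesis you never invoke; indeed, if step (1) held as stated, the proposition would follow for any $X$ with $\Hom(-,X)^\vee$ merely continuous, which is a warning sign in itself. To repair the proof you must supply this comparison, as Brav--Dyckerhoff do, rather than only the formal adjunction steps.
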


\begin{theorem}[{\cite[Theorem 11.5]{KellerWang}}]\label{thm:smooth-induce-proper}
    Let $\sA, \sB$ be compactly generated smooth categories and $f: \sB \rightleftharpoons \sA : f^r$ be a continuous adjunction equipped with a relative left Calabi--Yau structure. Then $f^r: \Prop \sA \rightleftharpoons \Prop \sB : f^{rr}$ carries a canonically induced right Calabi--Yau structure.
\end{theorem}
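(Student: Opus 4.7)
The plan is to deduce the theorem from the absolute case (Theorem \ref{thm:left-right-cy}) together with the Serre-type duality of Proposition \ref{prop:leftdual-serre}, while carefully transporting the fiber sequences that define a weak relative Calabi--Yau structure. First, I would set up the adjunction on proper modules: since $f$ admits a continuous right adjoint $f^r$, the functor $f$ preserves compact objects; dually, $f^r$ lifts to a continuous functor $f^r : \Prop \sA \to \Prop\sB$, whose right adjoint $f^{rr}$ exists by the adjoint functor theorem. This is the natural candidate for the adjunction carrying the induced proper structure.

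Next, I would exploit Proposition \ref{prop:leftdual-serre}: for any proper object $X$ of a smooth compactly generated category $\sA$, the functor $\Id_\sA^!$ acts as a Serre functor, i.e.\ there is a natural equivalence $\Hom(-,X)^\vee \simeq \Hom(\Id_\sA^!(-),X)$. Applying this simultaneously to $\sA$, $\sB$ and the composite $ff^r$, the weak equivalences in the diagram defining a smooth relative Calabi--Yau structure on $f$ translate, after taking $\cV$-linear duals and shifting by $d$, into the equivalences defining a weak proper relative Calabi--Yau structure for the adjunction $f^r \dashv f^{rr}$ on proper modules. Concretely, the fiber sequence $\mathrm{Fib}(c)[1-d] \to ff^r[1-d] \to \Id_\sA[1-d]$ on $\sA$ becomes, under duality, the cofiber sequence $\Id_{\Prop\sA}[d-1] \to f^r f^{rr}[d-1] \to \mathrm{Cofib}$ on $\Prop\sA$ that appears in the definition of a proper relative Calabi--Yau structure.

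To produce the Hochschild class itself, I would use the canonical trace pairing between smooth and proper Hochschild invariants of a compactly generated category, and its relative version for an adjunction. The relative smooth Calabi--Yau class $\phi : \Iduc[d] \to \HH_*(\sB,\sA)^{S^1}$ is adjoint, under this pairing, to a proper class $\phi^* : \HH_*(\Prop\sA, \Prop\sB)_{S^1} \to \Iduc[-d]$, and naturality of the pairing with respect to the fiber sequences $\HH_*(\sB) \to \HH_*(\sA) \to \HH_*(\sB,\sA)$ and $\HH_*(\Prop\sA) \to \HH_*(\Prop\sB) \to \HH_*(\Prop\sA,\Prop\sB)$ ensures compatibility with the equivalences established in the preceding step.

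The main obstacle, in my view, is the $S^1$-equivariance of the transfer. One needs that Proposition \ref{prop:leftdual-serre}, together with the trace pairing between smooth and proper Hochschild chains, is compatible with the cyclic structures supplied by Theorem \ref{thm:AF}. Unpacking the paracyclic model of the trace and checking that dualizing interchanges the cyclic structures on $\HH_*(\sA)$ and $\HH_*(\Prop\sA)$ in an $S^1$-equivariant way, as well as in the relative setting, is the technical heart of the argument; once this is verified, the lift of $\phi^*$ to cyclic homology follows automatically from the negative cyclic lift of $\phi$.
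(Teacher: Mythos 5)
You should note first that the paper does not prove this statement at all: it is imported as a black box from Keller--Wang \cite[Theorem 11.5]{KellerWang}, exactly as the absolute version (Theorem \ref{thm:left-right-cy}) is imported from Brav--Dyckerhoff \cite{Brav-Dyckerhoff1}. So there is no internal proof to compare against; what can be assessed is how your sketch relates to the known argument, and there your outline is essentially the standard strategy: pass to the induced continuous adjunction on proper modules (restriction along $f$ on compact objects), transport the non-degeneracy statements through the Serre-type duality of Proposition \ref{prop:leftdual-serre}, and produce the proper class by pairing the smooth relative class against proper Hochschild invariants.

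Two points deserve sharpening. First, for the definition of a proper relative Calabi--Yau structure to apply you must check that $\Prop\sA$ and $\Prop\sB$ are themselves proper; this is where smoothness of $\sA$ and $\sB$ enters (pseudo-perfect modules over a smooth category are perfect, hence have perfect mutual Homs), and it is also why the non-degeneracy transfer via Proposition \ref{prop:leftdual-serre} is only available after restricting to left proper objects -- your step translating the fiber sequences should be phrased as an equivalence of functors on $\Prop\sA$, not on all of $\sA$. Second, the $S^1$-equivariance is less of an obstacle than you make it: rather than dualizing Hochschild chains by hand and re-verifying cyclic compatibility in the paracyclic model, one constructs the pairing between $\HH_*(\sA)$ and $\HH_*(\Prop\sA)$ (and its relative refinement for the adjunction) from the exact evaluation bifunctor on compact objects against proper modules; functoriality of the trace construction of Theorem \ref{thm:AF} under such functors then makes the pairing $S^1$-equivariant for free, so the cyclic lift of the proper class is inherited directly from the negative cyclic lift of the smooth class. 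The genuinely delicate point, which your sketch only gestures at, is the identification $\HH_*(\Prop\sA)\simeq \HH_*(\sA)^\vee$ (valid because $\sA$ is smooth) together with the compatibility of this identification with the maps induced by $f$ and by restriction on proper modules; this is what guarantees that the cofiber sequences of absolute Hochschild homologies on the two sides correspond under duality, and it is the content of the Keller--Wang proof rather than a formality.
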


\section{Preliminary in Sheaf Theory} \label{sec:sheaves}

\subsection{Microsupport and microsheaves}
Let $\cV$ be a rigid symmetric monoidal category. Let $M$ be a smooth manifold and $\Sh(M)$ be the category of sheaves with coefficients in $\cV$. Following Kashiwara--Schapira \cite{KS}, for a sheaf $F \in \Sh(M)$, one can define a conic closed subset in the cotangent bundle $\ms(F) \subseteq T^*M$ called the singular support of $F$ and the corresponding closed subset in the cosphere bundle $\msif(F) \subseteq S^*M$ called the singular support at infinity of $F$.

For $\widehat X \subseteq T^*M$, we define $\Sh_{\widehat X}(M)$ to be the full subcategory of sheaves $F$ such that $\ms(F) \subseteq \widehat X$. For $X \subseteq S^*M$, we define $\Sh_X(M)$ to be the full subcategory of sheaves $F$ such that $\msif(F) \subseteq X$. When $X \subseteq S^*M$ is compact subset, we also define $\Sh_X(M)_0$ to be the full subcategory of compactly supported sheaves $F$ such that $\msif(F) \subseteq X$. More specifically, in this paper, we will only consider singular support conditions that arise from the following setting.

\begin{definition}
    Let $M$ be a connected smooth manifold and $\Lambda \subseteq S^*M$ be a compact subset. Define $M_c$ to be the union of $\pi(\Lambda)$ and all bounded regions in $M \setminus \pi(\Lambda)$. Define $\widehat\Lambda = M_c \cup \bR_+ \times \Lambda$ to be the conification of $\Lambda$ that only includes the compact subset in the zero section $M_c$.
\end{definition}

\begin{lemma}
    Let $M$ be a connected smooth manifold and $\Lambda \subseteq S^*M$ be a compact subset. Then $\Sh_\Lambda(M)_0 = \Sh_{\widehat\Lambda}(M)$, so the latter consists of compactly supported sheaves $F$ such that $\msif(F) \subseteq \Lambda$.
\end{lemma}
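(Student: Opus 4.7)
The plan is to verify both containments defining the equality, which is essentially a definitional unwinding using the key decomposition
\[
\ms(F) \;=\; \supp(F) \,\cup\, (\bR_+ \cdot \msif(F)),
\]
where $\supp(F)$ is identified with its image in the zero section of $T^*M$. This decomposition is automatic since $\ms(F)$ is a closed conic subset of $T^*M$: its intersection with the zero section equals $\supp(F)$, while its part outside the zero section is the lift of $\msif(F) \subseteq S^*M$ under $T^*M \setminus 0_M \to S^*M$.

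For the forward inclusion $\Sh_\Lambda(M)_0 \subseteq \Sh_{\widehat\Lambda}(M)$, suppose $F$ is compactly supported with $\msif(F) \subseteq \Lambda$. The non-zero part of $\ms(F)$ is contained in $\bR_+ \cdot \Lambda \subseteq \widehat\Lambda$, so it remains to argue $\supp(F) \subseteq M_c$. The noncharacteristic deformation / microlocal propagation results of Kashiwara--Schapira imply that $F$ is locally constant on $M \setminus \pi(\Lambda)$, since $\msif(F)$ is empty there. If $V$ is an unbounded connected component of $M \setminus \pi(\Lambda)$, then $F|_V$ is a locally constant sheaf on a connected non-relatively-compact open set whose support is contained in the compact set $\supp(F) \cap V$; this forces $F|_V = 0$. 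Hence $\supp(F)$ meets only $\pi(\Lambda)$ together with bounded components of its complement, i.e., $\supp(F) \subseteq M_c$, and so $\ms(F) \subseteq \widehat\Lambda$.

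For the backward inclusion $\Sh_{\widehat\Lambda}(M) \subseteq \Sh_\Lambda(M)_0$, suppose $\ms(F) \subseteq \widehat\Lambda$. Since $\widehat\Lambda \setminus 0_M = \bR_+ \cdot \Lambda$, projecting to $S^*M$ gives $\msif(F) \subseteq \Lambda$. Moreover $\supp(F) = \pi(\ms(F)) \subseteq \pi(\widehat\Lambda) = M_c$; since $M_c$ is a relatively compact subset of $M$ (being the union of the compact set $\pi(\Lambda)$ with bounded components of its complement, whose closures attach to $\pi(\Lambda)$), the closed set $\supp(F)$ is compact.

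The main technical ingredient is the microlocal rigidity invoked in the forward direction, namely that $\msif(F) \subseteq \Lambda$ implies $F|_{M \setminus \pi(\Lambda)}$ is locally constant; this is a direct application of the microlocal Morse lemma. The only other point that deserves care is the compactness of $M_c$, which is clear in the subanalytic Legendrian setting used throughout the paper (where $\pi(\Lambda)$ is compact subanalytic and has only finitely many relatively compact complementary components) but should be made explicit in the generality of the statement.
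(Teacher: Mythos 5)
Your proof is correct, and in fact the paper states this lemma without giving any proof, treating it as a definitional unwinding; your argument (splitting $\ms(F)$ into $\supp(F)$ and the cone over $\msif(F)$, using local constancy off $\pi(\Lambda)$ to kill $F$ on unbounded components, and conversely reading off compact support from $\supp(F)\subseteq M_c$) is exactly the intended one. Two small points of hygiene. First, the fact you need in the forward direction is the Kashiwara--Schapira criterion that $\ms(F)\cap \dot T^*U=\varnothing$ forces $F|_U$ to be locally constant (KS Prop.\ 5.4.5), rather than the microlocal Morse lemma proper; this is cosmetic. Second, the compactness of $M_c$, which you rightly flag, does hold in the full generality of the statement and not only in the subanalytic setting, but your parenthetical reason (``closures attach to $\pi(\Lambda)$'') is not by itself a proof when there are infinitely many bounded components: the standard argument is to choose a compact set $L$ with $\pi(\Lambda)\subseteq \operatorname{int}(L)$, note that the compact set $L\setminus \operatorname{int}(L)$ meets only finitely many components of $M\setminus \pi(\Lambda)$, and check that every other component, being connected and disjoint from $L\setminus\operatorname{int}(L)$, is either contained in $\operatorname{int}(L)$ or is open and closed in $M$ (impossible by connectedness of $M$); hence all bounded components outside a finite list lie in $L$, so $M_c$ is closed and relatively compact, hence compact. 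With that filled in, both inclusions are complete.
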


In this paper, we will consider the situation where $\Lambda \subseteq S^*M$ (resp.~$\widehat\Lambda \subseteq T^*M$) is a subanalytic isotropic subset. Sheaves microsupported in $\Lambda$, in this case, are constructible sheaves.

Let $\Lambda \subseteq S^*M$ be subanalytic isotropic subset. Then the inclusion functor
$\iota_{\widehat\Lambda*}: \Sh_{\widehat\Lambda}(M) \hookrightarrow \Sh(M)$
is limit and colimit preserving, and thus admits both left and right adjoint. In particular, the left adjoint functor is also colimit perserving. We will denote the left and right adjoint by 
$$\iota_{\widehat\Lambda}^*, \, \iota_{\widehat\Lambda}^!: \Sh(M) \twoheadrightarrow \Sh_{\widehat\Lambda}(M).$$

\begin{proposition}\label{prop:stopremoval}
Let $M$ be connected and $\Lambda$ be a subanalytic isotropic subset in $S^*M$.
The category $\Sh_{\widehat\Lambda}(M)$ is compactly generated.
Let $\Lambda \subseteq \Lambda^\prime$ be an inclusion of subanalytic isotropic subsets. Then the left adjoint $\iota_{\widehat\Lambda \widehat\Lambda^\prime}^*$ of the inclusion functor $\iota_{\widehat\Lambda \widehat\Lambda^\prime*} : \Sh_{\widehat\Lambda}(M) \hookrightarrow \Sh_{\widehat\Lambda^\prime}(M)$ sends compact objects to compact objects, i.e.,
$\Sh_{\widehat\Lambda^\prime}^c(M) \twoheadrightarrow \Sh_{\widehat\Lambda}(M)^c$.
\end{proposition}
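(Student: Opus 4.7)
The plan is to realize $\Sh_{\widehat\Lambda}(M)$ as a colocalization of a larger, compactly generated category of stratified constructible sheaves, and then transfer compact generators across the left adjoint. The general categorical fact I will use repeatedly is: if $\iota_*: \sB \hookrightarrow \sA$ is a fully faithful right adjoint that preserves filtered colimits, then for any compact $X \in \sA$ the functor $\Hom_\sB(\iota^*X, -) \simeq \Hom_\sA(X, \iota_*(-))$ preserves filtered colimits, so $\iota^*X$ is compact; moreover, if $\{X_i\}$ generates $\sA$, then by fully faithfulness $\{\iota^*X_i\}$ generates $\sB$.

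First, invoking the existence of subanalytic Whitney stratifications, I would pick a stratification $\mathcal{S}$ of $M$ with $\widehat\Lambda \subseteq \bigcup_{S \in \mathcal{S}} T^*_S M$. This exhibits $\Sh_{\widehat\Lambda}(M)$ as a full subcategory of the category $\Sh_\mathcal{S}(M)$ of $\mathcal{S}$-constructible sheaves, and the latter is compactly generated via Lurie's exit-path theorem $\Sh_\mathcal{S}(M) \simeq \Fun(\operatorname{Exit}(M, \mathcal{S}), \cV)$: compact generators are representables of the exit-path category tensored with compact generators of $\cV$, equivalently the standard sheaves $j_{S!} \mathbf{1}_S \otimes v$ on strata.

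Next, I would verify that the inclusion $\iota_{\widehat\Lambda *}: \Sh_{\widehat\Lambda}(M) \hookrightarrow \Sh_\mathcal{S}(M)$ preserves filtered colimits (and in fact all colimits). This follows from the Kashiwara--Schapira estimate that $\ms$ of a colimit is contained in the closure of the union of the individual $\ms$'s, together with closedness of $\widehat\Lambda$. Consequently $\iota^*_{\widehat\Lambda}$ preserves compactness, and applying it to the generators above yields a compact generating set for $\Sh_{\widehat\Lambda}(M)$, establishing the first assertion.

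The second assertion is immediate from the same mechanism applied to $\iota_{\widehat\Lambda \widehat\Lambda'*}: \Sh_{\widehat\Lambda}(M) \hookrightarrow \Sh_{\widehat\Lambda'}(M)$, which is again a fully faithful filtered-colimit-preserving right adjoint by the same microsupport estimate (using that $\widehat\Lambda$ is closed in $\widehat\Lambda'$). Its left adjoint therefore preserves compact objects. The only substantive ingredient throughout is the microsupport estimate under filtered colimits in the subanalytic setting; the remainder is formal adjunction bookkeeping, and I foresee no serious obstacle beyond a careful citation of the stratification and microsupport results.
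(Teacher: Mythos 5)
Your argument is correct and is essentially the standard one this paper implicitly relies on (the proposition is recalled without proof, resting on the wrapped-sheaves/GPS-style fact that $\Sh_{\widehat\Lambda}(M)$ is a colimit- and limit-closed subcategory of constructible sheaves for a refining subanalytic Whitney stratification, which is compactly generated via the exit-path equivalence): exhibit the inclusion as a fully faithful, colimit-preserving right adjoint and transfer compact generators along the left adjoint, and the second assertion is the same formal adjunction argument for $\iota_{\widehat\Lambda\widehat\Lambda'*}$. One small inaccuracy: the compact generators of $\Fun(\operatorname{Exit}(M,\mathcal{S}),\cV)$ are the left Kan extensions of compact objects of $\cV$ from single exit-path objects (corepresenting stalk functors at points), not the sheaves $j_{S!}1_S\otimes v$, since $\Hom(j_{S!}1_S,-)=\Gamma(S,(-)|_S)$ need not commute with filtered colimits for a noncompact stratum; this does not affect the proof, as any set of compact generators serves in your transfer argument.
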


Then we recall the definition of microsheaves following \cite{Gui,Nadler-pants,Nadler-Shende,Kuo-Li-spherical}. First, define the presheaf
\begin{align*}
\msh^\pre: (\Op_{T^* M}^{\Rp})^{op} &\longrightarrow \st \\
\Omega &\longmapsto \Sh(M)/ \Sh_{\Omega^c}(M)
\end{align*}
where we restrict our attention to conic open sets $\Op_{T^* M}^{\Rp}$,
and the target $\st$ is the (large) category of stable categories with morphisms being exact functors.
We denote by $\msh$ its sheafification and refer it as the sheaf of microsheaves.
Note that $\msh |_{0_M} = \Sh$ as sheaves of categories on $M$. One can show that in fact the Hom in microsheaves valued in $\st$ can be computed by $\mhom(-, -)$ introduced by Kashiwara--Schapira \cite[Section 4.4]{KS} using \cite[Theorem 6.1.2]{KS}.

We note that since $\msh$ is conic, $\msh |_{\dT^* M}$ descents naturally to a sheaf on $S^* M$, 
and we abuse the notation, denoting it by $\msh$ as well.
Similar statements for $\mhom$ and $\ms^\infty$ hold in this case.

\begin{definition}
Fix a subanalytic isotropic subset $\Lambda \subseteq S^*M$. 
Let $\msh_\Lambda$ denote the subsheaf of $\msh$ which consists of objects microsupported in $\Lambda$ or $\Lambda \times \RR_{>0}$.
\end{definition}

We note that this sheaf coincides with the sheafification of the following 
subpresheaf $\msh^\pre_\Lambda$ of $\msh^\pre$ (where $\ms_\Omega(F) := \ms(F) \cap \Omega$):

\begin{align*}
\msh^\pre_\Lambda: (\Op_{T^* M}^{\Rp})^{op} &\longrightarrow \st \\
\Omega &\longmapsto \{F \in \msh^\pre(\Omega) | \ms_\Omega(F) \subseteq \Lambda \}
\end{align*}

We note that $\msh^\pre_\Lambda$ in fact takes value in the category of compactly generated stable categories
whose morphisms are given by functor which admits both the left and the right adjoints,
and its sheafification in $\st$ and $\PrLcs$ coincide.
In other word, restriction maps $\msh_\Lambda(\Omega) \rightarrow \msh_\Lambda(\Omega^\prime)$ admit both left and right adjoints. 
In particular, we will refer to the restriction map associated to $\dT^* M \subseteq T^* M$ as
the microlocalization functor along $\Lambda$
$$m_\Lambda: \Sh_{\widehat\Lambda}(M) \rightarrow \msh_\Lambda(\Lambda)$$
and denote its left and right adjoint by $m_\Lambda^l$ and $m_\Lambda^r$.
Note that $\msh_\Lambda$ is a constructible sheaf on $S^*M$ or $\dT^*M$ supported on $\Lambda$ or ${\Lambda} \times \bR_{>0}$,
and we will use the same notation $\msh_\Lambda$ to denote the corresponding sheaf on $\Lambda$ or ${\Lambda} \times \bR_{>0}$.


We sometimes restrict ourselves to small categories. Consider a compact isotropic subset $\Lambda \subseteq S^*M$ (or a conic isotropic subset $\widehat\Lambda \subseteq T^*M$). Let $\msh^c_\Lambda(\Omega)$ be the full subcategory of compact objects, $\msh^b_\Lambda(\Omega) \subseteq \msh_\Lambda(\Omega)$ be the full subcategory of objects with perfect stalks, and $\msh^{pp}_\Lambda(\Omega) = \mathrm{Fun}^\text{ex}(\msh^c_\Lambda(\Omega)^{op}, \cV_0)$ be the category of proper objects.
    
The following theorem shows that $\msh^b_\Lambda(\Omega)$ is the equivalent to the subcategories of proper modules $\msh^{pp}_\Lambda(\Omega)$ in $\msh^c_\Lambda(\Omega)$.

\begin{theorem}[Nadler \cite{Nadler-pants}*{Theorem 3.21}, \cite{Ganatra-Pardon-Shende3}*{Corollary 4.23}]\label{thm:perfcompact}
    Let $\Lambda \subseteq S^{*}M$ (resp.~$\widehat\Lambda \subseteq T^*M$) be any compact subanalytic isotropic subset. Then the natural pairing $\Hom_{\msh_\Lambda(\Omega)}(-, -)$ defines an equivalence
    $$\msh^b_\Lambda(\Omega) \simeq \msh^{pp}_\Lambda(\Omega) = \mathrm{Fun}^\text{ex}(\msh^c_\Lambda(\Omega)^{op}, \cV_0).$$
    In particular, $\Sh^b_{\widehat\Lambda}(M) \simeq \Sh^{pp}_{\widehat\Lambda}(M) = \mathrm{Fun}^\text{ex}(\Sh^c_{\widehat\Lambda}(M)^{op}, \cV_0)$.
\end{theorem}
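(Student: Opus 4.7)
The statement is a general fact about compactly generated stable categories specialized to $\msh_\Lambda(\Omega)$ (and in particular to $\Sh_{\widehat\Lambda}(M)$), combined with the identification of compact generators for (micro)sheaves. My plan has three steps.

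First, I would set up the abstract Yoneda-type functor. Since $\sA := \msh_\Lambda(\Omega)$ is compactly generated by Proposition \ref{prop:stopremoval}, writing $A := \msh^c_\Lambda(\Omega)$ we have $\sA = \Ind(A)$. The natural pairing $\sA \otimes \sA^\vee \to \cV$ from Example \ref{ex:yoneda} restricts to a functor
$$\Phi: \sA \longrightarrow \Fun^{\text{ex}}(A^{op}, \cV), \quad X \longmapsto \Hom_\sA(-, X)|_A.$$
Since $A$ is idempotent complete, $\Phi$ is fully faithful. By construction $\Phi(X)$ lands in $\Fun^{\text{ex}}(A^{op}, \cV_0) = \msh^{pp}_\Lambda(\Omega)$ precisely when $\Hom_\sA(K, X) \in \cV_0$ for every compact $K \in A$. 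So the entire content is to identify the subcategory cut out by this condition with $\msh^b_\Lambda(\Omega)$.

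Second, I would invoke an explicit description of compact generators in terms of corepresentatives for stalks and microstalks. For $\Sh_{\widehat\Lambda}(M)$ with $\widehat\Lambda$ subanalytic isotropic this is classical from Kashiwara–Schapira: compact objects are built by finite colimits from standard/costandard sheaves on subanalytic open sets, which corepresent stalk functors. For $\msh_\Lambda(\Omega)$ near smooth points of $\Lambda$ one has explicit microlocal corepresentatives of the microstalk functor $\mu_p$, and near singular strata Nadler's analysis in \cite{Nadler-pants} produces a finite set of compact generators via the pants category, while Ganatra–Pardon–Shende's \cite{Ganatra-Pardon-Shende3} give them as linking disks in the Fukaya model. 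The upshot is a set $\{K_p\}_{p \in \Lambda}$ of compact objects that together corepresent all (micro)stalk functors.

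Third, I would combine the two: $\Hom_\sA(K, X) \in \cV_0$ for every compact $K$ is equivalent, by the description of generators, to every (micro)stalk of $X$ being perfect, i.e.\ to $X \in \msh^b_\Lambda(\Omega)$. This makes $\Phi$ restrict to a fully faithful embedding $\msh^b_\Lambda(\Omega) \hookrightarrow \msh^{pp}_\Lambda(\Omega)$. For essential surjectivity, given $F \in \msh^{pp}_\Lambda(\Omega)$, the composition with $A \hookrightarrow \sA$ extends uniquely to a colimit-preserving functor $\sA^\vee \to \cV$, hence by dualizability of $\sA$ is represented by an object $X \in \sA$; the fact that $F$ lands in $\cV_0$ translates back, via the corepresentative generators, to perfect (micro)stalks for $X$.

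The main obstacle is the second step: producing a sufficient supply of compact corepresentatives for microstalk functors along singular strata of $\Lambda$, since this is exactly where the microsheaf category can be delicate. Away from singular strata it reduces to the classical constructible picture, but at arboreal-type singularities one must invoke Nadler's pants construction \cite{Nadler-pants}, and for fully general subanalytic isotropic $\Lambda$ the Ganatra–Pardon–Shende equivalence with partially wrapped Fukaya categories \cite{Ganatra-Pardon-Shende3} is what ensures the required generation. Once such generators are fixed, everything else is formal Ind-completion/Yoneda bookkeeping.
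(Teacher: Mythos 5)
The paper does not prove this statement at all---it is imported verbatim from Nadler \cite{Nadler-pants} and Ganatra--Pardon--Shende \cite{Ganatra-Pardon-Shende3}---so there is no internal proof to compare against; your outline is essentially the argument of those references: identify $\msh_\Lambda(\Omega)=\Ind(\msh^c_\Lambda(\Omega))=\Fun^{\mathrm{ex}}(\msh^c_\Lambda(\Omega)^{op},\cV)$ via restricted Yoneda, and use that (micro)stalk functors are corepresented by compact objects which generate, so that ``$\Hom(K,X)\in\cV_0$ for all compact $K$'' is equivalent to perfect (micro)stalks. Two small remarks: your essential-surjectivity step needs no dualizability detour, since a proper module is already an object of $\Ind(\msh^c_\Lambda(\Omega))$ and its microstalks are read off by evaluating on the compact corepresentatives; and compact objects are the \emph{thick} closure (finite colimits \emph{and} retracts) of the generators, which is what makes the forward implication (perfect microstalks $\Rightarrow$ perfect $\Hom$ from all compacts) go through.
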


    Using the above theorem, for a subanalytic Legendrian $\Lambda \subseteq S^{*}M$ the category of proper modules
    $$\Sh^{pp}_{\widehat\Lambda}(M) = \msh^{pp}_{\widehat{\Lambda}}(T^*M),$$
    is a proper category, and moreover, by \cite[Lemma A.8]{Ganatra-Pardon-Shende3}, $\Sh_{\widehat\Lambda}^b(M) \subseteq \Sh_{\widehat\Lambda}^c(M)$.

\subsection{Contact isotopies and deformations}
We recall sheaf quantizations of contact isotopies of sheaves and deformations of sheaves.

First, the result of Guillermou--Kashiwara--Schapira \cite{GKS} constructs sheaf quantizations for any contact isotopy (homogeneous Hamiltonian isotopy).

\begin{theorem}[Guillermou--Kashiwara--Schapira \cite{GKS}]\label{thm: GKS}
    Let $\Phi: S^*M \times I \to S^*M$ be a contact isotopy on $S^*M$ defined by the contact Hamiltonian $H_t = \alpha(\partial_t\varphi_t)$ with the contact movie
    $$\Lambda_\Phi \coloneqq \left\{(x, -\xi, \varphi_t(x, \xi), t, -H_t(\varphi_t(x, \xi)) )\right\}.$$
    Then there is a sheaf $K(\Phi) \in \Sh(M \times M \times I)$ that defines a convolution functor $\Phi(-) \coloneqq K(\Phi) \circ (-)$ such that $\msnz(\Phi(F)) = \Lambda_\Phi \circ \msnz(F)$. The convolution induces equivalences 
    $$\varphi_t \coloneqq K(\Phi)_t \circ (-): \Sh(M) \xrightarrow{\sim} \Sh(M),$$
    such that $\msnz(\varphi_t(F)) = \varphi_t(\msnz(F))$. Moreover, the convolution also induces a morphism between sheaf of categories
    \begin{align*}
    \varphi_t \coloneqq K(\Phi)_t \circ (-): \msh_\Lambda \rightarrow \varphi_t^* \msh_{\varphi_t(\Lambda)}.
    \end{align*} 
\end{theorem}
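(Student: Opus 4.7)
The plan is to follow the strategy of Guillermou--Kashiwara--Schapira: lift the contact data to homogeneous Hamiltonian data on $\dot T^*M$, construct the kernel as the unique sheaf with prescribed singular support satisfying an initial condition at $t=0$, and then derive the functorial properties from Kashiwara--Schapira's microlocal calculus for proper push-forwards.

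First, I would lift $\Phi$ to a homogeneous Hamiltonian isotopy $\widehat\Phi : \dot T^*M \times I \to \dot T^*M$ whose Hamiltonian $\widehat H_t$ is the degree-one homogeneous extension of $H_t = \alpha(\partial_t\varphi_t)$. The associated conic Lagrangian movie
$$\Lambda_{\widehat\Phi} = \left\{\left(x,-\xi,\,\widehat\varphi_t(x,\xi),\,t,\,-\widehat H_t(\widehat\varphi_t(x,\xi))\right)\right\} \subseteq \dot T^*(M \times M \times I)$$
intersects $\{t=0\}$ exactly in $T^*_{\Delta_M}(M\times M)$, which is the singular support of the constant sheaf $k_{\Delta_M}$ on the diagonal. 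This provides a natural initial datum for the propagation problem.

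Second, I would construct $K(\Phi)$ as the unique sheaf on $M\times M\times I$ satisfying $\msnz(K(\Phi)) \subseteq \Lambda_{\widehat\Phi}$ together with $K(\Phi)|_{t=0} \simeq k_{\Delta_M}$. Existence and uniqueness rest on Kashiwara--Schapira's microlocal Cauchy problem: the projection $\Lambda_{\widehat\Phi} \to I$ has no critical points since a contact isotopy moves transversally to each slice $\{t = \text{const}\}$, so the propagation theorem forces uniqueness once the initial fiber is fixed. Existence is then obtained by solving the problem on infinitesimal time intervals where the isotopy is $C^0$-close to the identity and the conormal kernel can be deformed explicitly, then globalizing by patching via the uniqueness statement. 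This step is the main technical obstacle, and it is where the full strength of \cite{KS} propagation enters the argument.

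Third, once $K(\Phi)$ has been produced, the convolution properties follow formally from Kashiwara--Schapira's estimate $\msnz(K \circ F) \subseteq \msnz(K) \circ \msnz(F)$ for proper composition, whose non-characteristic hypothesis is automatic because $\Lambda_{\widehat\Phi}$ is the graph of a symplectomorphism. To see that each $\varphi_t$ is an equivalence, I would apply the construction to the reverse isotopy $\Phi^{-1}$ to obtain a kernel $K(\Phi^{-1})$; the convolution $K(\Phi)_t \circ K(\Phi^{-1})_t$ then satisfies the same microsupport and initial conditions as $k_{\Delta_M}$ and therefore equals $k_{\Delta_M}$ by uniqueness, giving a two-sided inverse. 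Finally, since the kernel acts compatibly with restriction to conic opens in $T^*M$, the induced equivalences descend to the sheafified categories $\msh_\Lambda \to \varphi_t^*\msh_{\varphi_t(\Lambda)}$, completing the microsheaf statement.
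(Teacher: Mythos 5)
Your proposal is correct and follows essentially the same route as the source the paper relies on: the paper gives no independent proof of this statement but cites Guillermou--Kashiwara--Schapira, and your sketch (homogeneous lift, uniqueness and existence of the kernel via microsupport propagation with initial condition $k_{\Delta_M}$, inverse kernel from the reverse isotopy, and descent to the sheaves of microsheaf categories by compatibility with conic restriction) is precisely their argument. No gaps worth flagging beyond the acknowledged technical work in the existence step, which is exactly where \cite{GKS} invests its effort.
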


Consider the case when $\Phi$ is a positive isotopy, i.e., the defining Hamiltonian $H_t = \alpha(\partial_t \varphi_t) \geq 0$.
In this case, \cite[Section 3]{Kuo-wrapped-sheaves} implies that there are continuation maps $K(\Phi)_s \rightarrow K(\Phi)_t, \; s \leq t$, and it induces continuation maps for any $F \in \Sh(M)$ 
$$c(F, s, t): \varphi_s(F) \rightarrow \varphi_t(F), \;\; s \leq t.$$
Moreover, the continuation map is canonically defined, i.e., it is independent of homotopies of the positive isotopies that preserves $\varphi_s$ and $\varphi_t$.

Then we recall the gapped condition for families of Legendrians and the gapped non-characteristic deformation lemma, which will be used later on. We fix any Reeb flow $T_t: S^*M \to T^*M$, defined by an autonomous positive Hamiltonian.

\begin{definition}[{\cite[Definition 2.10]{Nadler-Shende}}]
    Let $\Lambda_t, \Lambda_t'$, $t \in [0, 1]$, be families of subsets in the contact manifold $Y$ with Reeb flow $T_t: Y \to Y$. Then $\Lambda_t$ and $\Lambda_t'$ are gapped if there exists $\epsilon > 0$ such that for any $\epsilon' \in (-\epsilon, 0) \cup (0, \epsilon)$,
    $$\Lambda_t \cap T_{\epsilon'}(\Lambda_t') = \varnothing, \; t \in [0, 1].$$
    $\Lambda_t$ and $\Lambda_t'$ are positively gapped if there exists $\epsilon > 0$ such that for any $\epsilon' \in (0, \epsilon)$,
    $$\Lambda_t \cap T_{\epsilon'}(\Lambda_t') = \varnothing, \; t \in [0, 1].$$
\end{definition}

Let $\Lambda \subseteq S^*(M \times [0, 1])$ be subsets that are non-characteristic with respect to $M \times [0, 1] \to [0, 1]$. Then under the projection $\Pi: T^*(M \times [0, 1]) \to T^*M \times [0, 1]$, we can get
$$\Pi(\Lambda) \subseteq S^*M \times [0, 1].$$
In particular, we will write $\Lambda_t = \Pi(\Lambda) \cap S^*M \times \{t\}$ which defines a family of subsets in $S^*M$.

\begin{theorem}[Zhou {\cite[Proposition 4.1]{Zhou}}, Nadler--Shende {\cite[Theorem 5.1]{Nadler-Shende}}]\label{thm:full-faithful-nearby}
    Let $\Lambda, \Lambda' \subseteq S^*(M \times [0, 1])$ be subanalytic Legendrians that are non-characteristic with respect to $M \times [0, 1] \to [0, 1]$. Assume that the families of subanalytic Legendrians $\Lambda_t, \Lambda_t' \subseteq S^*M$ are gapped. Then for $F \in \Sh_{\Lambda}(M \times [0, 1])$ and $G \in \Sh_{\Lambda'}(M \times [0, 1])$,
    $$\Hom(F, G) = \Hom(F_t, G_t), \; t \in [0, 1],$$
    where $F_t$ and $G_t$ be the restrictions to $M \times \{t\}$.
\end{theorem}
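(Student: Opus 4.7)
The plan is to deduce the result from a non-characteristic propagation statement applied to the sheaf $R\sHom(F, G)$ on $M \times [0,1]$ along the projection $\pi: M \times [0,1] \to [0,1]$. The strategy reduces to showing that $\pi_* R\sHom(F, G)$ is a locally constant sheaf on the interval, since then
$$\Hom(F, G) = \Gamma([0,1], \pi_* R\sHom(F, G)) = (\pi_* R\sHom(F, G))_t = \Hom(F_t, G_t),$$
where the last equality uses proper base change along the inclusion $\{t\} \hookrightarrow [0,1]$, with properness of the fiber $M \times \{t\}$ ensured because the relevant sheaves are compactly supported on $M$ by the compactness of $\Lambda_t, \Lambda_t'$.

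The first technical step is to apply the Kashiwara--Schapira microsupport estimate for internal Hom \cite{KS}*{Proposition 5.4.14}:
$$\ms(R\sHom(F,G)) \subseteq \ms(F)^a \hat{+}\, \ms(G),$$
where $(-)^a$ is the antipodal map and $\hat{+}$ is the fiberwise sum. This requires a transversality condition, which is provided by the non-characteristic hypothesis for $\Lambda$ and $\Lambda'$ over $[0,1]$: covectors in $\widehat{\Lambda} \cup \widehat{\Lambda'}$ cannot point purely in the $[0,1]$ direction, so the sum is well-defined. Once this estimate is established, invoking the microsupport estimate for a submersive pushforward reduces the local constancy of $\pi_* R\sHom(F,G)$ to the assertion that $\ms(R\sHom(F,G))$ contains no covector of the form $(x, 0; t, \tau)$ with $\tau \neq 0$.

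The second technical step, which is the crux, is to rule out such vertical covectors using the gapped hypothesis. A vertical covector $(x, 0; t, \tau) \in \ms(F)^a \hat{+}\, \ms(G)$ would come from a pair $(x, \xi; t, -\tau_1) \in -\ms(F)$ and $(x, -\xi; t, \tau + \tau_1) \in \ms(G)$ at a common base point. Passing to the cosphere bundle, these correspond to a point of $\Lambda_t$ and a point of $\Lambda_t'$ whose images in $S^*M$ coincide up to a small Reeb translation; that is, a nonempty intersection $\Lambda_t \cap T_{\epsilon'}(\Lambda_t')$ for some arbitrarily small $\epsilon' \neq 0$. The gapped assumption forbids precisely this configuration, uniformly in $t$, so no vertical covector appears, and $\pi_* R\sHom(F,G)$ has microsupport in the zero section, hence is locally constant.

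The main obstacle is carrying out the second step cleanly in the subanalytic setting: namely, translating the contact-geometric gapped condition into a precise estimate on $\ms(F)^a \hat{+}\, \ms(G)$, which involves controlling how the conifications $\widehat{\Lambda}, \widehat{\Lambda'}$ assemble over a neighborhood of $t$ and how the fiberwise sum interacts with the Reeb flow direction encoded by $\tau$. Once one passes from covector-level information to the cosphere bundle and identifies $\tau$-scaling with the Reeb flow, the remaining verification is direct, but some care is required to ensure that all arguments remain within the subanalytic category so that the Kashiwara--Schapira estimates apply verbatim.
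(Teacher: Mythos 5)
The paper does not actually prove this statement: it is imported from Zhou and Nadler--Shende, and your outline (kill the vertical microsupport of the family $\sHom$ sheaf, then propagate along $[0,1]$) is broadly in the spirit of those proofs. But the write-up has a genuine gap exactly at the crux. A vertical covector $(x,0;t,\tau)$ with $\tau\neq 0$ in the naive sum $\ms(F)^a+\ms(G)$ arises from $(x,\xi;t,\tau_1)\in\ms(F)$ and $(x,\xi;t,\tau_2)\in\ms(G)$ with the \emph{same} $\xi$ and $\tau_2-\tau_1=\tau$; projecting to $S^*M$ this is a point of $\Lambda_t\cap\Lambda'_t$, i.e.\ the $\epsilon'=0$ intersection, which the gapped condition explicitly permits. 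The interval component $\tau$ records the contact-Hamiltonian speed of the family (cf.\ the movie in Theorem~\ref{thm: GKS}), not a Reeb displacement of the time-$t$ slice, so ``different $\tau$ over the same point of $S^*M$'' is not the same as ``$\Lambda_t\cap T_{\epsilon'}(\Lambda'_t)\neq\varnothing$ for some small $\epsilon'\neq 0$,'' and no contradiction with gappedness follows as stated. The true mechanism is that a speed mismatch at an intersection point forces short Reeb chords between the slices at nearby times, and one must also control the extra \emph{limit} covectors in $\hat{+}$; this uniform-in-$t$, quantitative use of the gap is precisely the nontrivial content of the cited proofs and is what your argument assumes rather than proves. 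Relatedly, KS Proposition 5.4.14 does not apply: its hypothesis is $\ms(F)\cap\ms(G)\subseteq 0_{M\times[0,1]}$ (disjointness outside the zero section), not a verticality condition, and it fails in exactly the interesting cases where $\Lambda_t\cap\Lambda'_t\neq\varnothing$; the unconditional estimate carries $\hat{+}$, whose limit points your pointwise analysis ignores.

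There are also gaps in the reduction. Compactness of $\Lambda_t,\Lambda'_t$ does \emph{not} give compact support of $F,G$ (a constant sheaf has empty $\msif$ and full support, and the theorem imposes no support hypothesis), so the properness of $\pi$ on $\supp\sHom(F,G)$ that you need both for the pushforward microsupport estimate and for the base-change step is unjustified; some properness or a deformation-lemma-type argument must be supplied. Moreover, even granting local constancy of $\pi_*\sHom(F,G)$, its stalk at $t$ is a colimit of $\Hom$'s over $M\times U$, and identifying it with $\Hom(F_t,G_t)$ requires the non-characteristic restriction isomorphism $\sHom(F,G)|_{M\times\{t\}}\simeq\sHom(F_t,G_t)$ — this is where the non-characteristicity hypothesis actually enters and it is not addressed in your proposal.
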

\begin{remark}
    The above theorem can be viewed as a simplified version of \cite[Theorem 5.1]{Nadler-Shende}, where we have families of Legendrians on a noncompact interval, and one needs to estimate the singular support of the nearby cycles when taking the closure of the interval.
\end{remark}


We will also need the Reeb perturbation lemma which will lead to a more refined criterion for the gapped non-characteristic deformation lemma.

\begin{proposition}[{\cite[Theorem 4.3]{Kuo-Li-spherical}}]\label{prop:perturbation}
    Let $\Lambda, \Lambda' \subseteq S^*M$ be subanalytic Legendrians. Let $T_t: S^*M \to S^*M$ be a non-negative contact flow such that 
    $$\Lambda \cap T_\epsilon(\Lambda') = \varnothing$$
    for any small $\epsilon > 0$. Then for $F \in \Sh_{\Lambda}(M)$ and $G \in \Sh_{\Lambda'}(M)$,
    $$\Hom(F, G) = \Hom(F, T_\epsilon G).$$
\end{proposition}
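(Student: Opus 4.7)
The plan is to realize the continuation-induced morphism
\[ c_* \colon \Hom(F, G) \longrightarrow \Hom(F, T_\epsilon G) \]
as the generalization between the stalks at $0$ and $\epsilon$ of a constructible sheaf $\mathcal{K} \in \Sh([0, \epsilon])$, and then to bound the microsupport of $\mathcal{K}$ using the GKS quantization of the flow together with the positive-gap hypothesis.

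I would first invoke Theorem \ref{thm: GKS} to produce a family sheaf $\widetilde G \in \Sh(M \times [0, \epsilon])$ with $\widetilde G|_{M \times \{t\}} \simeq T_t G$ whose microsupport satisfies
\[ \msnz(\widetilde G) \subseteq \bigl\{ (x, \xi, t, -H_t(x, \xi)) : (x, \xi) \in \bR_{>0} \cdot T_t(\Lambda'),\; t \in [0, \epsilon] \bigr\}, \]
where $H_t \geq 0$ is the defining Hamiltonian of $T_t$. Let $\pi \colon M \times [0, \epsilon] \to M$ and $q \colon M \times [0, \epsilon] \to [0, \epsilon]$ be the two projections, set $\widetilde F = \pi^* F$, and define
\[ \mathcal{K} := Rq_* R\mathcal{H}om(\widetilde F, \widetilde G) \in \Sh([0, \epsilon]). \]
Both $\widetilde F$ and $\widetilde G$ are non-characteristic along every fiber $M \times \{t\}$, so restriction commutes with $R\mathcal{H}om$ and the classical Kashiwara--Schapira non-characteristic deformation lemma identifies $\mathcal{K}_t \simeq R\Hom_{\Sh(M)}(F, T_t G)$, with the internal generalization maps of $\mathcal{K}$ realizing the continuation morphisms coming from the GKS family.

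Next, I would apply the Kashiwara--Schapira microsupport estimate $\msnz(R\mathcal{H}om(A, B)) \subseteq \msnz(A)^a + \msnz(B)$. A covector in $\msnz(R\mathcal{H}om(\widetilde F, \widetilde G))$ of the pure form $(x, 0, t, \tau)$ with $\tau \neq 0$ forces a coincidence $(x, \xi) \in \Lambda \cap T_t(\Lambda')$ together with $\tau = -H_t(x, \xi)$. By hypothesis $\Lambda \cap T_s(\Lambda') = \varnothing$ for all $s \in (0, \epsilon)$ once $\epsilon$ is small, so no such covectors exist over $t \in (0, \epsilon)$; at $t = 0$ they may exist but are constrained to $\tau \leq 0$ by non-negativity of $H_0$. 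Consequently $\mathcal{K}$ is locally constant on $(0, \epsilon]$ with boundary microsupport $\msnz(\mathcal{K})|_{t = 0} \subseteq \{(0, \tau) : \tau \leq 0\}$.

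A constructible sheaf on $[0, \epsilon]$ that is locally constant on $(0, \epsilon]$ and whose microsupport at $t = 0$ lies in the outward half-plane $\{\tau \leq 0\}$ is forced to be constant: any summand of the form $k_{(0, \epsilon]}$ or $k_{\{0\}}$ would carry inward microsupport $\tau > 0$ at $t = 0$, violating the estimate. Hence $\mathcal{K} \simeq k_{[0, \epsilon]} \otimes V$ for some $V \in \cV$, so the generalization $\mathcal{K}_0 \to \mathcal{K}_\epsilon$ is an isomorphism, and this is precisely $c_*$. The main obstacle is this boundary analysis at $t = 0$: only positive gapping is assumed, so the gapped non-characteristic deformation lemma (Theorem \ref{thm:full-faithful-nearby}) does not apply directly. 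What rescues the argument is the sign of the Hamiltonian $H_t \geq 0$, which pins the residual microsupport at $t = 0$ in the ``outward'' direction --- exactly the direction compatible with extending a locally constant sheaf on $(0, \epsilon]$ constantly across $t = 0$.
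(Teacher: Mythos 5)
The paper does not actually prove this proposition --- it is imported verbatim from the authors' earlier work (\cite[Theorem 4.3]{Kuo-Li-spherical}) --- so there is no in-paper proof to compare against; judged on its own, your argument has two genuine gaps, both located exactly at the point $t=0$ where all the content of the statement lives.

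First, the identification $\mathcal{K}_t \simeq \Hom(F, T_t G)$ is unjustified precisely at $t=0$. Non-characteristicity of $\pi^*F$ and of $\widetilde G$ separately for the slices $M\times\{t\}$ is not what is needed: the relevant object is $\sHom(\pi^*F,\widetilde G)$, whose microsupport bound $\msnz(\pi^*F)^a + \msnz(\widetilde G)$ contains covectors conormal to the slice $M\times\{0\}$ over every point of $\Lambda\cap\Lambda'$ (which is allowed to be nonempty --- that is the whole point). So the comparison map $i_0^*\sHom(\pi^*F,\widetilde G)\to \sHom(F,G)$ need not be an isomorphism at the critical time. Moreover the statement carries no properness or compact-support hypothesis, so base change and the microsupport estimate for the non-proper $Rq_*$ are not available; concretely, for $M=\bR$, $F=G=1_{[0,\infty)}$ and the Reeb flow one computes $(Rq_*\sHom(\pi^*F,\widetilde G))_0 = 0 \neq 1_\cV = \Hom(F,G)$, and near $t=0$ the pushforward is literally of the shape $1_{(0,\epsilon]}$, even though the proposition itself holds.

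Second, the concluding exclusion step has the sign backwards under the conventions of \cite{KS} used in this paper (cf.\ Remark \ref{rem:compact-support}, where $\msif(1_{[0,+\infty)})=\{(0,+1)\}$). With these conventions a closed left endpoint produces $\tau>0$ and an open left endpoint produces $\tau<0$; hence $1_{(0,\epsilon]}$ has microsupport $\{\tau<0\}$ at $t=0$, which is \emph{contained} in the bound $\{\tau\le 0\}$ you correctly derive from $\tau=-H_t\le 0$. So the bound does not exclude the one summand ($1_{(0,\epsilon]}$, generization $0\to 1_\cV$) that would destroy the conclusion, and ``locally constant on $(0,\epsilon]$ plus $\tau\le 0$ at $0$'' does not force constancy --- what it gives is the opposite one-sided statement $R\Gamma([0,\epsilon];\mathcal{K})\simeq \mathcal{K}_0$. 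The asymmetry is essential: the mirror-image of your argument, applied to the family $T_{-t}G$ (where the bound becomes $\tau\ge 0$ and your exclusion of $1_{(0,\epsilon]}$ would then be correct), would ``prove'' $\Hom(F,G)\simeq\Hom(F,T_{-\epsilon}G)$, which is false, e.g.\ for $F=G=1_{[0,1]}$ one has $\Hom(F,G)=1_\cV$ but $\Hom(F,T_{-\epsilon}G)=1_\cV[-1]$. So positivity must enter through a genuinely one-sided mechanism at $t=0$ --- continuation maps together with a one-sided Morse-type/Sato-type analysis of the jump, as in the cited source --- rather than through a two-sided ``no inward microsupport, hence constant'' argument.
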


\begin{proposition}\label{prop:gapped-fully-faithful}
    Let $\Lambda, \Lambda' \subseteq S^*(M \times [0, 1])$ be subanalytic Legendrians that are non-characteristic with respect to $M \times [0, 1] \to [0, 1]$. Assume that the families of subanalytic Legendrians $\Lambda_t, \Lambda_t' \subseteq S^*M$ are positively gapped. Then for $F \in \Sh_{\Lambda}(M \times [0, 1])$ and $G \in \Sh_{\Lambda'}(M \times [0, 1])$,
    $$\Hom(F, G) = \Hom(F_t, G_t), \; t \in [0, 1],$$
    where $F_t$ and $G_t$ be the restrictions to $M \times \{t\}$.
\end{proposition}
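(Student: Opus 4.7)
The plan is to reduce the positively gapped case to the fully gapped case of Theorem~\ref{thm:full-faithful-nearby} by perturbing $G$ along a small Reeb push-off acting only in the $M$-direction, which upgrades the one-sided gap to a two-sided gap, and then to undo the perturbation via Proposition~\ref{prop:perturbation}.

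Let $\epsilon > 0$ witness the positively gapped hypothesis, fix $0 < \delta < \epsilon/2$, and let $\widetilde{T}_s$ denote a non-negative contact flow on $S^*(M \times [0,1])$ acting fiberwise over $[0,1]$ as the Reeb flow $T_s$ on $S^*M$, e.g.\ one generated by a contact Hamiltonian agreeing with $|\xi_M|$ on a conic neighborhood of the non-characteristic covectors carrying $\Lambda \cup \Lambda'$ and strictly positive elsewhere; its sheaf quantization from Theorem~\ref{thm: GKS} agrees, on a neighborhood of $\Lambda \cup \Lambda'$, with convolution by $K(T_s) \boxtimes \mathbf{1}_{\Delta_{[0,1]}}$. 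Applying $\widetilde{T}_\delta$ to $G$ yields $\widetilde{T}_\delta G$ with singular support $\widetilde{T}_\delta \Lambda'$, still non-characteristic with respect to the projection to $[0,1]$, whose fibers are $(\widetilde{T}_\delta \Lambda')_t = T_\delta(\Lambda'_t)$. For any $\epsilon' \in (-\delta, \delta) \setminus \{0\}$ we have $\epsilon' + \delta \in (0, \epsilon)$, so
\begin{align*}
\Lambda_t \cap T_{\epsilon'}\bigl((\widetilde{T}_\delta \Lambda')_t\bigr) \;=\; \Lambda_t \cap T_{\epsilon' + \delta}(\Lambda'_t) \;=\; \varnothing,
\end{align*}
which shows that $\Lambda$ and $\widetilde{T}_\delta \Lambda'$ are two-sided gapped; Theorem~\ref{thm:full-faithful-nearby} then gives $\Hom(F, \widetilde{T}_\delta G) = \Hom(F_t, T_\delta G_t)$ for every $t \in [0,1]$.

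To conclude, I would apply Proposition~\ref{prop:perturbation} twice: once globally on $M \times [0,1]$, where the positive gap of $\Lambda, \Lambda'$ supplies $\Lambda \cap \widetilde{T}_{\epsilon'}(\Lambda') = \varnothing$ for all small $\epsilon' > 0$ and hence $\Hom(F, G) = \Hom(F, \widetilde{T}_\delta G)$; and once fiberwise over each $t$, yielding $\Hom(F_t, G_t) = \Hom(F_t, T_\delta G_t)$. Concatenating the three identities produces the desired equality. The main technical point requiring care is the construction of an admissible positive contact flow $\widetilde{T}_s$ on $S^*(M \times [0,1])$ whose sheaf quantization interacts correctly with the slice restrictions $F \mapsto F_t$; this reduces to non-characteristicity of $\Lambda \cup \Lambda'$ with respect to $[0,1]$, allowing the Hamiltonian $|\xi_M|$ (which degenerates on the zero section of $T^*M$) to be smoothly modified away from $\Lambda \cup \Lambda'$ into a strictly positive contact Hamiltonian without affecting the singular support computations needed to invoke Theorem~\ref{thm: GKS} and Proposition~\ref{prop:perturbation}.
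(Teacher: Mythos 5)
Your proposal is correct and takes essentially the same route as the paper's own proof: both arguments push $G$ off by a (fiberwise) non-negative Reeb flow so that the positively gapped families become two-sided gapped, apply Theorem~\ref{thm:full-faithful-nearby} to the perturbed pair, and then remove the perturbation both on $M \times [0,1]$ and on each slice $M \times \{t\}$ via Proposition~\ref{prop:perturbation}. Your added care in constructing the fiberwise flow $\widetilde{T}_s$ (smoothing the degenerate Hamiltonian $|\xi_M|$ away from $\Lambda \cup \Lambda'$, using non-characteristicity) makes explicit a point the paper leaves implicit when it writes $T_\epsilon G$ for a sheaf on $M \times [0,1]$, but the argument is the same.
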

\begin{proof}
    Since $\Lambda_t, \Lambda_t' \subseteq S^*M$ are positively gapped, by Proposition \ref{prop:perturbation}, for $\epsilon > 0$ sufficiently small,
    $$\Hom(F, G) = \Hom(F, T_\epsilon G), \; \Hom(F_t, G_t) = \Hom(F_t, T_\epsilon G_t).$$
    Meanwhile, when $\Lambda_t, \Lambda_t' \subseteq S^*M$ are positively gapped, we know that for $\epsilon > 0$ sufficiently small, $\Lambda_t$ and $T_\epsilon(\Lambda_t') \subseteq S^*M$ are gapped. Therefore, by Theorem \ref{thm:full-faithful-nearby}, we know that
    $$\Hom(F, T_\epsilon G) = \Hom(F_t, T_\epsilon G_t).$$
    This then completes the proof.
\end{proof}

Finally, following Zhou \cite[Definition 0.8]{Zhou} and Nadler--Shende \cite[Definition 9.13]{Nadler-Shende}, we define subanalytic Legendrian deformations which induce equivalences through Legendrian movies\footnote{Similar terminologies are refered to as Legendrian isotopies in the title of \cite{Zhou}. Here we choose to avoid the word isotopy since this is typically not an isotopy of subsets inside the ambient manifold.}.

\begin{definition}
    Let $\Lambda \subseteq S^*(M \times [0, 1])$ be a subanalytic Legendrian, and $\Lambda_0, \Lambda_1 \subseteq S^*M$ be subanalytic Legendrians such that
    $$\Lambda \cap S^*(M \times I_\epsilon(j)) = \Lambda_j \times I_\epsilon(j), \; j = 0, 1,$$
    where $I_\epsilon(j) \subseteq [0, 1]$ is an open neighbourhood of $j$ with radius $\epsilon$. Then $\Lambda$ is called a Legendrian deformation between $\Lambda_0$ and $\Lambda_1$ if there is a contact isotopy such that
    $$S^*(M \times [0, 1]) \setminus \Lambda \cong S^*(M \times [0, 1]) \setminus \Lambda_j \times [0, 1], \; j = 0, 1,$$
    as open contact submanifolds with convex boundary.
\end{definition}
\begin{remark}
    Let $X_0, X_1 \subseteq S^*M$ are Weinstein hypersurfaces, and $W \subseteq S^*(M \times [0, 1])$ is a Weinstein hypersurface that is Liouville homotopic to $X_j \times [0, 1] \subseteq S^*(M \times [0, 1])$, such that
    $$W \cap S^*(M \times I_\epsilon(j)) = X_j \times T^*I_\epsilon(j).$$
    Then the Legendrian skeleton $\mathfrak{c}_W$ is a Legendrian deformation between $\mathfrak{c}_{X_0}$ and $\mathfrak{c}_{X_1}$. This is exactly the sufficiently Weinstein condition \cite[Definition 9.13]{Nadler-Shende}. In particular, when $(T^*M, X_0)$ and $(T^*M, X_1)$ are Liouville homotopic (instead of Weinstein homotopic), there always exists a Weinstein movie $(T^*(M \times [0, 1]), W)$ between them \cite[Proposition 2.42]{LazarevSylvanTanaka1}.
\end{remark}

\begin{theorem}[{\cite[Theorem 3.1]{Zhou}, \cite[Theorem 9.14]{Nadler-Shende}}]\label{thm:invariance-deformation}
    Let $\Lambda \subseteq S^*(M \times [0, 1])$ be a subanalytic Legendrian deformation between $\Lambda_0$ and $\Lambda_1 \subseteq S^*M$. Then there are natural equivalences
    \[\xymatrix{
    \msh_{\Lambda_0}(\Lambda_0) \ar[d] & \msh_\Lambda(\Lambda) \ar[l]_-{i_0^*}^-{\sim} \ar[r]^-{i_1^*}_-{\sim} \ar[d]  & \msh_{\Lambda_1}(\Lambda_1) \ar[d], \\
    \Sh_{\widehat\Lambda_0}(M) & \Sh_{\widehat\Lambda}(M \times [0, 1]) \ar[l]_-{i_0^*}^-{\sim} \ar[r]^-{i_1^*}_-{\sim} & \Sh_{\widehat\Lambda_1}(M).
    }\]
\end{theorem}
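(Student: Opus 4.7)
The plan is to combine two tools already set up in the excerpt: the gapped non-characteristic deformation lemma (Theorem~\ref{thm:full-faithful-nearby} and Proposition~\ref{prop:gapped-fully-faithful}) to handle fully faithfulness, and the GKS sheaf quantization (Theorem~\ref{thm: GKS}) applied to the contact isotopy trivializing the complement of $\Lambda$ to handle essential surjectivity. I will first treat the bottom row of sheaves on $M\times[0,1]$, and then deduce the top row for microsheaves from the compatibility between restriction and microlocalization together with the fact that $\msh_\Lambda$ is a sheaf on $\widehat\Lambda$.

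For fully faithfulness of $i_j^\ast$, I would argue as follows. By the non-characteristic hypothesis, the projection $\Pi\colon T^*(M\times[0,1])\to T^*M\times[0,1]$ identifies $\Lambda$ with a continuous family $\Lambda_t\subseteq S^*M$ of subanalytic Legendrians, and by the product structure near $j=0,1$ we have $\Lambda_t=\Lambda_j$ on the collar $I_\epsilon(j)$. Given $F,G\in\Sh_{\widehat\Lambda}(M\times[0,1])$, the families of singular supports $\Lambda_t, \Lambda_t'$ arising from them are compact subanalytic Legendrians; by compactness of $[0,1]$ I can cover $[0,1]$ by finitely many subintervals on each of which, after a small Reeb push-off, the two families become positively gapped. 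Applying Proposition~\ref{prop:gapped-fully-faithful} on each subinterval and composing gives $\Hom(F,G)=\Hom(F_j,G_j)$, which is exactly fully faithfulness of $i_j^\ast$.

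For essential surjectivity, I would start from $F_j\in\Sh_{\widehat\Lambda_j}(M)$ and try to build a lift. Covering $[0,1]$ by short intervals $[t_i,t_{i+1}]$ on which the Legendrian deformation is small, the contact isotopy that identifies $S^*(M\times[0,1])\setminus\Lambda$ with $S^*(M\times[0,1])\setminus \Lambda_j\times[0,1]$ can, on each short slab, be extended to a compactly supported global contact isotopy $\Phi^{(i)}$ of $S^*(M\times[t_i,t_{i+1}])$, since the isotopy is trivial away from a neighbourhood of $\Lambda$. By Theorem~\ref{thm: GKS}, $\Phi^{(i)}$ has a sheaf quantization that sends $F_j\boxtimes k_{[t_i,t_{i+1}]}$, which has singular support in $\widehat\Lambda_j\times[t_i,t_{i+1}]$, to a sheaf $\tilde F^{(i)}$ on $M\times[t_i,t_{i+1}]$ with $\ms(\tilde F^{(i)})\subseteq\widehat\Lambda$. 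The local lifts $\tilde F^{(i)}$ restrict, on the overlap $\{t_{i+1}\}$, to sheaves on $M$ that lie in $\Sh_{\widehat\Lambda_{t_{i+1}}}(M)$; by the fully faithfulness established in the previous paragraph applied to each slab, the local lifts $\tilde F^{(i)}$ and $\tilde F^{(i+1)}$ can be glued uniquely along their common restriction. Iterating produces the desired lift $\tilde F\in\Sh_{\widehat\Lambda}(M\times[0,1])$ with $i_j^\ast\tilde F\simeq F_j$. The main obstacle here is ensuring the gluing is coherent at the level of $\infty$-categories rather than just on objects; for this I would appeal to the sheaf-of-categories structure of $\Sh_{\widehat\Lambda}$ on $[0,1]$, which guarantees that the local equivalences $i_j^\ast$ assemble into a global equivalence since they are compatible on overlaps via the essentially unique isotopy-invariant identifications. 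The microsheaf case then follows by restricting the same argument to conic open subsets of $T^*(M\times[0,1])$ containing $\widehat\Lambda$, or equivalently by noting that $i_j^\ast$ on sheaves intertwines with microlocalization $m_\Lambda$ so that an equivalence on the bottom row descends to one on the top row after sheafifying $\msh^{\mathrm{pre}}_\Lambda$.
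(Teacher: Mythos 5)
This statement is not proved in the paper; it is imported from \cite{Zhou}*{Theorem 3.1} and \cite{Nadler-Shende}*{Theorem 9.14}, so your proposal has to be measured against those arguments. The full-faithfulness half of your plan is broadly in their spirit: restriction at $t=j$ is compared with the family via gapped non-characteristic deformation, as in Theorem \ref{thm:full-faithful-nearby} and Proposition \ref{prop:gapped-fully-faithful}. Even there, note that applying Proposition \ref{prop:gapped-fully-faithful} to two objects of $\Sh_{\widehat\Lambda}(M\times[0,1])$ requires the family $\Lambda_t$ to be (positively) gapped with itself uniformly in $t$; this is exactly where subanalyticity of $\Lambda$ enters and it should be argued, not asserted, since ``cover $[0,1]$ by finitely many subintervals on which the families become positively gapped after a small Reeb push-off'' is the nontrivial estimate in the cited proofs.

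The genuine gap is in essential surjectivity. The definition of a Legendrian deformation only provides a contactomorphism between the \emph{complements} $S^*(M\times[0,1])\setminus\Lambda$ and $S^*(M\times[0,1])\setminus(\Lambda_j\times[0,1])$; it does not give, and in general there does not exist, an ambient contact isotopy carrying $\Lambda_j\times[t_i,t_{i+1}]$ to $\Lambda|_{[t_i,t_{i+1}]}$ --- the paper's footnote to the definition stresses that such a deformation is typically \emph{not} an isotopy of subsets of the ambient manifold (the slices $\Lambda_t$ may change their front, singularity type, or even topology at intermediate times). Hence your step ``the contact isotopy $\ldots$ can be extended to a compactly supported global contact isotopy $\Phi^{(i)}$ $\ldots$ since the isotopy is trivial away from a neighbourhood of $\Lambda$'' is unjustified and false in the intended generality: nothing in the definition makes the identification of complements trivial away from $\Lambda$, and if it always extended across $\Lambda$ the theorem would reduce to GKS invariance and the convex-boundary (``sufficiently Weinstein'') hypothesis would be superfluous. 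Consequently the local lifts $\tilde F^{(i)}$ cannot be produced by Theorem \ref{thm: GKS}. The cited proofs instead analyze nearby-cycle/specialization functors at $t=j$, prove full faithfulness by the gapped estimates, and extract essential surjectivity from the structure of the complement (stop removal and doubling/antimicrolocalization over the interval), not from quantizing an ambient motion. Finally, your gluing step is also not yet a proof: closed slabs meeting in single points do not form a descent datum, and the appeal to ``the sheaf-of-categories structure of $\Sh_{\widehat\Lambda}$ on $[0,1]$'' presupposes constructibility of that sheaf of categories over $[0,1]$ with respect to the deformation, which is essentially the content of the theorem being proved.
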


\subsection{Duality, doubling and fiber sequence} \label{sec:duality fiber sequence}

We recall the result on Sabloff--Serre duality and Sato--Sabloff fiber sequence obtained in \cite[Section 3.2]{LiEstimate} and \cite[Section 4.1]{Kuo-Li-spherical}. These are the key geometric results we need to show the relative Calabi--Yau property.

\begin{theorem}[Sato--Sabloff fiber sequence {\cite[Theorem 4.3]{Kuo-Li-spherical}}]\label{thm:sato-sabloff}
    Let $\Lambda, \Lambda' \subseteq S^*M$ be compact subanalytic Legendrians. Let $T_t: S^*M \to S^*M$ be a non-negative contact flow such that 
    $$\Lambda \cap T_\epsilon(\Lambda') = \varnothing$$
    for any small $\epsilon \neq 0$. Then for $F \in \Sh_{\Lambda}(M)$ and $G \in \Sh_{\Lambda'}(M)$ such that $\mathrm{supp}(F) \cap \mathrm{supp}(G)$ is compact in $M$, there is a fiber sequence
    $$\Hom(F, T_{-\epsilon}G) \to \Hom(F, G) \to \Gamma(\Lambda \cap \Lambda', \mhom(F, G)).$$
\end{theorem}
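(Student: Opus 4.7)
The plan is to realize the asserted fiber sequence as coming from a cofiber sequence in $\Sh(M)$ produced by the Guillermou--Kashiwara--Schapira continuation morphism, and then to compute the third term microlocally using the disjointness of $\Lambda$ and the perturbed $\Lambda'$.

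First, I would invoke Theorem \ref{thm: GKS}: since $T_t$ is non-negative, its restriction to $t \in [-\epsilon,0]$ is a positive contact isotopy from $T_{-\epsilon}$ to the identity, which by the continuation map construction following Theorem \ref{thm: GKS} gives a canonical morphism $c: T_{-\epsilon}G \rightarrow G$. Let $C = \cof(c) \in \Sh(M)$ be its cofiber. Applying $\Hom(F,-)$ to the cofiber sequence $T_{-\epsilon}G \to G \to C$ yields the fiber sequence
$$\Hom(F, T_{-\epsilon}G) \to \Hom(F,G) \to \Hom(F, C),$$
so the problem reduces to identifying $\Hom(F,C)$ with $\Gamma(\Lambda \cap \Lambda',\mhom(F,G))$.

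Next, I would analyze $\msnz(C)$. By Theorem \ref{thm: GKS}, $\msnz(T_sG) = T_s\Lambda'$, and $C$ arises as the cofiber of the morphism of endpoints of the family $\{T_sG\}_{s \in [-\epsilon,0]}$ realized via the GKS kernel on $M \times M \times [-\epsilon,0]$. A microsupport estimate for the kernel convolution gives
$$\msnz(C) \subseteq T_{-\epsilon}\Lambda' \cup \bigcup_{s \in [-\epsilon,0]} T_s\Lambda',$$
a small conic neighborhood of $\Lambda'$. By the hypothesis $\Lambda \cap T_s(\Lambda') = \varnothing$ for all small $s \neq 0$ together with the compactness of the Legendrians, this set meets $\Lambda$ only along $\Lambda \cap \Lambda'$. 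Moreover, in any conic open neighborhood $U$ of $\Lambda \cap \Lambda'$ disjoint from $T_{-\epsilon}\Lambda'$, the microsheaf $\mhom(F, T_{-\epsilon}G)$ vanishes on $U$, so applying $\mhom(F,-)$ to the cofiber sequence gives $\mhom(F,C)|_U \simeq \mhom(F,G)|_U$.

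Finally, I would compute $\Hom(F, C)$ via Kashiwara--Schapira's microlocalization \cite[Theorem 6.1.2]{KS}: since $\msnz(F) \cap \msnz(C) \subseteq \Lambda \cap \Lambda'$, together with the compact support hypothesis on $\supp(F) \cap \supp(G)$ reducing to a compact neighborhood in $M$, and shrinking $U$ onto $\Lambda \cap \Lambda'$, this yields
$$\Hom(F,C) \simeq \Gamma(U, \mhom(F,C)) \simeq \Gamma(\Lambda \cap \Lambda', \mhom(F, G)).$$
The main obstacle is this last step, specifically the rigorous reduction from the global Hom to global sections of $\mhom$ on $\Lambda \cap \Lambda'$. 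A natural strategy is to choose a sufficiently fine conic open cover of a neighborhood of $\Lambda \cap \Lambda'$, exploit the (co)sheaf property of $\mhom$ via a Mayer--Vietoris / descent argument, and verify that the compact support hypothesis, combined with the geometric decoupling provided by $\Lambda \cap T_s(\Lambda') = \varnothing$ for $s \neq 0$, forces contributions from conic opens outside $\Lambda \cap \Lambda'$ to vanish in the colimit.
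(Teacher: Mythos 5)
The paper itself contains no proof of this statement: Theorem \ref{thm:sato-sabloff} is recalled verbatim from \cite[Theorem 4.3]{Kuo-Li-spherical} (see also \cite[Section 3.2]{LiEstimate}), so there is no internal argument to compare yours against. On its own terms, your skeleton is the right one and matches the strategy of the cited proof: the continuation map $c\colon T_{-\epsilon}G \to G$ from the non-negative isotopy, the cofiber $C=\mathrm{cof}(c)$, the estimate $\msif(C)\subseteq \Lambda' \cup T_{-\epsilon}(\Lambda')$, and the observation that $\mhom(F,T_{-\epsilon}G)$ vanishes at infinity because $\Lambda\cap T_{-\epsilon}(\Lambda')=\varnothing$, so that $\mhom(F,C)\simeq\mhom(F,G)$ near $\Lambda\cap\Lambda'$, are all correct.

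The genuine gap is the step you yourself flag, $\Hom(F,C)\simeq\Gamma(\Lambda\cap\Lambda',\mhom(F,G))$, and the fix you sketch does not address it. \cite[Theorem 6.1.2]{KS} computes morphism spaces in the localized category $D(M;\Omega)$, not in $\Sh(M)$: by Sato's distinguished triangle \cite[Section 4.4]{KS} one has $\LD{M}(F)\otimes C \to \sHom(F,C) \to \dot{\pi}_*\bigl(\mhom(F,C)|_{\dot{T}^*M}\bigr)$, so the global $\Hom(F,C)$ differs from sections of $\mhom(F,C)$ at infinity by the zero-section term $\Gamma(M,\LD{M}(F)\otimes C)$. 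The actual content of the theorem is that this term vanishes, i.e.\ that the continuation map induces an equivalence $\Gamma(M,\LD{M}(F)\otimes T_{-\epsilon}G)\xrightarrow{\sim}\Gamma(M,\LD{M}(F)\otimes G)$; this is a gapped non-characteristic deformation statement over the interval $[-\epsilon,0]$ (the mechanism of Theorem \ref{thm:full-faithful-nearby} and Proposition \ref{prop:perturbation}), and it is precisely where the hypothesis that $\supp(F)\cap\supp(G)$ is compact must be used — Remark \ref{rem:compact-support} gives a counterexample without it, so any argument in which this hypothesis plays no visible role cannot be complete. Your proposed Mayer--Vietoris/descent over fine conic covers operates entirely at infinity, where $\supp\,\mhom(F,C)$ is already confined to $\Lambda\cap\Lambda'$, so refining the cover gains nothing and never invokes compactness; the missing ingredient is instead to run the family $\{T_sG\}_{s\in[-\epsilon,0]}$ as a sheaf on $M\times[-\epsilon,0]$ and apply the gapped deformation argument (using properness of $\supp F\cap\supp G$) to control the zero-section contribution, as is done in the proof of \cite[Theorem 4.3]{Kuo-Li-spherical}.
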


\begin{theorem}[Sabloff--Serre duality {\cite[Proposition 4.11]{Kuo-Li-spherical}}]\label{thm:sabloff-serre}
    Let $\Lambda, \Lambda' \subseteq S^*M$ be compact subanalytic Legendrians. Let $T_t: S^*M \to S^*M$ be a non-negative contact flow such that
    $$\Lambda \cap T_\epsilon(\Lambda') = \varnothing$$
    for any small $\epsilon \neq 0$. Then for $F \in \Sh_{\Lambda}(M)$ and $G \in \Sh_{\Lambda'}(M)$ such that $\mathrm{supp}(F) \cap \mathrm{supp}(G)$ is compact in $M$ and $F$ has perfect stalks, there is an isomorphism
    $$\Hom(F, T_{-\epsilon}G \otimes \omega_M) \simeq p_!(\SD{M}(F) \otimes G) \simeq \Hom(G, F)^\vee.$$
    In particular, when $\omega_M = 1_M[-n]$, $\Hom(F, T_{-\epsilon}G)[-n] \simeq p_!(\LD{M}(F) \otimes G)[-n] \simeq \Hom(G, F)^\vee$.
\end{theorem}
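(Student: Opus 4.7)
The plan is to establish the two equivalences separately. Both are built around classical Verdier duality for constructible sheaves on $M$, with the Reeb shift $T_{-\epsilon}$ serving to arrange singular supports in a generic position so that a certain continuation map becomes invertible.

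For the second equivalence $p_!(\SD{M}(F) \otimes G) \simeq \Hom(G, F)^\vee$, I would compute the $\cV$-linear dual of the left-hand side by a chain of adjunctions: the $(p_!, p^!)$-adjunction together with $p^!\, 1_\cV = \omega_M$, the tensor--hom adjunction, and the reflexivity $\SD{M}\SD{M}(F) \simeq F$ which holds since $F$ is constructible with perfect stalks. Concretely,
$$\Hom(p_!(\SD{M}(F) \otimes G), 1_\cV) \simeq \Hom(\SD{M}(F) \otimes G, \omega_M) \simeq \Hom(G, \SD{M}\SD{M}(F)) \simeq \Hom(G, F).$$
Dualizing, and using that $\Hom(G, F)$ is dualizable in $\cV$ (a consequence of the compactness of $\mathrm{supp}(F) \cap \mathrm{supp}(G)$ together with $F$ having perfect stalks), one concludes $p_!(\SD{M}(F) \otimes G) \simeq \Hom(G, F)^\vee$.

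For the first equivalence, the perfect-stalk hypothesis on $F$ gives the identification $\sHom(F, H) \simeq \sHom(F, 1_M) \otimes H$, and combined with the invertibility of $\omega_M$ and the formula $\SD{M}(F) = \sHom(F, 1_M) \otimes \omega_M$, one obtains
$$\sHom(F, T_{-\epsilon}G \otimes \omega_M) \simeq \SD{M}(F) \otimes T_{-\epsilon}G.$$
Taking $R\Gamma$ and noting that the support of the right-hand side is contained in $\mathrm{supp}(F) \cap \mathrm{supp}(T_{-\epsilon}G)$, which remains compact for sufficiently small $\epsilon$ (since $T_{-\epsilon}$ is a small perturbation of the identity and $\mathrm{supp}(F) \cap \mathrm{supp}(G)$ is compact by assumption), one can replace $R\Gamma$ by $p_!$ and obtain $\Hom(F, T_{-\epsilon}G \otimes \omega_M) \simeq p_!(\SD{M}(F) \otimes T_{-\epsilon}G)$.

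The main obstacle is then to replace $T_{-\epsilon}G$ by $G$ inside $p_!$. For this I would invoke the Sato--Sabloff fiber sequence (Theorem \ref{thm:sato-sabloff}) together with the Reeb perturbation lemma (Proposition \ref{prop:perturbation}): under the transversality hypothesis $\Lambda \cap T_\epsilon(\Lambda') = \varnothing$, the $\mhom$-term appearing in the Sato--Sabloff fiber sequence vanishes, so the continuation map $T_{-\epsilon}G \to G$ becomes an equivalence after pairing against $\SD{M}(F)$ and applying $p_!$. This yields $p_!(\SD{M}(F) \otimes T_{-\epsilon}G) \simeq p_!(\SD{M}(F) \otimes G)$, and combining with the previous paragraph completes the proof of the first equivalence.
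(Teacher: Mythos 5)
Your derivation of the first equivalence leans on the claim that perfect stalks of $F$ alone give $\sHom(F,H)\simeq \sHom(F,1_M)\otimes H$ for arbitrary $H$. That identity is false: already for $F=H=1_{[0,1]}$ on $M=\bR$ one has $\sHom(F,F)\simeq 1_{[0,1]}$ while $\sHom(F,1_\bR)\otimes F\simeq 1_{(0,1)}$. The map $\LD{M}(F)\otimes H\to\sHom(F,H)$ is an equivalence for cohomologically constructible $F$ only when $H$ is locally constant or, what matters here, when $\ms(F)\cap\ms(H)$ lies in the zero section; so the identification $\sHom(F,T_{-\epsilon}G\otimes\omega_M)\simeq \SD{M}(F)\otimes T_{-\epsilon}G$ is true, but because $\Lambda\cap T_{-\epsilon}(\Lambda')=\varnothing$, not because of perfect stalks -- note that your lemma applied at $\epsilon=0$ would give $\Hom(F,G\otimes\omega_M)\simeq p_!(\SD{M}(F)\otimes G)$, which already fails in the example above ($1_\cV[1]$ versus $1_\cV$), so the error is exactly at the point where the geometry must enter. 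More seriously, the remaining step, replacing $T_{-\epsilon}G$ by $G$ under $p_!(\SD{M}(F)\otimes -)$, is not justified by Theorem \ref{thm:sato-sabloff} and Proposition \ref{prop:perturbation}: those control the functor $\Hom(F,-)$, not the pairing $p_!(\SD{M}(F)\otimes -)$, and the $\mhom$-term in the Sato--Sabloff sequence for the pair $(F,G)$ lives on $\Lambda\cap\Lambda'$, which is nonempty in every application in this paper (e.g.\ $\Lambda=\Lambda'$); the hypothesis only empties the intersection after a nonzero shift. What you actually need is a statement such as $p_!\bigl(\SD{M}(F)\otimes\mathrm{Cofib}(T_{-\epsilon}G\to G)\bigr)\simeq 0$, a compactly supported (dual) analogue of the perturbation lemma, and that is precisely the content of the quoted source; this paper imports the theorem from there and contains no proof to compare against, so this step cannot be waved through by citing the in-paper results.

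For the second equivalence, your chain of adjunctions correctly shows $\bigl(p_!(\SD{M}(F)\otimes G)\bigr)^\vee\simeq\Hom(G,F)$, but passing from this to $p_!(\SD{M}(F)\otimes G)\simeq\Hom(G,F)^\vee$ requires $p_!(\SD{M}(F)\otimes G)$ to be reflexive, and neither that nor your claim that $\Hom(G,F)$ is dualizable follows from the stated hypotheses: nothing is assumed about the stalks of $G$. If $G$ is a constant sheaf on a compact set with non-dualizable stalk $V$, then $p_!(\SD{M}(F)\otimes G)\simeq V$ while $\Hom(G,F)^\vee\simeq V^{\vee\vee}$, and in general $X^\vee\simeq Y$ does not yield $X\simeq Y^\vee$; so a purely formal dualization cannot work (and indeed the last identification itself needs stalkwise dualizability of $G$, as in the later uses with $G\in\Sh^b_{\widehat\Lambda}(M)$). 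The workable direction is to apply Verdier duality, $p_!\circ\SD{M}\simeq(-)^\vee\circ p_*$, to $\sHom(G,F)$ and then identify $\SD{M}\sHom(G,F)$ with $\SD{M}(F)\otimes G$, which again uses the perfect stalks of $F$ together with constructibility and the microsupport geometry. A smaller point: compactness of $\mathrm{supp}(F)\cap\mathrm{supp}(T_{-\epsilon}G)$ is not a consequence of $T_{-\epsilon}$ being a small perturbation of the identity; it uses that $F$ and $G$ are locally constant outside the compact sets $\pi(\Lambda)$ and $\pi(\Lambda')$, so that near infinity their supports are unions of components of the complement of a compact set.
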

\begin{remark}\label{rem:compact-support}
    The assumption that $\mathrm{supp}(F) \cap \mathrm{supp}(G)$ is compact is essential. For example, consider $M = \bR$ and $\Lambda = \Lambda' = \{(0, 1)\} \subseteq S^*\bR$. Let $F = G = 1_{[0, +\infty)}$ whose supports are non-compact. Then it is straightforward to check that
    $$\Hom(F, T_{-\epsilon}G) = 0, \; \Hom(G, F) = 1_\cV.$$
\end{remark}

More generally, since microsheaves form a sheaf of categories, we can also consider homomorphisms of microsheaves on an open subset. This can also be computed using contact isotopies.

\begin{theorem}[relative Sato--Sabloff fiber sequence {\cite[Theorem 4.45]{Kuo-Li-spherical}}]\label{thm:sato-sabloff-rel}
    Let $\Lambda, \Lambda' \subseteq S^*M$ be a subanalytic Legendrians and $\Omega \subseteq \Lambda \cap \Lambda'$ be a pre-compact open subset. Let $\hat{T}_t: S^*M \to S^*M$ be a non-negative contact flow such that $\hat H^{-1}(0) \cap (\Lambda \cap \Lambda') = (\Lambda \cap \Lambda') \setminus \Omega$ and 
    $$\Lambda \cap \hat{T}_\epsilon(\Omega \cap \Lambda') = \varnothing$$
    for any small $\epsilon \neq 0$. Then for $F \in \Sh_{\Lambda}(M)$ and $G \in \Sh_{\Lambda'}(M)$ such that $\mathrm{supp}(F) \cap \mathrm{supp}(G)$ is compact in $M$, there is a fiber sequence
    $$\Hom(F, \hat T_{-\epsilon}G) \to \Hom(F, G) \to \Gamma(\Omega, \mhom(F, G)).$$
\end{theorem}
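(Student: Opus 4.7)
\medskip

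\noindent\emph{Proof proposal.}
The overall plan is to reduce to an absolute Sabloff-style computation (Theorem \ref{thm:sato-sabloff}) by cutting with the continuation map and localizing via $\mhom$. First, since $\hat T_t$ is non-negative, Section 3.2 gives a canonical continuation morphism $c: \hat T_{-\epsilon}G \to G$ for all sufficiently small $\epsilon>0$. Set $C_\epsilon:=\mathrm{cofib}(c)$; applying $\Hom(F,-)$ yields a fiber sequence
\[
\Hom(F,\hat T_{-\epsilon}G)\longrightarrow \Hom(F,G)\longrightarrow \Hom(F,C_\epsilon).
\]
The task is then to exhibit a natural equivalence $\Hom(F,C_\epsilon)\simeq \Gamma(\Omega,\mhom(F,G))$ that is compatible with the projection from $\Hom(F,G)$.

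Next, I would analyze the microsupport of $C_\epsilon$. By Theorem \ref{thm: GKS}, $\msnz(\hat T_{-\epsilon}G)=\hat T_{-\epsilon}(\msnz(G))$, and standard microsupport estimates for cones of continuation maps place $\msnz(C_\epsilon)$ inside the swept region $\bigcup_{s\in[-\epsilon,0]}\hat T_s(\Lambda')$. Because $\hat T_s$ is stationary on the zero set of $\hat H$, which by hypothesis contains $(\Lambda\cap\Lambda')\setminus\Omega$, the sheaf $C_\epsilon$ is microlocally zero in a neighborhood of each such fixed point. Combined with the non-intersection hypothesis $\Lambda\cap\hat T_\epsilon(\Omega\cap\Lambda')=\varnothing$ and a compactness argument on $[-\epsilon,0]$, one sees that, for $\epsilon$ small enough, the intersection
\[
\Lambda \ \cap\ \bigcup_{s\in[-\epsilon,0]}\hat T_s(\Lambda')
\]
is contained in an arbitrarily small neighborhood $\tilde\Omega\subseteq S^*M$ of $\Omega$ with $\tilde\Omega\cap(\Lambda\cap\Lambda')=\Omega$.

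Now I would use the locality of $\mhom$. The compact support hypothesis on $F$ and $G$, together with the support estimate above, lets us write $\Hom(F,C_\epsilon)=\Gamma(\tilde\Omega,\mhom(F,C_\epsilon))$. On $\tilde\Omega$ the same non-intersection hypothesis forces $\mhom(F,\hat T_{-\epsilon}G)|_{\tilde\Omega}=0$, so the cofiber sequence induced by $c$ yields $\mhom(F,C_\epsilon)|_{\tilde\Omega}\simeq \mhom(F,G)|_{\tilde\Omega}$, and since this sheaf is supported in $\Omega\subseteq\tilde\Omega$, taking global sections gives the desired identification
\[
\Hom(F,C_\epsilon)\ \simeq\ \Gamma(\Omega,\mhom(F,G)),
\]
compatible by construction with the natural map from $\Hom(F,G)$.

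The main obstacle I anticipate is the third step: controlling $\msnz(C_\epsilon)$ uniformly near the boundary of $\Omega$ inside $\Lambda\cap\Lambda'$, where $\hat H$ degenerates to zero. This is where both the openness and pre-compactness of $\Omega$ and the quantitative hypothesis $\Lambda\cap\hat T_\epsilon(\Omega\cap\Lambda')=\varnothing$ (for all small $\epsilon\ne 0$, not just one $\epsilon$) are essential: they ensure that the swept Lagrangian sheet separates from $\Lambda$ over the closure of $\Omega$ and coincides with the fixed locus over its complement, so that the neighborhood $\tilde\Omega$ above genuinely exists. A supplementary check, analogous to Remark \ref{rem:compact-support}, is that the compact-support condition on $\mathrm{supp}(F)\cap\mathrm{supp}(G)$ is what allows us to identify $\Hom(F,C_\epsilon)$ with sections of $\mhom(F,C_\epsilon)$ over $T^*M$; once that is in place, the rest of the argument is formal manipulation of cofiber sequences and the sheaf property of $\mhom$.
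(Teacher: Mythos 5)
This theorem is not proved in the paper at all: it is quoted from \cite[Theorem 4.45]{Kuo-Li-spherical}, so your argument has to stand on its own, and its overall shape (take the cofiber $C_\epsilon$ of the continuation map $\hat T_{-\epsilon}G\to G$ and identify $\Hom(F,C_\epsilon)$ with $\Gamma(\Omega,\mhom(F,G))$) is reasonable. However, the two steps carrying the real weight are not justified, and one of them is false as stated. You claim that $\Lambda\cap\bigcup_{s\in[-\epsilon,0]}\hat T_s(\Lambda')$ is contained in an arbitrarily small neighbourhood $\tilde\Omega$ of $\Omega$ and that $C_\epsilon$ is microlocally trivial near the fixed points. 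Since $\hat H\ge 0$, every zero of $\hat H$ is a critical point and hence a fixed point of the flow, so every point of $(\Lambda\cap\Lambda')\setminus\Omega$ lies in $\Lambda\cap\hat T_s(\Lambda')$ for all $s$, including points arbitrarily close to $\partial\Omega$; no neighbourhood of $\Omega$ contains this locus. Moreover, a point being fixed does not make the continuation map a microlocal equivalence there: the hypothesis controls $\hat H$ only on $\Lambda\cap\Lambda'$, not on a neighbourhood, so nothing you have said rules out $\mhom(F,C_\epsilon)$ being nonzero along $(\Lambda\cap\Lambda')\setminus\Omega$, and separating the contribution of $\Omega$ from that of its boundary is precisely the technical content of the cited proof (which uses Hamiltonians cut off near $\Omega$ together with the gapped deformation results of the type recorded here as Theorem \ref{thm:full-faithful-nearby} and Proposition \ref{prop:perturbation}); it does not follow from the swept-region microsupport estimate.

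Second, even granting full control of $\msif(C_\epsilon)$, the identification $\Hom(F,C_\epsilon)\simeq\Gamma(\tilde\Omega,\mhom(F,C_\epsilon))$ is not a formal consequence of compact supports. One has $\Hom(F,C_\epsilon)=\Gamma(T^*M,\mhom(F,C_\epsilon))$, and Sato's exact triangle splits this into a zero-section term of the form $\Gamma(M,D'F\otimes C_\epsilon)$ plus the part at infinity; your estimate on $\msif(C_\epsilon)$ says nothing about the zero-section term, and $C_\epsilon$ is typically supported on the whole region swept by the front of $G$, which meets $\mathrm{supp}(F)$. Showing that this term does not contribute, i.e.\ that the cofiber of $\Hom(F,\hat T_{-\epsilon}G)\to\Hom(F,G)$ only sees the locus at infinity where the flow genuinely displaces $\Lambda'$ off $\Lambda$, is the heart of the Sato--Sabloff mechanism (already in the absolute case, Theorem \ref{thm:sato-sabloff}), and it is exactly where the displacement hypothesis and the compactness of $\mathrm{supp}(F)\cap\mathrm{supp}(G)$ must enter through a genuine perturbation/nearby-cycle argument rather than through the bookkeeping where you invoke them. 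As written, the proposal assumes the conclusion at both of these points, so it does not constitute a proof; repairing it would essentially reproduce the localized-Hamiltonian argument of \cite[Theorem 4.45]{Kuo-Li-spherical}.
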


As an application, we use the Sato--Sabloff sequence to compute the endomorphism of the constant microsheaves supported on the conormal bundles. Note that the following computation also follows from \cite[Theorem 4.2.3]{KS}.

\begin{lemma}\label{lem:mhom-computation}
Let $Z \subseteq M$ be a closed submanifold. Then there is an isomorphism
$$\mhom(1_Z, 1_Z)|_{S^*M} = 1_{S^*_ZM}.$$
\end{lemma}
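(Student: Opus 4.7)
The plan is a direct calculation. The microsheaf $\mhom(1_Z, 1_Z)|_{S^*M}$ is supported on $\msif(1_Z) = S^*_Z M$, and the claim is local on $S^*M$: after passing to a local trivialization of a tubular neighborhood of $Z$ we may work in the standard model $(M, Z) = (\bR^n, \bR^k \times \{0\})$, in which translation and rotation invariance reduce the lemma to computing a single stalk and checking that the transition maps are identities.

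Using the Kashiwara--Schapira formula $\mhom(F, G) = \mu_\Delta \mathcal{RH}om(q_2^{-1} F, q_1^! G)$ with $F = G = 1_Z$: the closed-embedding identity $\iota^! 1_{Z \times M} = 1_{Z \times Z}[-(n-k)]$ combined with $q_1^! 1_Z = 1_{Z \times M}[n]$ gives $\mathcal{RH}om(q_2^{-1} 1_Z, q_1^! 1_Z) = 1_{Z \times Z}[k]$. The specialization along the diagonal, $\nu_\Delta(1_{Z \times Z})$, is the constant sheaf $1_{TZ}$ on $T\Delta_M \cong TM$, and its fiberwise Fourier--Sato transform is $1_{T^*_Z M}[-k]$, reflecting that Fourier--Sato sends $1_V$ for a $k$-dimensional linear subspace $V$ of a vector space $E$ to $1_{V^\perp}[-k]$ on $E^*$. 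These shifts cancel to give $\mu_\Delta(1_{Z \times Z}[k]) = 1_{T^*_Z M}$; restriction to $\dT^*M$ and descent to $S^*M$ then yield $\mhom(1_Z, 1_Z)|_{S^*M} = 1_{S^*_Z M}$. Equivalently, one invokes \cite[Theorem 4.2.3]{KS} to identify $\mhom(1_Z, F) \cong \mu_Z(F)$ and reduces the claim to $\mu_Z(1_Z) = 1_{T^*_Z M}$, obtained by the same specialization-plus-Fourier--Sato package in slightly different packaging.

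The main obstacle is tracking the shifts from $q_1^!$, $\iota^!$, and Fourier--Sato and verifying they cancel; the cancellation relies on the orientation of $M$ and of the normal bundle of $Z$ in $M$, which is implicit in the lemma statement. One could also derive the result within the paper's framework via the Sato--Sabloff fiber sequence (Theorem \ref{thm:sato-sabloff-rel}), localizing to a small contractible open $\Omega \subseteq S^*_Z M$ and using a cutoff contact flow $\hat T_t$ to displace $\Omega \cap S^*_Z M$ off $S^*_Z M$; however, this requires explicit identification of the pushed-off sheaf $\hat T_{-\epsilon} 1_{Z \cap \bar B}$ (for a closed ball $\bar B$ restoring compactness) and the induced map $\Hom(1_{Z \cap \bar B}, \hat T_{-\epsilon} 1_{Z \cap \bar B}) \to \Hom(1_{Z \cap \bar B}, 1_{Z \cap \bar B})$, which is comparable in subtlety to the direct Kashiwara--Schapira calculation above.
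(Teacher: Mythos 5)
Your argument is correct, but it is a genuinely different route from the paper's. You compute $\mhom(1_Z,1_Z)$ directly from the Kashiwara--Schapira definition (specialization along the diagonal followed by the fiberwise Fourier--Sato transform, or equivalently the reduction $\mhom(1_Z,-)\simeq\mu_Z(-)$ together with $\mu_Z(1_Z)=1_{T^*_ZM}$), a route the paper explicitly acknowledges is available (``the following computation also follows from \cite{KS}'') but deliberately does not take; the paper instead stays inside its contact-isotopy toolkit, using GKS quantization (Theorem \ref{thm: GKS}) of the geodesic flow to trade $S^*_ZM$ for the outward conormal of the boundary of a tubular neighbourhood $U(Z)$, then the relative Sato--Sabloff sequence (Theorem \ref{thm:sato-sabloff-rel}) with cut-off nonnegative flows to identify $\Gamma(\Omega,\mhom(1_{U(Z)},1_{U(Z)}))\simeq\Gamma(\Omega,1_\Omega)$ for every open $\Omega$, and finally a check that these identifications are compatible with restriction so that they assemble into a sheaf isomorphism. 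Your route buys a canonical global isomorphism in one stroke (functoriality in $\Omega$ is automatic, and no compact-support or cut-off issues of the Sato--Sabloff machinery arise), while the paper's route avoids the specialization/Fourier--Sato formalism and matches the methods used throughout the paper. One small correction: no orientability of $M$ or of the normal bundle is needed, and none is implicit in the statement. If you keep the twists, $q_1^!1_Z\simeq 1_{Z\times M}\otimes \mathrm{or}_M[n]$, the local-cohomology step contributes $\mathrm{or}_{Z/M}[-(n-k)]$, so before microlocalization you have $1_{Z\times Z}\otimes \mathrm{or}_Z[k]$; the Fourier--Sato transform of the constant sheaf on the rank-$k$ subbundle $TZ\subseteq TM$ contributes a further $\mathrm{or}_Z[-k]$, and $\mathrm{or}_Z\otimes\mathrm{or}_Z$ is canonically trivial. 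Even more simply, via $\mu_Z$: $\nu_Z(1_Z)$ is the constant sheaf on the zero section of $T_ZM$, and its Fourier--Sato transform is $1_{T^*_ZM}$ with no shift or twist at all, so the lemma holds unconditionally, exactly as stated.
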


\begin{proof}
Consider the small positive pushoff by the geodesic flow on $S^*M$. Using the invariance of microsheaves under contact transformations Theorem \ref{thm: GKS}, it suffices to show the sheaf 
$$\mhom(1_{U(Z)}, 1_{U(Z)})  = 1_{S^*_{\partial U(Z), +} U(Z)},$$
where $U(Z)$ is a tubular neighbourhood of $Z$ and $S^*_{\partial U(Z), +} U(Z)$ is the outward unit conormal bundle of $\partial U(Z)$.
For any open subset $\Omega$, we consider a non-negative contact flow $\hat{T}_t: S^*M \to S^*M$ supported on $\overline{\Omega}$ with defining Hamiltonian $\hat{H}$. Applying Theorem \ref{thm:sato-sabloff-rel} 
, we can obtain the fiber sequence
$$ \Hom(1_{U(Z)}, \hat{T}_{-\epsilon} 1_{U(Z)}) \rightarrow \Hom(1_{U(Z)}, 1_{U(Z)}) \to \Gamma_c(\Omega, \mhom(1_{U(Z)}, 1_{U(Z)})).$$
Since $\pi: S^*_{\partial U(Z), +} U(Z) \to M$ is a smooth embedding whose image is $\partial U(Z)$, we know that for sufficiently small $\epsilon > 0$, $\pi: \hat{T}_{-\epsilon}(S^*_{\partial U(Z), +} U(Z)) \to M$ is also a smooth embedding whose image is $\partial \hat{U}_{-\epsilon}(Z)$, where $\hat{U}_{-\epsilon}(Z) \subseteq U(Z)$ is a smaller tubular neighbourhood and $U(Z) \setminus \hat{U}_{-\epsilon}(Z) \cong \pi(\Omega) \times [-\epsilon, 0)$. Therefore, we know that $\hat{T}_{-\epsilon} 1_{U(Z)} = 1_{\hat{U}_{-\epsilon}(Z)}$ and
\begin{align*}
\Gamma(\Omega, \mhom(1_{U(Z)}, 1_{U(Z)})) &= \Hom(1_{U(Z)}, \mathrm{Cofib}(\hat{T}_{-\epsilon} 1_{U(Z)} \to 1_{U(Z)})) \\
&= \Hom(1_{U(Z)}, 1_{U(Z) \setminus \hat{U}_{-\epsilon}(Z)}) = \Gamma(\Omega, 1_\Omega).
\end{align*}
For any $\Omega' \subseteq \Omega$, we can consider a non-negative contact flow $\hat{T}_t': S^*M \to S^*M$ supported on $\overline{\Omega}$ with defining Hamiltonian $\hat{H}' \leq \hat{H}$. Then it follows that for sufficiently small $\epsilon > 0$, $\pi: \hat{T}_{-\epsilon}'({S^*}_{\partial U(Z), +} U(Z)) \to M$ is also a smooth embedding whose image is $\partial \hat{U}'_{-\epsilon}(Z)$, where $\hat{U}_{-\epsilon}'(Z) \subseteq \hat{U}_{-\epsilon}(Z) \subseteq U(Z)$ is a smaller tubular neighbourhood and $U(Z) \setminus \hat{U}_{-\epsilon}'(Z) \cong \pi(\Omega') \times [-\epsilon, 0)$. Therefore, we have a commutative diagram 
\[\xymatrix{
\Gamma(\Omega, \mhom(1_{U(Z)}, 1_{U(Z)})) \ar[r]^-{\sim} \ar[d] & \Hom(1_{U(Z)}, 1_{U(Z) \setminus \hat{U}_{-\epsilon}(Z)}) \ar[r]^-{\sim} \ar[d] & \Gamma(\Omega, 1_{\Omega}) \ar[d] \\
\Gamma(\Omega', \mhom(1_{U(Z)}, 1_{U(Z)})) \ar[r]^-{\sim} & \Hom(1_{U(Z)}, 1_{U(Z) \setminus \hat{U}_{-\epsilon}'(Z)}) \ar[r]^-{\sim} & \Gamma(\Omega', 1_{\Omega'}).
}\]
Therefore, we can conclude that $\mhom(1_{U(Z)}, 1_{U(Z)})|_{S^*M} = 1_{S^*_{\partial U(Z), +} U(Z)}$.
\end{proof}

    Fix some Reeb flow $T_t: S^*M \to S^*M$. Consider $\msh_{\Lambda}(\Lambda)$ and $\Sh_{\widehat{\Lambda}_{\cup, \epsilon}}(M)$ where 
    $$\widehat{\Lambda}_{\cup, \epsilon} = (\Lambda \times \bR_+) \cup (T_\epsilon(\Lambda) \times \bR_+) \cup \bigcup\nolimits_{0 \leq s \leq \epsilon} \pi(T_s(\Lambda)).$$
    Consider the fully faithful embedding given by the doubling functor. We can give an explicit characterization of its essential image. See also \cite[Section 6.2]{Nadler-Shende}.

\begin{theorem}[{\cite[Theorem 4.1 \& Proposition 6.8]{Kuo-Li-spherical}}] \label{doubling}
    Let $\Lambda \subseteq S^*M$ be a compact subanalytic Legendrian. Then for $\epsilon > 0$ sufficiently small, for some Reeb flow $T_t: S^*M \to S^*M$, there is a fully faithful embedding
    $$w_\Lambda: \msh_{\Lambda}(\Lambda) \hookrightarrow \Sh_{\Lambda \cup T_\epsilon(\Lambda)}(M).$$
    Moreover, there is a recollement 
    $\msh_\Lambda(\Lambda) \hookrightarrow \Sh_{\Lambda \cup \Lambda_\epsilon}(M) \to \Sh_\Lambda (M).$ For $\epsilon > 0$ sufficiently small, the doubling functor gives an equivalence 
    $$w_\Lambda: \msh_{\Lambda}(\Lambda) \xrightarrow{\sim} \Sh_{\widehat\Lambda_{\cup,\epsilon}}(M).$$
\end{theorem}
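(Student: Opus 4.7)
The doubling functor should be defined as a cofiber of a continuation morphism. For $F \in \msh_\Lambda(\Lambda)$, set
\[ w_\Lambda(F) := \mathrm{Cofib}\bigl(c_F \colon m_\Lambda^l(F) \to T_\epsilon m_\Lambda^l(F)\bigr), \]
where $c_F$ is the GKS continuation morphism from the positive Reeb flow (Theorem~\ref{thm: GKS}). The standard cone estimate for singular support yields $\msif(w_\Lambda F) \subseteq \Lambda \cup T_\epsilon(\Lambda)$, and since $c_F$ is supported in the tube $\bigcup_{0 \leq s \leq \epsilon} \pi(T_s \Lambda)$, the cofiber lands in $\Sh_{\widehat\Lambda_{\cup, \epsilon}}(M)$.

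For fully faithfulness, I would compute $\Hom(w_\Lambda F, w_\Lambda G)$ by unfolding both defining cofiber sequences. Writing $\tilde F = m_\Lambda^l F$ and $\tilde G = m_\Lambda^l G$, Sato--Sabloff (Theorem~\ref{thm:sato-sabloff}) applied with target $T_\epsilon \tilde G$ makes the $\Gamma$-term vanish since $\Lambda \cap T_\epsilon \Lambda = \varnothing$, forcing $\Hom(\tilde F, \tilde G) \xrightarrow{\sim} \Hom(\tilde F, T_\epsilon \tilde G)$ via the continuation; hence $\Hom(\tilde F, w_\Lambda G) = 0$. On the other hand, contact invariance from Theorem~\ref{thm: GKS} gives $\Hom(T_\epsilon \tilde F, T_\epsilon \tilde G) \simeq \Hom(\tilde F, \tilde G)$, and another application of Sato--Sabloff yields
\[ \Hom(T_\epsilon \tilde F, w_\Lambda G) \simeq \Gamma(\Lambda, \mhom(\tilde F, \tilde G)) = \Hom_{\msh_\Lambda(\Lambda)}(F, G), \]
using $m_\Lambda \circ m_\Lambda^l \simeq \mathrm{id}$. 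Combining the two gives $\Hom(w_\Lambda F, w_\Lambda G) \simeq \Hom_{\msh_\Lambda(\Lambda)}(F, G)$.

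For the recollement $\msh_\Lambda(\Lambda) \hookrightarrow \Sh_{\Lambda \cup T_\epsilon \Lambda}(M) \to \Sh_\Lambda(M)$, the right-hand map is the stop-removal left adjoint to the natural inclusion $\Sh_\Lambda(M) \hookrightarrow \Sh_{\Lambda \cup T_\epsilon \Lambda}(M)$ (Proposition~\ref{prop:stopremoval}); the orthogonality $j^* w_\Lambda = 0$ is a direct consequence of the previous Sato--Sabloff computation. For essential surjectivity onto $\Sh_{\widehat\Lambda_{\cup, \epsilon}}(M)$, given $H$ in the target I would transport its microsheaf at $T_\epsilon \Lambda$ back to $\Lambda$ via the contact-flow equivalence (Theorem~\ref{thm: GKS}) to obtain $F \in \msh_\Lambda(\Lambda)$, and use the recollement fiber sequence to identify $H \simeq w_\Lambda(F)$; the compact support hypothesis forces the $\Sh_\Lambda(M)$-piece of the decomposition to vanish.

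The main obstacle is this essential surjectivity step, where one must produce an inverse reconstruction canonically from the microsheaf data. The smallness of $\epsilon$ is essential: it guarantees that the Reeb movie $T_s \Lambda$ stays non-characteristic and positively gapped with itself, enabling Proposition~\ref{prop:gapped-fully-faithful} to rigidify the reconstruction; and the compact support condition prevents sheaves of $\Sh_\Lambda(M)$ whose support leaks beyond the tube $\bigcup_s \pi(T_s\Lambda)$ from contaminating the image of $w_\Lambda$.
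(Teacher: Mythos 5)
There is a genuine gap, and it sits at the very first step: the definition. You set $w_\Lambda(F) := \mathrm{Cofib}\bigl(m_\Lambda^l(F) \to T_\epsilon m_\Lambda^l(F)\bigr)$, so your functor factors through $m_\Lambda^l \colon \msh_\Lambda(\Lambda) \to \Sh_{\widehat\Lambda}(M)$, i.e.\ through a realization of the microsheaf by a (compactly supported) sheaf microsupported on a \emph{single} copy of $\Lambda$. But the whole content of the doubling theorem is that such single-copy realizations need not exist, while realizations on the two-copy Legendrian always do. Concretely, take $M = \bR$ and $\Lambda = \{(0,+dx)\} \subseteq S^*\bR$. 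Then $\widehat\Lambda = \{0\} \cup \bR_+\Lambda$, so any $F \in \Sh_{\widehat\Lambda}(\bR)$ is supported at the point $0$; a nonzero skyscraper has the whole fiber $T^*_0\bR$ in its singular support, so $\Sh_{\widehat\Lambda}(\bR) = 0$ and hence $m_\Lambda^l \equiv 0$ and your $w_\Lambda \equiv 0$. Yet $\msh_\Lambda(\Lambda) \simeq \cV$ and $\Sh_{\widehat\Lambda_{\cup,\epsilon}}(\bR) \simeq \cV$, generated by $1_{[0,\epsilon')}$ where $T_\epsilon(0,+dx) = (\epsilon',+dx)$. So your functor is neither fully faithful nor essentially surjective. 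The same example shows that the identity $m_\Lambda \circ m_\Lambda^l \simeq \mathrm{id}$, which you invoke to convert $\Gamma(\Lambda,\mhom(\tilde F,\tilde G))$ into $\Hom_{\msh_\Lambda(\Lambda)}(F,G)$, is false in general: the unit of the adjunction $m_\Lambda^l \dashv m_\Lambda$ is genuinely non-invertible (its failure is exactly what the twist of the spherical adjunction in \cite{Kuo-Li-spherical} measures), so the last step of your fully-faithfulness computation collapses even in cases where $\Sh_{\widehat\Lambda}(M) \neq 0$.

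The intermediate Sato--Sabloff manipulations ($\Hom(\tilde F, w_\Lambda G) = 0$, $\Hom(T_\epsilon\tilde F, w_\Lambda G) \simeq \Gamma(\Lambda,\mhom(\tilde F,\tilde G))$) are structurally fine, and indeed formulas of the shape ``double of $m_\Lambda G$ $\simeq$ cone of a continuation map applied to $G$'' do hold — but only as \emph{consequences} of the doubling functor, valid on objects in the image of $m_\Lambda$, not as a definition on all of $\msh_\Lambda(\Lambda)$. The actual construction (Guillermou's doubling, Nadler--Shende's antimicrolocalization, quoted here from \cite{Kuo-Li-spherical}) builds $w_\Lambda$ locally over a cover of $\Lambda$ by GKS-quantizing the doubling contact movie and then glues using the fact that $\msh_\Lambda$ is a sheaf of categories; fully faithfulness, the recollement, and the identification of the essential image are then proved by Sato--Sabloff/perturbation arguments of the kind you use. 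Your essential surjectivity step — admittedly flagged as the main obstacle — cannot be repaired within your framework, since ``identify $H \simeq w_\Lambda(F)$ via the recollement'' presupposes that $w_\Lambda$ already has the correct microlocal restrictions along both copies, which the cofiber-of-$m_\Lambda^l$ formula does not. Also note, as a secondary point, that your claim that the cofiber is supported in the tube $\bigcup_{0\le s\le\epsilon}\pi(T_s\Lambda)$ needs an argument: the cone of the continuation map is only automatically locally constant outside the tube, and vanishing there uses the compact-support condition on bounded components, which should be spelled out.
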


\section{Relative Calabi--Yau Structure} \label{sec:weak-cy}

Let $\Lambda \subseteq S^*M$ be a compact subanalytic Legendrian and $\widehat\Lambda \subseteq T^*M$ be the corresponding precompact subanalytic Lagrangian. Our main result in this section is that, firstly, the categories $\msh_\Lambda(\Lambda)$ and $\Sh_{\widehat\Lambda}(M)$ are smooth (Propositions \ref{prop:weak-rel-cy-nondeg} and \ref{prop:weak-cy-nondeg}), and secondly, when $M$ is orientable, the continuous adjunction
$$m_\Lambda^l: \msh_\Lambda(\Lambda) \rightleftharpoons \Sh_{\widehat\Lambda}(M) : m_\Lambda$$
admits a weak smooth Calabi--Yau structure, and $\msh_\Lambda(\Lambda)$ itself admits a weak smooth Calabi--Yau structure (Theorem \ref{thm:weak-relative-cy}), using Sato--Sabloff fiber sequence Theorem \ref{thm:sato-sabloff} and Sabloff--Serre duality Theorem \ref{thm:sabloff-serre}.

\subsection{Duality and functors for microlocal sheaves} 
We consider the category $\sA = \Sh_{\widehat\Lambda}(M)$ of compactly supported sheaves on manifolds with isotropic singular support. First, we realize the duality pair of sheaves with isotropic singular support as follows:

\begin{theorem}[{\cite[Theorem 1.3]{Kuo-Li-duality}}]\label{thm:duality-sheaf}
Let $\Lambda \subseteq S^*M$ be a compact subanalytic Legendrian. Then the category $\sA = \Sh_{\widehat\Lambda}(M)$ is dualizable with $\sA^\vee = \Sh_{-\widehat\Lambda}(M)$, where the counit and unit are 
\begin{align*}
\epsilon_\sA &= p_! \Delta^* : \Sh_{-\widehat{\Lambda} \times \widehat{\Lambda}}(M \times M) \rightarrow \cV \\
\eta_\sA &= \iota_{\widehat{\Lambda} \times -\widehat{\Lambda}}^* \Delta_* p^*
: \cV \rightarrow \Sh_{\widehat{\Lambda} \times -\widehat{\Lambda}}(M \times M)
\end{align*}
\end{theorem}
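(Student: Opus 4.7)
The plan is to verify directly the defining triangle identities from Definition \ref{smdual} for the proposed unit $\eta_\sA$ and counit $\epsilon_\sA$; by uniqueness of duals, this identifies $\sA^\vee$ with $\Sh_{-\widehat\Lambda}(M)$. First I would check well-definedness: for $K \in \Sh_{-\widehat\Lambda \times \widehat\Lambda}(M \times M)$, the support is contained in $\pi(\widehat\Lambda) \times \pi(\widehat\Lambda)$, which is compact, so $\Delta^* K$ is compactly supported and $\epsilon_\sA(K) = p_! \Delta^* K$ lies in $\cV$. For the unit, $\Delta_* 1_M$ is the diagonal sheaf on $M \times M$, and the left adjoint $\iota_{\widehat\Lambda \times -\widehat\Lambda}^*$ provided by Proposition \ref{prop:stopremoval} projects it into the required subcategory $\sA \otimes \sA^\vee$.

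The main step is the triangle identity for $F \in \sA$. Under the K\"unneth-type identification of $\sA \otimes \sA^\vee \otimes \sA$ with singular-support-constrained sheaves on $M \times M \times M$, the composition $(\epsilon_\sA \otimes \Id_\sA)(\Id_\sA \otimes \eta_\sA)(F)$ becomes
\begin{align*}
    p_{3!}\,\Delta_{12}^{*}\bigl(F \boxtimes \iota_{\widehat\Lambda \times -\widehat\Lambda}^{*}\Delta_{*}1_{M}\bigr),
\end{align*}
where $\Delta_{12}: M \times M \hookrightarrow M \times M \times M$ is the diagonal in the first two factors and $p_3$ projects to the third factor. Since $F$ already has singular support in $\widehat\Lambda$, I would argue that $\iota^*$ is transparent in this composition: dropping it produces a sheaf with larger singular support, but the end result automatically has singular support in $\widehat\Lambda$, so the outer $\iota^*$ that must then be reinserted acts as the identity. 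The computation reduces to $p_{3!}\Delta_{12}^{*}(F \boxtimes \Delta_{*}1_M)$, and by proper base change applied to the fiber square relating $\Delta_{12}$ with the identity on $M$, together with $\Delta^*\Delta_*1_M = 1_M$, this simplifies to $F$ itself. The second triangle identity follows by a symmetric argument, exchanging $\widehat\Lambda$ with $-\widehat\Lambda$.

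The main obstacle is establishing the K\"unneth-type equivalence
\begin{align*}
    \Sh_{\widehat\Lambda_1}(M) \otimes \Sh_{\widehat\Lambda_2}(M) \xrightarrow{\sim} \Sh_{\widehat\Lambda_1 \times \widehat\Lambda_2}(M \times M)
\end{align*}
in $\PrLst$, which is required to interpret the candidate unit and counit as concrete sheaf operations and to make sense of the composition above. This relies on compact generation of the relevant categories (Proposition \ref{prop:stopremoval}) together with an identification of the compact generators of the tensor product with external tensor products $F \boxtimes G$ of compact objects. A secondary technical issue is commuting the left adjoint $\iota^*$ past $\Delta_*$ and $p_!$, which I would handle by lifting the argument to the ambient category $\Sh(M \times M \times M)$ and using the fact that the final answer lies in the required subcategory by construction, so that the projections $\iota^*$ become superfluous at the end.
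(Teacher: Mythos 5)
This theorem is not proved in the present paper at all: it is imported verbatim from the companion paper \cite[Theorem 1.3]{Kuo-Li-duality}, so the relevant comparison is with the argument there, which runs through a K\"unneth theorem for isotropic singular support and a positive-wrapping analysis of the diagonal kernel. Your outline reproduces the right formal skeleton (exhibit the unit and counit as kernels, check the triangle identities by base change), but it has two genuine gaps, both sitting exactly at the crux. First, the K\"unneth-type equivalence $\Sh_{\widehat\Lambda_1}(M) \otimes \Sh_{\widehat\Lambda_2}(M) \simeq \Sh_{\widehat\Lambda_1 \times \widehat\Lambda_2}(M \times M)$, which you defer as ``the main obstacle,'' is not a formal consequence of compact generation: the easy direction is that external products of compact objects land in the right-hand side (singular support estimate for $\boxtimes$) and generate the tensor product; the hard direction is that they \emph{generate} $\Sh_{\widehat\Lambda_1 \times \widehat\Lambda_2}(M\times M)$, i.e.\ that every sheaf with product microsupport is a colimit of external products and that the induced functor is fully faithful. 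That statement is the principal theorem of \cite{Kuo-Li-duality}, proved there with wrapping/GKS quantization and doubling-type arguments, and without it the unit $\iota_{\widehat\Lambda\times-\widehat\Lambda}^*\Delta_*p^*$ cannot even be interpreted as an element of $\sA\otimes\sA^\vee$.

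Second, the ``transparency of $\iota^*$'' step is not a valid argument as stated. Replacing the kernel $\iota_{\widehat\Lambda\times-\widehat\Lambda}^*\Delta_*1_M$ by $\Delta_*1_M$ changes the integral transform, and the observation that both outputs happen to lie in $\Sh_{\widehat\Lambda}(M)$ does not show the two transforms agree on that subcategory; by the same reasoning one could drop $\iota^*$ in situations where the wrapping genuinely alters the answer. What is actually required is that the localization map $\Delta_*1_M \to \iota_{\widehat\Lambda\times-\widehat\Lambda *}\iota_{\widehat\Lambda\times-\widehat\Lambda}^*\Delta_*1_M$ becomes an equivalence after applying $p_{3!}\Delta_{12}^*(F\boxtimes -)$ for every $F\in\Sh_{\widehat\Lambda}(M)$, equivalently that the fiber of this map (an object killed by $\iota^*$) acts by zero as a kernel on $\Sh_{\widehat\Lambda}(M)$. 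This is precisely the non-degeneracy at the heart of the duality theorem; in \cite{Kuo-Li-duality} it is established by realizing $\iota^*$ as a colimit of positive wrappings (Kuo's wrapped-sheaves theorem together with Theorem \ref{thm: GKS}) and showing, via a gapped non-characteristic deformation argument of the type recorded here as Theorem \ref{thm:full-faithful-nearby}/Proposition \ref{prop:gapped-fully-faithful}, that the wrapping of the diagonal is invisible to objects microsupported in $\widehat\Lambda$. Your proposal needs this input made explicit; as written, the key identity $\Id_{\Sh_{\widehat\Lambda}(M)} \simeq \iota_{\widehat\Lambda\times-\widehat\Lambda}^*1_\Delta$ (Lemma \ref{lem:dual-bimod}) is being assumed rather than proved. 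The well-definedness remarks (compact support forcing $p_!\Delta^*$ to be defined, and the symmetric treatment of the second triangle identity) are fine.
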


\begin{theorem}[{\cite[Theorem 1.1]{Kuo-Li-duality}}]\label{thm:fourier-mukai}
Let $\Lambda \subseteq S^*M$, $\Sigma \subseteq S^*N$ be compact subanalytic Legendrians. Then there is a Fourier--Mukai isomorphism 
\begin{align*}
\Sh_{-\widehat{\Lambda} \times \widehat{\Sigma}}(M \times N) &\xrightarrow{\sim} \Fun^L(\Sh_{\widehat{\Lambda}}(M), \Sh_{\widehat{\Sigma}}(N)) \\
K & \mapsto \pi_{2 !}(K \otimes \pi_1^*(-)).
\end{align*}
\end{theorem}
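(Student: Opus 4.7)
The plan is to derive this Fourier--Mukai equivalence by combining the dualizability from Theorem~\ref{thm:duality-sheaf} with a Künneth-type identification for tensor products of sheaf categories under singular support conditions, and then trace through the explicit formulas for the unit and counit to identify the resulting functor.

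First, since $\Sh_{\widehat{\Lambda}}(M)$ is dualizable with dual $\Sh_{-\widehat{\Lambda}}(M)$ by Theorem~\ref{thm:duality-sheaf}, the standard consequence of dualizability (Proposition~\ref{dual-gives-inHom}) provides a canonical equivalence
$$\Fun^L(\Sh_{\widehat{\Lambda}}(M), \Sh_{\widehat{\Sigma}}(N)) \simeq \Sh_{-\widehat{\Lambda}}(M) \otimes \Sh_{\widehat{\Sigma}}(N),$$
where the tensor product is taken in $\PrLst(\cV)$.

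The next step, which is the core of the argument, is to establish the Künneth identification
$$\Sh_{-\widehat{\Lambda}}(M) \otimes \Sh_{\widehat{\Sigma}}(N) \simeq \Sh_{-\widehat{\Lambda} \times \widehat{\Sigma}}(M \times N).$$
The strategy is to exhibit both sides as the same Bousfield localization of $\Sh(M) \otimes \Sh(N) \simeq \Sh(M \times N)$. Both $\Sh_{-\widehat{\Lambda}}(M) \hookrightarrow \Sh(M)$ and $\Sh_{\widehat{\Sigma}}(N) \hookrightarrow \Sh(N)$ are reflective subcategories whose left adjoints preserve compact objects by Proposition~\ref{prop:stopremoval}; since localization is compatible with the $\PrLst(\cV)$ tensor product (one tensors the localizing classes), the left-hand side embeds as the reflective subcategory of $\Sh(M \times N)$ generated under colimits by external products $F \boxtimes G$ with $F \in \Sh_{-\widehat{\Lambda}}(M)^c$ and $G \in \Sh_{\widehat{\Sigma}}(N)^c$. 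It then remains to identify this with $\Sh_{-\widehat{\Lambda} \times \widehat{\Sigma}}(M \times N)$. The inclusion is immediate from the standard estimate $\ms(F \boxtimes G) \subseteq \ms(F) \times \ms(G)$. For the reverse, one uses that $\Sh_{-\widehat{\Lambda} \times \widehat{\Sigma}}(M \times N)$ is compactly generated (Proposition~\ref{prop:stopremoval}) and its compact generators can be constructed from external products of compact generators of the factors, e.g.\ by using constructible sheaves supported in product strata.

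With the identification in hand, the final step is to verify that the abstract functor attached to $K \in \Sh_{-\widehat{\Lambda} \times \widehat{\Sigma}}(M \times N)$ matches the Fourier--Mukai formula. Viewing $K$ as an element of $\Sh_{\widehat{\Lambda}}(M)^\vee \otimes \Sh_{\widehat{\Sigma}}(N)$, the associated colimit-preserving functor sends $F \in \Sh_{\widehat{\Lambda}}(M)$ to the composition
$$F \;\longmapsto\; F \boxtimes K \;\longmapsto\; (\epsilon_{\sA} \otimes \mathrm{id})(F \boxtimes K).$$
Substituting $\epsilon_{\sA} = p_! \Delta^*$ from Theorem~\ref{thm:duality-sheaf} applied to the first two $M$-factors of $M \times M \times N$, and noting $\Delta_{12}^*(F \boxtimes K) = \pi_1^* F \otimes K$ on $M \times N$, one reads off
$$F \;\longmapsto\; \pi_{2!}\bigl(\pi_1^* F \otimes K\bigr),$$
which is exactly the Fourier--Mukai integral transform with kernel $K$.

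The principal obstacle is the Künneth identification in the second step: while the inclusion of the tensor product into $\Sh_{-\widehat{\Lambda} \times \widehat{\Sigma}}(M \times N)$ is formal, the surjectivity onto all compact generators (equivalently, checking the singular support condition is captured exactly by external products) is where the microlocal content sits and must be handled carefully, since the singular support of an external product can be strictly smaller than the product of singular supports.
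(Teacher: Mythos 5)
This theorem is not proved in the present paper at all: it is imported verbatim from the companion paper \cite{Kuo-Li-duality} (Theorem 1.1 there), so there is no in-paper argument to match your proposal against; what the present paper uses from that source are consequences such as Lemma \ref{lem:dual-bimod} (the identity functor is the wrapped diagonal kernel $\iota_{\widehat\Lambda\times-\widehat\Lambda}^*1_\Delta$) and Lemma \ref{lem:bimod-adjoint-sheaf} (compatibility of the product localization with the factorwise ones), which are exactly the kind of nontrivial microlocal inputs on which the cited proof is built, via wrapping and GKS quantization, rather than on a generators-are-external-products claim.

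Judged on its own terms, your reduction has a genuine gap at the step you yourself flag. Steps 1 and 3 are fine: dualizability plus Proposition \ref{dual-gives-inHom} gives $\Fun^L(\Sh_{\widehat\Lambda}(M),\Sh_{\widehat\Sigma}(N))\simeq \Sh_{-\widehat\Lambda}(M)\otimes\Sh_{\widehat\Sigma}(N)$, and unwinding $\epsilon_{\sA}=p_!\Delta^*$ does produce the formula $\pi_{2!}(K\otimes\pi_1^*(-))$. But the K\"unneth identification $\Sh_{-\widehat\Lambda}(M)\otimes\Sh_{\widehat\Sigma}(N)\simeq\Sh_{-\widehat\Lambda\times\widehat\Sigma}(M\times N)$ is not obtained by the argument you sketch. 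Fully faithfulness of the comparison functor is manageable, but for essential surjectivity your claim that ``compact generators can be constructed from external products, e.g.\ by using constructible sheaves supported in product strata'' begs the question: by Proposition \ref{prop:stopremoval} the compact generators of $\Sh_{-\widehat\Lambda\times\widehat\Sigma}(M\times N)$ are of the form $\iota^*_{-\widehat\Lambda\times\widehat\Sigma}(1_{U\times V})$, and these localized objects are no longer visibly external products. What one must show is precisely that $\iota^*_{-\widehat\Lambda\times\widehat\Sigma}(1_U\boxtimes 1_V)\simeq \iota^*_{-\widehat\Lambda}1_U\boxtimes\iota^*_{\widehat\Sigma}1_V$, i.e.\ that localization (positive wrapping) relative to the product support condition is computed by the product of the factorwise wrappings. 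Since the stop at infinity determined by $\widehat\Lambda\times\widehat\Sigma$ contains mixed strata (covectors conormal in one factor and zero in the other), product wrappings are far from obviously cofinal among all wrappings avoiding it, and establishing this commutation is essentially the K\"unneth theorem of \cite{Kuo-Li-duality} itself (compare Lemma \ref{lem:bimod-adjoint-sheaf}). So your proposal correctly reorganizes the statement around the K\"unneth formula but does not supply the microlocal argument that constitutes its actual content.
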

\begin{remark}\label{rem:dual-fourier-mukai}
The Fourier--Mukai isomorphism intertwines with the isomorphisms $\sA^\vee \otimes \sA = \Fun^L(\sA, \sA) = \Fun^L(\sA \otimes \sA^\vee, \cV)$. Therefore, we also have the dual Fourier--Mukai isomorphism
\begin{align*}
\Sh_{-\widehat{\Lambda} \times \widehat{\Sigma}}(M \times N) &\xrightarrow{\sim} \Fun^L(\Sh_{\widehat{\Lambda} \times -\widehat\Sigma}(M \times N), \cV) \\
K & \mapsto \pi_{!}(K \otimes -).
\end{align*}
\end{remark}

Let $\sA = \Sh_{\widehat\Lambda}(M)$, $\sB = \msh_\Lambda(\Lambda)$ and write $f = m_{\Lambda}^l: \msh_\Lambda(\Lambda) \rightleftharpoons \Sh_{\widehat\Lambda}(M): m_{\Lambda} = f^r$. Using the Fourier--Mukai isomorphism, we have the following identifications.

\begin{lemma}\label{lem:dual-bimod}
    Let $\Lambda \subseteq S^{*}M$ be a subanalytic Legendrian. 
    Then under the Fourier--Mukai isomorphism in Theorem \ref{thm:fourier-mukai}, 
    $$\Id_{\Sh_{\widehat\Lambda}(M)} \simeq \iota_{\widehat{\Lambda} \times -\widehat{\Lambda}}^*1_\Delta, \;\; m_{\Lambda}^l m_{\Lambda} \simeq m_{\widehat\Lambda \times -\widehat\Lambda}^l m_{\widehat\Lambda \times -\widehat\Lambda} 1_\Delta.$$
\end{lemma}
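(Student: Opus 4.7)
My plan is to prove the two identifications separately. The first is essentially tautological, while the second requires a Künneth-type compatibility of microlocalization with product microsupports.

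First, for $\Id_{\Sh_{\widehat\Lambda}(M)} \simeq \iota_{\widehat\Lambda \times -\widehat\Lambda}^* 1_\Delta$, I would invoke Remark \ref{rem:dual-functor}: under the equivalence $\Fun^L(\sA, \sA) \simeq \Hom(1_\cV, \sA \otimes \sA^\vee)$, the identity functor corresponds to the unit $\eta_\sA(1_\cV)$. Setting $\sA = \Sh_{\widehat\Lambda}(M)$ and using the explicit formula $\eta_\sA = \iota_{\widehat\Lambda \times -\widehat\Lambda}^* \Delta_* p^*$ from Theorem \ref{thm:duality-sheaf}, evaluation at $1_\cV$ yields $\iota_{\widehat\Lambda \times -\widehat\Lambda}^* \Delta_* 1_M = \iota_{\widehat\Lambda \times -\widehat\Lambda}^* 1_\Delta$.

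For the second identification $m_\Lambda^l m_\Lambda \simeq m_{\widehat\Lambda \times -\widehat\Lambda}^l m_{\widehat\Lambda \times -\widehat\Lambda} 1_\Delta$, I would apply the functoriality of bimodules under the continuous adjunction $m_\Lambda^l \dashv m_\Lambda$. By Definition \ref{def:push-forward}, the bimodule in $\sA \otimes \sA^\vee$ corresponding to the composite $m_\Lambda^l m_\Lambda \in \Fun^L(\sA, \sA)$ is obtained from the unit $\eta_\sB(1_\cV)$ of $\sB = \msh_\Lambda(\Lambda)$ by applying $m_\Lambda^l \otimes (m_\Lambda)^\vee: \sB \otimes \sB^\vee \to \sA \otimes \sA^\vee$. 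To match this explicitly with $m_{\widehat\Lambda \times -\widehat\Lambda}^l m_{\widehat\Lambda \times -\widehat\Lambda} 1_\Delta$, I would invoke a Künneth-type identification $\msh_{\widehat\Lambda \times -\widehat\Lambda}(\widehat\Lambda \times -\widehat\Lambda) \simeq \sB \otimes \sB^\vee$ under which $m_{\widehat\Lambda \times -\widehat\Lambda}$ on $\Sh_{\widehat\Lambda \times -\widehat\Lambda}(M \times M)$ corresponds to the external tensor $m_\Lambda \otimes (m_\Lambda)^\vee$, and its left adjoint factors correspondingly. The unit $\eta_\sB(1_\cV)$ would then be identified, via the microsheaf analogue of Theorem \ref{thm:duality-sheaf}, with the image of $1_\Delta$ under $m_{\widehat\Lambda \times -\widehat\Lambda}$, so that applying $m_{\widehat\Lambda \times -\widehat\Lambda}^l$ yields the asserted formula.

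The main obstacle is establishing the Künneth-type decomposition for microsheaves on products and the compatibility of microlocalization with both the product structure and the duality. Concretely, this requires verifying that the sheaf of microsheaves on $T^*(M \times M)$ is the exterior product of the sheaves of microsheaves on each factor, that the diagonal bimodule $\eta_{\msh_\Lambda(\Lambda)}(1_\cV)$ arises as $m_{\widehat\Lambda \times -\widehat\Lambda}(1_\Delta)$, and that the left adjoint of the product microlocalization is the tensor product of the individual left adjoints. Once these compatibilities are in place, both identifications follow formally from the pushforward formalism for bimodules together with Theorems \ref{thm:duality-sheaf} and \ref{thm:fourier-mukai}.
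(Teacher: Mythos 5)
Your first identification is fine and is exactly what the paper does: the identity functor corresponds to the unit, and Theorem \ref{thm:duality-sheaf} gives $\eta_{\Sh_{\widehat\Lambda}(M)}(1_\cV) = \iota_{\widehat\Lambda\times-\widehat\Lambda}^*1_\Delta$. For the second identification, the purely formal step is also fine (by Lemma \ref{swapping}, the bimodule of $m_\Lambda^l m_\Lambda$ is the image of $\eta_\sB(1_\cV)$ under $m_\Lambda^l\otimes m_\Lambda^\vee$), but the K\"unneth-type input you then invoke is not just unproved, it is false. The ideal boundary at infinity of the product cone $\widehat\Lambda\times-\widehat\Lambda$ is a fiberwise join: it contains mixed strata where only one of the two covector components is nonzero, so the category of microsheaves along it is not $\sB\otimes\sB^\vee$, and $m_{\widehat\Lambda\times-\widehat\Lambda}$ does not factor as $m_\Lambda\otimes(m_\Lambda)^\vee$. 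Concretely, take $M=\bR$ and $\Lambda=S^*_0\bR$ (two points, so $\widehat\Lambda=T^*_0\bR$): then $\sB\otimes\sB^\vee\simeq\cV^{\times 4}$, while microsheaves along $\partial^\infty(T^*_0\bR\times T^*_0\bR)=S^*_{(0,0)}\bR^2\simeq S^1$ are (twisted) local systems on a circle. Likewise, the identification of $\eta_\sB(1_\cV)$ with a sheaf-theoretic kernel is not the innocuous "microsheaf analogue of Theorem \ref{thm:duality-sheaf}" you assume: in the paper it requires the doubling theorem (Theorem \ref{doubling}, Proposition \ref{thm:duality-microsheaf}), and the resulting kernel is $m^l_{\widehat\Lambda_\cup\times-\widehat\Lambda_\cup}m_{\widehat\Lambda_\cup\times-\widehat\Lambda_\cup}1_\Delta$ with the \emph{doubled} stop $\widehat\Lambda_\cup$ (Lemma \ref{lem:dual-bimod-microsheaf}), not $m_{\widehat\Lambda\times-\widehat\Lambda}(1_\Delta)$.

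The paper's proof avoids all of this by a direct computation with wrapping kernels: for $F,G\in\Sh_{\widehat\Lambda}(M)$ it uses the Sato--Sabloff fiber sequence (Theorem \ref{thm:sato-sabloff}) on $M$ to write $m_\Lambda^l m_\Lambda F\simeq \mathrm{Fib}(T_{-\epsilon}1_\Delta\circ F\to T_\epsilon 1_\Delta\circ F)$ via GKS convolution kernels, then by adjunction reduces to $\Hom(\mathrm{Fib}(T_{-\epsilon}1_\Delta\to T_\epsilon1_\Delta), \sHom(\pi_1^*F,\pi_2^!G))$, and applies Sato--Sabloff again on $M\times M$ to identify this with $\Hom(m^l_{\widehat\Lambda\times-\widehat\Lambda}m_{\widehat\Lambda\times-\widehat\Lambda}1_\Delta, \sHom(\pi_1^*F,\pi_2^!G))$, hence with $\Hom(m^l_{\widehat\Lambda\times-\widehat\Lambda}m_{\widehat\Lambda\times-\widehat\Lambda}1_\Delta\circ F, G)$; the Fourier--Mukai theorem then gives the statement. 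If you insist on the formal bimodule-pushforward route, the workable version is the one the paper uses later in Proposition \ref{prop:cy-compatible}: model $\sB\otimes\sB^\vee$ by $\Sh_{\widehat\Lambda_\cup\times-\widehat\Lambda_\cup}(M\times M)$ via doubling, identify $m_\Lambda^l\otimes m_\Lambda^\vee$ with $\iota^*_{\widehat\Lambda\times-\widehat\Lambda}$ via Lemma \ref{lem:bimod-adjoint-sheaf}, and then prove $\iota^*_{\widehat\Lambda\times-\widehat\Lambda}m^l_{\widehat\Lambda_\cup\times-\widehat\Lambda_\cup}m_{\widehat\Lambda_\cup\times-\widehat\Lambda_\cup}1_\Delta\simeq m^l_{\widehat\Lambda\times-\widehat\Lambda}m_{\widehat\Lambda\times-\widehat\Lambda}1_\Delta$ --- which again comes down to a wrapping argument, not to a K\"unneth decomposition of microlocalization.
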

\begin{proof}
    First, the identity $\Id_{\Sh_{\widehat\Lambda}(M)} \simeq \iota_{\widehat{\Lambda} \times -\widehat{\Lambda}}^*1_\Delta$ follows from Theorem \ref{thm:duality-sheaf}. 
    Then consider $m_{\Lambda}^l m_{\Lambda}$. Using Sato-Sabloff fiber sequence Theorem \ref{thm:sato-sabloff}, we have for some Reeb flow $T_t: S^*M \to S^*M$,
    \begin{align*}
         \Hom(m_{\Lambda}^l m_{\Lambda} F, G) &= \Hom(\mathrm{Fib}(T_{-\epsilon}1_\Delta \circ F \rightarrow T_\epsilon1_\Delta \circ F), G)\\
         &= \Hom(\mathrm{Fib}(T_{-\epsilon}1_\Delta \rightarrow T_\epsilon 1_\Delta), \sHom(\pi_1^*F, \pi_2^!G)) \\
         &= \Hom(m_{\widehat\Lambda \times -\widehat\Lambda}^l m_{\widehat\Lambda \times -\widehat\Lambda} 1_\Delta, \sHom(\pi_1^*F, \pi_2^!G)) \\
         &= \Hom( m_{\widehat\Lambda \times -\widehat\Lambda}^l m_{\widehat\Lambda \times -\widehat\Lambda} 1_\Delta  \circ F, G).
    \end{align*}
    Therefore, under the Fourier--Mukai isomorphism, we get that
    $$m_{\Lambda}^l m_{\Lambda} \simeq m_{\widehat\Lambda \times -\widehat\Lambda}^l m_{\widehat\Lambda \times -\widehat\Lambda} 1_\Delta.$$
    This thus completes the proof.
\end{proof}


\begin{lemma}\label{lem:dual-bimod-sheaf2}
    Let $\Lambda \subseteq S^{*}M$ be a compact subanalytic Legendrian. Then under the dual Fourier--Mukai isomorphism in Remark \ref{rem:dual-fourier-mukai}, for some Reeb flow $T_t: S^*M \to S^*M$,
    \begin{gather*}
        \Id_{\Sh_{\widehat\Lambda}(M)} \simeq \pi_!(1_\Delta \otimes -), \;\; m_{\Lambda}^l m_{\Lambda} \simeq \pi_!(\mathrm{Fib}(T_{-\epsilon}1_\Delta \to T_\epsilon 1_\Delta) \otimes -).
    \end{gather*} 
\end{lemma}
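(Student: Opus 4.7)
The plan is to deduce this directly from Lemma~\ref{lem:dual-bimod} together with Remark~\ref{rem:dual-fourier-mukai}, which identifies the dual Fourier--Mukai isomorphism as $K \mapsto \pi_!(K \otimes -)$ and asserts that it intertwines with the Fourier--Mukai isomorphism under $\sA^\vee \otimes \sA = \Fun^L(\sA, \sA) = \Fun^L(\sA \otimes \sA^\vee, \cV)$.

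For the first identification, I would interpret the identity bimodule as the counit functional. By Remark~\ref{rem:dual-functor}, the identity $\Id_\sA$ corresponds under the equivalence $\Fun^L(\sA, \sA) \simeq \Fun^L(\sA^\vee \otimes \sA, \cV)$ to the counit $\epsilon_\sA$, and Theorem~\ref{thm:duality-sheaf} identifies this counit as $\epsilon_\sA = p_!\Delta^*$. A direct projection-formula/base-change computation then yields $p_!\Delta^* G \simeq \pi_!(1_\Delta \otimes G)$, which gives the desired formula $\Id_{\Sh_{\widehat\Lambda}(M)} \simeq \pi_!(1_\Delta \otimes -)$.

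For the second identification, I would combine Lemma~\ref{lem:dual-bimod} with the dual Fourier--Mukai isomorphism. Lemma~\ref{lem:dual-bimod} already identifies $m_\Lambda^l m_\Lambda$ with the kernel $m_{\widehat\Lambda \times -\widehat\Lambda}^l m_{\widehat\Lambda \times -\widehat\Lambda} 1_\Delta$, so the intertwining statement in Remark~\ref{rem:dual-fourier-mukai} immediately gives the corresponding functional as $\pi_!(m_{\widehat\Lambda \times -\widehat\Lambda}^l m_{\widehat\Lambda \times -\widehat\Lambda} 1_\Delta \otimes -)$. It remains to recognize this kernel as $\mathrm{Fib}(T_{-\epsilon} 1_\Delta \to T_\epsilon 1_\Delta)$, which is exactly the computation already carried out inside the proof of Lemma~\ref{lem:dual-bimod} via the Sato--Sabloff fiber sequence (Theorem~\ref{thm:sato-sabloff}): the continuation map for the Reeb perturbation places $m^l m 1_\Delta$ as the fiber of $T_{-\epsilon} 1_\Delta \to T_\epsilon 1_\Delta$.

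The main care needed is bookkeeping, not substance: one must track the symmetric identifications $\sA \otimes \sA^\vee \simeq \sA^\vee \otimes \sA$ (induced by swapping the two factors of $M \times M$, under which $1_\Delta$ is fixed) and verify that the localization $\iota^*_{\widehat\Lambda \times -\widehat\Lambda}$ appearing in Lemma~\ref{lem:dual-bimod} can be dropped when pairing against objects with complementary singular support, since the $\pi_!(- \otimes G)$ functional only sees $K$ through its image in the appropriate localization. No new geometric input beyond Theorem~\ref{thm:duality-sheaf} and the Sato--Sabloff computation already used in Lemma~\ref{lem:dual-bimod} is needed.
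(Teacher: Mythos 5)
Your proposal is correct and follows essentially the same route as the paper: both proofs test the kernel against $K \in \Sh_{-\widehat\Lambda \times \widehat\Lambda}(M\times M)$ via the adjunction $\Hom(\pi_!(L\otimes K),V)=\Hom(L,\sHom(K,\pi^!V))$, use Theorem \ref{thm:duality-sheaf} for the identity term, and invoke the Sato--Sabloff computation from Lemma \ref{lem:dual-bimod} to replace $m_{\widehat\Lambda\times-\widehat\Lambda}^l m_{\widehat\Lambda\times-\widehat\Lambda}1_\Delta$ by $\mathrm{Fib}(T_{-\epsilon}1_\Delta\to T_\epsilon 1_\Delta)$. The only point to state carefully is the one you already flag: the fiber and the localized kernel are not isomorphic as sheaves but induce the same functional, because $\sHom(K,\pi^!V)$ has singular support in $\widehat\Lambda\times-\widehat\Lambda$, so the pairing only sees the kernel through the localization --- which is exactly the paper's Hom-chain argument.
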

\begin{proof}
    We write $p: M \to \{*\}$ and $p_2: M \times M \to \{*\}$. Consider the dual Fourier--Mukai isomorphism $\Sh_{\widehat\Lambda \times -\widehat\Lambda}(M \times M) \simeq \Fun^L(\Sh_{-\widehat\Lambda \times \widehat\Lambda}(M \times M), \cV)$. For $\Id_{\Sh_{\widehat\Lambda}(M)} = \iota_{\widehat{\Lambda} \times -\widehat{\Lambda}}^*1_\Delta$, using Theorem \ref{thm:duality-sheaf}, we can immediately get
    $$\pi_{!}(\iota_{\widehat{\Lambda} \times -\widehat{\Lambda}}^*1_\Delta \otimes K) = \pi_{!}(1_\Delta \otimes K).$$
    Then consider $m_{\Lambda}^l m_{\Lambda} = m_{\widehat{\Lambda} \times -\widehat{\Lambda}}^l m_{\widehat\Lambda \times -\widehat\Lambda} 1_{\Delta}$. Using Sato--Sabloff fiber sequence, we have
    \begin{align*}
         \Hom &(\pi_! ( m_{\widehat{\Lambda} \times -\widehat{\Lambda}}^l m_{\widehat\Lambda \times -\widehat\Lambda} 1_{\Delta} \otimes K), V) \\
         &= \Hom(m_{\widehat{\Lambda} \times -\widehat{\Lambda}}^l m_{\widehat\Lambda \times -\widehat\Lambda} 1_{\Delta}, \sHom(K, \omega_{M \times M} \otimes V)) \\
         &= \Hom(\mathrm{Fib}(T_{-\epsilon}1_\Delta \to T_\epsilon 1_\Delta), \sHom(K, \omega_{M \times M} \otimes V)) \\
         &= \Hom(\pi_! (\mathrm{Fib}(T_{-\epsilon}1_\Delta \to T_\epsilon 1_\Delta) \otimes K), V).
    \end{align*}
    Thus, using the duality of $\Sh_{\widehat\Lambda \times -\widehat\Lambda}(M \times M)$, we get
    $$\pi_!(m_{\widehat{\Lambda} \times -\widehat{\Lambda}}^l m_{\widehat\Lambda \times -\widehat\Lambda} 1_\Delta \otimes K) \simeq \pi_!(\mathrm{Fib}(T_{-\epsilon}1_\Delta \to T_\epsilon 1_\Delta) \otimes K).$$
    This completes the proof of the lemma.
\end{proof}

    Then we consider the category $\sB = \msh_\Lambda(\Lambda)$ of microsheaves on isotropic subsets in $S^*M$ and study the dualizability result. Recall the isomorphism in Theorem \ref{doubling} that $\sB = \msh_\Lambda(\Lambda) = \Sh_{\widehat{\Lambda}_{\cup}}(M)$ where 
    $$\widehat\Lambda_{\cup} = \widehat{\Lambda}_{\cup, \epsilon} = (\Lambda \times \bR_+) \cup (\Lambda_\epsilon \times \bR_+) \cup \bigcup\nolimits_{0 \leq s \leq \epsilon} \pi(\Lambda_s),$$
    where we omit the subscript $\epsilon$ when there is no confusion. By Theorem \ref{thm:duality-sheaf}, we know that category $\sB$ is dualizable, and the Fourier--Mukai isomorphism holds.

\begin{proposition}[{\cite[Theorem 1.4 \& Corollary 4.21]{Kuo-Li-duality}}]\label{thm:duality-microsheaf}
    Let $\Lambda \subseteq S^*M$ be a compact subanalytic Legendrian. Then for $\epsilon > 0$ sufficiently small, the category $\sB = \msh_\Lambda(\Lambda) = \Sh_{\widehat\Lambda_{\cup}}(M)$ is dualizable with $\sB^\vee = \Sh_{-\widehat\Lambda_{\cup}}(M)$, where the counit and unit are 
    \begin{align*}
    \epsilon_{\sB} &= p_! \Delta^* : \Sh_{-\widehat{\Lambda}_{\cup} \times \widehat{\Lambda}_{\cup}}(M \times M) \rightarrow \cV \\
    \eta_{\sB} &= \iota_{\widehat{\Lambda}_{\cup} \times -\widehat{\Lambda}_{\cup}}^* \Delta_* p^*
    : \cV \rightarrow \Sh_{\widehat{\Lambda}_{\cup} \times -\widehat{\Lambda}_{\cup}}(M \times M).
    \end{align*}
\end{proposition}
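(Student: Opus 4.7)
The plan is to reduce the statement to the already-established dualizability result for compactly supported sheaves with singular support in the conification of a compact subanalytic Legendrian (Theorem \ref{thm:duality-sheaf}). The doubling equivalence of Theorem \ref{doubling} identifies $\msh_\Lambda(\Lambda)$ with the category of compactly supported sheaves microsupported on the conification of the doubled Legendrian, and the latter falls within the scope of Theorem \ref{thm:duality-sheaf}.

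More precisely, I would proceed as follows. First, fix $\epsilon > 0$ small enough for the hypotheses of Theorem \ref{doubling} to hold, and let $T_t$ be the Reeb flow used there. Then $\Lambda \cup T_\epsilon(\Lambda) \subseteq S^*M$ is again a compact subanalytic Legendrian (the union of two such Legendrians remains subanalytic and isotropic, and is manifestly Legendrian as a set). Second, I would verify that the set
\[
\widehat\Lambda_{\cup,\epsilon} \;=\; (\Lambda \times \bR_+) \cup (T_\epsilon(\Lambda) \times \bR_+) \cup \bigcup_{0 \le s \le \epsilon} \pi(T_s(\Lambda))
\]
coincides with the conification $\widehat{\Lambda \cup T_\epsilon(\Lambda)}$ as defined at the beginning of Section \ref{sec:sheaves}, i.e.~the union of $\bR_+ \cdot (\Lambda \cup T_\epsilon(\Lambda))$ with the compact zero-section component $M_c$ bounded by $\pi(\Lambda \cup T_\epsilon(\Lambda))$. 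For $\epsilon$ sufficiently small this amounts to checking that the new bounded regions created between $\pi(\Lambda)$ and $\pi(T_\epsilon(\Lambda))$ are precisely the swept strip $\bigcup_{0 \le s \le \epsilon} \pi(T_s(\Lambda))$, which follows from a standard analysis of the Reeb displacement for small times.

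Third, with this identification in hand, Theorem \ref{thm:duality-sheaf} applied to the compact subanalytic Legendrian $\Lambda \cup T_\epsilon(\Lambda)$ immediately yields that $\Sh_{\widehat\Lambda_{\cup,\epsilon}}(M)$ is dualizable, with dual $\Sh_{-\widehat\Lambda_{\cup,\epsilon}}(M)$ and with counit and unit given by $p_! \Delta^*$ and $\iota_{\widehat\Lambda_{\cup} \times -\widehat\Lambda_{\cup}}^* \Delta_* p^*$ respectively. Transporting this along the doubling equivalence $w_\Lambda : \msh_\Lambda(\Lambda) \xrightarrow{\sim} \Sh_{\widehat\Lambda_{\cup,\epsilon}}(M)$ from Theorem \ref{doubling} gives the claim for $\sB = \msh_\Lambda(\Lambda)$.

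The main technical point I expect to need care is the second step: verifying that $\widehat\Lambda_{\cup,\epsilon}$ as defined here really agrees with the conification of $\Lambda \cup T_\epsilon(\Lambda)$ in the sense used by Theorem \ref{thm:duality-sheaf}, i.e.~that no other bounded region of $M \setminus \pi(\Lambda \cup T_\epsilon(\Lambda))$ is newly created for small $\epsilon$ beyond the displayed swept strip. Everything else is either a citation of the doubling theorem or a transport of structure along an equivalence of categories, so the content is concentrated in this geometric verification.
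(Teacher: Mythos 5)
Your overall strategy (doubling equivalence plus the duality theorem for sheaf categories) is the same one the paper gestures at, but the step you yourself identify as carrying all the content is false, and this is a genuine gap. The set $\widehat\Lambda_{\cup,\epsilon}$ is \emph{not} the conification $\widehat{\Lambda \cup T_\epsilon(\Lambda)}$ in the sense of Definition 3.1: the conification includes \emph{all} bounded components of $M \setminus \pi(\Lambda \cup T_\epsilon(\Lambda))$, not just the strip swept by $\pi(T_s(\Lambda))$, $0 \le s \le \epsilon$. For example, take $\Lambda$ to be the unit conormal of a circle $C \subset \bR^2$: the open disk bounded by $C$ is a bounded component of the complement of $\pi(\Lambda \cup T_\epsilon(\Lambda))$, so the conification contains the whole disk, while $\widehat\Lambda_{\cup,\epsilon}$ contains only the thin annulus between $C$ and its Reeb pushoff. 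Consequently $\Sh_{\widehat\Lambda_{\cup,\epsilon}}(M) \subsetneq \Sh_{\widehat{\Lambda\cup T_\epsilon(\Lambda)}}(M)$ in general (the constant sheaf on the closed disk lies in the latter but not the former); this strict inclusion is forced by the recollement in Theorem \ref{doubling}, whose third term $\Sh_\Lambda(M)$ is typically nonzero. So Theorem \ref{thm:duality-sheaf}, as stated for conifications of compact Legendrians, does not apply verbatim to $\Sh_{\widehat\Lambda_{\cup,\epsilon}}(M)$, and your proof does not establish the proposition.

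What can be salvaged formally is only dualizability: the inclusion $\Sh_{\widehat\Lambda_{\cup,\epsilon}}(M) \hookrightarrow \Sh_{\widehat{\Lambda\cup T_\epsilon(\Lambda)}}(M)$ preserves limits and colimits, so the smaller category is a retract of a dualizable object in $\PrLst$ and hence dualizable. But the proposition asserts more, namely the explicit identifications $\sB^\vee = \Sh_{-\widehat\Lambda_{\cup}}(M)$ and the formulas $\epsilon_\sB = p_!\Delta^*$, $\eta_\sB = \iota_{\widehat\Lambda_\cup \times -\widehat\Lambda_\cup}^*\Delta_* p^*$, which are exactly what is used later (Lemma \ref{lem:dual-bimod-microsheaf}, Proposition \ref{prop:accelerate}); these do not follow from a retract argument or from transport along $w_\Lambda$ alone. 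The paper does not prove this here at all: it cites the duality results of the companion paper (Theorem 1.4 and Corollary 4.21 of Kuo--Li), which establish duality with these unit/counit formulas for general compact conic subanalytic isotropic support conditions such as $\widehat\Lambda_\cup$, not only for conifications containing all bounded regions. To repair your argument you would either need to invoke that more general statement, or redo the proof of Theorem \ref{thm:duality-sheaf} for the support condition $\widehat\Lambda_{\cup,\epsilon}$ directly. A smaller caveat: $T_\epsilon(\Lambda)$ is only subanalytic if the Reeb flow is chosen suitably (e.g.\ real analytic), which should be stated rather than assumed.
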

    
    Then it is clear that from Lemma \ref{lem:dual-bimod} that we can identify the identity functor and left dualizing functor. Recall that the colimit preserving adjunction $m_{\Lambda,{dbl}}^l: \msh_\Lambda(\Lambda) \rightleftharpoons \Sh_{\widehat\Lambda_{\cup}}(M) : m_{\Lambda,{dbl}}$ is a pair of inverse equivalences. From Lemma \ref{lem:dual-bimod}, we can then get a different characterization of these functors.

\begin{lemma}\label{lem:dual-bimod-microsheaf}
    Let $\Lambda \subseteq S^{*}M$ be a compact subanalytic Legendrian and let $\sB = \msh_\Lambda(\Lambda) = \Sh_{\widehat\Lambda_{\cup}}(M)$. Then under the Fourier--Mukai isomorphism in Theorem \ref{thm:fourier-mukai},
    \begin{align*}
    \Id_{\msh_\Lambda(\Lambda)} \simeq \iota_{\widehat{\Lambda}_{\cup} \times -\widehat{\Lambda}_{\cup}}^* 1_\Delta = m_{\widehat\Lambda_{\cup} \times -\widehat\Lambda_{\cup}}^l m_{\widehat\Lambda_{\cup} \times -\widehat\Lambda_{\cup}} 1_\Delta. 
    \end{align*}
    Under the dual Fourier--Mukai isomorphism in Remark \ref{rem:dual-fourier-mukai},
    \begin{align*}
        \Id_{\msh_\Lambda(\Lambda)} \simeq \pi_!(1_\Delta \otimes -) = \pi_!(\mathrm{Fib}(T_{-\epsilon}1_\Delta \to T_\epsilon 1_\Delta) \otimes -).
    \end{align*} 
\end{lemma}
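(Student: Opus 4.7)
The plan is to adapt the arguments of Lemma~\ref{lem:dual-bimod} and Lemma~\ref{lem:dual-bimod-sheaf2} verbatim, replacing $\widehat\Lambda$ throughout by the doubled Legendrian $\widehat\Lambda_\cup$, and then to observe that the microlocalization bimodule collapses to the identity bimodule in this doubled setting. The underlying conceptual reason is that $\sB = \msh_\Lambda(\Lambda)$ is \emph{itself} a microsheaf category, so microlocalizing it a second time should recover the same category.

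First, by Proposition~\ref{thm:duality-microsheaf}, the category $\sB = \Sh_{\widehat\Lambda_\cup}(M)$ is dualizable with counit $p_!\Delta^*$ and unit $\iota^*_{\widehat\Lambda_\cup \times -\widehat\Lambda_\cup}\Delta_* p^*$, of exactly the same form as in Theorem~\ref{thm:duality-sheaf}. Thus the Fourier--Mukai computation in Lemma~\ref{lem:dual-bimod} transfers directly, giving $\Id_\sB \simeq \iota^*_{\widehat\Lambda_\cup \times -\widehat\Lambda_\cup} 1_\Delta$ under the Fourier--Mukai isomorphism of Theorem~\ref{thm:fourier-mukai}. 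Likewise, the dual Fourier--Mukai computation of Lemma~\ref{lem:dual-bimod-sheaf2} transfers to yield $\Id_\sB \simeq \pi_!(1_\Delta \otimes -)$ via Remark~\ref{rem:dual-fourier-mukai}.

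The remaining content is the equality $\iota^*_{\widehat\Lambda_\cup \times -\widehat\Lambda_\cup} 1_\Delta = m^l_{\widehat\Lambda_\cup \times -\widehat\Lambda_\cup} m_{\widehat\Lambda_\cup \times -\widehat\Lambda_\cup} 1_\Delta$, and its dual counterpart $\pi_!(1_\Delta \otimes -) = \pi_!(\mathrm{Fib}(T_{-\epsilon}1_\Delta \to T_\epsilon 1_\Delta) \otimes -)$. My approach is to show that on $\sB$, the microlocalization $m_{\widehat\Lambda_\cup}$ is already an equivalence, so that the microlocalization bimodule $m^l m$ \emph{is} the identity bimodule on $\sB$. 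This is the whole point of the doubling construction: Theorem~\ref{doubling} states that $w_\Lambda : \msh_\Lambda(\Lambda) \xrightarrow{\sim} \Sh_{\widehat\Lambda_\cup}(M)$ is an equivalence, and by construction $w_\Lambda$ factors through $m_\Lambda^l$ so that its inverse is realized by microlocalization. Combining this identification with the second half of Lemma~\ref{lem:dual-bimod} applied to $\sB$ then yields the claimed equality.

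The main obstacle will be Step~3, i.e., rigorously identifying $m^l_{\widehat\Lambda_\cup \times -\widehat\Lambda_\cup} m_{\widehat\Lambda_\cup \times -\widehat\Lambda_\cup} 1_\Delta$ with $\iota^*_{\widehat\Lambda_\cup \times -\widehat\Lambda_\cup} 1_\Delta$ inside $\Sh_{\widehat\Lambda_\cup \times -\widehat\Lambda_\cup}(M \times M)$. A direct approach is geometric: apply the Sato--Sabloff fiber sequence (Theorem~\ref{thm:sato-sabloff}) to the kernel $1_\Delta$ viewed in $\Sh_{\widehat\Lambda_\cup \times -\widehat\Lambda_\cup}(M \times M)$; since the doubling ensures that the Reeb pushoffs of the two copies of $\Lambda$ inside $\widehat\Lambda_\cup$ do not re-interact under a small $T_{\pm\epsilon}$ perturbation on the product, the fiber $\mathrm{Fib}(T_{-\epsilon}1_\Delta \to T_\epsilon 1_\Delta)$ agrees with $\iota^*_{\widehat\Lambda_\cup \times -\widehat\Lambda_\cup} 1_\Delta$. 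Once this compatibility is in hand, both the Fourier--Mukai and dual Fourier--Mukai equalities of the lemma follow simultaneously.
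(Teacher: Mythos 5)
Your first step is sound and matches the paper: Proposition \ref{thm:duality-microsheaf} puts $\sB=\Sh_{\widehat\Lambda_\cup}(M)$ in exactly the same footing as Theorem \ref{thm:duality-sheaf}, so Lemmas \ref{lem:dual-bimod} and \ref{lem:dual-bimod-sheaf2} transport to give $\Id_\sB\simeq\iota^*_{\widehat\Lambda_\cup\times-\widehat\Lambda_\cup}1_\Delta$ and $\Id_\sB\simeq\pi_!(1_\Delta\otimes-)$. The gap is in your Step 3. You claim that ``microlocalization $m_{\widehat\Lambda_\cup}$ is already an equivalence on $\sB$'' because of Theorem \ref{doubling}, but that theorem concerns microlocalization along a \emph{single} copy of $\Lambda$ (the functor $m_{\Lambda,\mathrm{dbl}}$ inverse to the doubling $w_\Lambda$), whereas the microlocalization in the kernel $m^l_{\widehat\Lambda_\cup\times-\widehat\Lambda_\cup}m_{\widehat\Lambda_\cup\times-\widehat\Lambda_\cup}1_\Delta$ is along the full doubled Legendrian $\Lambda_\cup=\Lambda\cup T_\epsilon(\Lambda)$ in each factor. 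That functor is not an equivalence and not even fully faithful: its target is essentially two copies of $\msh_\Lambda(\Lambda)$. Already for $M=\bR$ and $\Lambda$ a single point of $S^*\bR$, the doubled category is generated by $1_{[0,\epsilon)}$, whose endomorphisms have rank one, while $\Gamma(\Lambda_\cup,\mhom(1_{[0,\epsilon)},1_{[0,\epsilon)}))$ has rank two.

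Your geometric fallback inherits the same problem. If the wrapping parameter in $\mathrm{Fib}(T_{-\epsilon}1_\Delta\to T_\epsilon 1_\Delta)$ were a small perturbation unrelated to the doubling width (which is what ``the two copies do not re-interact under a small $T_{\pm\epsilon}$ perturbation'' suggests), then the localized kernel represents $m^l_{\Lambda_\cup}m_{\Lambda_\cup}$, and in the model example above it sends the generator $1_{[0,\epsilon)}$ to \emph{two} copies of itself (one from each branch of $\Lambda_\cup$), not to itself. The equality asserted in the lemma depends on the wrapping distance being matched to the doubling parameter $\epsilon$ of $\widehat\Lambda_{\cup,\epsilon}$: wrapping the diagonal by the full width pushes one translate entirely out of the doubled region, so it is annihilated by $\iota^*_{\widehat\Lambda_\cup\times-\widehat\Lambda_\cup}$, and the surviving translate localizes back to the identity kernel (in the model case, with suitable orientation conventions, $\iota^*1_{[-\epsilon,0)}=0$ while $\iota^*\bigl(1_{[\epsilon,2\epsilon)}[-1]\bigr)\simeq1_{[0,\epsilon)}$). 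This matched-parameter mechanism --- equivalently, the fact recalled in the paper just before the lemma that $m_{\Lambda,\mathrm{dbl}}$, microlocalization along one copy, inverts $w_\Lambda$ --- is precisely the ingredient your argument omits. So the nontrivial half of the lemma is not a formal consequence of ``doubling makes microlocalization an equivalence''; you must either rerun the Sato--Sabloff argument with the wrapping tied to the doubling width, or route the identification through the one-copy adjoint equivalence $m^l_{\Lambda,\mathrm{dbl}}\dashv m_{\Lambda,\mathrm{dbl}}$ as the paper does.
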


    Under the isomorphism $\sB = \msh_\Lambda(\Lambda) = \Sh_{\widehat{\Lambda}_{\cup}}(M)$, we can identify the adjunction of microlocalization $\sB = \msh_\Lambda(\Lambda) \rightleftharpoons \Sh_{\widehat\Lambda}(M) = \sA$ with an adjunction $ \sB = \Sh_{\widehat{\Lambda}_{\cup}}(M) \rightleftharpoons \Sh_{\widehat\Lambda}(M) = \sA$ as follows
    \[\xymatrix{
    \Sh_{\widehat\Lambda_{\cup}}(M) \ar[r]^{m_{\Lambda,{dbl}}}_{\sim} & \msh_\Lambda(\Lambda) \ar[r]^{m_{\Lambda,{dbl}}^l}_{\sim} \ar@{=}[d] & \Sh_{\widehat\Lambda_{\cup}}(M) \ar[d]^{m_{\Lambda}^l m_{\Lambda,{dbl}}} \\
    \Sh_{\widehat\Lambda}(M) \ar[r]^{m_{\Lambda}} \ar[u]^{m_{\Lambda,{dbl}}^l m_{\Lambda}} & \msh_\Lambda(\Lambda) \ar[r]^{m_{\Lambda}^l} & \Sh_{\widehat\Lambda}(M).
    }\]
    It follows from Theorem \ref{doubling} that we have
    $$\iota_{\Lambda}^* = m_{\Lambda}^l m_{\Lambda,{dbl}}: \Sh_{\widehat\Lambda_{\cup}}(M) \to  \Sh_{\widehat\Lambda}(M).$$
    Then, consider Fourier--Mukai isomorphism and the natural morphisms of sheaves on the product. We can identify the functor $f \otimes f^{r\vee} : \sB \otimes \sB^\vee \to \sA \otimes \sA^\vee$ under the isomorphism 
    $\sB \otimes \sB^\vee = \Sh_{\widehat{\Lambda}_{\cup} \times -\widehat{\Lambda}_{\cup}}(M \times M)$.

\begin{lemma}[{\cite[Proposition 3.10]{Kuo-Li-duality}}]\label{lem:bimod-adjoint-sheaf}
    Let $\Lambda \subseteq S^*M$ be a compact subanalytic Legendrian. There is a commutative diagram
    \[\xymatrix{
    \Sh_{\widehat\Lambda_{\cup}}(M) \otimes \Sh_{-\widehat\Lambda_{\cup}}(M) \ar[r]^{\sim} \ar[d]_{\iota_{\widehat\Lambda}^* \otimes \iota_{-\widehat\Lambda}^*} & \Sh_{\widehat{\Lambda}_{\cup} \times -\widehat{\Lambda}_{\cup}}(M \times M) \ar[d]^{\iota_{\widehat\Lambda \times -\widehat\Lambda}^*} \\
    \Sh_{\widehat\Lambda}(M) \otimes \Sh_{-\widehat\Lambda}(M) \ar[r]^{\sim} & \Sh_{\widehat\Lambda \times -\widehat\Lambda}(M \times M).
    }\]
\end{lemma}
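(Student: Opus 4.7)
The plan is to pass to right adjoints and reduce the claim to a tautology about the external tensor product.

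First, I would observe that both vertical functors in the diagram are left adjoints to the fully faithful inclusions $\iota_{\widehat\Lambda *}: \Sh_{\widehat\Lambda}(M) \hookrightarrow \Sh_{\widehat\Lambda_{\cup}}(M)$ (and analogously for $-\widehat\Lambda$), and $\iota_{\widehat\Lambda \times -\widehat\Lambda *}: \Sh_{\widehat\Lambda \times -\widehat\Lambda}(M \times M) \hookrightarrow \Sh_{\widehat\Lambda_{\cup} \times -\widehat\Lambda_{\cup}}(M \times M)$. All of these exist by Proposition \ref{prop:stopremoval}. Since commutativity of a square of left adjoints is equivalent to commutativity of the square of right adjoints, it suffices to verify that the upward-pointing diagram with $\iota_{\widehat\Lambda *} \otimes \iota_{-\widehat\Lambda *}$ on the left and $\iota_{\widehat\Lambda \times -\widehat\Lambda *}$ on the right commutes.

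Second, I would identify both horizontal equivalences explicitly. By Theorems \ref{thm:duality-sheaf} and \ref{thm:fourier-mukai}, the Fourier--Mukai map identifies the tensor product $\sA \otimes \sA^\vee$ with $\Sh_{\widehat\Lambda \times -\widehat\Lambda}(M \times M)$ via the external tensor product: a pure tensor $F \otimes G$ goes to $F \boxtimes G = \pi_1^* F \otimes \pi_2^* G$. The analogous identification holds for $\sB = \Sh_{\widehat\Lambda_{\cup}}(M)$ by Proposition \ref{thm:duality-microsheaf}, and crucially the defining functor $K \mapsto \pi_{2!}(K \otimes \pi_1^*(-))$ is the same for both singular-support classes. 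Thus the horizontal equivalences are each implemented by $\boxtimes$.

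Third, once both horizontal equivalences are incarnated by $\boxtimes$, the square of right adjoints becomes the statement that for $F \in \Sh_{\widehat\Lambda}(M)$ and $G \in \Sh_{-\widehat\Lambda}(M)$, the sheaf $\iota_{\widehat\Lambda *} F \boxtimes \iota_{-\widehat\Lambda *} G$ on $M \times M$ agrees with $\iota_{\widehat\Lambda \times -\widehat\Lambda *}(F \boxtimes G)$. But both are literally the same sheaf $F \boxtimes G$: Kashiwara--Schapira's singular support estimate $\ms(F \boxtimes G) \subseteq \ms(F) \times \ms(G)$ ensures $F \boxtimes G$ lies in $\Sh_{\widehat\Lambda \times -\widehat\Lambda}(M \times M)$, and the two inclusions merely forget the tighter bound to the larger category. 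Since the horizontal equivalences are colimit-preserving and compact objects of the form $F \otimes G$ generate the tensor product categories, the commutativity on these generators extends to the full diagram.

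The only nontrivial step is the identification in the second paragraph of the abstract Fourier--Mukai functor with the external product on pure tensors; this is essentially built into Theorem \ref{thm:fourier-mukai}, but one should be careful with the pairing conventions coming from Theorem \ref{thm:duality-sheaf}. Beyond that bookkeeping, the argument is formal: it is the naturality of Fourier--Mukai in the Legendrian parameter, together with the monoidality of $\boxtimes$ with respect to singular support.
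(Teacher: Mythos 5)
Your reduction rests on a premise that fails: there is no fully faithful inclusion $\Sh_{\widehat\Lambda}(M) \hookrightarrow \Sh_{\widehat\Lambda_{\cup}}(M)$, because the two conic sets are not nested. By definition $\widehat\Lambda = M_c \cup \bR_+ \Lambda$ contains the compact zero-section piece $M_c$ (e.g.\ the bounded region enclosed by $\pi(\Lambda)$), whereas the zero-section part of $\widehat\Lambda_{\cup}$ is only the thin collar $\bigcup_{0\le s\le\epsilon}\pi(T_s(\Lambda))$; conversely $\widehat\Lambda_{\cup}$ contains $T_\epsilon(\Lambda)\times\bR_+$, which is not in $\widehat\Lambda$. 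So a typical object of $\Sh_{\widehat\Lambda}(M)$ (the constant sheaf on a ball whose boundary conormal is $\Lambda$) has support, hence singular support, outside $\widehat\Lambda_{\cup}$, and objects of $\Sh_{\widehat\Lambda_{\cup}}(M)$ carry microsupport along $T_\epsilon(\Lambda)$ that is not allowed in $\widehat\Lambda$. Hence Proposition \ref{prop:stopremoval} does not apply, and the vertical arrows are not reflections onto full subcategories of the top row: $\iota_{\widehat\Lambda}^*$ in the lemma is the global stop/support removal $\Sh(M)\to\Sh_{\widehat\Lambda}(M)$ restricted along $\Sh_{\widehat\Lambda_{\cup}}(M)\hookrightarrow\Sh(M)$, i.e.\ $m_\Lambda^l m_{\Lambda,\mathrm{dbl}}$ as stated just before the lemma.

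Because of this, passing to right adjoints does not yield a tautology. The right adjoint of $\iota_{\widehat\Lambda}^*$ restricted to $\Sh_{\widehat\Lambda_{\cup}}(M)$ is $\iota_{\widehat\Lambda_{\cup}}^!\circ\iota_{\widehat\Lambda *}$, a coreflection back into the doubled category rather than a forgetful inclusion, so the right-adjoint square asserts $\iota_{\widehat\Lambda_{\cup}\times-\widehat\Lambda_{\cup}}^!(F\boxtimes G)\simeq \iota_{\widehat\Lambda_{\cup}}^!F\boxtimes\iota_{-\widehat\Lambda_{\cup}}^!G$, which is exactly as nontrivial as the original statement; in particular your step that ``both are literally the same sheaf $F\boxtimes G$'' uses the false containment $\Sh_{\widehat\Lambda\times-\widehat\Lambda}(M\times M)\subseteq\Sh_{\widehat\Lambda_{\cup}\times-\widehat\Lambda_{\cup}}(M\times M)$. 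What survives from your argument is the identification of the horizontal equivalences with $\boxtimes$ on pure tensors (Theorems \ref{thm:duality-sheaf} and \ref{thm:fourier-mukai}) and the reduction, via the universal property of the tensor product in $\PrLst$, to a natural comparison of bifunctors; what is missing is the microlocal input. A workable route is to show directly that for $F\in\Sh_{\widehat\Lambda_{\cup}}(M)$, $G\in\Sh_{-\widehat\Lambda_{\cup}}(M)$ and every $K\in\Sh_{\widehat\Lambda\times-\widehat\Lambda}(M\times M)$ the unit maps induce $\Hom(F\boxtimes G,K)\simeq\Hom((\iota_{\widehat\Lambda}^*F)\boxtimes(\iota_{-\widehat\Lambda}^*G),K)$, using $\Hom(F\boxtimes G,K)=\Hom(F,\pi_{1*}\sHom(\pi_2^*G,K))$ together with singular support and support estimates placing $\pi_{1*}\sHom(\pi_2^*G,K)$ in $\Sh_{\widehat\Lambda}(M)$ (this is where compactness of $\Lambda$ and the compact-support conditions enter), or equivalently to show that the wrapping computing $\iota^*$ on $M\times M$ can be performed factor by factor; this is the actual content of the cited Proposition 3.10 of \cite{Kuo-Li-duality}, and it is not formal.
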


Finally, given the description of the unit and counit for $\sA = \Sh_{\widehat\Lambda}(M)$ and $\sB = \msh_\Lambda(\Lambda) = \Sh_{\widehat\Lambda_{\cup}}(M)$, their Hochschild homologies $\HH_*(\sA)$ and $\HH_*(\sB)$ can be computed as follows.

\begin{lemma}
Let $\Lambda \subseteq S^{*}M$ be a compact subanalytic Legendrian. Then
\begin{gather*}
    \HH_*(\Sh_{\widehat\Lambda}(M)) = p_! \Delta^* \iota_{\widehat\Lambda \times -\widehat\Lambda}^* 1_\Delta, \\
    \HH_*(\msh_\Lambda(\Lambda)) = p_! \Delta^*  \iota_{\widehat\Lambda_{\cup} \times -\widehat\Lambda_{\cup}}^* 1_\Delta = p_! \Delta^*  m_{\widehat\Lambda_{\cup} \times -\widehat\Lambda_{\cup}}^l m_{\widehat\Lambda_{\cup} \times -\widehat\Lambda_{\cup}} 1_\Delta. 
\end{gather*}
\end{lemma}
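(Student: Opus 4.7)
The plan is to compute each Hochschild homology directly from the definition $\HH_*(X) = \Tr(\mathrm{Id}_X) = \epsilon_X \circ \eta_X$ (evaluated on $1_\cV$) and then plug in the explicit descriptions of the unit and counit already established earlier in the paper.

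First I would handle the case $\sA = \Sh_{\widehat\Lambda}(M)$. By Theorem \ref{thm:duality-sheaf}, the counit and unit are $\epsilon_\sA = p_! \Delta^*$ and $\eta_\sA = \iota_{\widehat{\Lambda} \times -\widehat{\Lambda}}^* \Delta_* p^*$. Therefore
\[
\HH_*(\sA) \;=\; \epsilon_\sA \circ \eta_\sA (1_\cV) \;=\; p_! \Delta^* \iota_{\widehat{\Lambda} \times -\widehat{\Lambda}}^* \Delta_* p^*(1_\cV).
\]
Since $p^*(1_\cV) = 1_M$ and $\Delta_* 1_M = 1_\Delta$, this immediately gives the first formula $\HH_*(\Sh_{\widehat\Lambda}(M)) = p_!\Delta^* \iota_{\widehat\Lambda \times -\widehat\Lambda}^* 1_\Delta$.

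For the second case, I would use the identification $\sB = \msh_\Lambda(\Lambda) = \Sh_{\widehat\Lambda_{\cup}}(M)$ of Theorem \ref{doubling} and then apply Proposition \ref{thm:duality-microsheaf}, which provides the analogous counit $\epsilon_\sB = p_!\Delta^*$ and unit $\eta_\sB = \iota_{\widehat{\Lambda}_{\cup} \times -\widehat{\Lambda}_{\cup}}^* \Delta_* p^*$. Repeating the same computation as above yields
\[
\HH_*(\msh_\Lambda(\Lambda)) \;=\; p_! \Delta^* \iota_{\widehat{\Lambda}_{\cup} \times -\widehat{\Lambda}_{\cup}}^* 1_\Delta.
\]
The second equality in the statement then follows from Lemma \ref{lem:dual-bimod-microsheaf}, which identifies $\iota_{\widehat{\Lambda}_{\cup} \times -\widehat{\Lambda}_{\cup}}^* 1_\Delta$ with $m_{\widehat\Lambda_{\cup} \times -\widehat\Lambda_{\cup}}^l m_{\widehat\Lambda_{\cup} \times -\widehat\Lambda_{\cup}} 1_\Delta$ under the Fourier--Mukai isomorphism.

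There is no genuine obstacle here; this lemma is essentially a bookkeeping consequence of unwinding the definition of $\Tr(\mathrm{Id})$ together with the explicit unit/counit formulas already recorded in Theorem \ref{thm:duality-sheaf} and Proposition \ref{thm:duality-microsheaf}. The only point worth being careful about is the identification on the microsheaf side: one must invoke the doubling equivalence of Theorem \ref{doubling} to realize $\msh_\Lambda(\Lambda)$ as $\Sh_{\widehat\Lambda_{\cup}}(M)$ so that the formulas of Theorem \ref{thm:duality-sheaf} apply verbatim to $\sB$. Once this identification is in place, the rest is formal.
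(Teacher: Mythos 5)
Your proof is correct and is exactly the argument the paper intends: the lemma is stated without proof precisely because it follows from unwinding $\HH_* = \epsilon \circ \eta$ with the explicit unit/counit of Theorem \ref{thm:duality-sheaf} and Proposition \ref{thm:duality-microsheaf} (via the doubling identification of Theorem \ref{doubling}), together with Lemma \ref{lem:dual-bimod-microsheaf} for the last equality. No issues.
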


\begin{remark}
    Using Lemma \ref{lem:bimod-adjoint-sheaf}, we know that the natural map $\HH_*(\msh_\Lambda(\Lambda)) \to \HH_*(\Sh_{\widehat\Lambda}(M))$ factors through the morphism induced by $\iota_{\widehat\Lambda \times -\widehat\Lambda}^*: m_{\widehat\Lambda_{\cup} \times -\widehat\Lambda_{\cup}}^l m_{\widehat\Lambda_{\cup} \times -\widehat\Lambda_{\cup}} 1_\Delta \to m_{\widehat\Lambda \times -\widehat\Lambda}^l m_{\widehat\Lambda \times -\widehat\Lambda} 1_\Delta$:
    $$p_! \Delta^*  m_{\widehat\Lambda_{\cup} \times -\widehat\Lambda_{\cup}}^l m_{\widehat\Lambda_{\cup} \times -\widehat\Lambda_{\cup}} 1_\Delta \to p_! \Delta^* m_{\widehat\Lambda \times -\widehat\Lambda}^l m_{\widehat\Lambda \times -\widehat\Lambda} 1_\Delta \to p_! \Delta^* \iota_{\widehat\Lambda \times -\widehat\Lambda}^* 1_\Delta.$$
\end{remark}

On the other hand, using the unit and counit of $\sA = \Sh_{\widehat\Lambda}(M)$ and $\sB = \msh_\Lambda(\Lambda)$, the Hochschild cohomologies $\HH^*(\sA)$ and $\HH^*(\sB)$ can also be computed as follows.

\begin{lemma}
    Let $\Lambda \subseteq S^{*}M$ be a compact subanalytic Legendrian. Then
    \begin{gather*}
        \HH^*(\Sh_{\widehat\Lambda}(M)) = \End( \iota_{\widehat\Lambda \times -\widehat\Lambda}^* 1_\Delta ),\\
        \HH^*(\msh_\Lambda(\Lambda)) = \End(  \iota_{\widehat\Lambda_{\cup} \times -\widehat\Lambda_{\cup}}^* 1_\Delta ) = \End ( m_{\widehat\Lambda_{\cup} \times -\widehat\Lambda_{\cup}}^l m_{\widehat\Lambda_{\cup} \times -\widehat\Lambda_{\cup}} 1_\Delta ).
    \end{gather*}
\end{lemma}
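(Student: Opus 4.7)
The plan is to deduce these identifications directly from the bimodule descriptions of the identity functors established in Lemmas~\ref{lem:dual-bimod} and~\ref{lem:dual-bimod-microsheaf}, together with the Fourier--Mukai isomorphism of Theorem~\ref{thm:fourier-mukai}. By the definition recalled earlier, $\HH^*(X) = \Hom_{\End(X)}(\Id_X, \Id_X)$, so once the identity bimodule is identified as a concrete sheaf on $M \times M$, the Hochschild cohomology is computed as the endomorphism of that sheaf in the ambient category.

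First I would record the abstract input. The Fourier--Mukai isomorphism from Theorem~\ref{thm:fourier-mukai} gives equivalences of stable $\infty$-categories
\begin{equation*}
    \Sh_{-\widehat\Lambda \times \widehat\Lambda}(M \times M) \xrightarrow{\sim} \Fun^L(\Sh_{\widehat\Lambda}(M), \Sh_{\widehat\Lambda}(M)),
\end{equation*}
and, using the doubling equivalence $\msh_\Lambda(\Lambda) \simeq \Sh_{\widehat\Lambda_\cup}(M)$ from Theorem~\ref{doubling}, analogously for the kernels on the product with the $\widehat\Lambda_\cup$ singular support condition. Since these are equivalences of stable $\infty$-categories, they induce isomorphisms on mapping spaces, so the endomorphism algebra of a bimodule kernel agrees with that of the corresponding colimit-preserving functor.

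Now the conclusions are one-line applications. For $\sA = \Sh_{\widehat\Lambda}(M)$, Lemma~\ref{lem:dual-bimod} identifies $\Id_{\sA} \simeq \iota_{\widehat\Lambda \times -\widehat\Lambda}^* 1_\Delta$ under Fourier--Mukai, so
\begin{equation*}
    \HH^*(\sA) = \End_{\Fun^L(\sA,\sA)}(\Id_\sA) \simeq \End_{\Sh_{\widehat\Lambda \times -\widehat\Lambda}(M \times M)}(\iota_{\widehat\Lambda \times -\widehat\Lambda}^* 1_\Delta).
\end{equation*}
For $\sB = \msh_\Lambda(\Lambda)$, Lemma~\ref{lem:dual-bimod-microsheaf} gives two canonically isomorphic bimodule descriptions of $\Id_{\sB}$, namely $\iota_{\widehat\Lambda_\cup \times -\widehat\Lambda_\cup}^* 1_\Delta$ and $m_{\widehat\Lambda_\cup \times -\widehat\Lambda_\cup}^l m_{\widehat\Lambda_\cup \times -\widehat\Lambda_\cup} 1_\Delta$; both therefore compute $\HH^*(\sB)$ as the endomorphism algebra of the corresponding kernel.

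There is no substantive obstacle; the statement is essentially a bookkeeping consequence of the identifications already made. The only point to watch is that each $\End$ must be interpreted in the appropriate category of sheaves on $M \times M$ equipped with the correct singular support condition, which is automatic from the Fourier--Mukai statement, and that passing between the two descriptions in the microsheaf case uses precisely the equality noted within Lemma~\ref{lem:dual-bimod-microsheaf}.
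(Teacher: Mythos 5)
Your proposal is correct and follows essentially the same route the paper intends: the paper states this lemma as an immediate consequence of the identifications $\Id_{\Sh_{\widehat\Lambda}(M)} \simeq \iota_{\widehat\Lambda \times -\widehat\Lambda}^* 1_\Delta$ and $\Id_{\msh_\Lambda(\Lambda)} \simeq \iota_{\widehat\Lambda_\cup \times -\widehat\Lambda_\cup}^* 1_\Delta \simeq m_{\widehat\Lambda_\cup \times -\widehat\Lambda_\cup}^l m_{\widehat\Lambda_\cup \times -\widehat\Lambda_\cup} 1_\Delta$ from Lemmas~\ref{lem:dual-bimod} and~\ref{lem:dual-bimod-microsheaf}, transported through the Fourier--Mukai equivalence (with the doubling identification for the microsheaf case), together with $\HH^*(X) = \Hom_{\End(X)}(\Id_X,\Id_X)$. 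Your bookkeeping of the singular support conditions on the kernel category is the only point of care, and you handle it as the paper does.
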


\subsection{Weak Calabi--Yau structure on local systems}
    We consider the special case of local systems on a closed orientable manifold $\sB = \Loc(M)$. The smooth Calabi--Yau structure on $\Loc(M)$ is constructed in \cite[Section 5.1.1]{Brav-Dyckerhoff1} and recovers the Poincar\'e duality.

    For a vector bundle $E$ over a manifold $M$, we define the Thom space $\mathrm{Th}(E)$ to be the sphere bundle over $M$ by compactifying by adding one point at infinity in each fiber. In particular, the Thom space of a smooth manifold $M$ is the Thom space of its tangent bundle $\mathrm{Th}(TM)$.

\begin{definition}[{\cite[Definition 4.3.4]{Kochman}}]
    A smooth manifold $M$ is orientable with respect to $\cV$ if $TM$ is orientable with respect to $\cV$, namely for the Thom space there exists
    $$1_\cV[n] \to \Gamma(\mathrm{Th}(TM);1_{TM})$$
    such that the restriction to each fiber $1_\cV[n] \to \Gamma(S^n; 1_{\bR^n})$ is an equivalence.
\end{definition}
\begin{remark}
    When $\cV = \Mod(\Bbbk)$ where $\Bbbk$ is a ring or more generally a commutative ring spectrum, an orientation on $M$ is simply a map $\Bbbk[n] \to H^*(\mathrm{Th}(TM), M; \Bbbk)$ such that the restriction to each fiber $\Bbbk[n] \to H^*(S^n, *; \Bbbk)$ is a generator. This agrees with the standard definition.
\end{remark}

 Here is an alternative description of the orientation. Let $M$ be a smooth manifold. Consider the composition of the tangent bundle map and the delooping of the $J$-homomorphism
$$M \xrightarrow{TM} BO \xrightarrow{BJ} B\SPH^\times.$$
The composition is equal to the Thom spectrum $\mathrm{Th}^\infty(TM)$ of $M$ \cite[Proposition 7.7 \& Corollary 7.9]{Volpe-six-operations}. The Thom spectrum defines an invertible sheaf on $M$ with coefficients in $\SPH$ \cite[Equation 7.3]{Volpe-six-operations}. 
For a general $\cV$, being stable implies that $\cV \in \Sp = \Mod(\SPH)$, there is a symmetric monoidal functor
\begin{align*}
 \Sp &\rightarrow \cV \\
 X &\mapsto X \otimes 1_\cV.
\end{align*}
In particular, it induces a map of ring spectral $\SPH = \Hom_{\Sp}(\SPH, \SPH) \rightarrow \Hom_{\cV}(1_\cV,1_\cV) \eqqcolon \End_\cV(1_\cV)$ and hence a group homomorphism $\SPH^\times \rightarrow \Aut_\cV(1_\cV)$.
We can show that an orientation on $M$ with respect to $\cV$ is a null homotopy of the composition
$$M \xrightarrow{TM} BO \xrightarrow{BJ} B\SPH^\times \rightarrow B\hspace{-1pt}\Aut_\cV(1_\cV),$$
which is equivalent to a generating section of the sheaf $\mathrm{Th}^\infty(TM) \otimes 1_\cV$.

\begin{proposition}\label{prop:poincare}
    Let $\cV$ be a rigid symmetric monoidal category and $M$ be a smooth manifold. 
    Then the local system classified by the the Thom spectrum $\mathrm{Th}^\infty(TM) \otimes 1_\cV$
    $$M \xrightarrow{TM} BO \xrightarrow{BJ} B\SPH^\times \rightarrow B\hspace{-1pt}\Aut_\cV(1_\cV)$$
    is represented by the dualizing sheaf $\omega_M \coloneqq p^! 1_\cV$ where $p: M \rightarrow \{*\}$ is the projection to a point, and an orientation on $M$ with respect to $\cV$ is equivalent to an isomorphism $\omega_M \simeq 1_M[-n]$ or equivalently a null homotopy of the composition $M \to B\hspace{-1pt}\Aut_\cV(1_\cV)$.
\end{proposition}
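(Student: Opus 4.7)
The plan is to check both assertions by reducing to a local computation on Euclidean charts and then globalizing via the six-functor formalism, showing that $\omega_M = p^!1_\cV$ is an invertible sheaf whose classifying map is precisely the Thom spectrum composite.

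First I would verify that $\omega_M$ is an invertible object of $\Sh(M;\cV)$, locally equivalent to $1_M[n]$. By locality of $p^!$, it suffices to show $p_U^! 1_\cV \simeq 1_U[n]$ on each chart $U \cong \bR^n$. This follows by factoring $p_U$ as the open embedding $U \hookrightarrow S^n$ followed by the projection $S^n \to \{*\}$, and using Spanier--Whitehead duality of $S^n$ (which is $n$-dualizable with dual a shift of the unit). Since invertible sheaves on $M$ with coefficients in $\cV$ are classified by maps $M \to B\hspace{-1pt}\Aut_\cV(1_\cV)$, we obtain a canonical classifying map $\phi_M$ for $\omega_M[-n]$.

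Next, I would identify $\phi_M$ with the composition $M \xrightarrow{TM} BO \xrightarrow{BJ} B\SPH^\times \to B\hspace{-1pt}\Aut_\cV(1_\cV)$. The most efficient route is to invoke \cite{Volpe-six-operations}, specifically Proposition 7.7, Corollary 7.9, and Equation 7.3, where the dualizing sheaf of a smooth manifold with coefficients in $\Sp$ is identified with the Thom spectrum $\mathrm{Th}^\infty(TM)$ and the classifying map is shown to be the stated composite. Extending to general $\cV$ is then automatic, since the symmetric monoidal functor $\Sp \to \cV$ sending $X \mapsto X \otimes 1_\cV$ commutes with $p^!$ on smooth manifolds and preserves invertible objects. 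Without invoking that reference, one could instead unpack the transition cocycle of $\omega_M$ directly: on the overlap of two charts the Jacobian of the coordinate change in $\mathrm{GL}_n(\bR)$ acts on the local trivialization $1[n]$ via its natural action on the one-point compactification $\bR^n \cup \{\infty\} = S^n$, and this cocycle is exactly the composite $M \to BO \to B\SPH^\times$ coming from the $J$-homomorphism.

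For the orientation statement, a null homotopy of the composition $M \to B\hspace{-1pt}\Aut_\cV(1_\cV)$ is by definition a trivialization of the associated invertible sheaf, so by the first part it is equivalent to an isomorphism $\omega_M \simeq 1_M[-n]$. To match the definition of orientation given in the text, I would rewrite $\Gamma(\mathrm{Th}(TM); 1_{TM})$ as $p_*(\mathrm{Th}^\infty(TM) \otimes 1_\cV)$ relative to the zero section and observe that the map $1_\cV[n] \to \Gamma(\mathrm{Th}(TM); 1_{TM})$ being a fiberwise equivalence is the same as the adjoint map $1_M[n] \to \mathrm{Th}^\infty(TM) \otimes 1_\cV$ being a global equivalence of invertible sheaves. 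The main obstacle is the global coherence in the first identification, namely showing that $\omega_M$ and the Thom spectrum agree as local systems rather than just fiberwise; this is cleanest to defer to the six-functor formalism of \cite{Volpe-six-operations} rather than reconstruct it by hand.
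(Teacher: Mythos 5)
Your proposal is correct and follows essentially the same route as the paper: both identify $\omega_M$ with $\mathrm{Th}^\infty(TM)\otimes 1_\cV$ by citing Volpe's six-functor results, transfer from $\Sp$ to general $\cV$ along the coefficient-change functor, and conclude that an orientation (a fiberwise-generating section of the Thom spectrum) is the same as a trivialization of this invertible local system, i.e.\ an isomorphism $\omega_M \simeq 1_M$ up to the shift by $n$. The extra local chart verification and the hand-made Jacobian cocycle sketch you offer are harmless additions, but the core argument coincides with the paper's.
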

\begin{proof}
First, we know the equality $\mathrm{Th}^\infty(TM) \otimes 1_\cV = \omega_M$ by \cite[Theorem 7.11]{Volpe-six-operations} for the case of spectra and the general case follows from changing coefficient using \cite[Proposition 6.16]{Volpe-six-operations}. Moreover, for $\pi: \mathrm{Th}(TM) \to M$ and $j: M \hookrightarrow \mathrm{Th}(TM)$, we know the equality
$$\mathrm{Th}^\infty(TM) \otimes 1_\cV = \pi_*\mathrm{Cofib}(j_*j^*1_{\mathrm{Th}(TM)} \to 1_{\mathrm{Th}(TM)}) = \pi_*1_{TM}.$$
by \cite[Proposition 7.7]{Volpe-six-operations}. Therefore, an orientation on $M$ is equivalent to an isomorphism $\omega_M \simeq 1_M[n]$ is the special case of the fact that, for an invertible local system, a non-zero global section is equivalent to a trivialization of the invertible local system.
\end{proof}

\begin{proposition}
Let $M$ be a closed manifold of dimension $n$. 
Then an orientation of $M$ induces an $n$-dimensional weak smooth Calabi-Yau structure on $\Loc(M)$.
\end{proposition}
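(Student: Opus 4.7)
The plan is to identify the Hochschild homology, construct the Calabi--Yau class using the orientation, and verify the induced bimodule equivalence via Sabloff--Serre duality.

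For $\Lambda=\varnothing$ and $M$ closed, one has $\widehat{\varnothing}=M$ (the zero section), so $\Sh_{\widehat{\varnothing}}(M)=\Loc(M)$ and by the formulas in the previous subsection
$$\HH_*(\Loc(M))=p_!\Delta^*\iota^*_{M\times M}1_\Delta.$$
Dualizability of $\Loc(M)$ follows from Theorem~\ref{thm:duality-sheaf}, and smoothness is witnessed by the existence of the left adjoint $\epsilon^l_{\Loc(M)}=\iota^*_{M\times M}\Delta_! p^*$, using that the closed embedding $\Delta$ is proper so that $\Delta_!=\Delta_*$.

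Next I construct the class $\phi: 1_\cV[n]\to\HH_*(\Loc(M))$. By Proposition~\ref{prop:poincare} the orientation gives $\omega_M\simeq 1_M[-n]$, and composing with the counit $p_!\omega_M\to 1_\cV$ of the $p_!\dashv p^!$ adjunction yields the Poincar\'e dual of the fundamental class $[M]^*: 1_\cV[n]\to p_!1_M$. On the other hand, the unit $1_\Delta\to\iota^*_{M\times M}1_\Delta$ of the adjunction $\iota^*_{M\times M}\dashv \iota_{M\times M}$, which represents sheaf-theoretically the inclusion of constant loops into free loops, induces after applying $p_!\Delta^*$ a morphism
$$p_!1_M=p_!\Delta^*1_\Delta\longrightarrow p_!\Delta^*\iota^*_{M\times M}1_\Delta=\HH_*(\Loc(M)).$$
The composition defines $\phi$.

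Finally I verify that $\phi$ induces an equivalence $\Id_{\Loc(M)}^![n]\xrightarrow{\sim}\Id_{\Loc(M)}$. With $\Lambda=\Lambda'=\varnothing$ and $T_\epsilon=\Id$, and since $M$ is closed so that the compact intersection of supports hypothesis is automatic, Sabloff--Serre duality (Theorem~\ref{thm:sabloff-serre}) yields
$$\Hom(F,G)[-n]\simeq\Hom(G,F)^\vee$$
for $F\in\Loc^c(M)$ and $G\in\Loc(M)$. This is the classical Poincar\'e--Lefschetz duality for local systems, and Proposition~\ref{prop:leftdual-serre} upgrades it to the desired bimodule equivalence. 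The main obstacle, via Remark~\ref{rem:orientation-is-id}, is to confirm that the class $\phi$ built above matches this equivalence under the correspondence between Hochschild homology classes and natural transformations $\Id^!\to\Id$; this amounts to tracking the orientation twist $\omega_M\simeq 1_M[-n]$ through the adjunction-unit map and identifying it with the Serre functor isomorphism produced by Sabloff--Serre.
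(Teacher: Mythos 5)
There is a genuine gap, and it sits exactly where the orientation is supposed to do its work. Your formula $\epsilon^l_{\Loc(M)}=\iota^*_{M\times M}\Delta_!\,p^*$ is not the left adjoint of the counit $\epsilon=p_!\Delta^*$ from Theorem \ref{thm:duality-sheaf}: it is the left adjoint of $p_*\Delta^!$. Running the adjunctions correctly, using $\Delta^*(-)=\Delta^!(-)\otimes\omega_M$ on local systems (this is the key input, \cite[Proposition 5.4.13]{KS}), one finds $\epsilon^l(V)=\iota^*_{\varnothing}\Delta_*\bigl(p^*V\otimes\omega_M^{\pm1}\bigr)$, i.e.\ the inverse dualizing bimodule is the diagonal kernel twisted by the dualizing complex. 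Dropping that twist, as your formula does, would give $\Id^!_{\Loc(M)}\simeq\Id_{\Loc(M)}$ with no shift and no orientation hypothesis, i.e.\ a $0$-dimensional Calabi--Yau structure for every closed $M$ — which is false and should have signalled the problem. Your construction of the class $\phi$ also has a direction error: the counit $p_!\omega_M\to 1_\cV$ composed with the orientation produces a map \emph{out of} $p_!1_M$ (up to shift), not a map $1_\cV[n]\to p_!1_M$; the fundamental class should come from the unit $1_\cV\to p_*1_M$ together with the orientation identification between $1_M$ and $\omega_M$.

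Most importantly, the step that actually constitutes the proposition — that the orientation induces an equivalence $\Id^!_{\Loc(M)}[n]\xrightarrow{\sim}\Id_{\Loc(M)}$ whose associated Hochschild class is the one you built — is explicitly deferred in your last sentence ("the main obstacle\dots amounts to tracking the orientation twist"), so the non-degeneracy is never verified; Sabloff--Serre duality plus Proposition \ref{prop:leftdual-serre} only gives object-wise duality statements and does not by itself upgrade to the bimodule equivalence. The paper closes this gap by a short direct computation of the kernel: $\Id^!_{\Loc(M)}=\iota^*_{\varnothing}\Delta_*(\omega_M)$ (via $\Delta^*=\Delta^!\otimes\omega_M$), together with the observation $p_{1!}\,\iota^*_{\varnothing}(\Delta_*L)=L$ for local systems, so that an orientation $\omega_M\simeq 1_M[\pm n]$ (Proposition \ref{prop:poincare}) immediately yields $\Id^!_{\Loc(M)}\simeq\Id_{\Loc(M)}[\pm n]$, and then Remark \ref{rem:orientation-is-id} produces the weak Calabi--Yau class tautologically from the unit in Hochschild cohomology — no appeal to Sabloff--Serre is needed. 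If you want to salvage your route, you must (i) restore the $\omega_M$-twist in the left adjoint, and (ii) actually carry out the identification you postponed, at which point you will essentially have reproduced the paper's computation.
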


\begin{proof}
We know that under the dual Fourier--Mukai isomorphism, $\Id_{\Loc(M)} = p_!\Delta^*(-)$. By \cite[Proposition 5.4.13]{KS}, for any map $f: M \rightarrow N$ between manifolds and any $L \in \Loc(Y)$, 
$f^! L = f^* L \otimes \omega_f$. In particular, $\Delta^*(-) = \Delta^! (-) \otimes \omega_M$ on $\Loc(M \times M)$.
Thus 
$$\Id_{\Loc(M)}^! = \iota_{\varnothing}^* \Delta_* (\omega_M).$$
Last, we notice that, for any $L \in \Loc(M)$, ${p_1}_! \iota_\varnothing^*(\Delta_*L) = L$ where $p_1: M \times M \rightarrow M$ is the projection to the first component. 
Since we can check that, for any $L^\prime \in \Loc(M)$,
$$\Hom({p_1}_! \iota_\varnothing^*(\Delta_*L), L^\prime) = \Hom(\iota_\varnothing^*(\Delta_*L), p_1^! L^\prime) = \Hom( \Delta_*L, p_1^! L^\prime) = \Hom(L,L^\prime).$$
Again we use the fact that $p_1^! L^\prime$ is a local system.
This implies an equality $1_M[n] = \omega_M$ 
induces an equality $\Id_{\Loc(M)}^! = \Id_{\Loc(M)}[n]$.
Thus we conclude the proof by the last Proposition \ref{prop:poincare}. 
\end{proof}

\subsection{Weak Calabi--Yau structure on microlocalization}
    Consider the case when $\sA = \Sh_{\widehat\Lambda}(M)$, $\sB = \msh_{\Lambda}(\Lambda)$, and $f = m_\Lambda^l: \msh_{\Lambda}(\Lambda) \rightarrow \Sh_{\widehat\Lambda}(M) : m_\Lambda = f^r$. Recall from Lemma \ref{lem:dual-bimod} that 
    \begin{gather*}
        \Id_\sA = \iota_{\widehat\Lambda \times -\widehat\Lambda}^*1_\Delta, \;\; 
        ff^r = m_{\widehat\Lambda \times -\widehat\Lambda}^l m_{\widehat\Lambda \times -\widehat\Lambda} 1_\Delta.
    \end{gather*}
    Consider the fiber sequence in the non-degeneracy condition which is
    $$\mathrm{Fib}(c) \longrightarrow ff^r \longrightarrow \Id_\sA.$$
    The following proposition is a functorial version of the Sato--Sabloff sequence Theorem \ref{thm:sato-sabloff}, from which we can identify the fiber in the above sequence.

\begin{definition}\label{def:wrap-once}
    Let $\Lambda \subseteq S^*M$ be a compact subanalytic Legendrian and let $T_t: S^*M \to S^*M$ be a positive contact flow. Then the wrap once functor is defined by $S_\Lambda^+ = \iota_{\widehat\Lambda}^* \circ T_\epsilon: \Sh_{\widehat\Lambda}(M) \to \Sh_{\widehat\Lambda}(M)$.
\end{definition}
\begin{remark}
    When $M$ is a closed manifold, the functor $\iota_{\widehat\Lambda}^*: \Sh(M) \to \Sh_{\widehat\Lambda}(M)$ is given by the colimit of all positive contact isotopies supported away from $\Lambda \subseteq S^*M$ \cite[Proposition 1.2]{Kuo-wrapped-sheaves}. This justifies the terminology.
\end{remark}

\begin{proposition}[{\cite[Theorem 4.3]{Kuo-Li-spherical}}]\label{prop:fiber-sequence}
    Let $M$ be a manifold and $\Lambda \subseteq S^*M$ be a compact subanalytic Legendrian. Then there is a fiber sequence 
    $$m_{\Lambda}^l m_{\Lambda} \to \Id_{\Sh_{\widehat\Lambda}(M)} \to S_{\Lambda}^+.$$
\end{proposition}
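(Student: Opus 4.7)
The plan is to reduce the statement to the pointwise Sato--Sabloff fiber sequence of Theorem \ref{thm:sato-sabloff} by testing against a sheaf $G \in \Sh_{\widehat\Lambda}(M)$, identify each of the three terms with $\Hom$--groups corresponding to the functors $m_\Lambda^l m_\Lambda$, $\Id$, and $S_\Lambda^+$, and then invoke the Yoneda lemma to promote this pointwise identification to a fiber sequence of functors.

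Concretely, fix $F, G \in \Sh_{\widehat\Lambda}(M)$ and choose a positive Reeb flow $T_t : S^*M \to S^*M$ with $\Lambda \cap T_\epsilon(\Lambda) = \varnothing$ for small $\epsilon \neq 0$. Since $\Sh_{\widehat\Lambda}(M) = \Sh_\Lambda(M)_0$ consists of compactly supported sheaves, $\supp(F) \cap \supp(G)$ is automatically compact, so Theorem \ref{thm:sato-sabloff} (with $\Lambda' = \Lambda$) yields a fiber sequence
$$\Hom(F, T_{-\epsilon} G) \longrightarrow \Hom(F, G) \longrightarrow \Gamma(\Lambda, \mhom(F, G)),$$
in which the first arrow comes from the continuation map $T_{-\epsilon} G \to G$ and the second from microlocalization. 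The left term is rewritten using $T_\epsilon \dashv T_{-\epsilon}$ together with the adjunction $\iota_{\widehat\Lambda}^* \dashv \iota_{\widehat\Lambda *}$ (which acts as the identity on $G \in \Sh_{\widehat\Lambda}(M)$):
$$\Hom(F, T_{-\epsilon} G) = \Hom(T_\epsilon F, G) = \Hom(\iota_{\widehat\Lambda}^* T_\epsilon F, G) = \Hom(S_\Lambda^+ F, G).$$
The right term is rewritten via the microlocalization adjunction $m_\Lambda^l \dashv m_\Lambda$ and the identification $\Gamma(\Lambda, \mhom(F, G)) = \Hom_{\msh_\Lambda(\Lambda)}(m_\Lambda F, m_\Lambda G)$ as
$$\Gamma(\Lambda, \mhom(F, G)) = \Hom(m_\Lambda^l m_\Lambda F, G).$$

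Assembling these substitutions produces a fiber sequence
$$\Hom(S_\Lambda^+ F, G) \longrightarrow \Hom(F, G) \longrightarrow \Hom(m_\Lambda^l m_\Lambda F, G),$$
natural in $G$. Varying $G$ and applying Yoneda corepresents this as the fiber sequence of functors
$$m_\Lambda^l m_\Lambda \longrightarrow \Id_{\Sh_{\widehat\Lambda}(M)} \longrightarrow S_\Lambda^+.$$
To close the argument, the two connecting natural transformations must be identified as the expected ones: the counit of $m_\Lambda^l \dashv m_\Lambda$ on the left (which arises from microlocalization in Theorem \ref{thm:sato-sabloff}) and the composite of the continuation morphism $F \to T_\epsilon F$ with the unit of $\iota_{\widehat\Lambda}^* \dashv \iota_{\widehat\Lambda *}$ on the right (built into the first arrow of Sato--Sabloff).

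The main obstacle is not algebraic but functorial: the Sato--Sabloff sequence must be genuinely natural in the pair $(F, G)$, not merely valid objectwise, in order for Yoneda to upgrade it to a fiber sequence of functors. This amounts to packaging the sheaf-quantization ingredients of Theorem \ref{thm: GKS} and the proof of Theorem \ref{thm:sato-sabloff} into a coherent natural transformation, so that the boundary map $\Hom(F, G) \to \Gamma(\Lambda, \mhom(F, G))$ is the composition of the $\mhom$--comparison with the microlocalization counit, and the first map is the precomposition with continuation. Once this naturality is established, the fiber sequence of functors follows immediately.
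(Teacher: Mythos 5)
Your objectwise argument is fine as far as it goes: for fixed $F, G \in \Sh_{\widehat\Lambda}(M)$ the compact-support hypothesis of Theorem \ref{thm:sato-sabloff} is automatic, and the rewritings $\Hom(F, T_{-\epsilon}G) \simeq \Hom(S_\Lambda^+ F, G)$ and $\Gamma(\Lambda, \mhom(F,G)) \simeq \Hom(m_\Lambda^l m_\Lambda F, G)$ are correct, so corepresentability in $G$ gives, for each $F$, a fiber sequence $m_\Lambda^l m_\Lambda F \to F \to S_\Lambda^+ F$ in $\Sh_{\widehat\Lambda}(M)$. But the statement to be proved is a fiber sequence in the functor category, and the step you defer at the end --- ``once this naturality is established, the fiber sequence of functors follows immediately'' --- is precisely the content of the proposition, not a finishing touch. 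In the stable $\infty$-categorical setting an objectwise fiber sequence together with naturality of the connecting maps at the level of homotopy categories does not assemble into a fiber sequence in $\Fun^L(\Sh_{\widehat\Lambda}(M), \Sh_{\widehat\Lambda}(M))$; you would need the Sato--Sabloff sequence as a coherently natural construction in the pair $(F,G)$, and nothing in your proposal produces that data. So there is a genuine gap exactly where you flag the ``main obstacle.''

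The paper closes this gap by never varying $F$ at all: it runs your argument once, at the level of Fourier--Mukai kernels. By Theorem \ref{thm:fourier-mukai} and Lemma \ref{lem:dual-bimod}, the three functors in question correspond to the kernels $m_{\widehat\Lambda \times -\widehat\Lambda}^l m_{\widehat\Lambda \times -\widehat\Lambda} 1_\Delta$, $\iota_{\widehat\Lambda \times -\widehat\Lambda}^* 1_\Delta$ and $\iota_{\widehat\Lambda \times -\widehat\Lambda}^* T_\epsilon 1_\Delta$ in $\Sh_{\widehat\Lambda \times -\widehat\Lambda}(M \times M)$, and this kernel category is equivalent to $\Fun^L(\Sh_{\widehat\Lambda}(M), \Sh_{\widehat\Lambda}(M))$. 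Applying Theorem \ref{thm:sato-sabloff} with source the single sheaf $1_\Delta$ on $M \times M$ and testing against an arbitrary kernel $K \in \Sh_{\widehat\Lambda \times -\widehat\Lambda}(M \times M)$ gives a fiber sequence
$$\Hom(\iota_{\widehat\Lambda \times -\widehat\Lambda}^* T_\epsilon 1_\Delta, K) \to \Hom(\iota_{\widehat\Lambda \times -\widehat\Lambda}^* 1_\Delta, K) \to \Hom(m_{\widehat\Lambda \times -\widehat\Lambda}^l m_{\widehat\Lambda \times -\widehat\Lambda} 1_\Delta, K),$$
natural in the single variable $K$; a one-variable Yoneda argument then yields a fiber sequence of kernels, hence of functors, with no two-variable coherence problem to solve. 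If you want to salvage your approach you should reformulate it this way --- i.e.\ replace ``vary $F$ and $G$'' by ``apply Sato--Sabloff to the diagonal kernel and vary the test kernel $K$'' --- which is exactly the paper's proof.
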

\begin{proof}
    For any $K \in \Sh_{\widehat\Lambda \times -\widehat\Lambda}(M \times M)$, using Sato-Sabloff fiber sequence Theorem \ref{thm:sato-sabloff}, we know
    $$\Hom(T_\epsilon 1_\Delta, K) \rightarrow \Hom(1_\Delta, K) \rightarrow \Gamma(S^*(M \times M), \mhom( m_{\widehat\Lambda \times -\widehat\Lambda}1_\Delta, m_{\widehat\Lambda \times -\widehat\Lambda} K) ).$$
    Therefore, by adjunction we have the fiber sequence
    \begin{equation*}
    \Hom(\iota_{\widehat\Lambda \times -\widehat\Lambda}^*T_\epsilon 1_\Delta , K) \rightarrow \Hom(\iota_{\widehat\Lambda \times -\widehat\Lambda}^*1_\Delta, K) \rightarrow \Hom (m_{\widehat\Lambda \times -\widehat\Lambda}^l m_{\widehat\Lambda \times -\widehat\Lambda}1_\Delta, K).
    \end{equation*}
    Finally, one observes that $S_{\Lambda}^+ = \iota_{\widehat\Lambda}^* \circ T_\epsilon \simeq \iota_{\widehat\Lambda \times -\widehat\Lambda}^*T_\epsilon 1_\Delta$ under the Fourier--Mukai isomorphism. This completes the proof of the fiber sequence.
\end{proof}

    Then we need to show the isomorphisms with the corresponding inverse dualizing bimodules defined by left adjoints. The following proposition is a functorial version of the Sabloff--Serre duality Theorem \ref{thm:sabloff-serre}.

\begin{proposition}\label{prop:weak-rel-cy-nondeg}
    Let $M$ be a compact manifold of dimension $n$ and $\Lambda \subseteq S^*M$ be a compact subanalytic Legendrian. Then $\Sh_{\widehat\Lambda}(M)$ is a smooth category, and there is an isomorphism
    $$\Id^!_{\Sh_{\widehat\Lambda}(M)} \simeq S_{\Lambda}^+ \otimes \omega_M.$$
    In particular, when $M$ is oriented, then $\Id^!_{\Sh_{\widehat\Lambda}(M)} \simeq S_{\Lambda}^+[-n]$.
\end{proposition}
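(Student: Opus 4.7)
I will first establish smoothness of $\sA = \Sh_{\widehat\Lambda}(M)$ and then identify $\Id_\sA^!$ using the Fourier--Mukai description together with Sabloff--Serre duality.

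For smoothness, by Lemma \ref{lem:dual-bimod} the identity bimodule is $\iota_{\widehat\Lambda \times -\widehat\Lambda}^*(1_\Delta) \in \sA \otimes \sA^\vee = \Sh_{\widehat\Lambda \times -\widehat\Lambda}(M \times M)$. Since $M$ is compact, $1_M$ is a compact object of $\Sh(M)$. Because $\Delta : M \hookrightarrow M \times M$ is a proper closed embedding, $\Delta_! = \Delta_*$ preserves compact objects and $1_\Delta = \Delta_! 1_M$ is compact in $\Sh(M \times M)$. Finally the left adjoint $\iota_{\widehat\Lambda \times -\widehat\Lambda}^*$ of the colimit-preserving inclusion sends compact objects to compact objects (in the spirit of Proposition \ref{prop:stopremoval}). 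Hence $\iota_{\widehat\Lambda \times -\widehat\Lambda}^*(1_\Delta)$ is compact and $\sA$ is smooth.

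For the main isomorphism, under the Fourier--Mukai correspondence (Theorem \ref{thm:fourier-mukai}) $\Id_\sA^!$ is represented by the kernel $K^! := \epsilon_\sA^l(1_\cV) \in \sA^\vee \otimes \sA = \Sh_{-\widehat\Lambda \times \widehat\Lambda}(M \times M)$, uniquely characterized by $\Hom(K^!, K) = \epsilon_\sA(K) = p_! \Delta^* K$ for every $K$ in this subcategory. The plan is to rewrite the right-hand side as a Hom pairing by applying Sabloff--Serre duality (Theorem \ref{thm:sabloff-serre}) on $M \times M$ with $F = \mathbb{D}_{M \times M}(1_\Delta) = \Delta_* \omega_M$ and $G = K$, giving $p_! \Delta^* K = \pi_!(1_\Delta \otimes K) = \Hom(\Delta_* \omega_M,\, T_{-\epsilon} K \otimes \omega_{M \times M})$. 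Absorbing $\omega_{M \times M} \simeq \omega_M \boxtimes \omega_M$ along the diagonal and moving $T_{-\epsilon}$ across via the adjunction $T_{-\epsilon} \dashv T_\epsilon$ identifies $K^!$ with $\iota_{-\widehat\Lambda \times \widehat\Lambda}^*(T_\epsilon 1_\Delta \otimes \omega_M)$; by the kernel-level description of $S_\Lambda^+$ given in the proof of Proposition \ref{prop:fiber-sequence}, this is precisely the kernel for the endofunctor $S_\Lambda^+ \otimes \omega_M$. In the oriented case, $\omega_M \simeq 1_M[-n]$ by Proposition \ref{prop:poincare}, yielding $\Id_\sA^! \simeq S_\Lambda^+[-n]$.

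The main technical obstacle is that Theorem \ref{thm:sabloff-serre} does not apply verbatim on $M \times M$: the conormal $T^*_\Delta(M \times M)$ meets the Legendrian $-\widehat\Lambda \times \widehat\Lambda$ precisely along $\widehat\Lambda$, so the non-characteristic hypothesis fails. The remedy is to choose the non-negative contact flow $T_\epsilon$ on $S^*(M \times M)$ so as to disjoin the two Legendrians away from their intersection, and then invoke the relative Sato--Sabloff fiber sequence (Theorem \ref{thm:sato-sabloff-rel}) together with the Reeb perturbation lemma (Proposition \ref{prop:perturbation}) in order to commute $T_\epsilon$ past the Hom pairing. A secondary bookkeeping point is that shifts must be tracked carefully through the relation $\Delta^! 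K = \Delta^* K \otimes \omega_{M/M \times M}$ combined with the convention $\omega_M = 1_M[-n]$ to produce the claimed shift $[-n]$ rather than $[+n]$.
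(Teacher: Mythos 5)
Your core argument is the same as the paper's: apply Sabloff--Serre duality (Theorem \ref{thm:sabloff-serre}) on $M\times M$ to the pair $(\Delta_*\omega_M, K)$ in order to exhibit an explicit left adjoint of the pairing $K\mapsto \pi_!(1_\Delta\otimes K)$, and then recognize the resulting kernel as that of $S_\Lambda^+\otimes\omega_M$ via $S_\Lambda^+\simeq \iota_{\widehat\Lambda\times-\widehat\Lambda}^*T_\epsilon 1_\Delta$ from Proposition \ref{prop:fiber-sequence}; smoothness then comes for free from the existence of this left adjoint, exactly as in the paper.

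Two points need correction, and one is redundant. First, your ``technical obstacle'' is misdiagnosed: the hypothesis of Theorem \ref{thm:sabloff-serre} does not exclude the two Legendrians meeting at time $0$; it only requires a non-negative flow with $\Lambda\cap T_\epsilon(\Lambda')=\varnothing$ for all small $\epsilon\neq 0$, and this holds for the flow of Definition \ref{def:hamiltonian} because a compact subanalytic Legendrian has no arbitrarily short Reeb chords. So the theorem applies as stated, and neither Theorem \ref{thm:sato-sabloff-rel} nor Proposition \ref{prop:perturbation} is needed for this step. Second, your twist bookkeeping, taken literally, lands on the wrong side: from $\pi_!(1_\Delta\otimes K)\simeq \Hom(\Delta_*\omega_M, T_{-\epsilon}K\otimes\omega_{M\times M})$, moving $\omega_{M\times M}$ across the Hom gives $\Hom\bigl(T_\epsilon\Delta_*(\omega_M\otimes\Delta^*\omega_{M\times M}^{-1}),K\bigr)=\Hom(T_\epsilon\Delta_*\omega_M^{-1},K)$, i.e.\ the kernel of $S_\Lambda^+\otimes\omega_M^{-1}$, which differs from the asserted $S_\Lambda^+\otimes\omega_M$ by $\omega_M^{\otimes 2}$ (a $2n$-fold shift in the oriented case). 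A direct check (take $\Lambda=\varnothing$, $M=S^1$, where $\Id^!_{\Loc(S^1)}\simeq \Id[-1]$) confirms that $S_\Lambda^+\otimes\omega_M$, i.e.\ the shift $[-n]$, is the correct answer, so the ``absorption'' step must be redone; the paper avoids the issue by establishing the adjunction identity directly in the form $\Hom(\iota_{\widehat\Lambda\times-\widehat\Lambda}^*T_\epsilon(\Delta_*\omega_M),K)\simeq\pi_!(1_\Delta\otimes K)$. Third, your separate smoothness argument via compactness of the diagonal bimodule is unnecessary once the adjunction identity is proved, and as written it has citation gaps: Proposition \ref{prop:stopremoval} only treats $\iota^*$ between two isotropically stopped categories, not the localization from all of $\Sh(M\times M)$, and compactness of $1_\Delta$ in $\Sh(M\times M)$ (equivalently colimit-preservation of $\Delta^!$) is not free; the intended argument is rather that a compactly supported sheaf with perfect stalks has compact image under positive wrapping into $\Sh_{\widehat\Lambda\times-\widehat\Lambda}(M\times M)$.
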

\begin{proof}
    Fix any $K \in \Sh_{\widehat\Lambda \times -\widehat\Lambda}(M \times M)$. Then, by Sabloff--Serre duality Theorem \ref{thm:sabloff-serre}, we have
    \begin{equation*}
        \Hom(\iota_{\widehat\Lambda \times -\widehat\Lambda}^*T_\epsilon (\Delta_*\omega_M), K) = \pi_!(1_\Delta \otimes K).
    \end{equation*}
    This shows that $\iota_{\widehat\Lambda \times -\widehat\Lambda}^*(\Delta_*\omega_M)$ is the left adjoint of $\pi_!(1_\Delta \otimes -)$. Hence $\Sh_{\widehat\Lambda}(M)$ is smooth and we have an isomorphism of functors $\Id^!_{\Sh_{\widehat\Lambda}(M)} \simeq S_{\Lambda}^+ \otimes \omega_M$.
\end{proof}
\begin{remark}
    Define $v: M \times M \to M \times M, \, (x, y) \mapsto (y, x)$. Consider the equivalence induced by the duality of compactly generated categories
    $$\VD{\widehat\Lambda \times -\widehat\Lambda}: \Sh_{\widehat\Lambda \times -\widehat\Lambda}^c(M \times M)^{op} \to \Sh_{-\widehat\Lambda \times \widehat\Lambda}^c(M \times M).$$
    Then we can show that $\Id_{\Sh_{\widehat\Lambda}(M)}^! \simeq v_*\VD{\widehat\Lambda \times -\widehat\Lambda}(\iota_{\widehat\Lambda \times -\widehat\Lambda}^* 1_\Delta)$, and the result is equivalent to
    $$\iota_{\widehat\Lambda \times -\widehat\Lambda}^*T_\epsilon (\Delta_*\omega_M) = v_*\VD{\widehat\Lambda \times -\widehat\Lambda}(\iota_{\widehat\Lambda \times -\widehat\Lambda}^* \Delta_*1_M).$$
\end{remark}

\begin{remark}
    Using Proposition \ref{prop:leftdual-serre}, we know that the above isomorphism recovers the Sabloff--Serre duality, namely when $G \in \Sh_{\widehat\Lambda}^b(M)$, we have
    $$\Hom(F, G)^\vee = \Hom(S_{\Lambda}^+F, G) = \Hom(F, S_{\Lambda}^- G).$$
    In other words, the proper Calabi--Yau structure on $m_{\Lambda}: \Sh_{\widehat\Lambda}^b(M) \rightleftharpoons \msh_\Lambda(\Lambda) : m_{\Lambda}^r$ is induced by the Sabloff--Serre duality. The compact support assumption of the sheaves is crucial. See Remark \ref{rem:compact-support} for a counterexample when $M = \bR$, $\Lambda = \{(0, 1)\} \subseteq S^*\bR$ and $F = G = 1_{[0, +\infty)}$.
\end{remark}


    From the first isomorphism, we can deduce the third isomorphism using duality
    $$(S_{\Lambda}^+)^! \simeq \Id_{\Sh_{\widehat\Lambda}(M)} \otimes \,\omega_M.$$
    The two isomorphisms force the map in the middle to be an isomorphism, namely
    $$(m_{\Lambda}^l m_{\Lambda})^! \simeq m_{\Lambda}^l m_{\Lambda} \otimes \omega_M[1].$$
    Indeed, we can prove the following stronger non-degeneracy result. Consider $\sB = \msh_{\Lambda}(\Lambda) = \Sh_{\widehat{\Lambda}_{\cup}}(M)$, where the equivalence by Theorem \ref{doubling} is induced by the adjunction of microlocalizations $m_{\Lambda,{dbl}}^l: \msh_\Lambda(\Lambda) \to \Sh_{\widehat{\Lambda}_{\cup}}(M): m_{\Lambda,{dbl}}$.

\begin{proposition}\label{prop:weak-cy-nondeg}
    Let $M$ be a connected manifold of dimension $n$ and $\Lambda \subseteq S^*M$ be a compact subanalytic Legendrian. Then $\msh_\Lambda(\Lambda)$ is a smooth category, and there is an isomorphism
    $$\Id_{\msh_\Lambda(\Lambda)}^! \simeq \Id_{\msh_\Lambda(\Lambda)} \otimes \,\omega_M[1].$$
    In particular, when $M$ is orientable, then $\Id_{\msh_\Lambda(\Lambda)}^! \simeq \Id_{\msh_\Lambda(\Lambda)}[1-n]$.
\end{proposition}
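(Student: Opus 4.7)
The plan is to mirror the proof of Proposition~\ref{prop:weak-rel-cy-nondeg} after transporting every construction through the doubling equivalence $\msh_\Lambda(\Lambda)\simeq \Sh_{\widehat\Lambda_\cup}(M)$ of Theorem~\ref{doubling}. Under this equivalence, Lemma~\ref{lem:dual-bimod-microsheaf} already tells us that, under Fourier--Mukai, the identity bimodule is represented by $\iota^*_{\widehat\Lambda_\cup \times -\widehat\Lambda_\cup} 1_\Delta$ and coincides, via the dual Fourier--Mukai isomorphism, with $\pi_!(1_\Delta\otimes -)$.

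First, I would rerun the Sabloff--Serre computation from the proof of Proposition~\ref{prop:weak-rel-cy-nondeg}, now for the doubled Legendrian pair $\widehat\Lambda_\cup \times -\widehat\Lambda_\cup \subseteq T^*(M\times M)$. Applying Theorem~\ref{thm:sabloff-serre} to the diagonal sheaf $\Delta_*\omega_M$ gives, for a sufficiently small positive Reeb displacement $T_\delta$ of the product and every $K\in \Sh_{\widehat\Lambda_\cup \times -\widehat\Lambda_\cup}(M\times M)$,
$$\Hom\bigl(\iota^*_{\widehat\Lambda_\cup \times -\widehat\Lambda_\cup}T_\delta(\Delta_*\omega_M),\, K\bigr) \simeq \pi_!(1_\Delta\otimes K).$$
By Lemma~\ref{lem:dual-bimod-microsheaf} the right-hand side represents $\Id_{\msh_\Lambda(\Lambda)}$, so this exhibits the left-hand side as the left adjoint of the identity bimodule. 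This simultaneously shows that $\msh_\Lambda(\Lambda)$ is smooth and identifies, under Fourier--Mukai,
$$\Id^!_{\msh_\Lambda(\Lambda)} \simeq S^+_{\Lambda_\cup} \otimes \omega_M.$$

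The essential step beyond Proposition~\ref{prop:weak-rel-cy-nondeg} is then to identify the wrap-once functor on the doubled category with a single shift,
$$S^+_{\Lambda_\cup}\simeq \Id_{\msh_\Lambda(\Lambda)}[1],$$
from which the target $\Id^!_{\msh_\Lambda(\Lambda)}\simeq \Id_{\msh_\Lambda(\Lambda)}\otimes\omega_M[1]$ follows (specializing to $[1-n]$ when $\omega_M\simeq 1_M[-n]$). The heuristic reason is that $\widehat\Lambda_\cup$ already contains the Reeb pushoff $T_\epsilon\Lambda$ adjacent to $\Lambda$; a further push by $T_\delta$ with $0<\delta<\epsilon$, followed by $\iota^*_{\widehat\Lambda_\cup}$, only exchanges the two cones of the doubling through their continuation map, and that continuation between parallel copies of $\Lambda$ is precisely the degree-one shift detected by Sato--Sabloff. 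To make this rigorous I would apply the functorial Sato--Sabloff sequence of Proposition~\ref{prop:fiber-sequence} to $\Lambda_\cup$ itself, read off $m_{\Lambda_\cup}$ and $m^l_{\Lambda_\cup}$ through the doubling equivalence (splitting $\msh_{\Lambda_\cup}(\Lambda_\cup)$ according to the two components of $\Lambda_\cup$), and identify the fiber sequence $m^l_{\Lambda_\cup}m_{\Lambda_\cup}\to \Id \to S^+_{\Lambda_\cup}$ with the shift sequence. I expect this microlocal bookkeeping -- tracking how the Reeb dynamics on $\Lambda\cup T_\epsilon\Lambda$ interacts with the doubling embedding -- to be the main obstacle.
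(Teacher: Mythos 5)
Your first step is sound: running the Sabloff--Serre computation of Proposition~\ref{prop:weak-rel-cy-nondeg} for the doubled pair $\widehat\Lambda_\cup\times-\widehat\Lambda_\cup$ does show that $\msh_\Lambda(\Lambda)\simeq\Sh_{\widehat\Lambda_\cup}(M)$ is smooth and that $\Id^!_{\msh_\Lambda(\Lambda)}\simeq S^+_{\Lambda_\cup}\otimes\omega_M$ (modulo checking the gap/smallness hypotheses for $\Lambda_\cup=\Lambda\cup T_\epsilon\Lambda$, which hold for $\epsilon$ small). The genuine gap is your second step: the identification $S^+_{\Lambda_\cup}\simeq\Id_{\Sh_{\widehat\Lambda_\cup}(M)}[1]$ is exactly the new content of the proposition, and you only assert it with a heuristic ("the continuation between the two parallel copies is the degree-one shift") plus a plan to extract it from Proposition~\ref{prop:fiber-sequence} applied to $\Lambda_\cup$. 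That plan requires computing $m^l_{\Lambda_\cup}m_{\Lambda_\cup}$ and the counit through the doubling equivalence, splitting $\msh_{\Lambda_\cup}(\Lambda_\cup)$ into its two components and keeping track of the shift carried by the back copy of the double; none of this is carried out, and note that, given your step 1, the statement $S^+_{\Lambda_\cup}\simeq\Id[1]$ is logically equivalent to the proposition itself, so deferring it defers essentially the whole proof.

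The paper avoids this issue entirely by exploiting both presentations of the identity bimodule in Lemma~\ref{lem:dual-bimod-microsheaf}: under the dual Fourier--Mukai isomorphism $\Id_{\msh_\Lambda(\Lambda)}\simeq\pi_!(\mathrm{Fib}(T_{-\epsilon}1_\Delta\to T_\epsilon 1_\Delta)\otimes-)$, not just $\pi_!(1_\Delta\otimes-)$. Applying Sabloff--Serre (Theorem~\ref{thm:sabloff-serre}) to the \emph{doubled} kernel $\mathrm{Fib}(T_{-\epsilon}\Delta_*\omega_M\to T_\epsilon\Delta_*\omega_M)$ rather than to a single pushoff $T_\delta\Delta_*\omega_M$, one finds that the left adjoint of this functor is $m^l_{\widehat\Lambda_\cup\times-\widehat\Lambda_\cup}m_{\widehat\Lambda_\cup\times-\widehat\Lambda_\cup}(\Delta_*\omega_M)[1]$, i.e.\ $\Id_{\msh_\Lambda(\Lambda)}\otimes\omega_M[1]$ directly; the shift $[1]$ comes from the $\mathrm{Fib}$-to-$\mathrm{Cofib}$ rotation in that computation, with no analysis of the wrap-once functor of $\Lambda_\cup$ needed. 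If you want to keep your route, you would have to prove $\iota^*_{\widehat\Lambda_\cup\times-\widehat\Lambda_\cup}T_{-\delta}1_\Delta\simeq 0$ (equivalently, that the connecting map in the fiber sequence of Proposition~\ref{prop:fiber-sequence} for $\Lambda_\cup$ realizes $S^+_{\Lambda_\cup}$ as $\Id[1]$), which is a nontrivial microlocal statement about the doubled skeleton that your sketch does not establish.
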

\begin{proof}
    Fix any $K \in \Sh_{\widehat\Lambda_{\cup} \times -\widehat\Lambda_{\cup}}(M \times M)$. Then, by Sabloff--Serre duality Theorem \ref{thm:sabloff-serre}, we have
    \begin{align*}
        \Hom &(m_{\widehat\Lambda_{\cup} \times -\widehat\Lambda_{\cup}}^l m_{\widehat\Lambda_{\cup} \times -\widehat\Lambda_{\cup}} (\Delta_*\omega_M)[1], K) = \Hom(\iota_{\widehat\Lambda_{\cup} \times -\widehat\Lambda_{\cup}}^*\mathrm{Fib}(T_{-\epsilon} \Delta_*\omega_M \to T_\epsilon \Delta_*\omega_M, K)[1-n] \\
        &= \Hom(\iota_{\widehat\Lambda_{\cup} \times -\widehat\Lambda_{\cup}}^*\mathrm{Cofib}(T_{-\epsilon} \Delta_*\omega_M \to T_\epsilon \Delta_*\omega_M), K)[-n]
        = \pi_!(\mathrm{Fib}(T_{-\epsilon} 1_\Delta \to T_\epsilon 1_\Delta) \otimes K).
    \end{align*}
    This shows that $m_{\widehat\Lambda_{\cup} \times -\widehat\Lambda_{\cup}}^l m_{\widehat\Lambda_{\cup} \times -\widehat\Lambda_{\cup}} (\Delta_*\omega_M)[1]$ is the left adjoint of $\pi_!(\mathrm{Fib}(T_{-\epsilon} 1_\Delta \to T_\epsilon 1_\Delta) \otimes -)$. Hence $\msh_\Lambda(\Lambda)$ is smooth and $\Id^!_{\msh_\Lambda(\Lambda)} \simeq \Id_{\msh_\Lambda(\Lambda)} \otimes \,\omega_M[1]$.
\end{proof}

\begin{remark}
    Define $v: M \times M \to M \times M, \, (x, y) \mapsto (y, x)$. Consider the equivalence induced by the duality of compactly generated categories
    $$\VD{\widehat\Lambda_\cup \times -\widehat\Lambda_\cup}: \Sh_{\widehat\Lambda_\cup \times -\widehat\Lambda_\cup}^c(M \times M)^{op} \to \Sh_{-\widehat\Lambda_\cup \times \widehat\Lambda_\cup}^c(M \times M).$$
    Then we have $\Id_{\msh_{\Lambda}(\Lambda)}^! \simeq v_*\VD{\widehat\Lambda_\cup \times -\widehat\Lambda_\cup}(m_{\widehat\Lambda_\cup \times -\widehat\Lambda_\cup}^l m_{\widehat\Lambda_\cup \times -\widehat\Lambda_\cup} 1_\Delta)$, and the result is equivalent to
    $$m_{\widehat\Lambda_\cup \times -\widehat\Lambda_\cup}^l m_{\widehat\Lambda_\cup \times -\widehat\Lambda_\cup}  (\Delta_*\omega_M)[1] = v_*\VD{\widehat\Lambda_\cup \times -\widehat\Lambda_\cup}(m_{\widehat\Lambda_\cup \times -\widehat\Lambda_\cup}^l m_{\widehat\Lambda_\cup \times -\widehat\Lambda_\cup} \Delta_*1_M).$$
\end{remark}

    We need to show that the corresponding isomorphisms come from the (relative) Hochschild homologies. Since the isomorphism in Proposition \ref{prop:weak-rel-cy-nondeg} corresponds a map $(m_{\Lambda}^l m_{\Lambda})^! \to m_{\Lambda}^l m_{\Lambda} \otimes \,\omega_M[1]$ while the isomorphism in Proposition \ref{prop:weak-cy-nondeg} corresponds to a map $\Id_{\msh_\Lambda(\Lambda)}^! \to \Id_{\msh_\Lambda(\Lambda)} \otimes \,\omega_M[1]$, we will need to understand the relation between the isomorphisms.

\begin{proposition}\label{prop:cy-compatible}
    Let $M$ be a connected manifold and $\Lambda \subseteq S^*M$ be a compact subanalytic Legendrian. 
    There is a commutative diagram between the induced isomorphisms in Propositions \ref{prop:weak-rel-cy-nondeg} and \ref{prop:weak-cy-nondeg}
    \[\xymatrix{
    (m_{\Lambda}^l m_{\Lambda})^! \ar[r]^-{\sim} \ar[d] & m_{\Lambda}^l m_{\Lambda} \otimes \,\omega_M[1]\\
    m_\Lambda^l \circ \Id_{\msh_\Lambda(\Lambda)}^! \circ m_\Lambda \ar[r]^-{\sim} & m_\Lambda^l \circ (\Id_{\msh_\Lambda(\Lambda)} \otimes \,\omega_M[1]) \circ m_\Lambda \ar[u].
    }\]
\end{proposition}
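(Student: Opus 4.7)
The plan is to identify both horizontal isomorphisms in the square as manifestations of one and the same Sabloff--Serre duality pairing (Theorem \ref{thm:sabloff-serre}) applied on the level of Fourier--Mukai kernels on $M \times M$, and to deduce commutativity from the naturality of this pairing with respect to the canonical map of bimodule kernels
$$m_{\widehat\Lambda_\cup \times -\widehat\Lambda_\cup}^l m_{\widehat\Lambda_\cup \times -\widehat\Lambda_\cup} 1_\Delta \longrightarrow \iota_{\widehat\Lambda \times -\widehat\Lambda}^* 1_\Delta,$$
which represents the counit $m_\Lambda^l m_\Lambda \to \Id_{\Sh_{\widehat\Lambda}(M)}$ via the Fourier--Mukai isomorphism.

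First I would unpack the vertical arrows. Because $\msh_\Lambda(\Lambda)$ is smooth (Proposition \ref{prop:weak-cy-nondeg}), Lemma \ref{rem:bimod-adjunction} identifies the left vertical arrow with the canonical equivalence $(m_\Lambda^l m_\Lambda)^! \simeq m_\Lambda^l \circ \Id_{\msh_\Lambda(\Lambda)}^! \circ m_\Lambda$. The right vertical is the tautology $m_\Lambda^l \circ \Id_{\msh_\Lambda(\Lambda)} \circ m_\Lambda = m_\Lambda^l m_\Lambda$ tensored with $\omega_M[1]$. So the diagram amounts to comparing two isomorphisms between the same functors.

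Second, using Lemmas \ref{lem:dual-bimod} and \ref{lem:dual-bimod-microsheaf} I would translate the two horizontal isomorphisms into statements about kernels on $M \times M$. As extracted from the proofs of Propositions \ref{prop:weak-rel-cy-nondeg} and \ref{prop:weak-cy-nondeg}, both equivalences arise from evaluating the Sabloff--Serre pairing $\Hom(F, T_{-\epsilon} G \otimes \omega_M) \simeq \pi_!(\Delta_*\omega_M \otimes G)$ with $F = \Delta_*\omega_M$ against an arbitrary kernel, once on $\Sh_{\widehat\Lambda \times -\widehat\Lambda}(M \times M)$ (producing the top isomorphism with kernel $\iota_{\widehat\Lambda \times -\widehat\Lambda}^* T_\epsilon \Delta_*\omega_M$) and once on $\Sh_{\widehat\Lambda_\cup \times -\widehat\Lambda_\cup}(M \times M)$ (producing the bottom isomorphism with kernel $m_{\widehat\Lambda_\cup \times -\widehat\Lambda_\cup}^l m_{\widehat\Lambda_\cup \times -\widehat\Lambda_\cup}(\Delta_*\omega_M)[1]$).

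Third, the commutativity reduces to naturality of the Sabloff--Serre pairing together with Lemma \ref{lem:bimod-adjoint-sheaf}, which asserts that the Fourier--Mukai-plus-doubling identifications intertwine $\iota_{\widehat\Lambda \times -\widehat\Lambda}^*$ with $\iota_{\widehat\Lambda}^* \otimes \iota_{-\widehat\Lambda}^*$. Pushing the counit kernel map displayed above, along with its dualizing counterpart, through the Sabloff--Serre pairing recovers the two compositions around the square, so they coincide. The main obstacle will be bookkeeping: one must carefully transport the left-adjoint constructions $(m_\Lambda^l m_\Lambda)^!$ and $\Id_{\msh_\Lambda(\Lambda)}^!$ to the level of Fourier--Mukai kernels via Definition \ref{def:dualizing-functor}, check that the canonical map between them is exactly the one induced by the counit kernel morphism, and verify that once everything is expressed as a pairing against kernels the diagram chase is forced by naturality.
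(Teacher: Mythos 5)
Your proposal is essentially the paper's own proof: both translate the two horizontal isomorphisms into statements about Fourier--Mukai kernels on $M \times M$ via Lemmas \ref{lem:dual-bimod}, \ref{lem:dual-bimod-sheaf2} and \ref{lem:dual-bimod-microsheaf}, use Lemma \ref{lem:bimod-adjoint-sheaf} to intertwine the doubled and undoubled support conditions along the canonical kernel morphism out of $m_{\widehat\Lambda_{\cup} \times -\widehat\Lambda_{\cup}}^l m_{\widehat\Lambda_{\cup} \times -\widehat\Lambda_{\cup}} 1_\Delta$, and then conclude commutativity by evaluating the Sabloff--Serre-type pairings against arbitrary test objects (the paper uses $F \boxtimes G$ and their doubled versions $F_{dbl} \boxtimes G_{dbl}$) and invoking naturality, exactly as you outline. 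The only slip is cosmetic: the kernel witnessing the top isomorphism $(m_\Lambda^l m_\Lambda)^! \simeq m_\Lambda^l m_\Lambda \otimes \omega_M[1]$ is $m_{\widehat\Lambda \times -\widehat\Lambda}^l m_{\widehat\Lambda \times -\widehat\Lambda}(\Delta_*\omega_M)[1]$ paired against $\mathrm{Fib}(T_{-\epsilon}1_\Delta \to T_\epsilon 1_\Delta)$, not $\iota_{\widehat\Lambda \times -\widehat\Lambda}^* T_\epsilon \Delta_*\omega_M$ (which witnesses $\Id^!_{\Sh_{\widehat\Lambda}(M)}$ in Proposition \ref{prop:weak-rel-cy-nondeg}), and this correction falls out automatically once the translation to kernels is carried out.
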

\begin{proof}
    Consider the morphism $\eta_{\msh_\Lambda(\Lambda)} \to (m_\Lambda^{\vee} \otimes m_\Lambda^l) \circ \eta_{\Sh_{\widehat\Lambda}(M)} \to (\Id_{\Sh_{\widehat\Lambda}(M)^\vee} \otimes m_\Lambda^lm_\Lambda) \circ \eta_{\Sh_{\widehat\Lambda}(M)}$. By the Fourier--Mukai isomorphism and Lemmas \ref{lem:dual-bimod}, \ref{lem:dual-bimod-microsheaf} and \ref{lem:bimod-adjoint-sheaf}, it can be written as
    \begin{align*}
        m_{\widehat\Lambda_{\cup} \times -\widehat\Lambda_{\cup}}^l m_{\widehat\Lambda_{\cup} \times - \widehat\Lambda_{\cup}} 1_\Delta & \to (\iota_{\widehat\Lambda}^* \otimes \iota_{-\widehat\Lambda}^*) m_{\widehat\Lambda_{\cup} \times -\widehat\Lambda_{\cup}}^l m_{\widehat\Lambda_{\cup} \times - \widehat\Lambda_{\cup}} 1_\Delta \\
        &\xrightarrow{\sim} \iota_{\widehat\Lambda \times -\widehat\Lambda}^* m_{\widehat\Lambda_{\cup} \times -\widehat\Lambda_{\cup}}^l m_{\widehat\Lambda_{\cup} \times - \widehat\Lambda_{\cup}} 1_\Delta \xrightarrow{\sim} m_{\widehat\Lambda \times -\widehat\Lambda}^l m_{\widehat\Lambda \times -\widehat\Lambda} 1_\Delta.
    \end{align*}
    Consider the morphism $\epsilon_{\msh_\Lambda(\Lambda)} \to \epsilon_{\msh_\Lambda(\Lambda)} \circ (m_\Lambda^{l\vee} \otimes m_\Lambda) \to \epsilon_{\Sh_{\widehat\Lambda}(M)} \circ (\Id_{\Sh_{\widehat\Lambda}(M)^\vee} \otimes m_\Lambda^lm_\Lambda)$. By the dual Fourier--Mukai isomorphism and Lemma \ref{lem:dual-bimod-sheaf2}, \ref{lem:dual-bimod-microsheaf}, and \ref{lem:bimod-adjoint-sheaf}, it can be written as
    \begin{align*}
        \pi_!(\mathrm{Fib}(T_{-\epsilon}1_\Delta \rightarrow T_\epsilon 1_\Delta) & \otimes (m_{\Lambda_{\cup,\epsilon}}^l m_\Lambda \otimes m_{-\Lambda_{\cup,\epsilon}}^l m_{-\Lambda})(-)) \\
        &\xrightarrow{\sim} \pi_!(\mathrm{Fib}(T_{-\epsilon}1_\Delta \rightarrow T_\epsilon 1_\Delta) \otimes -) \xrightarrow{\sim} \pi_!(\mathrm{Fib}(T_{-\epsilon}1_\Delta \rightarrow T_\epsilon 1_\Delta) \otimes -).
    \end{align*}
    Let $F, G \in \Sh_{\widehat\Lambda}(M)$ and $F_{dbl}, G_{dbl} \in \Sh_{\widehat\Lambda_{\cup}}(M)$ be defined by
    $$F_{dbl} = m_{\Lambda,{dbl}}^l m_{\Lambda} F, \;\; G_{dbl} = m_{-\Lambda,{dbl}}^l m_{-\Lambda} G.$$
    Consider the morphism $\Id_{\msh_\Lambda(\Lambda)}^! \to \Id_{\msh_\Lambda(\Lambda)} \otimes\, \omega_M[1]$ induced by Proposition \ref{prop:weak-cy-nondeg}, and the morphism $(m_\Lambda^l m_\Lambda)^! \to m_\Lambda^l m_\Lambda \otimes \,\omega_M[1]$ induced by Proposition \ref{prop:weak-rel-cy-nondeg}. Following Remark \ref{cohomology-v-smooth}, these can be realized as the vertical morphisms in the commutative diagram
    \[\xymatrix{
        \pi_!(\mathrm{Fib}(T_{-\epsilon}1_\Delta \rightarrow T_\epsilon 1_\Delta) \otimes (F_{dbl} \boxtimes G_{dbl}))[n-1]  \ar_{\ref{prop:weak-cy-nondeg}}[d] 
        & \pi_!(\mathrm{Fib}(T_{-\epsilon}1_\Delta \rightarrow T_\epsilon 1_\Delta) \otimes (F \boxtimes G))[n-1] \ar_-{\sim}[l]  \ar^{\ref{prop:weak-rel-cy-nondeg}}[d] \\
        \Hom(m_{\widehat\Lambda_{\cup} \times -\widehat\Lambda_{\cup}}^l m_{\widehat\Lambda_{\cup} \times - \widehat\Lambda_{\cup}} (\Delta_*\omega_M), F_{dbl} \boxtimes G_{dbl}) \ar^-{\sim}[r] & \Hom(m_{\widehat\Lambda \times -\widehat\Lambda}^l m_{\widehat\Lambda \times -\widehat\Lambda} (\Delta_*\omega_M), F \boxtimes G).
    }\]
    Therefore, the commutativity of the diagram in the proposition is equivalent to the the commutativity of the above diagram. Hence it is clear that the diagram commutes.
\end{proof}

    When $M$ is oriented, by Propositions \ref{prop:fiber-sequence}, \ref{prop:weak-rel-cy-nondeg}, \ref{prop:weak-cy-nondeg} and \ref{prop:cy-compatible}, we have a commutative diagram of morphisms of functors as follows
    \[\xymatrix{
    1_\cV[n-1] \ar_{\ref{prop:weak-cy-nondeg}}[d] \ar^{\sim}[r] & 1_\cV[n-1] \ar^-{\ref{prop:fiber-sequence}}[r] \ar^{\ref{prop:weak-rel-cy-nondeg}}[d] & 0 \ar^{\ref{prop:fiber-sequence}}[d] \\
    \Hom(\Id_{\msh_\Lambda(\Lambda)}^!,  \Id_{\msh_\Lambda(\Lambda)}) \ar[r] & \Hom((m_{\Lambda}^l m_{\Lambda})^!, m_{\Lambda}^l m_{\Lambda}) \ar[r] & \Hom(\Id_{\Sh_{\widehat\Lambda}(M)}^!, \Id_{\Sh_{\widehat\Lambda}(M)}).
    }\]
    Therefore, we can show the existence of a weak Calabi--Yau structure on the continuous adjunction pair coming from microlocalization.

\begin{theorem}\label{thm:weak-relative-cy}
    Let $M$ be a connected oriented manifold of dimension $n$ and $\Lambda \subseteq S^*M$ be a compact subanalytic Legendrian. Then the continuous adjunction
    $$m_{\Lambda}^l: \msh_\Lambda(\Lambda) \rightleftharpoons \Sh_{\widehat\Lambda}(M) : m_{\Lambda}$$
    admits an $n$-dimensional weak smooth relative Calabi--Yau structure, and $\msh_\Lambda(\Lambda)$ admits an $(n-1)$-dimensional weak Calabi--Yau structure.
\end{theorem}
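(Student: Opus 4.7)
The plan is to assemble the weak relative Calabi--Yau structure from the non-degeneracy isomorphisms of Propositions~\ref{prop:weak-rel-cy-nondeg} and~\ref{prop:weak-cy-nondeg}, the fiber sequence of Proposition~\ref{prop:fiber-sequence}, and the compatibility of Proposition~\ref{prop:cy-compatible}, essentially by unpacking the commutative diagram displayed immediately before the statement of the theorem. No new geometric input is needed --- the key geometric ingredients are the Sato--Sabloff fiber sequence and Sabloff--Serre duality that have already been packaged into the four propositions.

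First I would handle the absolute Calabi--Yau statement on $\msh_\Lambda(\Lambda)$. Proposition~\ref{prop:weak-cy-nondeg} gives an equivalence $\Id_{\msh_\Lambda(\Lambda)}^![n-1] \xrightarrow{\sim} \Id_{\msh_\Lambda(\Lambda)}$. By Remark~\ref{rem:orientation-is-id}, this equivalence is tautologically classified by a Hochschild class
$$\phi_{\msh_\Lambda(\Lambda)}: 1_\cV[n-1] \longrightarrow \HH_*(\msh_\Lambda(\Lambda)),$$
obtained as the image of $1 \in \HH^*(\msh_\Lambda(\Lambda))$ under the isomorphism. That this class defines a weak $(n-1)$-dimensional smooth Calabi--Yau structure is, by definition, the non-degeneracy assertion of Proposition~\ref{prop:weak-cy-nondeg} itself.

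Next, for the relative Calabi--Yau structure on $f = m_\Lambda^l \dashv f^r = m_\Lambda$, both $\msh_\Lambda(\Lambda)$ and $\Sh_{\widehat\Lambda}(M)$ are smooth by Propositions~\ref{prop:weak-cy-nondeg} and~\ref{prop:weak-rel-cy-nondeg}. I would next produce a lift of $\phi_{\msh_\Lambda(\Lambda)}$ to a class
$$\phi: 1_\cV[n] \longrightarrow \HH_*(\Sh_{\widehat\Lambda}(M), \msh_\Lambda(\Lambda))$$
by showing the composition of $\phi_{\msh_\Lambda(\Lambda)}$ with $\HH_*(\msh_\Lambda(\Lambda)) \to \HH_*(\Sh_{\widehat\Lambda}(M))$ is null-homotopic. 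This is exactly the content of the commutative diagram before the theorem: by Propositions~\ref{prop:weak-rel-cy-nondeg} and~\ref{prop:fiber-sequence}, the inverse dualizing functor of $\Id_{\Sh_{\widehat\Lambda}(M)}$ is $S_\Lambda^+[-n]$, not $\Id_{\Sh_{\widehat\Lambda}(M)}[-n]$, so there is no corresponding class in $\HH_*(\Sh_{\widehat\Lambda}(M))$ at this degree -- hence the rightmost vertical arrow in that diagram factors through $0$. The existence of the lift then follows from the defining fiber sequence of relative Hochschild homology.

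Finally I would verify non-degeneracy, namely that the induced map $\Id_{\Sh_{\widehat\Lambda}(M)}^! \xrightarrow{c^!} (m_\Lambda^l m_\Lambda)^! \xrightarrow{F_!\phi_{\msh_\Lambda(\Lambda)}} m_\Lambda^l m_\Lambda[1-n] \xrightarrow{c} \Id_{\Sh_{\widehat\Lambda}(M)}[1-n]$ is null-homotopic, and that the resulting map between the cofiber of $c^!$ and the fiber of $c[1-n]$ is an equivalence in $\End(\Sh_{\widehat\Lambda}(M))$. The first claim is automatic from the existence of the lift $\phi$. For the second, Proposition~\ref{prop:cy-compatible} guarantees that the map $F_!\phi_{\msh_\Lambda(\Lambda)} : (m_\Lambda^l m_\Lambda)^! \to m_\Lambda^l m_\Lambda[1-n]$ induced from the absolute Calabi--Yau class agrees, under $(-)^!$, with the isomorphism coming from Proposition~\ref{prop:weak-rel-cy-nondeg} when passed through the fiber sequence $m_\Lambda^l m_\Lambda \to \Id_{\Sh_{\widehat\Lambda}(M)} \to S_\Lambda^+$. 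Combined with the equivalences $\Id_{\msh_\Lambda(\Lambda)}^![n-1] \xrightarrow{\sim} \Id_{\msh_\Lambda(\Lambda)}$ and $\Id_{\Sh_{\widehat\Lambda}(M)}^![n] \xrightarrow{\sim} S_\Lambda^+$, the five lemma applied to the long exact sequence in $\End(\Sh_{\widehat\Lambda}(M))$ yields the required equivalences $\mathrm{Cofib}(c^!) \xrightarrow{\sim} \Id_{\Sh_{\widehat\Lambda}(M)}[1-n]$ and $\Id_{\Sh_{\widehat\Lambda}(M)}^! \xrightarrow{\sim} \mathrm{Fib}(c)[1-n]$.

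The main bookkeeping obstacle is to make sure that the map $F_!\phi$ encoded via Remark~\ref{cohomology-v-smooth} is genuinely the push-forward of $\phi_{\msh_\Lambda(\Lambda)}$ along the adjunction, so that Proposition~\ref{prop:cy-compatible} applies verbatim. This should follow by unwinding the explicit formula $F_!(\alpha) = f \circ \alpha \circ f^r$ from Lemma~\ref{swapping} and tracing the corresponding manipulations on Hochschild cohomology; no further geometric input is required.
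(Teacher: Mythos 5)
Your overall route is the same as the paper's: the absolute statement is Remark \ref{rem:orientation-is-id} applied to Proposition \ref{prop:weak-cy-nondeg}, and the relative statement is assembled from Propositions \ref{prop:fiber-sequence}, \ref{prop:weak-rel-cy-nondeg}, \ref{prop:weak-cy-nondeg} and \ref{prop:cy-compatible}. However, there is a genuine gap at the one step that actually produces the relative class, namely the null-homotopy of the composite $1_\cV[n-1] \to \HH_*(\msh_\Lambda(\Lambda)) \to \HH_*(\Sh_{\widehat\Lambda}(M))$. Your stated reason --- that since $\Id^!_{\Sh_{\widehat\Lambda}(M)} \simeq S_\Lambda^+[-n]$ rather than $\Id_{\Sh_{\widehat\Lambda}(M)}[-n]$, ``there is no corresponding class in $\HH_*(\Sh_{\widehat\Lambda}(M))$ at this degree'' --- is a non sequitur: $\Hom(1_\cV[n-1], \HH_*(\Sh_{\widehat\Lambda}(M)))$ is in general nonzero (under the acceleration morphisms it receives $\Gamma(\widehat\Lambda, 1_{\widehat\Lambda\setminus\Lambda})[n]$, which typically does not vanish in this degree), and the failure of $\Id^!$ to be $\Id[-n]$ says nothing about whether this particular pushforward class vanishes. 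Moreover your argument is circular: in the third paragraph the lift is deduced from this null-homotopy, while in the fourth paragraph the null-homotopy of $c \circ F_!\phi \circ c^!$ is declared ``automatic from the existence of the lift''; under the smoothness identification $\Hom(\Id_{\Sh_{\widehat\Lambda}(M)}^!, \Id_{\Sh_{\widehat\Lambda}(M)}) \simeq \Hom(1_\cV, \HH_*(\Sh_{\widehat\Lambda}(M)))$ these are the same statement, so it is never actually established.

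The missing argument is the one the paper's diagram encodes. By Proposition \ref{prop:cy-compatible}, $F_!\phi_{\msh_\Lambda(\Lambda)}$ coincides with the isomorphism $(m_\Lambda^l m_\Lambda)^! \simeq m_\Lambda^l m_\Lambda[1-n]$ which was constructed (in the discussion following Proposition \ref{prop:weak-rel-cy-nondeg}) precisely as the middle map of a map of fiber sequences extending the equivalences $\Id^!_{\Sh_{\widehat\Lambda}(M)} \simeq S_\Lambda^+[-n]$ and $(S_\Lambda^+)^! \simeq \Id_{\Sh_{\widehat\Lambda}(M)}[-n]$, i.e.\ it is compatible with applying $(-)^!$ to $m_\Lambda^l m_\Lambda \to \Id \to S_\Lambda^+$. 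Consequently, under $\Id^!_{\Sh_{\widehat\Lambda}(M)} \simeq S_\Lambda^+[-n]$, the composite $F_!\phi \circ c^!$ is identified with the connecting map $S_\Lambda^+[-n] \to m_\Lambda^l m_\Lambda[1-n]$, so $c[1-n] \circ F_!\phi \circ c^!$ is the composite of two consecutive maps of the rotated fiber sequence and is therefore canonically null; this yields the lift, and it simultaneously identifies the outer vertical maps in the non-degeneracy diagram with the known equivalences. Note that this compatibility is also what your five-lemma step tacitly needs: knowing only that the middle vertical $F_!\phi$ is an equivalence, together with abstract isomorphisms of the outer terms, does not force the induced maps $\Id^!_{\Sh_{\widehat\Lambda}(M)} \to \mathrm{Fib}(c)[1-n]$ and $\mathrm{Cofib}(c^!) \to \Id_{\Sh_{\widehat\Lambda}(M)}[1-n]$ to be equivalences; one must know that $c^!$ corresponds under the dualities to the connecting map, which is exactly the compatibility described above.
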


\section{Circle Action and Calabi--Yau Structures}

Consider the weak Calabi--Yau structures on the category $\msh_\Lambda(\Lambda)$ the adjunction
$$m_\Lambda^l: \msh_\Lambda(\Lambda) \rightleftharpoons \Sh_{\widehat\Lambda}(M) : m_\Lambda.$$
In this section, we will upgrade them to strong (relative) Calabi--Yau structures by understanding the circle actions on the Hochschild homology of microlocal sheaf categories (Theorem \ref{thm:strong-cy} and \ref{thm:strong-rel-cy}). 

First, we build acceleration morphisms on the (relative) Hochschild homologies and cohomologies of the categories
\begin{gather*}
    \Gamma(\Lambda, 1_\Lambda) \to \HH^*(\msh_\Lambda(\Lambda)) \xrightarrow{\sim} \HH_*(\msh_\Lambda(\Lambda))[1-n], \\
    \Gamma(\widehat\Lambda, 1_{\widehat\Lambda}) \to \HH^*(\msh_\Lambda(\Lambda), \Sh_{\widehat\Lambda}(M)) \xrightarrow{\sim} \HH_*(\msh_\Lambda(\Lambda), \Sh_{\widehat\Lambda}(M))[-n],
\end{gather*}
and explain that the weak (relative) Calabi--Yau structures are exactly the elements induced by identities $1 \in \Gamma(\Lambda, 1_\Lambda)$ and $1 \in \Gamma(\widehat\Lambda, 1_{\widehat\Lambda})$ (Theorem \ref{thm:accelerate-hochschild}). Then we prove $S^1$-equivariancy of the acceleration morphisms (Propositions \ref{prop:accelerate-equivariant} and \ref{prop:accelerate-equivariant-rel}) using a sheaf theoretic model of the $S^1$-action on the constant loops (Propositions \ref{prop:cyclic-well-define} and \ref{prop:cyclic-well-define-rel}) and Hochschild homologies (Lemmas \ref{lem:cyclic-homology} and \ref{lem:cyclic-homology-rel}).

\subsection{Constant orbits in Hochschild invariants}
In this section, we consider the acceleration maps on the Hochschild homology of sheaves and microsheaves. This is a topological phenomenon analogous to the inclusion map of constant loops into the free loop space
$$H_*(M) \rightarrow H_*(L M) \xrightarrow{\sim} \HH_*(\Loc(M)).$$
In this section, we try to construct the constant orbits on the Hochschild homology and cohomology of sheaves and microsheaves, also known as the acceleration morphisms.

We will need to carefully choose nonnegative Hamiltonian pushoffs for the diagonal in order to write down a simple formula for the constant orbits on the Hochschild homology. We use the fact that subanalytic subsets admit Whitney refinements, and Whitney stratified spaces are deformation retracts of their tubular neighbourhoods with control data \cite[Section 6--7]{Mather}.

\begin{definition}\label{def:hamiltonian}
    Let $\Omega$ be a small neighbourhood of a subanalytic Legendrian $\Lambda \subseteq S^*M$ that deformation retracts onto $\Lambda$. Let $\rho: S^*M \to \bR$ be a smooth cut-off function such that
    $$\rho^{-1}(0) = \Lambda, \;\; \rho^{-1}(1) = S^*M \setminus \Omega,$$
    obtained by the smooth Urysohn's lemma. Then we write
    $$\check{T}_t, \; T_t: S^*M \to S^*M$$
    to be the flows defined by the non-negative Hamiltonian functions $\check{H}(x, \xi) = \rho(x, \xi)$ and $H(x, \xi) \equiv 1$. Abusing notations, we also write
    $$\check{T}_t, \; T_t: S^*(M \times M) \to S^*(M \times M)$$
    to be the contact Hamiltonian flows defined by $\check{H}(x, \xi, x', \xi') = \rho(x', \xi') \cdot |\xi'|/(|\xi|^2 + |\xi'|^2)^{1/2}$ and $H(x, \xi, x', \xi') \equiv |\xi'|/(|\xi|^2 + |\xi'|^2)^{1/2}$ on a neighbourhood of the diagonal $S^*_{\Delta}(M \times M)$.     

\end{definition}

From the construction, we know that under the non-negative contact flow $\check{T}_t: S^*(M \times M) \to S^*(M \times M)$, we have
$S^*_\Delta(M \times M) \cap \check{T}_\epsilon(S^*_\Delta(M \times M)) = \Delta_\Lambda.$
Under the non-negative contact flow $\hat{T}_t = T_t \circ \check{T}_{-t}: S^*(M \times M) \to S^*(M \times M)$, we have $S^*_\Delta(M \times M) \setminus \hat{T}_\epsilon(S^*_\Delta(M \times M)) = \Delta_{\Omega}$.

Then we construct the acceleration morphism on the Hochschild homology and cohomology of the category of microsheaves on $\Lambda \subseteq S^*M$. Here, we will assume that $M$ is oriented, though it should be fairly easy to write down the statements when $M$ is not orientable.

\begin{proposition}\label{prop:accelerate}
    Let $M$ be connected oriented and $\Lambda \subseteq S^*M$ be a compact subanalytic Legendrian. Then there is a commutative diagram where the horizontal morphisms are called acceleration morphisms
    \[\xymatrix{
    \mathrm{End}(\mathrm{Fib}(\check{T}_{\epsilon}1_{\Delta} \to T_\epsilon 1_{\Delta})) \ar[r] \ar[d]_{\rotatebox{90}{$\sim$}} & \mathrm{End}(\iota_{\widehat\Lambda_{\cup} \times -\widehat\Lambda_{\cup}}^* \mathrm{Fib}(\check{T}_{\epsilon}1_{\Delta} \to T_\epsilon 1_{\Delta})) \ar[d]^{\rotatebox{90}{$\sim$}} \\
    p_!\Delta^* \mathrm{Fib}(\check{T}_{\epsilon} 1_{\Delta} \to T_\epsilon 1_{\Delta})[1-n] \ar[r] & p_!\Delta^* \iota_{\widehat\Lambda_{\cup} \times -\widehat\Lambda_{\cup}}^* \mathrm{Fib}(\check{T}_{\epsilon}1_{\Delta} \to T_\epsilon 1_{\Delta}) [1-n],
    }\]
    where $\Gamma(\Lambda, 1_\Lambda) \simeq \mathrm{End}(\mathrm{Fib}(\check{T}_{\epsilon}1_{\Delta} \to T_\epsilon 1_{\Delta})) \simeq p_!\Delta^* \mathrm{Fib}(\check{T}_{\epsilon}1_{\Delta} \to T_\epsilon 1_{\Delta})[1-n]$. We denote the bottom acceleration map by
    $$\Tr_{cst}(\Id_{\msh_\Lambda(\Lambda)}) \to \Tr(\Id_{\msh_\Lambda(\Lambda)}).$$
\end{proposition}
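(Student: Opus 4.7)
The plan is to establish the three-way equivalence
\[
    \Gamma(\Lambda, 1_\Lambda) \simeq \End(\mathrm{Fib}(\check T_\epsilon 1_\Delta \to T_\epsilon 1_\Delta)) \simeq p_!\Delta^* \mathrm{Fib}(\check T_\epsilon 1_\Delta \to T_\epsilon 1_\Delta)[1-n]
\]
via the relative Sato--Sabloff fiber sequence (Theorem \ref{thm:sato-sabloff-rel}) and Sabloff--Serre duality (Theorem \ref{thm:sabloff-serre}), and to then deduce the right-column equivalences and the commutativity of the diagram by naturality. The common backbone is Lemma \ref{lem:mhom-computation}, which yields $\mhom(1_\Delta, 1_\Delta)|_{S^*(M\times M)} = 1_{S^*_\Delta(M\times M)}$; combined with the precompact open inclusion $\Delta_\Lambda \hookrightarrow S^*_\Delta(M\times M)$ (using compactness of $\Lambda$), this gives $\Gamma(\Delta_\Lambda, \mhom(1_\Delta, 1_\Delta)) = \Gamma(\Lambda, 1_\Lambda)$.

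For the identification $\Gamma(\Lambda, 1_\Lambda) \simeq \End(\mathrm{Fib})$, I would apply the relative Sato--Sabloff fiber sequence to $F = G = 1_\Delta \in \Sh_{S^*_\Delta}(M\times M)$ with the non-negative contact flow $\hat T_t = T_t \circ \check T_{-t}$ on $S^*(M\times M)$ from Definition \ref{def:hamiltonian}; its Hamiltonian $\hat H = H - \check H = 1 - \rho$ is supported in the neighborhood $\Omega$ of $\Lambda$, so the defining hypothesis $\hat H^{-1}(0) \cap S^*_\Delta = S^*_\Delta \setminus \Omega'$ is satisfied with $\Omega' = \Omega \cap S^*_\Delta$. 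The resulting fiber sequence
\[
    \Hom(1_\Delta, \hat T_{-\epsilon} 1_\Delta) \to \Hom(1_\Delta, 1_\Delta) \to \Gamma(\Omega', 1_{\Omega'}) \simeq \Gamma(\Lambda, 1_\Lambda)
\]
can be rewritten via GKS invariance (Theorem \ref{thm: GKS}) as involving $\Hom(T_\epsilon 1_\Delta, \check T_\epsilon 1_\Delta)$ and $\Hom(\check T_\epsilon 1_\Delta, \check T_\epsilon 1_\Delta) = \Hom(1_\Delta, 1_\Delta)$. Unravelling the defining triangle $\mathrm{Fib} \to \check T_\epsilon 1_\Delta \to T_\epsilon 1_\Delta$, the four Homs $\Hom(\check T_\epsilon 1_\Delta, \check T_\epsilon 1_\Delta)$, $\Hom(T_\epsilon 1_\Delta, T_\epsilon 1_\Delta)$, $\Hom(T_\epsilon 1_\Delta, \check T_\epsilon 1_\Delta)$, $\Hom(\check T_\epsilon 1_\Delta, T_\epsilon 1_\Delta)$ assemble into $\End(\mathrm{Fib})$ via iterated fiber sequences, and the previous fiber sequence identifies the cofiber as $\Gamma(\Lambda, 1_\Lambda)$.

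For the equivalence $\End(\mathrm{Fib}) \simeq p_!\Delta^*\mathrm{Fib}[1-n]$, I would apply Sabloff--Serre duality (Theorem \ref{thm:sabloff-serre}) on $M \times M$ to the Homs assembling $\End(\mathrm{Fib})$. Using $\omega_{M \times M} = 1_{M \times M}[-2n]$ together with the identity $p_{M, !}\Delta^* = p_{M \times M, !}(1_\Delta \otimes -)$ and the codimension shift $\Delta^! = \Delta^*[-n]$, the total shift compiles to $[1-n]$. The right-column equivalences are obtained by running the same arguments after pullback via $\iota_{\widehat\Lambda_\cup \times -\widehat\Lambda_\cup}^*$, which preserves both $\mhom$ along $\Delta_\Lambda$ (by its microlocal character) and the relevant Sabloff--Serre pairings in the microsheaf setting. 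Commutativity of the diagram then follows from naturality of Sato--Sabloff, Sabloff--Serre, and the unit of the adjunction $\iota^* \dashv \iota_*$. The main obstacle is tracking the $[1-n]$ shift precisely through the iterated fibers and verifying the compact-support hypothesis of Sabloff--Serre despite the possibly non-compact support of $\mathrm{Fib}$ on $M \times M$; this uses compactness of $\Lambda$ to ensure that the microsupport of $\mathrm{Fib}(\check T_\epsilon 1_\Delta \to T_\epsilon 1_\Delta)$ is contained in the compact set $\Delta_\Lambda \cup T_\epsilon(\Delta_\Lambda)$, which suffices for applying the Reeb perturbation arguments underlying the duality.
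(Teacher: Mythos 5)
Your outline is essentially the paper's own argument: the left column is identified with $\Gamma(\Lambda,1_\Lambda)$ via the relative Sato--Sabloff sequence for the flow $\hat{T}_t=T_t\circ\check{T}_{-t}$ together with Lemma \ref{lem:mhom-computation} and the retraction of $\Omega$ onto $\Lambda$, the $[1-n]$ shift comes from the codimension-$n$ diagonal and the orientation, and commutativity comes from the horizontal arrows being the wrapping map $\mathrm{Fib}(\check{T}_\epsilon 1_\Delta\to T_\epsilon 1_\Delta)\to\iota_{\widehat\Lambda_{\cup}\times-\widehat\Lambda_{\cup}}^*\mathrm{Fib}(\check{T}_\epsilon 1_\Delta\to T_\epsilon 1_\Delta)$ while the vertical ones are induced by adjunction and duality. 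One step in your sketch is stated too loosely, and it is exactly the step the paper makes explicit: saying that the four Homs ``assemble'' into $\End(\mathrm{Fib})$ and that the Sato--Sabloff cofiber is this endomorphism object is not formal. What the Sato--Sabloff sequence for $\hat{T}_t$ (after GKS) gives directly is $\Gamma(\Lambda,1_\Lambda)\simeq\Hom(T_\epsilon 1_\Delta,\mathrm{Fib}(\check{T}_\epsilon 1_\Delta\to T_\epsilon 1_\Delta))[1]$; to identify this with $\End(\mathrm{Fib}(\check{T}_\epsilon 1_\Delta\to T_\epsilon 1_\Delta))$ you must additionally know $\Hom(\check{T}_\epsilon 1_\Delta,\mathrm{Fib}(\check{T}_\epsilon 1_\Delta\to T_\epsilon 1_\Delta))\simeq 0$, i.e.\ that the continuation map $\Hom(\check{T}_\epsilon 1_\Delta,\check{T}_\epsilon 1_\Delta)\to\Hom(\check{T}_\epsilon 1_\Delta,T_\epsilon 1_\Delta)$ is an equivalence; this is the role of the perturbation lemma (Proposition \ref{prop:perturbation}) in the paper's proof, and without it the total fiber of your square of four Homs is not identified with the cofiber of the single sequence. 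Two further packaging differences, neither a problem: for the homology-side identification the paper does not quote Theorem \ref{thm:sabloff-serre} as a black box but perturbs by a small $T_{-\epsilon'}$ so that the kernel is non-characteristic for $\Delta$ and uses $\Hom(1_\Delta,-)=p_*\Delta^!(-)$, $\Delta^!\simeq\Delta^*[-n]$ and $p_*\simeq p_!$ on compact supports (your Sabloff--Serre route with $F=1_\Delta$ amounts to the same computation, with the same mild stretching of the compactness hypotheses that the paper itself allows in Propositions \ref{prop:weak-rel-cy-nondeg} and \ref{prop:weak-cy-nondeg}); and the right-hand column is obtained in the paper not by rerunning the argument after $\iota^*$ but by citing Propositions \ref{prop:weak-rel-cy-nondeg} and \ref{prop:weak-cy-nondeg}, which is precisely what makes the compatibility of the two columns immediate.
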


\begin{proof}
    Consider the non-negative contact isotopy $\hat{T}_t = T_t \circ \check{T}_{-t}$ which is supported on the closure of $\Omega$. For the first isomorphism, using the perturbation lemma Proposition \ref{prop:perturbation}, Sato--Sabloff fiber sequence Theorem \ref{thm:sato-sabloff-rel} for the isotopy $\hat{T}_t: S^*(M \times M) \to S^*(M \times M)$ and Lemma \ref{lem:mhom-computation}, we know that
    \begin{align*}
    \Hom & (\mathrm{Fib}(\check{T}_{\epsilon}1_{\Delta} \to T_\epsilon 1_{\Delta}), \mathrm{Fib}(\check{T}_{\epsilon}1_{\Delta} \to T_\epsilon 1_{\Delta})) \\
    &\simeq \Hom({T}_\epsilon 1_{\Delta}, \mathrm{Fib}(\check{T}_{\epsilon}1_{\Delta} \to T_\epsilon 1_{\Delta}))[1] \simeq \Hom(1_{\Delta}, T_{-\epsilon}\mathrm{Fib}(\check{T}_{\epsilon}1_{\Delta} \to T_{\epsilon}1_{\Delta}))[1] \\
    &\simeq \Hom(1_{\Delta}, \mathrm{Fib}(\hat{T}_{-\epsilon}1_{\Delta} \to 1_{\Delta}))[1] \simeq \Gamma(\Delta_\Omega, \mhom(1_\Delta, 1_\Delta)) \simeq \Gamma(\Omega, 1_{\Omega}).
    \end{align*} 
    Since $\Lambda$ is a deformation retract of $\Omega$, we can complete the proof. For the second isomorphism, we know that for $\epsilon' < \epsilon$, a small negative pushoff ${T}_{-\epsilon'}$ of $\check{T}_\epsilon (S^*_\Delta(M \times M))$ is non-characteristic with respect to $\Delta: M \hookrightarrow M \times M$. Hence using the perturbation lemma Proposition \ref{prop:perturbation} for the isotopy $\hat{T}_t$ and the isotopy $T_{t'}: S^*M \to S^*M$, we have
    \begin{align*}
    \Hom & (\mathrm{Fib}(\check{T}_{\epsilon}1_{\Delta} \to T_\epsilon 1_{\Delta}), \mathrm{Fib}(\check{T}_{\epsilon}1_{\Delta} \to T_\epsilon 1_{\Delta})) \\
    & \simeq \Hom({T}_\epsilon1_{\Delta}, \mathrm{Fib}(\check{T}_{\epsilon}1_{\Delta} \to T_\epsilon 1_{\Delta}))[1] \simeq \Hom(1_{\Delta}, T_{-\epsilon}\mathrm{Fib}(\check{T}_{\epsilon}1_{\Delta} \to T_\epsilon 1_{\Delta}))[1] \\
    & \simeq \Hom(1_{\Delta}, {T}_{-\epsilon'} \mathrm{Fib}(\check{T}_{\epsilon}1_{\Delta} \to T_\epsilon 1_{\Delta}))[1] \simeq p_*\Delta^! ({T}_{-\epsilon'} \mathrm{Fib}(\check{T}_{\epsilon}1_{\Delta} \to T_\epsilon 1_{\Delta}))[1] \\
    & \simeq p_!\Delta^* ({T}_{-\epsilon'} \mathrm{Fib}(\check{T}_{\epsilon}1_{\Delta} \to T_\epsilon 1_{\Delta}))[1-n] \simeq p_!\Delta^* \mathrm{Fib}(\check{T}_{\epsilon}1_{\Delta} \to T_\epsilon 1_{\Delta})[1-n].
    \end{align*} 
    This shows the isomorphisms for the unwrapped terms.

    Then it suffices to show that the diagram commutes. The horizontal morphisms are induced by the positive wrapping 
    \begin{equation*}
        \mathrm{Fib}(\check{T}_{\epsilon}1_{\Delta} \to T_\epsilon 1_{\Delta})  \xrightarrow{\,\,\,} \iota_{\widehat\Lambda_{\cup} \times -\widehat\Lambda_{\cup}}^*\mathrm{Fib}(\check{T}_{\epsilon}1_{\Delta} \to T_\epsilon 1_{\Delta}) \xrightarrow{\sim} m_{\widehat\Lambda_{\cup} \times -\widehat\Lambda_{\cup}}^l m_{\widehat\Lambda_{\cup} \times -\widehat\Lambda_{\cup}} 1_{\Delta}.
    \end{equation*}
    Since the both vertical isomorphisms are both induced by adjunction and Poincar\'e duality Propositions \ref{prop:weak-rel-cy-nondeg} and \ref{prop:weak-cy-nondeg}, 
    we can conclude that the diagram commutes.
\end{proof}

Similarly, we are also able to construct the constant orbits on the Hochschild homology and cohomology of the sheaf category. When $M$ is noncompact, we consider the compact subset $M_c$ and the compact diagonal $\Delta_c$ of $M_c$, where $M_c$ is the union of all the bounded strata of $M \setminus \pi(\Lambda)$.

\begin{proposition}
    Let $M$ be a connected oriented manifold and $\Lambda \subseteq S^*M$ be a compact subanalytic Legendrian. Then there is an acceleration morphism for Hochschild homology
    $$p_!\Delta^*\check{T}_\epsilon 1_{\Delta_c}[-n] \longrightarrow p_!\Delta^* \iota_{\widehat\Lambda \times -\widehat\Lambda}^* \check{T}_\epsilon 1_{\Delta_c}[-n],$$
    and also an acceleration morphism for Hochschild cohomology
    $$\End(\check{T}_\epsilon 1_{\Delta_c}) \longrightarrow \End(\iota_{\widehat\Lambda \times -\widehat\Lambda}^* \check{T}_\epsilon 1_{\Delta_c}),$$
    where $\Gamma(\widehat{\Lambda}, 1_{\widehat\Lambda \setminus \Lambda}) \simeq p_!\Delta^*\check{T}_\epsilon 1_{\Delta_c}[-n]$ and $\Gamma(\widehat{\Lambda}, 1_{\widehat\Lambda}) \simeq \End(\check{T}_\epsilon 1_{\Delta_c})$. We denote the bottom acceleration map by
    $$\Tr_{cst}(\Id_{\Sh_{\widehat\Lambda}(M)}) \to \Tr(\Id_{\Sh_{\widehat\Lambda}(M)}).$$
\end{proposition}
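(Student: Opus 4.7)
The proof would mirror that of Proposition \ref{prop:accelerate} (the microsheaf analogue), with suitable adjustments to pass from microsheaves on $\Lambda$ to sheaves on the body $M$.

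First, I would define the acceleration morphisms as the natural maps induced by the unit of the adjunction $\iota_{\widehat\Lambda\times -\widehat\Lambda}^* \dashv \iota_{\widehat\Lambda\times -\widehat\Lambda\,*}$: applying $\iota_{\widehat\Lambda\times -\widehat\Lambda}^*$ to endomorphisms of $\check T_\epsilon 1_{\Delta_c}$ gives the Hochschild cohomology acceleration, and the Hochschild homology acceleration is then its Poincar\'e--Lefschetz counterpart under the vertical identifications established in Propositions \ref{prop:weak-rel-cy-nondeg} and \ref{prop:cy-compatible}.

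For the Hochschild cohomology identification $\End(\check T_\epsilon 1_{\Delta_c}) \simeq \Gamma(\widehat\Lambda, 1_{\widehat\Lambda})$, I would use GKS invariance (Theorem \ref{thm: GKS}) to absorb $\check T_\epsilon$ as an autoequivalence of $\Sh(M\times M)$, giving
\[
\End(\check T_\epsilon 1_{\Delta_c}) \simeq \End(1_{\Delta_c}) \simeq \Gamma(M_c, 1_{M_c}).
\]
The projection $\pi: T^*M \to M$ restricts to a deformation retraction $\widehat\Lambda \to M_c$ that scales the rays of $\bR_+\cdot\Lambda$ down to their feet in $\pi(\Lambda) \subseteq M_c$, hence $\Gamma(\widehat\Lambda, 1_{\widehat\Lambda}) \simeq \Gamma(M_c, 1_{M_c})$.

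For the Hochschild homology identification $p_!\Delta^*\check T_\epsilon 1_{\Delta_c}[-n]\simeq \Gamma(\widehat\Lambda, 1_{\widehat\Lambda\setminus\Lambda})$, I would rewrite the left-hand side via Sabloff--Serre duality (Theorem \ref{thm:sabloff-serre}, using $\omega_M = 1_M[-n]$) as a Hom computation on $M\times M$, and then apply the relative Sato--Sabloff fiber sequence (Theorem \ref{thm:sato-sabloff-rel}) with a non-negative contact Hamiltonian on $S^*(M\times M)$ adapted to isolate the $\Lambda$-directions on the diagonal. The $\mhom$ contribution, computed by Lemma \ref{lem:mhom-computation}, carries $\Gamma(\Lambda,1_\Lambda)$, while the residual Hom contribution carries the cohomology of the interior $M_c$; assembling these in the Sato--Sabloff fiber sequence produces the relative cohomology $\Gamma(\widehat\Lambda, 1_{\widehat\Lambda\setminus\Lambda})$, where $\Lambda$ is interpreted as the boundary at infinity of $\bR_+\cdot\Lambda \subseteq \widehat\Lambda$. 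The hardest part will be exactly this step: matching the output of the Sato--Sabloff decomposition with the relative cohomology $\Gamma(\widehat\Lambda, 1_{\widehat\Lambda\setminus\Lambda})$, while keeping the shifts and the compactification of $\widehat\Lambda$ at the Legendrian boundary consistent. Once these identifications are in place, the compatibility of the acceleration morphism with the vertical Poincar\'e identifications follows from naturality, as in the proof of Proposition \ref{prop:accelerate}.
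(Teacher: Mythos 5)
Your proposal is correct in outline and draws on the same toolbox as the paper, but it organizes the key computation differently. The paper never attacks $p_!\Delta^*\check{T}_\epsilon 1_{\Delta_c}[-n]$ head-on: for the cohomological identification it simply notes $\End(\check{T}_\epsilon 1_{\Delta_c}) = \End(1_{\Delta_c})$ (as you do), and for the homological one it applies $p_!\Delta^*$ to the kernel-level fiber sequence $\check{T}_\epsilon 1_{\Delta_c} \to T_\epsilon 1_{\Delta_c} \to \mathrm{Cofib}$, identifies the middle term with $\Hom(1_\Delta,1_\Delta) \simeq \Gamma(\widehat\Lambda,1_{\widehat\Lambda})$ using only the perturbation lemma (Proposition \ref{prop:perturbation}; a pure Reeb pushoff of $S^*_\Delta(M\times M)$ is non-characteristic for $\Delta$), and recognizes the cofiber term as the $\Gamma(\Lambda,1_\Lambda)$ already produced in Proposition \ref{prop:accelerate}; the $\check{T}_\epsilon$-term is then the fiber of $\Gamma(\widehat\Lambda,1_{\widehat\Lambda}) \to \Gamma(\Lambda,1_\Lambda)$, i.e.\ $\Gamma(\widehat\Lambda,1_{\widehat\Lambda\setminus\Lambda})$. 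Your route instead converts the $\check{T}_\epsilon$-term into a Hom via Sabloff--Serre and re-runs the (relative) Sato--Sabloff analysis with Lemma \ref{lem:mhom-computation}; this is viable, and routing through Sabloff--Serre is the right instinct, since $\Delta$ is characteristic for $\check{T}_\epsilon 1_{\Delta_c}$ along $\Delta_\Lambda$ so a naive non-characteristic pullback is unavailable. What your approach loses is that it re-derives the boundary term rather than inheriting it: the step you flag as hardest --- matching your decomposition with the restriction sequence $\Gamma(\widehat\Lambda,1_{\widehat\Lambda}) \to \Gamma(\Lambda,1_\Lambda)$ and, crucially, with the identifications of Proposition \ref{prop:accelerate} (this compatibility is what the later relative Hochschild and strong Calabi--Yau arguments actually use) --- is exactly what the paper's three-term bookkeeping makes nearly automatic. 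Finally, note that the homology acceleration morphism itself requires no duality to define: it is just $p_!\Delta^*[-n]$ applied to the unit $\check{T}_\epsilon 1_{\Delta_c} \to \iota_{\widehat\Lambda \times -\widehat\Lambda *}\iota_{\widehat\Lambda \times -\widehat\Lambda}^*\check{T}_\epsilon 1_{\Delta_c}$, so the content of the proposition is entirely in the two identifications, which your plan does address.
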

\begin{proof}
    We only carry out the proof when $M$ is compact to keep the notations less complicated. For the Hochschild cohomology, the result is straightforward as $\End(\check{T}_\epsilon 1_{\Delta}) = \End(1_{\Delta})$. For the Hochschild homology, consider the fiber sequence
    $$p_!\Delta^*\check{T}_\epsilon 1_{\Delta}[-n] \to p_!\Delta^*(T_\epsilon 1_{\Delta})[-n] \to p_!\Delta^*\mathrm{Fib}(\check{T}_\epsilon 1_{\Delta} \to T_\epsilon 1_{\Delta})[1-n].$$
    Then it suffices to show that $\Gamma(\widehat\Lambda, 1_{\widehat\Lambda}) \simeq p_!\Delta^*\check{T}_\epsilon 1_{\Delta}[-n]$. First, we know that
    $$\Hom(1_{\Delta}, 1_{\Delta}) = \Gamma(\Delta, 1_\Delta) = \Gamma(\widehat\Lambda, 1_{\widehat\Lambda}).$$
    Then, notice that a small Reeb push-off $T_\epsilon(S^*_\Delta(M \times M))$ is non-characteristic with respect to $\Delta: M \hookrightarrow M \times M$. Hence using the perturbation lemma for the isotopy ${T}_t: S^*(M \times M) \to S^*(M \times M)$, 
    \begin{align*}
    \Hom(1_{\Delta}, 1_{\Delta}) \simeq \Hom(1_{\Delta}, T_\epsilon 1_{\Delta}) \simeq p_*\Delta^!T_\epsilon 1_{\Delta} \simeq p_!\Delta^*T_\epsilon 1_{\Delta}[-n].
    \end{align*}
    This shows the isomorphisms which are compatible with the isomorphisms in Proposition \ref{prop:accelerate}. This then constructs the acceleration morphism on Hochschild homology.
\end{proof}

\begin{figure}
    \centering
    \includegraphics[width=0.8\textwidth]{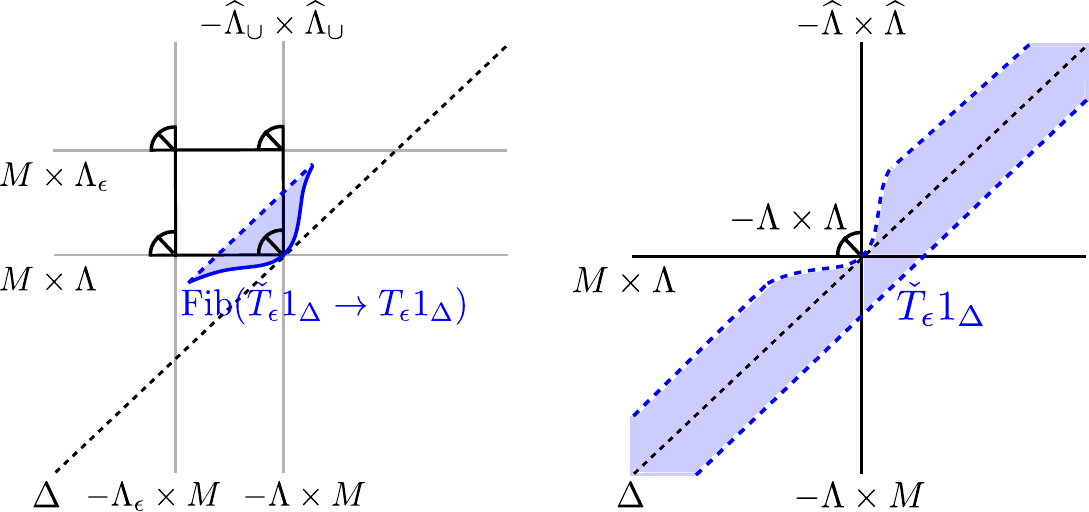}
    \caption{Blue regions are the supports of the sheaves $\check{T}_\epsilon 1_\Delta$ and $\mathrm{Fib}(\check{T}_\epsilon 1_\Delta \to T_\epsilon 1_\Delta)$ that appear in the construction of acceleration morphisms, and black arcs are the projections of $-\widehat\Lambda \times \widehat\Lambda$ and respectively $-\widehat\Lambda_\cup \times \widehat\Lambda_\cup$ onto $M \times M$.}
\end{figure}

Putting together the above propositions plus the definitions of relative Hochschild invariants, we have constructed the following acceleration morphisms on Hochschild invariants:

\begin{theorem}\label{thm:accelerate-hochschild}
    Let $M$ be connected oriented and $\Lambda \subseteq S^*M$ be a compact subanalytic Legendrian. Then for the microsheaves in $\msh_\Lambda(\Lambda)$, there is an acceleration morphism
    $$\Gamma(\Lambda, 1_\Lambda) \to \HH^*(\msh_\Lambda(\Lambda)) \xrightarrow{\sim} \HH_*(\msh_\Lambda(\Lambda))[1-n].$$
    For sheaves in $\Sh_{\widehat\Lambda}(M)$, there are acceleration morphisms
    $$\Gamma(\widehat\Lambda, 1_{\widehat\Lambda}) \to \HH^*(\Sh_{\widehat\Lambda}(M)), \; \Gamma(\widehat\Lambda, 1_{\widehat\Lambda \setminus \Lambda}) \to \HH_*(\Sh_{\widehat\Lambda}(M))[-n].$$
    These morphisms are functorial, and in particular we have acceleration morphisms
    $$\Gamma(\widehat\Lambda, 1_{\widehat\Lambda}) \to \HH^*(\msh_\Lambda(\Lambda), \Sh_{\widehat\Lambda}(M)) \xrightarrow{\sim} \HH_*(\msh_\Lambda(\Lambda), \Sh_{\widehat\Lambda}(M))[-n].$$
\end{theorem}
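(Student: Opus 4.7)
The plan is to assemble the acceleration morphisms from Proposition \ref{prop:accelerate} and the subsequent sheaf-theoretic proposition, and then verify that they are mutually compatible so as to descend to the relative invariants. For $\msh_\Lambda(\Lambda)$, the two rows of the diagram in Proposition \ref{prop:accelerate} already supply the acceleration morphisms in the required form: the top row gives $\Gamma(\Lambda, 1_\Lambda) \to \HH^*(\msh_\Lambda(\Lambda))$ after identifying $\End(\iota_{\widehat\Lambda_\cup \times -\widehat\Lambda_\cup}^* \mathrm{Fib}(\check{T}_\epsilon 1_\Delta \to T_\epsilon 1_\Delta)) \simeq \HH^*(\msh_\Lambda(\Lambda))$ via Lemma \ref{lem:dual-bimod-microsheaf} and Fourier--Mukai; the bottom row gives $\Gamma(\Lambda, 1_\Lambda) \to \HH_*(\msh_\Lambda(\Lambda))[1-n]$; and the two vertical columns of that diagram supply the Calabi--Yau isomorphism $\HH^*(\msh_\Lambda(\Lambda)) \xrightarrow{\sim} \HH_*(\msh_\Lambda(\Lambda))[1-n]$, which is also the content of Proposition \ref{prop:weak-cy-nondeg}. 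The analogous construction for $\Sh_{\widehat\Lambda}(M)$ would be read off the same way from the companion proposition, with Lemma \ref{lem:dual-bimod} playing the role of Lemma \ref{lem:dual-bimod-microsheaf}.

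For the relative part, I would observe that all four absolute acceleration morphisms are obtained by applying either $\End$ or $p_!\Delta^*$ to a single sheaf-level datum on $M \times M$: the morphism $\check{T}_\epsilon 1_{\Delta_c} \to T_\epsilon 1_{\Delta_c}$ together with the two restriction functors $\iota_{\widehat\Lambda \times -\widehat\Lambda}^*$ and $\iota_{\widehat\Lambda_\cup \times -\widehat\Lambda_\cup}^*$. On the topological side, the compactly-supported-off-$\Lambda$ sections and the sections on $\widehat\Lambda$ and on $\Lambda$ fit into the fiber sequence
\begin{equation*}
\Gamma(\widehat\Lambda, 1_{\widehat\Lambda \setminus \Lambda}) \longrightarrow \Gamma(\widehat\Lambda, 1_{\widehat\Lambda}) \longrightarrow \Gamma(\Lambda, 1_\Lambda).
\end{equation*}
On the categorical side, this matches the defining fiber sequence of the relative Hochschild invariants: for homology as the cofiber of $\HH_*(\msh_\Lambda(\Lambda)) \to \HH_*(\Sh_{\widehat\Lambda}(M))$, and for cohomology via the pullback description $\HH^*(\msh_\Lambda(\Lambda), \Sh_{\widehat\Lambda}(M)) = \HH^*(\msh_\Lambda(\Lambda)) \times_{\End(f)} \HH^*(\Sh_{\widehat\Lambda}(M))$. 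The relative acceleration morphism, and the $[-n]$-shifted isomorphism between relative cohomology and relative homology, would be induced on the corresponding fiber and pullback terms.

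The main obstacle I expect is the compatibility of the Calabi--Yau shifts: the absolute microsheaf isomorphism is by $[1-n]$ while the absolute sheaf version and the relative version are by $[-n]$. To organise this coherently into a single commutative cube I would invoke Proposition \ref{prop:cy-compatible}, which asserts that the two Calabi--Yau morphisms agree after pre- and post-composition by $m_\Lambda$ and $m_\Lambda^l$. Together with the Fourier--Mukai identifications of the identity bimodules (Lemmas \ref{lem:dual-bimod} and \ref{lem:dual-bimod-microsheaf}) and the compatibility of the two $\iota^*$-restrictions from Lemma \ref{lem:bimod-adjoint-sheaf}, the functoriality of the whole construction then follows from naturality of fibers and pullbacks in the stable setting, and the relative acceleration together with its Calabi--Yau $[-n]$-identification passes to the (co)fiber terms.
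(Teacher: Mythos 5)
Your proposal is correct and follows essentially the same route as the paper, which itself proves the theorem simply by assembling Proposition \ref{prop:accelerate} and its sheaf-side companion with the fiber-sequence/pullback definitions of the relative Hochschild invariants, the nondegeneracy statements of Propositions \ref{prop:weak-rel-cy-nondeg} and \ref{prop:weak-cy-nondeg}, and the compatibility of Proposition \ref{prop:cy-compatible}. Your observation that all four absolute accelerations come from one kernel-level datum on $M\times M$ restricted by $\iota^*_{\widehat\Lambda\times-\widehat\Lambda}$ and $\iota^*_{\widehat\Lambda_\cup\times-\widehat\Lambda_\cup}$, matching the topological fiber sequence $\Gamma(\widehat\Lambda,1_{\widehat\Lambda\setminus\Lambda})\to\Gamma(\widehat\Lambda,1_{\widehat\Lambda})\to\Gamma(\Lambda,1_\Lambda)$, is exactly the functoriality the paper invokes implicitly.
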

\begin{remark}
As explained in Remarks \ref{cohomology-v-smooth} and \ref{rem:orientation-is-id}, we know that under the acceleration morphism
$$\Gamma(\Lambda, 1_\Lambda) \to \HH^*(\msh_\Lambda(\Lambda)) \xrightarrow{\sim} \HH_*(\msh_\Lambda(\Lambda))[1-n],$$
the unit $1 \in \Gamma(\Lambda, 1_\Lambda)$ determines the weak Calabi--Yau class in
$\HH_*(\msh_\Lambda(\Lambda))$.
Moreover, under the acceleration morphism
$$\Gamma(\widehat\Lambda, 1_{\widehat\Lambda}) \to \HH^*(\msh_\Lambda(\Lambda), \Sh_{\widehat\Lambda}(M)) \xrightarrow{\sim} \HH_*(\msh_\Lambda(\Lambda), \Sh_{\widehat\Lambda}(M))[-n],$$
the unit $1 \in \Gamma(\widehat\Lambda, 1_{\widehat\Lambda})$, which is the unique lifting of $1 \in \Gamma(\Lambda, 1_\Lambda)$, determines the weak relative Calabi--Yau class in 
$\HH_*(\msh_\Lambda(\Lambda), \Sh_{\widehat\Lambda}(M))$. Thus, our goal is to show that the image of the units are $S^1$-invariant so as to obtain strong (relative) Calabi--Yau structures.
\end{remark}

\subsection{Circle action on Hochschild homology}
Now we give an explicit model for the $S^1$-action on the Hochschild homology $\HH_*(\Sh_{\widehat\Lambda}(M))$ and $\HH_*(\msh_\Lambda(\Lambda))$. Our construction is inspired by the $S^1$-action on the Hochschild homology of Tamarkin categories by Zhang \cite{ZhangS1} and the general $S^1$-action on Hochschild homology by Ayala--Francis \cite{AyalaFrancis}.

Consider the diagonal map $\Delta_n: M^n \to M^{2n}, \; (x_1, x_2, \dots, x_n) \mapsto (x_1, x_1, x_2, x_2, \dots, x_n, x_n)$, the twisted diagonal map
$\widetilde{\Delta}_n: M^n \to M^{2n}, \; (x_1, x_2, \dots, x_{n-1}, x_n) \to (x_n, x_1, x_1, \dots, x_{n-1}, x_{n-1}, x_n)$, and the projection map $p_n: M^n \to \{*\}$.

\begin{lemma}\label{lem:cyclic-homology}
Let $\Lambda \subseteq S^*M$ be a compact subanalytic Legendrian. The $S^1$-action on $\HH_*(\Sh_{\widehat\Lambda}(M)) = \Tr(\Id_{\Sh_{\widehat\Lambda}(M)})$ is defined by the cyclic object with
\begin{align*}
\Tr(\Id_{\Sh_{\widehat\Lambda}(M)}, n) = p_{n !}\widetilde{\Delta}_n^*(\iota_{-\widehat\Lambda \times \widehat\Lambda}^*1_\Delta)^{\boxtimes n}.
\end{align*}
where the face morphisms $d_i$ and degeneracy morphisms $s_i$ are induced by the identities of the unit and counit, and $t_n$ is the cyclic rotation on $(M \times M)^n$ of order $n$.
\end{lemma}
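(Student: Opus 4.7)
The plan is to apply the Ayala--Francis formula of Theorem \ref{thm:AF} to $X = \Sh_{\widehat\Lambda}(M)$ and to substitute the explicit unit $\eta_\sA = \iota_{\widehat\Lambda \times -\widehat\Lambda}^*\Delta_*p^*$ and counit $\epsilon_\sA = p_!\Delta^*$ from Theorem \ref{thm:duality-sheaf}. Starting from
$$\Tr(\Id_X, n) = \epsilon^{\otimes n} \circ \tau_{2n} \circ \eta^{\otimes n},$$
applying $\eta^{\otimes n}$ to the unit produces the external product $(\iota_{\widehat\Lambda \times -\widehat\Lambda}^*1_\Delta)^{\boxtimes n}$ on $M^{2n}$, viewed as an object of $(X \otimes X^\vee)^{\otimes n} \simeq \Sh_{(\widehat\Lambda \times -\widehat\Lambda)^n}(M^{2n})$ via the Fourier--Mukai identification of Theorem \ref{thm:fourier-mukai}.

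Next, I would analyze the cyclic rotation $\tau_{2n}: (X \otimes X^\vee)^{\otimes n} \to (X^\vee \otimes X)^{\otimes n}$, which under Fourier--Mukai corresponds to pullback along the cyclic coordinate shift $\sigma: M^{2n} \to M^{2n}$, $(z_1, \ldots, z_{2n}) \mapsto (z_{2n}, z_1, \ldots, z_{2n-1})$; this shift carries the singular support condition $(\widehat\Lambda \times -\widehat\Lambda)^n$ to $(-\widehat\Lambda \times \widehat\Lambda)^n$, reinterpreting the rotated object in $(X^\vee \otimes X)^{\otimes n}$. The key geometric identity is
$$\widetilde{\Delta}_n = \sigma \circ \Delta_n,$$
where $\Delta_n: M^n \to M^{2n}$ is the product of pair diagonals, so that $\Delta_n^* \circ \sigma^* = \widetilde{\Delta}_n^*$. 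Applying the counit $\epsilon^{\otimes n} = p_{n!}\Delta_n^*$ in the new pair structure therefore yields
$$\Tr(\Id_X, n) = p_{n!}\Delta_n^*\sigma^*(\iota_{\widehat\Lambda \times -\widehat\Lambda}^*1_\Delta)^{\boxtimes n} = p_{n!}\widetilde{\Delta}_n^*(\iota_{-\widehat\Lambda \times \widehat\Lambda}^*1_\Delta)^{\boxtimes n},$$
where the second equality re-expresses the rotated sheaf as the natural external product in the shifted pair structure.

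Finally, for the simplicial and cyclic structure, per Remark \ref{rem:cyclic-object-map} the face morphisms $d_i$ and degeneracies $s_i$ are induced by the triangle identities involving $\eta$ and $\epsilon$, translating in the sheaf picture to insertion of a diagonal factor via the unit $1_\Delta \to \Delta_*\Delta^*1_\Delta$ or contraction of adjacent factors via the counit $\Delta^*\Delta_*1_\Delta \to 1_\Delta$ at the specified pair position, exactly as in the standard simplicial structure of the cyclic bar construction. The cyclic rotation $t_n$ arises from $\tau_{2n}^{2}$, a rotation of order $n$ on the $n$ pair factors, which under Fourier--Mukai is precisely the cyclic rotation on $(M \times M)^n$ of order $n$. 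The main technical subtlety is the bookkeeping of how the categorical rotation $\tau_{2n}$ translates into the coordinate shift $\sigma$ and the identification of $\sigma^*(\iota_{\widehat\Lambda \times -\widehat\Lambda}^*1_\Delta)^{\boxtimes n}$ with $(\iota_{-\widehat\Lambda \times \widehat\Lambda}^*1_\Delta)^{\boxtimes n}$ as objects of $\Sh_{(-\widehat\Lambda \times \widehat\Lambda)^n}(M^{2n})$; once this identification is verified, the identity $\widetilde{\Delta}_n = \sigma \circ \Delta_n$ directly converts the naive product diagonal into the twisted diagonal demanded by the statement.
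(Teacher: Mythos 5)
Your proposal is essentially the paper's own (implicit) argument: the lemma is obtained by unwinding Theorem \ref{thm:AF} with the explicit unit and counit of Theorem \ref{thm:duality-sheaf} under the Fourier--Mukai/K\"unneth identifications of Theorem \ref{thm:fourier-mukai}, with the face, degeneracy and rotation maps read off from Remark \ref{rem:cyclic-object-map}, and your identity $\widetilde{\Delta}_n=\sigma\circ\Delta_n$ is the right geometric translation of the categorical rotation. One small caveat: the intermediate identification $\sigma^*\bigl((\iota_{\widehat\Lambda\times-\widehat\Lambda}^*1_\Delta)^{\boxtimes n}\bigr)\simeq(\iota_{-\widehat\Lambda\times\widehat\Lambda}^*1_\Delta)^{\boxtimes n}$ is not literally an equality of sheaves on $M^{2n}$ (the one-step shift regroups the kernel factors across consecutive pairs rather than swapping coordinates within each pair), and after applying $\Delta_n^*$ and $p_{n!}$ the two sides match only up to a cyclic/reflection relabeling of $M^n$ --- a direction-of-rotation convention the paper does not fix either, so the stated conclusion is unaffected.
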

\begin{remark}
Following Remark \ref{rem:cyclic-object-map}, we can write down the morphisms in the cyclic object. Firstly, the identity $\Delta_2^*(- \boxtimes \iota_{-\widehat\Lambda \times \widehat\Lambda}^*1_\Delta \boxtimes -) \xrightarrow{\sim} \Delta_1^*(- \boxtimes -)$ induces the face morphism $d_i$ for $1 \leq i \leq n+1$:
$$d_i: p_{n+1 !}\widetilde{\Delta}_{n+1}^*(\iota_{-\widehat\Lambda \times \widehat\Lambda}^*1_\Delta)^{\boxtimes n+1} \xrightarrow{\sim} p_{n !}\widetilde{\Delta}_n^*(\iota_{-\widehat\Lambda \times \widehat\Lambda}^*1_\Delta)^{\boxtimes n}.$$
Secondly, the identity $\iota_{-\widehat\Lambda \times \widehat\Lambda}^*1_\Delta \xrightarrow{\sim} (\id \times \Delta_1 \times \id)^*(\iota_{-\widehat\Lambda \times \widehat\Lambda}^*1_\Delta)^{\boxtimes 2}$ induces the degeneration morphism $s_i$ for $1 \leq i \leq n$:
$$s_i: p_{n !}\widetilde{\Delta}_{n}^*(\iota_{-\widehat\Lambda \times \widehat\Lambda}^*1_\Delta)^{\boxtimes n} \xrightarrow{\sim} p_{n+1 !}\widetilde{\Delta}_{n+1}^*(\iota_{-\widehat\Lambda \times \widehat\Lambda}^*1_\Delta)^{\boxtimes n+1}.$$
Finally, the cyclic rotation on $(M \times M)^n$ of order $n$ induces the cyclic rotation morphism
$$t_n: p_{n !}\widetilde{\Delta}_n^*(\iota_{-\widehat\Lambda \times \widehat\Lambda}^*1_\Delta)^{\boxtimes n} \xrightarrow{\sim} p_{n !}\widetilde{\Delta}_n^*(\iota_{-\widehat\Lambda \times \widehat\Lambda}^*1_\Delta)^{\boxtimes n}.$$
\end{remark}

\begin{lemma}\label{lem:cyclic-homology-rel}
Let $\Lambda \subseteq S^*M$ be a compact subanalytic Legendrian. The $S^1$-action on $\HH_*(\msh_\Lambda(\Lambda)) = \Tr(\Id_{\msh_\Lambda(\Lambda)})$ is defined by the cyclic object with
\begin{align*}
\Tr(\Id_{\msh_\Lambda(\Lambda)}, n) = p_{n !}\widetilde{\Delta}_n^* (m_{\widehat\Lambda_{\cup} \times -\widehat\Lambda_{\cup}}^l m_{\widehat\Lambda_{\cup} \times -\widehat\Lambda_{\cup}} 1_\Delta)^{\boxtimes n},
\end{align*}
where the face morphisms $d_i$ and degeneracy morphisms $s_i$ are induced by the identities of the unit and counit, and $t_n$ is the cyclic rotation on $(M \times M)^n$ of order $n$.
\end{lemma}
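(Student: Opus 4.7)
The plan is to directly apply the Ayala--Francis formula of Theorem \ref{thm:AF} to $X = \msh_\Lambda(\Lambda)$ and unwind it sheaf-theoretically, in exact parallel to Lemma \ref{lem:cyclic-homology}. The key identification, provided by Theorem \ref{doubling}, lets us replace $\msh_\Lambda(\Lambda)$ with $\Sh_{\widehat{\Lambda}_\cup}(M)$; then by Proposition \ref{thm:duality-microsheaf}, this category is dualizable with unit $\eta_\sB = \iota^*_{\widehat{\Lambda}_\cup \times -\widehat{\Lambda}_\cup} \Delta_* p^*$ and counit $\epsilon_\sB = p_! \Delta^*$. Under the Fourier--Mukai isomorphism of Theorem \ref{thm:fourier-mukai}, applied to the product category, Lemma \ref{lem:dual-bimod-microsheaf} identifies the image of $\eta_\sB$ with the kernel $\iota^*_{\widehat{\Lambda}_\cup \times -\widehat{\Lambda}_\cup} 1_\Delta \simeq m^l_{\widehat{\Lambda}_\cup \times -\widehat{\Lambda}_\cup} m_{\widehat{\Lambda}_\cup \times -\widehat{\Lambda}_\cup} 1_\Delta$, so $\eta_\sB^{\otimes n}$ translates to the external product $(m^l m 1_\Delta)^{\boxtimes n}$ on $(M \times M)^n$.

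Next, I would trace through the three-step composition
$$1_\cV \xrightarrow{\eta_\sB^{\otimes n}} (\sB \otimes \sB^\vee)^{\otimes n} \xrightarrow{\tau_{2n}} (\sB^\vee \otimes \sB)^{\otimes n} \xrightarrow{\epsilon_\sB^{\otimes n}} 1_\cV$$
from Theorem \ref{thm:AF}. Writing coordinates $(a_1, b_1, \ldots, a_n, b_n)$ on $M^{2n}$ where $a_i, b_i$ are the first and second factors of the $i$-th copy of $M \times M$, the cyclic rotation $\tau_{2n}$ shifts the factors so that the subsequent application of $\epsilon_\sB^{\otimes n} = (p_! \Delta^*)^{\otimes n}$ identifies $b_i$ with $a_{i+1}$ (indices taken mod $n$) and then $!$-pushes forward to a point. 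Writing $y_i = b_i = a_{i+1}$ identifies this composition precisely with the pullback along the twisted diagonal $\widetilde{\Delta}_n$ followed by $p_{n!}$, yielding the claimed formula $\Tr(\Id, n) = p_{n!} \widetilde{\Delta}_n^* (m^l m 1_\Delta)^{\boxtimes n}$.

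Finally, I would verify that the simplicial and cyclic structure maps match those described in the statement. By Remark \ref{rem:cyclic-object-map}, the face map $d_i$ comes from the triangle identity $(\epsilon \otimes \epsilon) \circ (\id \otimes \eta \otimes \id) \simeq \epsilon$; under Fourier--Mukai and the identification of $\eta$ with $m^l m 1_\Delta$, this becomes collapsing one factor via $\Delta_2^*(- \boxtimes m^l m 1_\Delta \boxtimes -) \simeq \Delta_1^*(- \boxtimes -)$. The degeneracy $s_i$ arises from the dual identity $\eta \simeq (\id \otimes \epsilon \otimes \id) \circ (\eta \otimes \eta)$, translating to insertion of $m^l m 1_\Delta$ along a pullback by $\id \times \Delta \times \id$. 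The cyclic rotation $t_n$ is induced by $\tau_{2n}^2$ on the factors, which is literally the cyclic permutation on $(M \times M)^n$.

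The main obstacle is carefully bookkeeping the cyclic rotation $\tau_{2n}$: it acts as the order-$2n$ cyclic shift on $2n$ factors, while the counit pairs up only consecutive pairs, and the interaction is precisely what produces the twisted diagonal $\widetilde{\Delta}_n$ (rather than the naive $\Delta_n$) after combining $\tau_{2n}$ with $\Delta^{*\otimes n}$. Beyond this, the only subtlety specific to the microsheaf case (as opposed to the sheaf case of Lemma \ref{lem:cyclic-homology}) is invoking Lemma \ref{lem:dual-bimod-microsheaf} at the appropriate places to rewrite $\iota^*_{\widehat{\Lambda}_\cup \times -\widehat{\Lambda}_\cup} 1_\Delta$ in the form $m^l m 1_\Delta$; this is purely notational.
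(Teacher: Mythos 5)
Your proposal is correct and follows essentially the same route the paper intends: apply the Ayala--Francis cyclic object of Theorem \ref{thm:AF} to $\msh_\Lambda(\Lambda) \simeq \Sh_{\widehat\Lambda_\cup}(M)$ with the unit/counit of Proposition \ref{thm:duality-microsheaf}, identify the kernel of $\eta_\sB$ via Fourier--Mukai and Lemma \ref{lem:dual-bimod-microsheaf} as $m^l_{\widehat\Lambda_\cup\times-\widehat\Lambda_\cup}m_{\widehat\Lambda_\cup\times-\widehat\Lambda_\cup}1_\Delta$, and unwind $\epsilon_\sB^{\otimes n}\circ\tau_{2n}\circ\eta_\sB^{\otimes n}$ into $p_{n!}\widetilde\Delta_n^*(\,\cdot\,)^{\boxtimes n}$, matching the structure maps against Remark \ref{rem:cyclic-object-map}. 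This is exactly how the paper derives the lemma (in parallel with Lemma \ref{lem:cyclic-homology}), including the bookkeeping of the cyclic shift that produces the twisted rather than the naive diagonal.
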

\begin{remark}
Similarly, following Remark \ref{rem:cyclic-object-map}, we can write down the morphisms in the cyclic object. Firstly, the identity $\Delta_2^*(- \boxtimes m_{\widehat\Lambda_{\cup} \times -\widehat\Lambda_{\cup}}^l m_{\widehat\Lambda_{\cup} \times -\widehat\Lambda_{\cup}} 1_\Delta \boxtimes -) \xrightarrow{\sim} \Delta_1^*(- \boxtimes -)$ induces the face morphism $d_i$ for $1 \leq i \leq n+1$:
$$d_i: p_{n+1 !}\widetilde{\Delta}_{n+1}^*(m_{\widehat\Lambda_{\cup} \times -\widehat\Lambda_{\cup}}^l m_{\widehat\Lambda_{\cup} \times -\widehat\Lambda_{\cup}} 1_\Delta)^{\boxtimes n+1} \xrightarrow{\sim} p_{n !}\widetilde{\Delta}_n^*(m_{\widehat\Lambda_{\cup} \times -\widehat\Lambda_{\cup}}^l m_{\widehat\Lambda_{\cup} \times -\widehat\Lambda_{\cup}} 1_\Delta)^{\boxtimes n}.$$
Secondly, the identity $m_{\widehat\Lambda_{\cup} \times -\widehat\Lambda_{\cup}}^l m_{\widehat\Lambda_{\cup} \times -\widehat\Lambda_{\cup}} 1_\Delta \xrightarrow{\sim} (\id \times \Delta_1 \times \id)^*(m_{\widehat\Lambda_{\cup} \times -\widehat\Lambda_{\cup}}^l m_{\widehat\Lambda_{\cup} \times -\widehat\Lambda_{\cup}} 1_\Delta)^{\boxtimes 2}$ induces the degeneration morphism $s_i$ for $1 \leq i \leq n$:
$$s_i: p_{n !}\widetilde{\Delta}_{n}^*(m_{\widehat\Lambda_{\cup} \times -\widehat\Lambda_{\cup}}^l m_{\widehat\Lambda_{\cup} \times -\widehat\Lambda_{\cup}} 1_\Delta)^{\boxtimes n} \xrightarrow{\sim} p_{n+1 !}\widetilde{\Delta}_{n+1}^*(m_{\widehat\Lambda_{\cup} \times -\widehat\Lambda_{\cup}}^l m_{\widehat\Lambda_{\cup} \times -\widehat\Lambda_{\cup}} 1_\Delta)^{\boxtimes n+1}.$$
Finally, the cyclic rotation on $(M \times M)^n$ of order $n$ induces the cyclic rotation morphism
$$t_n: p_{n !}\widetilde{\Delta}_n^*(m_{\widehat\Lambda_{\cup} \times -\widehat\Lambda_{\cup}}^l m_{\widehat\Lambda_{\cup} \times -\widehat\Lambda_{\cup}} 1_\Delta)^{\boxtimes n} \xrightarrow{\sim} p_{n !}\widetilde{\Delta}_n^*(m_{\widehat\Lambda_{\cup} \times -\widehat\Lambda_{\cup}}^l m_{\widehat\Lambda_{\cup} \times -\widehat\Lambda_{\cup}} 1_\Delta)^{\boxtimes n}.$$
\end{remark}

\subsection{Circle action on the constant orbits}
Recall the acceleration morphism that we have constructed in Proposition \ref{prop:accelerate}
\begin{align*}
p_!\Delta^* \mathrm{Fib}(\check{T}_{\epsilon}1_\Delta \to T_\epsilon 1_\Delta) & \xrightarrow{\,\,\,} p_!\Delta^* \iota_{\widehat\Lambda_{\cup,\epsilon} \times -\widehat\Lambda_{\cup, \epsilon}}^* \mathrm{Fib}(\check{T}_{\epsilon}1_\Delta \to T_\epsilon 1_\Delta).
\end{align*}
We now explain the standard $S^1$-action on the left hand side. 

Recall the non-negative Hamiltonian flows $\check{T}_t, T_t: S^*(M \times M) \to S^*(M \times M)$ defined in Definition \ref{def:hamiltonian}. For $\sA = \Sh_{\widehat\Lambda}(M)$, using the compact subset $M_c$ and the compact diagonal $\Delta_c$ of $M_c$, we will consider a cyclic object such that
\begin{align*}
\Tr_{cst}(\Id_{\Sh_{\widehat\Lambda}(M)}, n) = p_{n !}\widetilde{\Delta}_n^*(\check{T}_{\epsilon/n}1_{\Delta_c})^{\boxtimes n}.
\end{align*}
Similarly, for $\sB = \msh_\Lambda(\Lambda)$, we will consider a cyclic object such that
\begin{align*}
\Tr_{cst}(\Id_{\msh_\Lambda(\Lambda)}, n) = p_{n !}\widetilde{\Delta}_n^*\mathrm{Fib}(\check{T}_{\epsilon/n}1_\Delta \to T_{\epsilon/n} 1_\Delta)^{\boxtimes n}.
\end{align*}
Here, when defining each term in the cyclic object, we intentionally take the Hamiltonian push-off for time $\epsilon/n$ to make sure that the microlocal behaviour of the $n$-th term in the cyclic object is captured by $n$-broken Reeb orbits, each of which has length at most $\epsilon/n$. Hence the microlocal behaviour of all terms in the cyclic object is captured by the Reeb orbits of total length at most $\epsilon$.

We check that the choice of Hamiltonian push-offs that we make are consistent so that the cyclic object is well defined.

\begin{proposition}\label{prop:cyclic-well-define}
    Let $\Lambda \subseteq S^*M$ be a compact subanalytic Legendrian. Then there is a cyclic object $\Tr_{cst}(\Id_{\msh_\Lambda(\Lambda)}, *)$ given by
    $$\Tr_{cst}(\Id_{\msh_\Lambda(\Lambda)}, n) = p_{n !}\widetilde{\Delta}_n^*\mathrm{Fib}(\check{T}_{\epsilon/n}1_\Delta \to T_{\epsilon/n} 1_\Delta)^{\boxtimes n},$$
    where the face morphisms $d_i$ are induced by $\Delta_2^*(- \boxtimes \mathrm{Fib}(\check{T}_{\epsilon/n}1_\Delta \to T_{\epsilon/n} 1_\Delta) \boxtimes -) \rightarrow \Delta_1^*(- \boxtimes -)$, and $t_n$ are induced by the cyclic rotation on $(M \times M)^n$ of order $n$.
\end{proposition}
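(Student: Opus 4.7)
I will build the cyclic object by combining the universal trace cyclic object of Theorem~\ref{thm:AF} with the continuation morphisms for positive Hamiltonian isotopies recalled in Section~\ref{sec:sheaves}. Write $F_n \coloneqq \mathrm{Fib}(\check{T}_{\epsilon/n}1_\Delta \to T_{\epsilon/n}1_\Delta)$ for the sheaf on $M\times M$ used at level $n$; the only difference between this cyclic object and the universal one of Lemma~\ref{lem:cyclic-homology-rel} is that the time of the Hamiltonian push-off varies with the level, so what needs to be checked is that this variation is compatible with the cyclic structure.

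The three types of structure maps are constructed as follows. The cyclic rotation $t_n$ permutes the $n$ identical tensor factors in $F_n^{\boxtimes n}$ and acts on $(M\times M)^n$ by the compatible cyclic rotation intertwining $\widetilde{\Delta}_n$, exactly parallel to the universal case. The face map $d_i$ between levels $n{+}1$ and $n$ is induced by combining the basic collapse $\Delta_2^*(-\boxtimes F_n \boxtimes -) \to \Delta_1^*(-\boxtimes -)$, which comes from the triangle identity for the self-duality of $\msh_\Lambda(\Lambda)$ (cf.~Lemma~\ref{lem:dual-bimod-microsheaf} and the remark following Lemma~\ref{lem:cyclic-homology-rel}), with the continuation morphisms $F_{n+1}\to F_n$ induced by the positive-time continuations $\check{T}_{\epsilon/(n+1)}\to\check{T}_{\epsilon/n}$ and $T_{\epsilon/(n+1)}\to T_{\epsilon/n}$ described after Theorem~\ref{thm: GKS}. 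The degeneracy maps $s_i$ are defined dually, combining the insertion $F_n\to(\mathrm{id}\times\Delta_1\times\mathrm{id})^*(F_n\boxtimes F_n)$ with continuation morphisms in the appropriate direction.

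The verification of the cyclic, simplicial, and extra-degeneracy relations splits into two parts. Relations involving only the data at a single level $n$ reduce to the corresponding relations in the universal cyclic object, which hold by Theorem~\ref{thm:AF}. Relations crossing between levels reduce to functoriality and canonicity of the continuation morphisms: any two composite continuations between the same source and target times are canonically homotopic, because continuations for positive Hamiltonian isotopies are independent of the choice of positive isotopy preserving endpoints, as recalled in Section~\ref{sec:sheaves}.

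The main obstacle is the combinatorial bookkeeping needed for this last step. Although the naturality of continuations is the essential input, for each cyclic identity one must draw out the commutative diagram, track which continuations appear on each side, and reduce the identity to an equality of composite continuations between fixed times. Once these diagrams are written out, every such equality collapses to the canonicity statement above, which completes the proof.
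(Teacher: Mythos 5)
Your plan treats the well-definedness of $\Tr_{cst}(\Id_{\msh_\Lambda(\Lambda)},*)$ as essentially formal, and that is where it breaks. First, the collapse map $\Delta_2^*(-\boxtimes \mathrm{Fib}(\check{T}_{\epsilon/n}1_\Delta \to T_{\epsilon/n}1_\Delta)\boxtimes -)\to \Delta_1^*(-\boxtimes -)$ cannot be extracted from the triangle identity for the self-duality of $\msh_\Lambda(\Lambda)$: the kernel $F_n=\mathrm{Fib}(\check{T}_{\epsilon/n}1_\Delta\to T_{\epsilon/n}1_\Delta)$ is not the identity bimodule kernel of that category; only its image $\iota_{\widehat\Lambda_{\cup}\times-\widehat\Lambda_{\cup}}^*F_n\simeq m_{\widehat\Lambda_{\cup}\times-\widehat\Lambda_{\cup}}^l m_{\widehat\Lambda_{\cup}\times-\widehat\Lambda_{\cup}}1_\Delta$ is, and convolving $F_n$ with itself does not return $F_n$. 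For the same reason your reduction of the single-level cyclic relations to Theorem \ref{thm:AF} is unjustified: $\Tr_{cst}$ is not presented as the trace of a dualizable object (the paper only conjectures this), so there is no universal trace cyclic object whose structure maps or relations you may borrow.

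Second, the cross-level compatibilities do not reduce to the canonicity of continuation maps. The actual content of the paper's proof is geometric: one computes $S^*_{\widetilde\Delta_n}(M\times M)^n\cap\msif(\cdots)$ for the relevant box products via broken Reeb orbits (Lemma \ref{lem:ss-broken-loop}), observes that for $\epsilon$ small the only closed broken orbits of total length at most $\epsilon$ are constant orbits on $\Lambda$ — and that none exist once a strictly positive factor $T_{\epsilon/n}$ appears (Lemma \ref{lem:non-const-orbit}) — and then invokes the (positively) gapped non-characteristic deformation results (Theorem \ref{thm:full-faithful-nearby}, Proposition \ref{prop:gapped-fully-faithful}). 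This is what Lemmas \ref{lem:face-map-define}, \ref{lem:cyclic-define-isom} and \ref{lem:cyclic-define-zero} establish: the pushforward $p_{n!}\widetilde\Delta_n^*$ is insensitive to changing the push-off time from $\epsilon/(n-1)$ to $\epsilon/n$ and to replacing an inserted $1_\Delta$ by $\check{T}_{\epsilon/n}1_\Delta$ (while the $T_{\epsilon/n}1_\Delta$ insertion contributes zero), so the candidate face maps are in fact isomorphisms, which by the cyclic relations also forces the degeneracies to be isomorphisms. The independence of continuation maps on the choice of positive isotopy gives you canonical maps, but it does not show that these maps induce equivalences after $p_{n!}\widetilde\Delta_n^*$, nor that no corrections from non-constant short orbits arise; without the broken-orbit singular support analysis and the gapped deformation lemmas, the construction of the cyclic object does not go through.
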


\begin{proposition}\label{prop:cyclic-well-define-rel}
    Let $\Lambda \subseteq S^*M$ be a compact subanalytic Legendrian. Then there is a cyclic object $\Tr_{cst}(\Id_{\Sh_{\widehat\Lambda}(M)}, *)$ given by
    $$\Tr_{cst}(\Id_{\Sh_{\widehat\Lambda}(M)}, n) = p_{n !}\widetilde{\Delta}_n^*(\check{T}_{\epsilon/n}1_{\Delta_c})^{\boxtimes n},$$
    where the face morphisms $d_i$ are induced by $\Delta_2^*(- \boxtimes \check{T}_{\epsilon/n}1_{\Delta_c} \boxtimes -) \rightarrow \Delta_1^*(- \boxtimes -)$, and $t_n$ are induced by the cyclic rotation on $(M \times M)^n$ of order $n$.
\end{proposition}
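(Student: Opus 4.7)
The plan is to verify that the proposed formulas define a functor from the cyclic category $\boldsymbol{\Lambda}$ into $\End_\cV(1_\cV)$. Equivalently, by Proposition \ref{prop:paracyclic}, we may produce an $S^1$-equivariant paracyclic object indexed by $\boldsymbol{\Delta}_\circlearrowleft$, or equivalently a functor out of $\cD(S^1)$ that extends to the suspension $\cD(S^1)^{\triangleleft\triangleright}$ in the sense of Ayala--Francis. The strategy is to reduce to the AF cyclic structure of Theorem \ref{thm:AF} and Lemma \ref{lem:cyclic-homology}, whose simplicial, degeneracy, and cyclic identities are already established, and to show that the modifications introduced by replacing $\iota^*_{-\widehat\Lambda\times\widehat\Lambda}1_\Delta$ with $\check{T}_{\epsilon/n}1_{\Delta_c}$ at level $n$ assemble compatibly via continuation maps in positive Hamiltonian isotopies.

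The cyclic rotation $t_n$ is the easiest piece. Since the push-off time $\epsilon/n$ is identical for all $n$ factors, the cyclic permutation of order $n$ on $(M\times M)^n$ preserves the external tensor product $(\check{T}_{\epsilon/n}1_{\Delta_c})^{\boxtimes n}$, and it intertwines the map $\widetilde{\Delta}_n^*$ with its rotated version. After applying $p_{n!}\widetilde{\Delta}_n^*$ this descends to an automorphism of $\Tr_{cst}(\Id_{\Sh_{\widehat\Lambda}(M)},n)$ of order $n$, matching the cyclic generator of $\boldsymbol{\Lambda}$.

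Next, I would construct the face maps $d_i:\Tr_{cst}(n+1)\to\Tr_{cst}(n)$ and the degeneracy maps $s_i$ in two stages. The combinatorial part is identical to the AF construction in Lemma \ref{lem:cyclic-homology}: on the $i$-th pair of factors one applies the tautological natural transformation $\Delta_2^*(A\boxtimes \check{T}_{s}1_{\Delta_c}\boxtimes B)\to \Delta_1^*(A\boxtimes B)$ for the face map, or the unit $1_\Delta\to \check{T}_{s}1_{\Delta_c}$ inserted by $(\id\times\Delta_1\times\id)^*$ for the degeneracy. The new analytic part is the rescaling of push-off times. Since $\check{T}_t$ is a non-negative Hamiltonian flow (Definition \ref{def:hamiltonian}), the discussion after Theorem \ref{thm: GKS} supplies canonical continuation morphisms $\check{T}_s 1_\Delta \to \check{T}_t 1_\Delta$ for $0\le s\le t$, functorial in the time parameter and unique up to canonical homotopy. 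One thus completes a face map by composing the combinatorial step with continuation morphisms on the surviving factors that adjust the time from $\epsilon/(n+1)$ to $\epsilon/n$; degeneracies are handled dually.

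The main obstacle is verifying that all simplicial and cyclic identities continue to hold after the time rescalings—for instance $d_id_j=d_{j-1}d_i$, $s_is_j=s_{j+1}s_i$, and in particular the cyclic compatibilities $t_nd_i=d_{i-1}t_{n+1}$, $t_ns_0=s_nt_{n-1}^2$. This reduces to two points: (i) any composition of continuation morphisms along a nondecreasing sequence of times $s_0\le s_1\le\cdots\le s_k$ depends only on the endpoints $s_0,s_k$, by canonicity of continuation for positive isotopies; and (ii) once the rescalings are canceled on both sides of an identity, what remains is precisely the AF cyclic relation, which holds by Theorem \ref{thm:AF}. Point (i) guarantees that the rescalings on the two sides of each relation agree, because in each case they correspond to moving from a common initial to a common terminal level, along potentially distinct but canonically equivalent paths; then (ii) completes the check.
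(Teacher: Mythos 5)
There is a genuine gap: your argument never establishes that the time-rescaling maps become \emph{equivalences} after applying $p_{n!}\widetilde{\Delta}_n^*$, and this is precisely the content that makes the proposition nontrivial. Note two things. First, for positive isotopies the continuation morphisms only go in the direction of increasing time, so while your scheme can produce face maps (time per factor grows from $\epsilon/(n+1)$ to $\epsilon/n$), the degeneracies require passing from $\epsilon/n$ to the \emph{smaller} time $\epsilon/(n+1)$; ``handled dually'' does not produce a map unless the relevant continuation morphisms are invertible after $p_{n!}\widetilde{\Delta}_n^*$. Second, in this paper a cyclic object is by definition an object of $\Fun_\simeq(\boldsymbol{\Lambda},\cV)$ — every structure map must be an equivalence, since this is what lets the cyclic object factor through $BS^1$ and encode the $S^1$-action used later. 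Canonicity of continuation maps (your point (i)) gives coherence, not invertibility, so both the construction of the degeneracies and the required equivalence property are left unproved. The paper's proof (carried out for the $\msh$ case, with the $\Sh$ case stated to be identical) is devoted exactly to this point: the analogue of Lemma \ref{lem:face-map-define} shows the rescaling map $p_{n!}\widetilde{\Delta}_n^*(1_{\Delta_c}\boxtimes(\check{T}_{\epsilon/(n-1)}1_{\Delta_c})^{\boxtimes n-1})\simeq p_{n!}\widetilde{\Delta}_n^*(1_{\Delta_c}\boxtimes(\check{T}_{\epsilon/n}1_{\Delta_c})^{\boxtimes n-1})$ is an isomorphism, and the analogues of Lemmas \ref{lem:cyclic-define-isom}--\ref{lem:cyclic-define-zero} show the face maps are isomorphisms; these rest on the broken-orbit microsupport computation (Lemma \ref{lem:ss-broken-loop}), the absence of short non-constant closed Reeb orbits for $\epsilon$ small (Lemma \ref{lem:non-const-orbit}), the positively gapped non-characteristic deformation lemma (Proposition \ref{prop:gapped-fully-faithful}), and, in this relative case, the use of the compact diagonal $\Delta_c$ so that the deformation arguments apply to compactly supported kernels. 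None of this geometry appears in your proposal, and it cannot be replaced by formal properties of continuation maps.

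A secondary problem is your point (ii): you cannot quote Theorem \ref{thm:AF} for the cyclic identities of $\Tr_{cst}$, because that theorem produces the cyclic structure on $\Tr(\Id_X)$ from the unit and counit of a dualizable object (the kernels $\iota^*_{-\widehat\Lambda\times\widehat\Lambda}1_\Delta$ of Lemma \ref{lem:cyclic-homology}), whereas $\Tr_{cst}(\Id_{\Sh_{\widehat\Lambda}(M)})$ is built from the kernels $\check{T}_{\epsilon/n}1_{\Delta_c}$ and is not (a priori) the trace of any dualizable object — the paper explicitly flags this as conjectural. The relations do hold, but they must be checked directly from the explicit sheaf-theoretic formulas (which is easy once the maps are defined and shown to be equivalences), not imported from the Ayala--Francis construction.
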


\begin{remark}
    We conjecture that $\Tr_{cst}(\Id_{\Sh_{\widehat\Lambda}(M)})$ and $\Tr_{cst}(\Id_{\msh_\Lambda(\Lambda)})$ are actually the trace of some other smooth categories that admit functors into $\Sh_{\widehat\Lambda}(M)$ and $\msh_\Lambda(\Lambda)$. However, this will be the topics for future studies.
\end{remark}

Since the argument of the above propositions are the same, we will only write down the proof for the more complicated case, namely Proposition \ref{prop:cyclic-well-define}.

In order to show that these are cyclic objects, we need the following singular support estimation that shows the relation between the singular support of the twisted diagonal and broken Reeb trajectories. See for example \cite[Lemma 4.9]{Chiu} and \cite[Proposition 2.18]{Zhang}.

\begin{lemma}\label{lem:ss-broken-loop}
    Let $\varphi_t^j: S^*M \to S^*M$ be contact isotopies and $K_t^j \in \Sh(M \times M)$ be the sheaf quantizations of the contact isotopies such that $\msif(K_t^j) = \mathrm{Graph}(\varphi_t^j)$ for $1 \leq j \leq n$. Then
    \begin{align*}
        S^*_{\tilde{\Delta}_n} & (M \times M)^n \cap \msif(K_t^1 \boxtimes K_t^2 \boxtimes \dots \boxtimes K_t^n) \\
        = &\, \{(x_n, -\xi_n, x_1, \xi_1; x_1, -\xi_1, x_2, \xi_2; \dots; x_{n-1}, -\xi_{n-1}; x_n, \xi_n) \\
        & \mid (x_1, \xi_1) = \varphi_t^1(x_n, \xi_n), (x_{i+1}, \xi_{i+1}) = \varphi_t^{i+1}(x_i, \xi_i), 1 \leq i \leq n-1 \}.
    \end{align*}
\end{lemma}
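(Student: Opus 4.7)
The proof is essentially a bookkeeping computation, combining the standard fact that singular supports behave multiplicatively under external tensor products with an explicit description of the conormal bundle of the twisted diagonal. There is no substantive analytic obstacle; the only thing that requires care is index bookkeeping (and the sign convention arising from the $(x,-\xi)$ side of a Fourier--Mukai kernel).

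First, I would invoke the Kashiwara--Schapira estimate $\ms(F_1 \boxtimes \cdots \boxtimes F_n) = \ms(F_1) \times \cdots \times \ms(F_n)$ to write
\[
\msif(K_t^1 \boxtimes \cdots \boxtimes K_t^n) \;=\; \mathrm{Graph}(\varphi_t^1) \times \cdots \times \mathrm{Graph}(\varphi_t^n),
\]
where the $j$-th factor consists of points $(y_j, \eta_j, z_j, \zeta_j) \in T^*(M\times M)$ of the form $(y_j,-\mu_j,\varphi_t^j(y_j,\mu_j))$, i.e.\ $(z_j,\zeta_j) = \varphi_t^j(y_j,-\eta_j)$.

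Next, I would compute the conormal bundle of $\widetilde{\Delta}_n$. Indexing the pairs so that pair $j$ in $(M\times M)^n$ has base coordinates $(y_j, z_j)$, the map $\widetilde{\Delta}_n$ forces $(y_1,z_1) = (x_n,x_1)$ and $(y_j, z_j) = (x_{j-1}, x_j)$ for $2 \le j \le n$. Thus each $x_i$ ($1\le i\le n-1$) appears exactly at $z_i$ and $y_{i+1}$, while $x_n$ appears at $y_1$ and $z_n$. A covector $((\eta_1,\zeta_1),\dots,(\eta_n,\zeta_n))$ annihilates $T\widetilde{\Delta}_n(M^n)$ iff the sum of covectors at equal base coordinates vanishes, i.e.
\[
\zeta_i + \eta_{i+1} = 0 \ (1\le i\le n-1),\qquad \zeta_n + \eta_1 = 0.
\]
Setting $\xi_j := \zeta_j$, these relations become $\eta_{j} = -\xi_{j-1}$ (indices mod $n$), yielding exactly the covector pattern $(-\xi_{j-1},\xi_j)$ claimed in the lemma.

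Finally, I would intersect. Requiring the $j$-th pair $(x_{j-1},-\xi_{j-1},x_j,\xi_j)$ to lie in $\mathrm{Graph}(\varphi_t^j)$ translates, by the description in the first step, exactly into $(x_j,\xi_j) = \varphi_t^j(x_{j-1},\xi_{j-1})$; in particular for $j=1$ this reads $(x_1,\xi_1) = \varphi_t^1(x_n,\xi_n)$. Putting these together gives the broken Reeb trajectory condition of the lemma. The only step demanding any care is matching the sign convention for $\mathrm{Graph}(\varphi_t^j)$ induced by the sheaf quantization $K_t^j$ (the standard $(x,-\xi,x',\xi')$ convention of Guillermou--Kashiwara--Schapira), which is where the minus signs in the statement originate.
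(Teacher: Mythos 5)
Your proof is correct, and it is the standard direct verification; note that the paper does not actually prove Lemma \ref{lem:ss-broken-loop} but defers to \cite[Lemma 4.9]{Chiu} and \cite[Proposition 2.18]{Zhang}, whose arguments run along exactly the lines you describe (multiplicativity of singular support under $\boxtimes$, the conormal of $\widetilde{\Delta}_n$, and the Guillermou--Kashiwara--Schapira convention $\Lambda_\Phi = \{(x,-\xi,\varphi_t(x,\xi))\}$ responsible for the signs). Your conormal computation and the resulting covector pattern $(-\xi_{j-1},\xi_j)$ match the statement precisely. One point to tighten: the displayed identity $\msif(K^1_t \boxtimes \cdots \boxtimes K^n_t) = \mathrm{Graph}(\varphi^1_t)\times\cdots\times\mathrm{Graph}(\varphi^n_t)$ is not literally correct at infinity, since the conic singular support of each $K^j_t$ also contains its zero-section part (over $\mathrm{supp}\,K^j_t$), so $\msif$ of the external product contains mixed points in which some pairs carry zero covectors. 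This does not affect your conclusion: on $S^*_{\widetilde{\Delta}_n}(M\times M)^n$ the $j$-th pair carries the covector $(-\xi_{j-1},\xi_j)$ (indices cyclic), so for each $j$ either $\xi_{j-1}=\xi_j=0$ (zero-section contribution) or both are nonzero and the pair must lie on the cone over $\mathrm{Graph}(\varphi^j_t)$; going around the cycle, and using that the total covector is nonzero at infinity, forces all $\xi_j\neq 0$, which returns exactly your broken-trajectory condition. For the reverse inclusion (equality rather than just $\subseteq$) you do need $\ms(F\boxtimes G)\supseteq \ms(F)\times\ms(G)$, which is unproblematic here because the GKS kernels are simple along their graphs, so the relevant microstalks are invertible and their products are nonzero.
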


We also need the following simple geometric lemma on small closed orbits of the Reeb flow, which will enable us to show that our definition above does not introduce any extra correction coming from the non-constant closed orbits of the Reeb flow.

\begin{lemma}\label{lem:non-const-orbit}
    Consider a complete metric on $S^*M$. For any $\delta > 0$, there exists a uniform lower bound $\epsilon > 0$ such that for any Hamiltonian flow $\varphi_t: S^*M \to S^*M$ whose contact vector field has norm at least $\delta > 0$, the closed orbits have period at least $\epsilon$.
\end{lemma}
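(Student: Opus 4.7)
The plan is to establish this via the flow box theorem together with a compactness argument on the relevant region of $S^*M$. The hypothesis $\|X_H\| \geq \delta > 0$ means that the contact vector field $X_H$ has no zeros, so by the straightening theorem every point $p \in S^*M$ admits a chart $U_p$ with local coordinates $(t, y_1, \dots, y_{2n-1})$ in which $X_H = \partial/\partial t$; within such a chart the orbit $\{(t, y_0)\}$ through $p$ is parameterized by $t$ and cannot return to $p$. The content of the lemma is to upgrade this local statement to a uniform lower bound $\epsilon > 0$ on the period of closed orbits.

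First I would reduce to a compact region: because the metric is complete and the lemma is applied in a context where the sheaves have compact support (so we only need closed orbits meeting a fixed compact set $K \subseteq S^*M$), it suffices to control closed orbits that are entirely contained in some enlargement $K' \supseteq K$. By the usual Lebesgue-number / compactness argument, $K'$ admits a finite cover by flow boxes whose minimum metric radius $r_0 > 0$ and minimum flow-box length $\tau_0 > 0$ provide uniform estimates: for any $p \in K'$, the forward orbit of $p$ does not return to the open ball $B_{r_0}(p)$ within time $\tau_0$. Any closed orbit through $p$ with period $T$ must return to $p$ and therefore to $B_{r_0}(p)$, forcing $T \geq \tau_0$.

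For the uniformity over all Hamiltonians, I would exploit the structure of the Hamiltonians that actually appear in the construction (Definition \ref{def:hamiltonian}): they are built from a fixed smooth cutoff $\rho$ together with a fixed metric-normalization factor on $S^*(M \times M)$, so they live in a $C^1$-precompact family. Together with the lower bound $\|X_H\| \geq \delta$, the upper $C^1$-bound produces uniform flow-box estimates $r_0, \tau_0$ depending only on $\delta$ and on the ambient geometry, and the choice $\epsilon = \min(\tau_0, r_0/\|X_H\|_\infty)$ works uniformly.

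The main obstacle, as often with flow-box arguments, is ensuring that the constants do not degenerate as $\|X_H\|$ or its derivatives become large. In the general form stated, this requires supplementing the lower bound $\|X_H\| \geq \delta$ with some implicit upper control; in the specific situation in which the lemma is applied (where the Hamiltonians are built from a single fixed $\rho$ on a compact base), such control is automatic, and I would record it explicitly in the proof so that the uniformity claim is transparent when invoked in Propositions \ref{prop:cyclic-well-define} and \ref{prop:cyclic-well-define-rel}.
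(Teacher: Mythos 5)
The paper gives no proof of this lemma at all --- it is asserted as a ``simple geometric lemma'' and then used directly in Lemmas \ref{lem:face-map-define}--\ref{lem:cyclic-define-zero} --- so there is nothing to compare your argument against line by line; your flow-box-plus-compactness plan is the standard route one would take. The decisive issue is the one you flag only at the end, and it should be promoted from a caveat to the main point: with only the pointwise lower bound $\|X_H\|\ge\delta$, the asserted uniform $\epsilon$ does not exist. Contact vector fields form a vector space, so replacing $X_H$ by $\lambda X_H$ (i.e.\ $H$ by $\lambda H$) with $\lambda\ge 1$ preserves the hypothesis while dividing every period by $\lambda$; applied, say, to a Reeb flow on $S^*M$ with a closed orbit, this produces admissible flows with closed orbits of arbitrarily small period. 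The lower bound $\delta$ only excludes fixed points; what genuinely bounds periods from below is upper control on the field, e.g.\ a Lipschitz/$C^1$ bound $L$, which gives $T\ge 2\pi/L$ for every nonconstant periodic orbit (Yorke's inequality), or equivalently uniform flow-box constants. Your own formula $\epsilon=\min(\tau_0, r_0/\|X_H\|_\infty)$ already presupposes $\|X_H\|_\infty<\infty$, so the upper bound is not optional in your argument either.

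Your proposed repair is the right one: the flows that actually occur (those built from the fixed cutoff $\rho$ and $H\equiv 1$ of Definition \ref{def:hamiltonian}, rescaled by $1/n\le 1$ and concatenated, in Propositions \ref{prop:cyclic-well-define} and \ref{prop:cyclic-well-define-rel}) form a family with uniform $C^0$ and $C^1$ bounds on a neighbourhood of the compact region where the relevant broken orbits live, and for such a family your flow-box argument (or Yorke's bound) does give a uniform $\epsilon$ depending only on those bounds and $\delta$. Two smaller cleanups if you write this out: the reduction to a compact region should not be justified by compact support of sheaves --- the lemma is purely dynamical, and the compactness you need comes from $\Lambda$ being compact and the Hamiltonians of Definition \ref{def:hamiltonian} being fixed near $\Lambda$ and the diagonal; and the key quantitative claim, that an orbit cannot leave a flow box and re-enter the ball $B_{r_0}(p)$ in time less than $\tau_0$, must be derived explicitly from the uniform $C^1$ bound, since the straightening theorem by itself only gives pointwise, non-uniform constants.
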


Now we prove that $\Tr_{cst}(\Id_{{\msh_\Lambda(\Lambda)}})$ defines a cyclic object. First, we use the following lemma to show that the face morphisms $d_i,\,1 \leq i \leq n$, can be well defined.

\begin{lemma}\label{lem:face-map-define}
    For $\Lambda \subseteq S^*M$ be a compact subanalytic Legendrian, when $\epsilon > 0$ is sufficiently small, there are isomorphisms
    $$p_{n !} \widetilde{\Delta}_{n}^* (1_\Delta \boxtimes \mathrm{Fib}(\check{T}_{\epsilon/(n-1)} 1_\Delta \to T_{\epsilon/(n-1)} 1_\Delta)^{\boxtimes n-1}) \simeq p_{n !} \widetilde{\Delta}_{n}^* (1_\Delta \boxtimes \mathrm{Fib}(\check{T}_{\epsilon/n} 1_\Delta \to T_{\epsilon/n} 1_\Delta)^{\boxtimes n-1}).$$
\end{lemma}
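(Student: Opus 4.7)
The plan is to interpret both sides geometrically via broken Reeb trajectories and deduce the claimed isomorphism from deformation invariance along a one-parameter family of Hamiltonian flow times $\tau \in [\epsilon/n,\,\epsilon/(n-1)]$. Roughly, when $\epsilon$ is smaller than the minimal period of any non-constant closed Reeb orbit in a neighborhood of $\Lambda$, the pushforward $p_{n!} \widetilde\Delta_n^*$ of the relevant family depends only on constant-orbit data supported on $\Lambda$, and hence is independent of the precise value of $\tau$.

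First, I would apply Lemma \ref{lem:ss-broken-loop} to identify the singular support at infinity of $1_\Delta \boxtimes \mathrm{Fib}(\check T_\tau 1_\Delta \to T_\tau 1_\Delta)^{\boxtimes (n-1)}$ along $S^*_{\widetilde\Delta_n}(M \times M)^n$. A contributing covector corresponds to a sequence $(x_1,\xi_1), \ldots, (x_n,\xi_n)$ with $(x_n,\xi_n) = (x_1,\xi_1)$ (forced by the $1_\Delta$ factor) and, for each $1 \le i \le n-1$, $(x_{i+1},\xi_{i+1})$ obtained from $(x_i,\xi_i)$ by either the cut-off flow $\check\varphi_\tau$ or the Reeb flow $\varphi_\tau$. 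Thus the relevant singular support is parametrized by broken Reeb trajectories of total time $(n-1)\tau \le \epsilon$ that close up to a loop. Applying Lemma \ref{lem:non-const-orbit} with the lower bound $\delta > 0$ for the contact vector field of $\check H$ outside a fixed neighborhood of $\Lambda$, and choosing $\epsilon$ smaller than the resulting minimal period, I conclude that every closed broken trajectory lies entirely in $\Lambda$, where $\check H$ vanishes and the trajectory degenerates to a constant orbit.

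Next, I would assemble $G_\tau = 1_\Delta \boxtimes \mathrm{Fib}(\check T_\tau 1_\Delta \to T_\tau 1_\Delta)^{\boxtimes (n-1)}$ into a single sheaf $G \in \Sh(M^{2n} \times I)$ with $I = [\epsilon/n,\,\epsilon/(n-1)]$, using Guillermou--Kashiwara--Schapira sheaf quantization (Theorem \ref{thm: GKS}) of the Hamiltonian flow in the parameter $\tau$. The singular support analysis of the previous paragraph holds uniformly in $\tau \in I$, so the singular support at infinity of $\widetilde\Delta_n^* G$ is supported, modulo the zero section, on constant-orbit contributions over the compact set $\widetilde\Delta_n(\Lambda) \subseteq M^n$ together with covectors whose $I$-component vanishes. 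Invoking the Reeb perturbation lemma (Proposition \ref{prop:perturbation}) to close the gap between $\Lambda$ and nearby pushoffs, and applying the gapped non-characteristic deformation lemma (Proposition \ref{prop:gapped-fully-faithful}) to the family $(p_n)_! \widetilde\Delta_n^* G$ on $I$, I obtain canonical isomorphisms between its restrictions at different $\tau \in I$. Evaluating at the endpoints $\tau = \epsilon/(n-1)$ and $\tau = \epsilon/n$ yields the claimed isomorphism.

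The main obstacle will be verifying the non-characteristic condition uniformly in $\tau$ through the transition region $\partial\Omega$, where $\rho$ interpolates between $0$ and $1$ and the graphs of $\check\varphi_\tau$ and $\varphi_\tau$ are distinct, so that $\mathrm{Fib}(\check T_\tau 1_\Delta \to T_\tau 1_\Delta)$ has genuinely nontrivial singular support there. Any such contribution to a closed broken orbit in $\widetilde\Delta_n$ would produce a non-constant Reeb-type closed trajectory in $\Omega \setminus \Lambda$; this is precisely what Lemma \ref{lem:non-const-orbit} rules out, provided the cut-off $\rho$ and the neighborhood $\Omega$ are chosen so that $\check H$ has contact vector field of norm at least $\delta > 0$ outside any fixed neighborhood of $\Lambda$. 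Hence one can select $\epsilon$ uniformly in $\tau \in I$ to exclude all such orbits, completing the argument.
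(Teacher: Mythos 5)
Your argument is correct and is essentially the paper's own proof: identify the singular support of $1_\Delta \boxtimes \mathrm{Fib}(\check{T}_{\tau}1_\Delta \to T_{\tau}1_\Delta)^{\boxtimes n-1}$ along $S^*_{\widetilde{\Delta}_n}(M \times M)^n$ with broken Reeb trajectories, use the smallness of $\epsilon$ (ruling out non-constant closed orbits) to see that the intersection is the constant-orbit locus over $\Lambda$ uniformly in the time parameter, and conclude by the positively gapped non-characteristic deformation in that parameter. The only cosmetic difference is that the paper first dualizes by adjunction to $\Hom(-, 1_{\widetilde{\Delta}_n})$ so that Proposition \ref{prop:gapped-fully-faithful} applies verbatim, whereas you state the invariance for the pushforward $p_{n!}\widetilde{\Delta}_n^*$ directly.
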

\begin{proof}
    Using adjunction, we know that it suffices to prove the following isomorphisms:
    $$\Hom(1_\Delta \boxtimes \mathrm{Fib}(\check{T}_{\epsilon/(n-1)} 1_\Delta \to T_{\epsilon/(n-1)} 1_\Delta)^{\boxtimes n-1}, 1_{\tilde{\Delta}_n}) \simeq \Hom(1_\Delta \boxtimes \mathrm{Fib}(\check{T}_{\epsilon/n} 1_\Delta \to T_{\epsilon/n} 1_\Delta)^{\boxtimes n-1}, 1_{\tilde{\Delta}_n}).$$
    Consider the flow $\check{T}_{t/n}$ and $T_{t/n}$ for $0 < t \leq \epsilon$, and consider the intersection of the singular supports 
    $$S^*_{\tilde{\Delta}_n} (M \times M)^n \cap \msif(1_\Delta \boxtimes \mathrm{Fib}(\check{T}_{t/(n-1)} 1_\Delta \to T_{t/(n-1)} 1_\Delta)^{\boxtimes n-1}).$$
    We know that they are in bijection with the set of $n$-broken Reeb orbits
    $$(x_2, \xi_2) = \check{T}_{t/(n-1)} (x_1, \xi_1), \; (x_{i+1}, \xi_{i+1}) = T^*_{t/(n-1)}(x_i, \xi_i), \; (x_1, \xi_1) = (x_n, \xi_n),$$
    where $T^*_{t/(n-1)}$ means either $\check{T}_{t/(n-1)}$ or $T_{t/(n-1)}$. When $\epsilon > 0$ is sufficiently small, we know that the closed orbits of length at most $\epsilon$ for the concatenated non-negative Hamiltonian flow are all constant orbits by the concatenation of $\check{T}_{t/(n-1)}, \check{T}_{t/(n-1)}, \dots, \check{T}_{t/(n-1)}$, starting and ending on $\Lambda$. Therefore, for $0 < t \leq \epsilon$,
    $$S^*_{\tilde{\Delta}_n}(M \times M)^n \cap \msif(1_\Delta \boxtimes \mathrm{Fib}(\check{T}_{t/(n-1)} 1_\Delta \to T_{t/(n-1)} 1_\Delta)^{\boxtimes n-1}) = \widetilde{\Delta}_{\Lambda,n}.$$
    Take a sufficiently small Reeb push-off, we can also show that for any $0 < t \leq \epsilon$, the intersection is empty:
    $$T_{\epsilon} S^*_{\tilde{\Delta}_n}(M \times M)^n \cap \msif(1_\Delta \boxtimes \mathrm{Fib}(\check{T}_{t/(n-1)} 1_\Delta \to T_{t/(n-1)} 1_\Delta)^{\boxtimes n-1}) = \varnothing.$$
    Then the isomorphism follows from the positively gapped non-characteristic deformation Proposition \ref{prop:gapped-fully-faithful}.
\end{proof}

Then, we need the following lemmas to show that the face morphisms $d_i, \, 1 \leq i \leq n+1$, induce isomorphisms between different terms in the construction. By the relations in the cyclic category, this automatically implies that the degeneration morphisms $s_i, \, 1 \leq i \leq n$, also induce isomorphisms.

\begin{lemma}\label{lem:cyclic-define-isom}
    For $\Lambda \subseteq S^*M$ be a compact subanalytic Legendrian, there are isomorphisms
    $$p_{n !} \widetilde{\Delta}_{n}^* (1_\Delta \boxtimes \mathrm{Fib}(\check{T}_{\epsilon/n} 1_\Delta \to T_{\epsilon/n} 1_\Delta)^{\boxtimes n-1}) \simeq p_{n !}\widetilde{\Delta}_{n}^*(\check{T}_{\epsilon/n} 1_\Delta \boxtimes \mathrm{Fib}(\check{T}_{\epsilon/n} 1_\Delta \to T_{\epsilon/n} 1_\Delta)^{\boxtimes n-1}).$$
\end{lemma}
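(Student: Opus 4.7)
The strategy is to realize both sides of the claimed isomorphism as restrictions at $s = 0$ and $s = \epsilon/n$ of a single family of sheaves on $(M \times M)^n \times [0, \epsilon/n]$, and to apply the positively gapped non-characteristic deformation lemma (Proposition \ref{prop:gapped-fully-faithful}) uniformly in the parameter $s$.

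Define
$$A_s \coloneqq \check{T}_s 1_\Delta \boxtimes \mathrm{Fib}(\check{T}_{\epsilon/n} 1_\Delta \to T_{\epsilon/n} 1_\Delta)^{\boxtimes n-1},$$
so that $A_0$ recovers the left-hand side and $A_{\epsilon/n}$ recovers the right-hand side after applying $p_{n!}\widetilde\Delta_n^*$. Positivity of $\check{T}$ yields a continuation morphism $1_\Delta \to \check{T}_{\epsilon/n} 1_\Delta$, which induces a morphism $A_0 \to A_{\epsilon/n}$; we must show $p_{n!}\widetilde\Delta_n^*$ sends this to an equivalence. By Yoneda it suffices to show that for every $V \in \cV$ the induced map
$$\Hom(A_{\epsilon/n}, \widetilde\Delta_{n*}p_n^! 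V) \to \Hom(A_0, \widetilde\Delta_{n*}p_n^! V)$$
is an isomorphism, where we have used the $(p_{n!}, p_n^!)$ and $(\widetilde\Delta_n^*, \widetilde\Delta_{n*})$ adjunctions. The Guillermou--Kashiwara--Schapira sheaf quantization $K(\check{T})$ of Theorem \ref{thm: GKS} assembles the $A_s$ into a single sheaf $\mathcal{A}$ on $(M \times M)^n \times [0, \epsilon/n]$ that is non-characteristic over the parameter direction.

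The key geometric input is a uniform-in-$s$ estimate for $\msif(A_s) \cap S^*_{\widetilde\Delta_n}(M \times M)^n$. Combining Lemma \ref{lem:ss-broken-loop} with the singular support bound $\msif(\mathrm{Fib}(\check{T}_{\epsilon/n} 1_\Delta \to T_{\epsilon/n} 1_\Delta)) \subseteq \check{T}_{\epsilon/n}(S^*_\Delta(M \times M)) \cup T_{\epsilon/n}(S^*_\Delta(M \times M))$, points of this intersection correspond to $n$-broken closed contact trajectories whose first leg is generated by $\check{H}$ for time $s$ and whose remaining legs are generated by either $\check{H}$ or $H \equiv 1$ for time $\epsilon/n$. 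Each leg is a positive contact isotopy, so their composition is again a positive contact isotopy; for it to be the identity, every leg must carry zero Hamiltonian along the orbit, which rules out any $T_{\epsilon/n}$-leg and forces the orbit to lie inside $\{\check{H} = 0\} = \Lambda$. Together with Lemma \ref{lem:non-const-orbit} and shrinking $\epsilon$ if necessary, we conclude $\msif(A_s) \cap S^*_{\widetilde\Delta_n}(M\times M)^n = \widetilde\Delta_{\Lambda,n}$ for every $s \in [0, \epsilon/n]$. A small Reeb push-off $T_{\epsilon''}$ of $\widetilde\Delta_n$ cannot meet $\msif(A_s)$, since a non-negative Hamiltonian flow cannot compose to a negative Reeb translation; this yields a positive gap that is uniform in $s$.

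Applying Proposition \ref{prop:gapped-fully-faithful} to the pair $(\mathcal{A}, \widetilde\Delta_{n*}p_n^!V \boxtimes 1_{[0, \epsilon/n]})$ then delivers the desired $s$-independence of the Hom spaces, compatibly with the continuation morphism, which completes the argument. The main obstacle is the uniformity of the positive gap in $s \in [0, \epsilon/n]$; this is the content of the positivity observation above, and is entirely parallel to the singular support bookkeeping already used in Lemma \ref{lem:face-map-define}.
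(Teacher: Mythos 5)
Your proposal is correct and follows essentially the same route as the paper's proof: you interpolate between the two sides by the family $\check{T}_s 1_\Delta \boxtimes \mathrm{Fib}(\check{T}_{\epsilon/n}1_\Delta \to T_{\epsilon/n}1_\Delta)^{\boxtimes n-1}$, identify the intersection of its singular support with $S^*_{\widetilde\Delta_n}(M\times M)^n$ with broken Reeb orbits (which for small $\epsilon$ are constant orbits over $\Lambda$), obtain a uniform positive gap from a small Reeb push-off, and conclude by the positively gapped non-characteristic deformation Proposition \ref{prop:gapped-fully-faithful}. Your only deviations are cosmetic refinements (testing against $\widetilde\Delta_{n*}p_n^!V$ for all $V$ via Yoneda rather than against $1_{\tilde\Delta_n}$, and making the GKS family sheaf over $[0,\epsilon/n]$ explicit), which match the paper's implicit bookkeeping.
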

\begin{proof}
    Using adjunction, we know that it suffices to prove the following isomorphisms:
    $$\Hom(1_\Delta \boxtimes \mathrm{Fib}(\check{T}_{\epsilon/n} 1_\Delta \to T_{\epsilon/n} 1_\Delta)^{\boxtimes n-1}, 1_{\tilde{\Delta}_n}) \simeq \Hom(\check{T}_{\epsilon/n} 1_\Delta \boxtimes \mathrm{Fib}(\check{T}_{\epsilon/n} 1_\Delta \to T_{\epsilon/n} 1_\Delta)^{\boxtimes n-1}, 1_{\tilde{\Delta}_n}).$$
    Consider the flow $\check{T}_{t/n}$ for $0 \leq t \leq \epsilon$, and consider the intersection of the singular supports 
    $$S^*_{\tilde{\Delta}_n} (M \times M)^n \cap \msif(\check{T}_{t/n} 1_\Delta \boxtimes \mathrm{Fib}(\check{T}_{\epsilon/n} 1_\Delta \to T_{\epsilon/n} 1_\Delta)^{\boxtimes n-1}).$$
    We know that they are in bijection with the set of $n$-broken Reeb orbits
    $$(x_2, \xi_2) = T^*_{\epsilon/n} (x_1, \xi_1), \; (x_{i+1}, \xi_{i+1}) = T^*_{\epsilon/n}(x_i, \xi_i), \; (x_1, \xi_1) = \check{T}_{t/n}(x_n, \xi_n),$$
    where $T^*_{\epsilon/n}$ means either $\check{T}_{\epsilon/n}$ or $T_{\epsilon/n}$. When $\epsilon > 0$ is sufficiently small, we know that the closed orbits of length at most $\epsilon$ for the concatenated non-negative Hamiltonian flow are all constant orbits by the concatenation of $\check{T}_{t/n}, \check{T}_{\epsilon/n}, \dots, \check{T}_{\epsilon/n}$, starting and ending on $\Lambda$. Therefore, for $0 \leq t \leq \epsilon$,
    $$S^*_{\tilde{\Delta}_n}(M \times M)^n \cap \msif(\check{T}_{t/n} 1_\Delta \boxtimes \mathrm{Fib}(\check{T}_{\epsilon/n} 1_\Delta \to T_{\epsilon/n} 1_\Delta)^{\boxtimes n-1}) = \delta_{n*}\Lambda,$$
    where $\delta_n: S^*M \to S^*((M \times M) \times \dots \times (M \times M)), \, (x, \xi) \mapsto (x, \xi, x, \xi, \dots, x, \xi, x, \xi)$. Take a sufficiently small Reeb push-off, we can also show that for any $0 < t \leq \epsilon$, the intersection is empty:
    $$T_{\epsilon} S^*_{\tilde{\Delta}_n}(M \times M)^n \cap \msif(\check{T}_{t/n} 1_\Delta \boxtimes \mathrm{Fib}(\check{T}_{\epsilon/n} 1_\Delta \to T_{\epsilon/n} 1_\Delta)^{\boxtimes n-1}) = \varnothing.$$
    Then the isomorphism follows from positively gapped non-characteristic defomormation Proposition \ref{prop:gapped-fully-faithful}.
\end{proof}

\begin{lemma}\label{lem:cyclic-define-zero}
    For $\Lambda \subseteq S^*M$ be a compact subanalytic Legendrian, there are isomorphisms
    $$0 \simeq p_{n !}\widetilde{\Delta}_{n}^*({T}_{\epsilon/n} 1_\Delta \boxtimes \mathrm{Fib}(\check{T}_{\epsilon/n} 1_\Delta \to T_{\epsilon/n} 1_\Delta)^{\boxtimes n-1}).$$
\end{lemma}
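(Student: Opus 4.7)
The plan is to follow the blueprint of Lemmas \ref{lem:face-map-define} and \ref{lem:cyclic-define-isom}, interpolating through a family whose endpoint at $t=0$ is identically zero. By the same adjunction reduction used in those lemmas, it suffices to prove
$$\Hom\!\Bigl(T_{\epsilon/n} 1_\Delta \boxtimes \mathrm{Fib}\bigl(\check T_{\epsilon/n} 1_\Delta \to T_{\epsilon/n} 1_\Delta\bigr)^{\boxtimes n-1},\, 1_{\tilde\Delta_n}\Bigr) = 0.$$
I would introduce the one-parameter family
$$G_t \;=\; T_{t/n} 1_\Delta \boxtimes \mathrm{Fib}\bigl(\check T_{t/n} 1_\Delta \to T_{t/n} 1_\Delta\bigr)^{\boxtimes n-1}, \qquad t \in [0,\epsilon].$$
At $t=0$ both $T_0$ and $\check T_0$ are the identity, so $\mathrm{Fib}(1_\Delta \xrightarrow{\mathrm{id}} 1_\Delta) = 0$; for $n \geq 2$ this forces $G_0 = 0$ and $\Hom(G_0, 1_{\tilde\Delta_n}) = 0$ automatically. (The case $n=1$ can be handled separately by observing that $T_\epsilon 1_\Delta$ has support disjoint from $\Delta$ for small $\epsilon > 0$, so that $\Delta^* T_\epsilon 1_\Delta = 0$.)

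The key geometric step is to verify that $\{G_t\}$ is positively gapped against $1_{\tilde\Delta_n}$ uniformly in $t \in [0,\epsilon]$. By Lemma \ref{lem:ss-broken-loop}, the intersection $S^*_{\tilde\Delta_n}(M \times M)^n \cap \msif(G_t)$ is parametrized by closed $n$-broken Reeb trajectories whose first leg is the genuine translation $T_{t/n}$ (Hamiltonian $\equiv 1$) and whose remaining $n-1$ legs have non-negative Hamiltonian bounded by $1$. Since the first leg strictly translates every point by $t/n > 0$ in the Reeb direction and the remaining legs never translate backwards, the concatenated flow cannot admit any closed orbit for $t \in (0,\epsilon]$; by Lemma \ref{lem:non-const-orbit} we further rule out short non-constant closed orbits, and the emptiness is automatic at $t=0$ since $G_0 = 0$. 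A sufficiently small Reeb push-off $T_{\epsilon'}$ then preserves the empty intersection uniformly in $t$, establishing the positive gap.

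Applying Proposition \ref{prop:gapped-fully-faithful} to this gapped family yields
$$\Hom(G_\epsilon, 1_{\tilde\Delta_n}) \;=\; \Hom(G_0, 1_{\tilde\Delta_n}) \;=\; 0,$$
and unwinding the adjunction reduction (via Verdier duality on the oriented manifold $M^n$) gives the desired $p_{n!}\tilde\Delta_n^* G_\epsilon = 0$. The main technical obstacle is the uniform gapped estimate across the entire compact interval $[0,\epsilon]$: the scaling $1/n$ of the Hamiltonian time is precisely what keeps the total displacement along any hypothetical concatenated $n$-broken orbit below the non-constant orbit threshold of Lemma \ref{lem:non-const-orbit}, while the strict positivity of the first-leg Hamiltonian is essential for ruling out constant closed orbits even though the Hamiltonian defining $\check T_{t/n}$ vanishes on $\Lambda$.
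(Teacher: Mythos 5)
Your overall strategy (adjunction reduction, broken-orbit analysis via Lemma \ref{lem:ss-broken-loop}, then invariance along a gapped family) is the same as the paper's, but your homotopy is genuinely different: you shrink all wrapping times $t/n$ to $0$ so that the whole object collapses at $t=0$, whereas the paper keeps every wrapping time fixed at $\epsilon/n$ and instead interpolates the Hamiltonian $\check H \rightsquigarrow H$ inside one $\mathrm{Fib}$ factor (via $\hat T_{t} = T_{t}\circ \check T_{-t}$), so that at the endpoint that factor becomes $\mathrm{Fib}(T_{\epsilon/n}1_\Delta \to T_{\epsilon/n}1_\Delta)\simeq 0$ while nothing in the family ever degenerates geometrically. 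This difference is where your argument has a genuine gap: all the work in your proof is done at the degenerate endpoint $t=0$, and your justification of the hypotheses of Proposition \ref{prop:gapped-fully-faithful}/Theorem \ref{thm:full-faithful-nearby} there is not correct. Those statements require (i) that the family sheaf on $(M\times M)^n\times[0,\epsilon]$ has singular support in a subanalytic Legendrian which is non-characteristic over the interval, and (ii) that the \emph{slices of that family Legendrian} are (positively) gapped against $S^*_{\tilde\Delta_n}(M\times M)^n$ for every $t$, including $t=0$. Writing ``the emptiness is automatic at $t=0$ since $G_0=0$'' conflates $\msif(G_0)=\varnothing$ with the $t=0$ slice of the family's singular support, which is the nonempty limit set (products of conormals of the diagonal, i.e.\ the legs all becoming the identity); the gap at $t=0$ must be checked against this limit Legendrian, and the ``strictly positive first leg of time $t/n$'' argument you rely on degenerates exactly there (at $t=0$ it is only the auxiliary push-off $T_{\epsilon'}$, i.e.\ fixed-point-freeness of the Reeb flow, that saves you — a different argument which you never make). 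Likewise you never verify non-characteristicity of the degenerating family at $t=0$; naive models of such a pinching family (constant sheaves on cone-like regions collapsing onto the diagonal) do acquire fiber conormals at the vertex, so this needs an actual argument, e.g.\ setting the family up via GKS kernels over an open interval around $0$ and using that movie Legendrians and their fibers of continuation maps never contain fiber conormals. These points are repairable, so your route can likely be salvaged, but as written the key step is unjustified — and it is precisely the step the paper's choice of homotopy is designed to avoid.

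Two smaller points. First, your parenthetical treatment of $n=1$ is wrong: $T_\epsilon 1_\Delta$ does not have support disjoint from the diagonal (a positive Reeb push-off of $1_\Delta$ is supported on a neighbourhood of $\Delta$), and indeed $p_!\Delta^*T_\epsilon 1_\Delta[-n]\simeq \Hom(1_\Delta,1_\Delta)\ne 0$; the statement is only meaningful when at least one $\mathrm{Fib}$ factor is present, which is how it is used. Second, the appeal to ``Verdier duality on the oriented manifold'' is unnecessary — the adjunction reduction works exactly as in Lemmas \ref{lem:face-map-define} and \ref{lem:cyclic-define-isom} without any orientation hypothesis, since twisting the target $1_{\tilde\Delta_n}$ by $\omega$ and a test object does not affect the microsupport argument.
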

\begin{proof}
    Using adjunction, we know that it suffices to prove the following isomorphisms:
    $$0 \simeq \Hom({T}_{\epsilon/n} 1_\Delta \boxtimes \mathrm{Fib}(\check{T}_{\epsilon/n} 1_\Delta \to T_{\epsilon/n} 1_\Delta)^{\boxtimes n-1}, 1_{\tilde{\Delta}_n}).$$
    Write $\hat{T}_{t/n} = T_{t/n} \circ \check{T}_{(\epsilon - t)/n}$ for $0 \leq t \leq \epsilon$. We will deform the sheaf $\mathrm{Fib}(\check{T}_{\epsilon/n} 1_\Delta \to T_{\epsilon/n} 1_\Delta)$ using this Hamiltonian flow. Consider the intersection of the singular supports 
    $$S^*_{\tilde{\Delta}_n} (M \times M)^n \cap \msif(T_{\epsilon/n} 1_\Delta \boxtimes \mathrm{Fib}(\check{T}_{\epsilon/n} 1_\Delta \to T_{\epsilon/n} 1_\Delta)^{\boxtimes n-2} \boxtimes \mathrm{Fib}(\hat{T}_t \circ \check{T}_{\epsilon/n} 1_\Delta \to T_{\epsilon/n} 1_\Delta)).$$
    We know that they are in bijection with the set of $n$-broken Reeb orbits
    $$(x_{i+1}, \xi_{i+1}) = T^*_{\epsilon/n}(x_i, \xi_i), \; (x_n, \xi_n) = \hat{T}_{t/n} \circ T^*_{\epsilon/n}(x_{n-1}, \xi_{n-1}), \; (x_1, \xi_1) = T_{\epsilon/n} (x_n, \xi_n),$$
    where $T^*_{\epsilon/n}$ means $\check{T}_{\epsilon/n}$ or $T_{\epsilon/n}$. When $\epsilon > 0$ is sufficiently small, we know that there exist no closed orbits of length at most $\epsilon$ for the concatenated positive Hamiltonian flow since the Hamiltonian $T_{\epsilon/n}$ is strictly positive and all the other Hamiltonians $T^*_{\epsilon/n}$ are non-negative. Therefore, for $0 \leq t \leq \epsilon$, we have
    $$S^*_{\tilde{\Delta}_n}(M \times M)^n \cap \msif(T_{\epsilon/n} 1_\Delta \boxtimes \mathrm{Fib}(\check{T}_{\epsilon/n} 1_\Delta \to T_{\epsilon/n} 1_\Delta)^{\boxtimes n-2} \boxtimes \mathrm{Fib}(\hat{T}_t \circ \check{T}_{\epsilon/n} 1_\Delta \to T_{\epsilon/n} 1_\Delta)) = \varnothing.$$
    Then using the gapped non-characteristic deformation Theorem \ref{thm:full-faithful-nearby}, we can conclude that
    \begin{align*}
        \Hom & ({T}_{\epsilon/n} 1_\Delta \boxtimes \mathrm{Fib}(\check{T}_{\epsilon/n} 1_\Delta \to T_{\epsilon/n} 1_\Delta)^{\boxtimes n-1}, 1_{\tilde{\Delta}_n}) \\
        & \simeq \Hom(T_{\epsilon/n} 1_\Delta \boxtimes \mathrm{Fib}(\check{T}_{\epsilon/n} 1_\Delta \to T_{\epsilon/n} 1_\Delta)^{\boxtimes n-2} \boxtimes \mathrm{Fib}(\hat{T}_{t/n} \circ \check{T}_{\epsilon/n} 1_\Delta \to T_{\epsilon/n} 1_\Delta), 1_{\tilde{\Delta}_n}) \\
        & \simeq \Hom(T_{\epsilon/n} 1_\Delta \boxtimes \mathrm{Fib}(\check{T}_{\epsilon/n} 1_\Delta \to T_{\epsilon/n} 1_\Delta)^{\boxtimes n-2} \boxtimes \mathrm{Fib}(\hat{T}_{\epsilon/n} \circ \check{T}_{\epsilon/n} 1_\Delta \to T_{\epsilon/n} 1_\Delta), 1_{\tilde{\Delta}_n}) \\
        & \simeq \Hom(T_{\epsilon/n} 1_\Delta \boxtimes \mathrm{Fib}(\check{T}_{\epsilon/n} 1_\Delta \to T_{\epsilon/n} 1_\Delta)^{\boxtimes n-2} \boxtimes \mathrm{Fib}(T_{\epsilon/n} 1_\Delta \to T_{\epsilon/n} 1_\Delta), 1_{\tilde{\Delta}_n}) \simeq 0.
    \end{align*}
    This completes the proof of the lemma.
\end{proof}

Using the above lemmas, we can finally show that the construction does induce a cyclic object $\Tr_{cst}(\Id_{\msh_\Lambda(\Lambda)}, *)$, characterizing an $S^1$-action on $\Tr_{cst}(\Id_{\msh_\Lambda(\Lambda)})$.

\begin{proof}[Proof of Proposition \ref{prop:cyclic-well-define}]
    First, the relations between the face morphisms, degeneration morphisms and cyclic permutation morphisms immediately follow. Next, notice that we can define the natural isomorphism induced by the cyclic permutation $t_n: (M \times M)^n \to (M \times M)^n$ for $n \in \bN$:
    $$t_n: p_{n !}\widetilde{\Delta}_n^*\mathrm{Fib}(\check{T}_{\epsilon/n}1_\Delta \to T_{\epsilon/n} 1_\Delta)^{\boxtimes n} \xrightarrow{\sim} p_{n !}\widetilde{\Delta}_n^*\mathrm{Fib}(\check{T}_{\epsilon/n}1_\Delta \to T_{\epsilon/n} 1_\Delta)^{\boxtimes n}.$$

    Then we construct the face morphisms $d_i$ for $i \in n$. Using the projection formula, the base change formula and Lemma \ref{lem:face-map-define}, we get
    \begin{align*}
        p_{n-1 !} \widetilde{\Delta}_{n-1}^* &\mathrm{Fib}(\check{T}_{\epsilon/(n-1)} 1_\Delta \to T_{\epsilon/(n-1)} 1_\Delta)^{\boxtimes n-1} \\
        & \simeq p_{n !} \widetilde{\Delta}_{n}^* (1_\Delta \boxtimes \mathrm{Fib}(\check{T}_{\epsilon/(n-1)} 1_\Delta \to T_{\epsilon/(n-1)} 1_\Delta)^{\boxtimes n-1}) \\ 
        & \simeq p_{n !} \widetilde{\Delta}_{n}^* (1_\Delta \boxtimes \mathrm{Fib}(\check{T}_{\epsilon/n} 1_\Delta \to T_{\epsilon/n} 1_\Delta)^{\boxtimes n-1}).
    \end{align*}
    Then the face morphism is induced by the following natural morphism
    $$d_i: p_{n !} \widetilde{\Delta}_{n}^* (1_\Delta \boxtimes \mathrm{Fib}(\check{T}_{\epsilon/n} 1_\Delta \to T_{\epsilon/n} 1_\Delta)^{\boxtimes n-1}) \xrightarrow{\sim} p_{n !} \widetilde{\Delta}_{n}^* \mathrm{Fib}(\check{T}_{\epsilon/n} 1_\Delta \to T_{\epsilon/n} 1_\Delta)^{\boxtimes n}.$$
    This is an isomorphism by Lemma \ref{lem:cyclic-define-isom} and \ref{lem:cyclic-define-zero}. By the relation in the cyclic category, we know that the degeneration morphisms have to be isomorphisms as well. Hence we obtain a cyclic object.
\end{proof}

Recall that the weak Calabi--Yau structure is induced by a morphism
$$1_\cV[n-1] \to \Tr_{cst}(\Id_{\msh_\Lambda(\Lambda)}) \to \Tr(\Id_{\msh_\Lambda(\Lambda)}).$$
In order to show that the weak Calabi--Yau structure is $S^1$-invariant, we will study the $S^1$-action on $\Tr_{cst}(\Id_{\msh_\Lambda(\Lambda)})$ and show that it is the trivial action.

\begin{proposition}\label{prop:trivial-action}
    For $\Lambda \subseteq S^*M$ be a compact subanalytic Legendrian, the standard $S^1$-action on 
    $$\Tr_{cst}(\Id_{\msh_\Lambda(\Lambda)}) = p_!\Delta^* \mathrm{Fib}(\check{T}_{\epsilon}1_\Delta \to T_\epsilon 1_\Delta)$$
    is the trivial $S^1$-action.
\end{proposition}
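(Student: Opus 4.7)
The plan is to show that the cyclic object $\Tr_{cst}(\Id_{\msh_\Lambda(\Lambda)},*)$ constructed in Proposition \ref{prop:cyclic-well-define} is equivalent to the constant cyclic object at $\Gamma(\Lambda, 1_\Lambda)$. By Proposition \ref{prop:paracyclic}, this is equivalent to the induced $S^1$-action on the colimit being trivial.

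First, I would establish a level-wise identification $\Tr_{cst}(\Id_{\msh_\Lambda(\Lambda)}, n) \simeq \Gamma(\Lambda, 1_\Lambda)$ extending the case $n=1$ already computed in Proposition \ref{prop:accelerate}. The singular support estimate in Lemma \ref{lem:ss-broken-loop} shows that $\widetilde{\Delta}_n^*\mathrm{Fib}(\check{T}_{\epsilon/n}1_\Delta \to T_{\epsilon/n}1_\Delta)^{\boxtimes n}$ is microsupported on $n$-broken closed Reeb orbits of total length at most $\epsilon$, where each segment is governed by either $\check{T}_{\epsilon/n}$ or $T_{\epsilon/n}$. Combining Lemma \ref{lem:non-const-orbit} with the positively-gapped non-characteristic deformation in Proposition \ref{prop:gapped-fully-faithful} (cf.\ the arguments of Lemmas \ref{lem:face-map-define}--\ref{lem:cyclic-define-zero}), for $\epsilon$ small the only contributing orbits are constant orbits on $\Lambda$, embedded via the small diagonal $\Lambda \hookrightarrow (S^*M)^n$. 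Adapting the Sato--Sabloff plus $\mhom$ computation from Proposition \ref{prop:accelerate} (invoking Lemma \ref{lem:mhom-computation}) then yields $\Tr_{cst}(\Id_{\msh_\Lambda(\Lambda)}, n) \simeq \Gamma(\Omega, 1_\Omega) \simeq \Gamma(\Lambda, 1_\Lambda)$.

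Next I would argue that the face maps $d_i$, and hence by the cyclic relations the degeneracies $s_i$, correspond to the identity on $\Gamma(\Lambda, 1_\Lambda)$. They are already equivalences by Proposition \ref{prop:cyclic-well-define}, and are induced by tautological unit/counit morphisms of the form $\Delta_2^*(-\boxtimes 1_\Delta \boxtimes -) \xrightarrow{\sim} \Delta_1^*(-\boxtimes -)$. Under the identifications of Step 1, both source and target reduce to the same microstalk data on $1_\Lambda$, so these equivalences restrict to the identity on $\Gamma(\Lambda, 1_\Lambda)$. The remaining, and key, point is that the cyclic rotation $t_n$ also reduces to the identity. The rotation is induced by the cyclic permutation of $(M\times M)^n$, which along $\widetilde{\Delta}_n$ is compatible with the cyclic permutation of $M^n$; by Step 1, the relevant locus in $M^n$ is the small diagonal $\Lambda \hookrightarrow \Lambda^n \hookrightarrow M^n$, on which the cyclic permutation of $M^n$ acts as the identity. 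Since the microlocal data on $\Lambda$ is just the constant sheaf $1_\Lambda$, the cyclic rotation therefore acts as $\id_{\Gamma(\Lambda, 1_\Lambda)}$.

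The main obstacle is to upgrade these level-wise identifications to an equivalence of cyclic diagrams rather than a mere collection of equivalences in $\cV$. The cleanest approach is to exhibit an explicit morphism of cyclic objects from the constant cyclic diagram at $\Gamma(\Lambda, 1_\Lambda)$ into $\Tr_{cst}(\Id_{\msh_\Lambda(\Lambda)},*)$, built from Sato--Sabloff units on each tensor factor, and then invoke Step 1 to see it is a level-wise equivalence. Verifying the required cyclic-compatibility of such a map ultimately reduces to the geometric fact used in Step 3: the cyclic permutation of $M^n$ fixes the small diagonal pointwise, which is what forces the whole cyclic structure to be constant.
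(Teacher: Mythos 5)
Your proposal is correct and follows essentially the same route as the paper: via Sato--Sabloff the trace at each cyclic level is computed by sections of $\mhom$ supported on the small diagonal $\delta_n(S^*M)$ (only constant broken Reeb orbits survive for $\epsilon$ small), and since the cyclic rotation fixes this locus pointwise it acts as the identity, which is precisely the content of Lemmas \ref{lem:support-diagonal} and \ref{lem:trivial-action-zerosection} and the final step of the paper's proof. Your additional packaging of the level-wise identifications into a morphism of cyclic objects from the constant diagram at $\Gamma(\Lambda,1_\Lambda)$ is a slightly more structured phrasing of the same argument (modulo keeping track of the shift $[1-\dim M]$ in that identification), so nothing essential is missing.
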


    Using adjunction, we can consider the cyclic permutation on the Hom space $\Hom(\mathrm{Fib}(\check{T}_{\epsilon/n}1_\Delta \to T_{\epsilon/n} 1_\Delta)^{\boxtimes n}, 1_{\tilde{\Delta}_n})$. We will show that the support of $\mhom(\mathrm{Fib}(\check{T}_{\epsilon/n}1_\Delta \to T_{\epsilon/n} 1_\Delta)^{\boxtimes n}, 1_{\tilde{\Delta}_n})$ is contained in the fixed point set of the cyclic permutation, where the action is trivial.

\begin{lemma}\label{lem:support-diagonal}
    For $\Lambda \subseteq S^*M$ be a compact subanalytic Legendrian, $\delta_n: T^*M \to T^*(M \times M) \times \dots \times T^*(M \times M), \, (x, \xi) \mapsto (x, \xi, x, -\xi, \dots, x, \xi, x, -\xi)$, the restriction functor induces an isomorphism
    \begin{align*}
        \Gamma (S^*(M \times M)^n &, \mhom(\mathrm{Fib}(\check{T}_{\epsilon/n}1_\Delta \to T_{\epsilon/n} 1_\Delta)^{\boxtimes n}, 1_{\tilde{\Delta}_n})) \\
        \simeq \Gamma(\delta_{n}(S^*M) &, \mhom(\mathrm{Fib}(\check{T}_{\epsilon/n}1_\Delta \to T_{\epsilon/n} 1_\Delta)^{\boxtimes n}, 1_{\tilde{\Delta}_n})).
    \end{align*}
\end{lemma}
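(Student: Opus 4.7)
The plan is to reduce to bounding the support of $\mhom(F, 1_{\tilde{\Delta}_n})$, where $F \coloneqq \mathrm{Fib}(\check{T}_{\epsilon/n}1_\Delta \to T_{\epsilon/n}1_\Delta)^{\boxtimes n}$, and to show that this support lies in $\delta_n(\Lambda) \subseteq \delta_n(S^*M)$. Once this is established, the claimed restriction isomorphism on global sections is immediate from the vanishing of $\mhom(F,1_{\tilde{\Delta}_n})$ away from $\delta_n(S^*M)$.

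First I would invoke the standard microlocal support estimate $\supp(\mhom(F,G)) \subseteq \ms(F) \cap \ms(G)$ (cf.\ \cite[Proposition 4.4.2]{KS}), giving at infinity
\[\supp(\mhom(F,1_{\tilde{\Delta}_n})) \cap S^*(M \times M)^n \subseteq \msif(F) \cap S^*_{\tilde{\Delta}_n}(M \times M)^n.\]
Since $F$ is an $n$-fold external product of fibers of continuation morphisms between the sheaf quantizations of $\check{T}_{\epsilon/n}$ and $T_{\epsilon/n}$ applied to $1_\Delta$, the triangle inequality for singular support bounds $\msif(F)$ by the union, over assignments $\varphi^\bullet = (\varphi^1,\ldots,\varphi^n) \in \{\check{T}_{\epsilon/n}, T_{\epsilon/n}\}^n$, of $\msif(K^{\varphi^1} \boxtimes \cdots \boxtimes K^{\varphi^n})$, where $K^\varphi$ denotes the sheaf quantization of $\varphi$.

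Next, Lemma \ref{lem:ss-broken-loop} identifies each such intersection with $S^*_{\tilde{\Delta}_n}(M\times M)^n$ as the set of $n$-broken closed trajectories $(x_i,\xi_i) \in S^*M$ satisfying $(x_{i+1},\xi_{i+1}) = \varphi^{i+1}(x_i,\xi_i)$ cyclically. For $\epsilon > 0$ sufficiently small, every such trajectory is constant (see the obstacle discussion below). A constant orbit at $(x,\xi)$ is a common fixed point of every $\varphi^i$; since $T_{\epsilon/n}$ has no fixed points on $S^*M$ (its Hamiltonian being identically $1$), every $\varphi^i$ must equal $\check{T}_{\epsilon/n}$, whose fixed locus is $\rho^{-1}(0) = \Lambda$. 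Under the identification of Lemma \ref{lem:ss-broken-loop}, the corresponding point of $S^*(M\times M)^n$ is precisely $\delta_n(x,\xi)$, yielding $\supp(\mhom(F,1_{\tilde{\Delta}_n})) \cap S^*(M \times M)^n \subseteq \delta_n(\Lambda) \subseteq \delta_n(S^*M)$.

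The main obstacle is the degeneracy of $\check{H} = \rho$ on $\Lambda$, which prevents a direct application of Lemma \ref{lem:non-const-orbit} to pieces involving $\check{T}$. I would handle this via a partition argument: cover $S^*M$ by a small tubular neighborhood $U$ of $\Lambda$ and its complement. On the complement, $\rho \geq \delta_0 > 0$, so every flow piece has contact vector field of norm bounded below by $\min(1,\delta_0)$ and Lemma \ref{lem:non-const-orbit} uniformly forces non-constant orbit periods to exceed $\epsilon$; on $U$, the flow of any concatenation of total time $\epsilon$ is, after shrinking $\epsilon$, a $C^0$-small perturbation of the identity, so any closed broken orbit contained in $U$ must be stationary and hence lie in $\Lambda$.
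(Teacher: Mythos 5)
Your argument is correct and follows essentially the same route as the paper's proof: bound $\supp(\mhom(F,1_{\tilde{\Delta}_n}))$ by the intersection of the singular supports, identify $S^*_{\tilde{\Delta}_n}(M\times M)^n \cap \msif(F)$ with $n$-broken orbits via Lemma \ref{lem:ss-broken-loop}, and rule out non-constant closed orbits of total length at most $\epsilon$, so that only all-$\check{T}_{\epsilon/n}$ concatenations with endpoints on $\Lambda$ survive, yielding $\delta_n(\Lambda)\subseteq\delta_n(S^*M)$. Your two-region justification of the short-closed-orbit claim actually supplies more detail than the paper, which simply asserts it; the only loose point is orbits that cross between the neighborhood of $\Lambda$ and its complement (where Lemma \ref{lem:non-const-orbit} does not apply piecewise), the same issue the paper leaves implicit.
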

\begin{proof}
    Since we know that $\mathrm{supp}(\mhom(\mathrm{Fib}(\check{T}_{\epsilon/n}1_\Delta \to T_{\epsilon/n} 1_\Delta)^{\boxtimes n}, 1_{\tilde{\Delta}_n}))$ is contained in the intersection of the singular supports for the sheaves, it suffices to show that the intersection
    $$S^*_{\tilde{\Delta}_n}(M \times M)^n \cap \msif(\mathrm{Fib}(\check{T}_{\epsilon/n}1_\Delta \to T_{\epsilon/n} 1_\Delta)^{\boxtimes n}) \subseteq \delta_{n}(S^*M).$$
    Recall that the intersection corresponds in bijection to the set of $n$-broken Reeb orbits of the form
    $$(x_2, \xi_2) = T^*_{\epsilon/n} (x_1, \xi_1), \; (x_{i+1}, \xi_{i+1}) = T^*_{\epsilon/n}(x_i, \xi_i), \; (x_1, \xi_1) = T^*_{\epsilon/n}(x_n, \xi_n),$$
    where $T^*_{\epsilon/n}$ means either $\check{T}_{\epsilon/n}$ or $T_{\epsilon/n}$. For $\epsilon > 0$ sufficiently small, the closed orbits of length at most $\epsilon$ for the concatenated non-negative Hamiltonian flow are all constant orbits by the concatenation of $\check{T}_{\epsilon/n}, \dots, \check{T}_{\epsilon/n}$. Therefore,
    \begin{equation*}
        S^*_{\tilde{\Delta}_n}(M \times M)^n \cap \msif(\mathrm{Fib}(\check{T}_{\epsilon/n} 1_\Delta \to T_{\epsilon/n} 1_\Delta)^{\boxtimes n}) = \delta_{n}(\Lambda) \subseteq \delta_{n}(S^*M). \qedhere
    \end{equation*}
\end{proof}

    Then we show that the Hom of microsheaves away from the zero section completely determines the Hom of the sheaves.

\begin{lemma}\label{lem:trivial-action-zerosection}
    For $\Lambda \subseteq S^*M$ be a compact subanalytic Legendrian, there is an isomorphism
    \begin{align*}
    \Hom & (\mathrm{Fib}(\check{T}_{\epsilon/n}1_\Delta \to T_{\epsilon/n} 1_\Delta)^{\boxtimes n}, 1_{\tilde{\Delta}_n})) \\
    \simeq \Gamma & (S^*(M \times M)^n,  \mhom(\mathrm{Fib}(\check{T}_{\epsilon/n}1_\Delta \to T_{\epsilon/n} 1_\Delta)^{\boxtimes n}, 1_{\tilde{\Delta}_n})).
    \end{align*}
\end{lemma}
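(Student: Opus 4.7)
The plan is to apply the Sato--Sabloff fiber sequence (Theorem~\ref{thm:sato-sabloff}) on the product manifold $(M\times M)^n$ to $F = \mathrm{Fib}(\check T_{\epsilon/n}1_\Delta \to T_{\epsilon/n}1_\Delta)^{\boxtimes n}$ and $G = 1_{\tilde\Delta_n}$, and then to show that the resulting ``unwrapped'' term vanishes, so that only the microlocal contribution survives.

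First I would choose a non-negative Reeb contact flow $\widetilde T_t \colon S^*((M\times M)^n) \to S^*((M\times M)^n)$ whose contact Hamiltonian is strictly positive on a neighbourhood of $\delta_n(\Lambda)$. By the microsupport computation already performed inside Lemma~\ref{lem:support-diagonal}, we have $\msif(F) \cap \msif(G) \subseteq \delta_n(\Lambda)$, so this choice forces $\msif(F) \cap \widetilde T_\delta\msif(G) = \varnothing$ for any sufficiently small $\delta \neq 0$, and the hypothesis of Theorem~\ref{thm:sato-sabloff} is satisfied. It yields the fiber sequence
\begin{equation*}
\Hom(F, \widetilde T_{-\delta}G) \longrightarrow \Hom(F, G) \longrightarrow \Gamma\bigl(\msif(F) \cap \msif(G),\, \mhom(F,G)\bigr).
\end{equation*}
By Lemma~\ref{lem:support-diagonal} again, the rightmost term is canonically $\Gamma(S^*((M\times M)^n), \mhom(F, G))$, so the statement of the lemma reduces to the vanishing $\Hom(F, \widetilde T_{-\delta}G) = 0$.

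For that vanishing I would follow the strategy of Lemma~\ref{lem:cyclic-define-zero}. Deform $F$ through a one-parameter family of sheaves $F_s$ obtained by applying the interpolating flow $\hat T_{s/n} = T_{s/n} \circ \check T_{(\epsilon - s)/n}$ factor-wise inside each tensor factor of $F$, so that $F_0 = F$ and at $s = \epsilon$ every factor has acquired a strictly positive Reeb component. A microsupport point in $\msif(F_s) \cap \widetilde T_{-\delta}\msif(G)$ corresponds to a closed $(n{+}1)$-broken Reeb trajectory whose first $n$ pieces come from the factor-wise $\check T$/$T$/$\hat T$ push-offs and whose last piece is the ambient $\widetilde T_{-\delta}$ push-off; once any $s > 0$ is turned on, the net signed Hamiltonian integral along such a trajectory is strictly negative, precluding closure. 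Hence $\msif(F_s) \cap \widetilde T_{-\delta}\msif(G) = \varnothing$ throughout $s \in [0,\epsilon]$ and the family is positively gapped, so Proposition~\ref{prop:gapped-fully-faithful} gives $\Hom(F, \widetilde T_{-\delta}G) = \Hom(F_\epsilon, \widetilde T_{-\delta}G)$; the latter is zero because at the endpoint the two microsupports are disjoint already without any push-off.

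The main obstacle is precisely this broken Reeb orbit bookkeeping. The Reeb flow $\widetilde T_t$ on the total cosphere bundle $S^*((M\times M)^n)$ does not split as a product of factor-wise contact flows on the individual $S^*(M\times M)$ pieces, so the analysis cannot be reduced to $n$ independent copies of Lemma~\ref{lem:cyclic-define-zero}. A careful choice of $\widetilde H$, for instance a positive combination of factor-wise Hamiltonians supported in a small neighbourhood of $\delta_n(\Lambda)$, should make the gap estimate explicit, but verifying this extends the factor-wise broken Reeb orbit analysis of Lemmas~\ref{lem:cyclic-define-isom} and \ref{lem:cyclic-define-zero} to a setting where an ambient Reeb push-off is interleaved with the factor-wise push-offs, and this is the principal technical point to be carried out.
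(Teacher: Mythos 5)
Your overall strategy---Sato--Sabloff to isolate the unwrapped term, then kill it by interpolating $\check T$ into $T$ via $\hat T_{t/n}=T_{t/n}\circ\check T_{(\epsilon-t)/n}$ and invoking gapped non-characteristic deformation---is the same as the paper's. The genuine gap is the one you flag yourself, and it is exactly where the paper's proof makes a different, load-bearing choice. You introduce a new ambient flow $\widetilde T_t$ on $S^*((M\times M)^n)$ which does not split as a product, so Lemma \ref{lem:ss-broken-loop} no longer describes the intersections $\msif(F_s)\cap \widetilde T_{-\delta}\msif(G)$ and the ``broken Reeb orbit bookkeeping'' is left unverified; this is not a routine extension, it is the whole content of the vanishing statement. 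The paper avoids it by wrapping \emph{factor-wise}: by adjunction the unwrapped term becomes $\Hom(T_{\epsilon'/n}\,\mathrm{Fib}(\check T_{\epsilon/n}1_\Delta\to T_{\epsilon/n}1_\Delta)^{\boxtimes n},1_{\tilde\Delta_n})$ with $T_{\epsilon'/n}$ acting on each $M\times M$ factor, so every point of the intersection with $S^*_{\tilde\Delta_n}(M\times M)^n$ is, verbatim by Lemma \ref{lem:ss-broken-loop}, an $n$-broken orbit of the concatenations $T_{\epsilon'/n}\circ T^*_{\epsilon/n}$, all of which are strictly positive; Lemma \ref{lem:non-const-orbit} then excludes closed orbits of small total length, the family is gapped for all $t\in[0,\epsilon]$, and Theorem \ref{thm:full-faithful-nearby} applies. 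With that choice your ``principal technical point'' disappears.

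There is also a logical slip in your gap argument: you claim the family becomes gapped ``once any $s>0$ is turned on,'' with a strictly negative signed Hamiltonian integral. The gap must hold at $s=0$ as well---that endpoint is precisely the term you are computing, and Proposition \ref{prop:gapped-fully-faithful} (or Theorem \ref{thm:full-faithful-nearby}) needs the closed interval---and the correct source of strict positivity is the ambient (or, in the paper, factor-wise) positive push-off, which is present for every $s$ including $s=0$; equivalently, after moving the push-off onto $F$ all pieces of the putative closed broken orbit are non-negative and at least one is strictly positive, which is what precludes closure, not the interpolation parameter. Finally, deforming all $n$ factors simultaneously is harmless but unnecessary: as in Lemma \ref{lem:cyclic-define-zero}, deforming a single factor already makes the endpoint sheaf $\mathrm{Fib}(T_{\epsilon/n}1_\Delta\to T_{\epsilon/n}1_\Delta)=0$ and hence the Hom vanish.
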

\begin{proof}
    Using the Sato--Sabloff fiber sequence, it suffices to show that for $\epsilon' > 0$ sufficiently small, we have
    $$\Hom(T_{\epsilon'/n}\mathrm{Fib}(\check{T}_{\epsilon/n}1_\Delta \to T_{\epsilon/n} 1_\Delta)^{\boxtimes n}, 1_{\tilde{\Delta}_n})) \simeq 0.$$
    Similar to Lemma \ref{lem:cyclic-define-zero}, we write $\hat{T}_{t/n} = T_{t/n} \circ \check{T}_{(\epsilon - t)/n}$ for $0 \leq t \leq \epsilon$. We will deform the sheaf $\mathrm{Fib}(\check{T}_{\epsilon/n} 1_\Delta \to T_{\epsilon/n} 1_\Delta)$ using this Hamiltonian flow. Consider the intersection of the singular supports 
    $$S^*_{\tilde{\Delta}_n} (M \times M)^n \cap \msif(T_{\epsilon'/n} (\mathrm{Fib}(\check{T}_{\epsilon/n} 1_\Delta \to T_{\epsilon/n} 1_\Delta)^{\boxtimes n-1} \boxtimes \mathrm{Fib}(\hat{T}_{t/n} \circ \check{T}_{\epsilon/n} 1_\Delta \to T_{\epsilon/n} 1_\Delta))).$$
    We know that they are in bijection with the set of $n$-broken Reeb orbits
    $$(x_2, \xi_2) = T_{\epsilon'/n} \circ T^*_{\epsilon/n}(x_1, \xi_1), \; (x_{i+1}, \xi_{i+1}) = T_{\epsilon'/n} \circ T^*_{\epsilon/n}(x_i, \xi_i), \; (x_1, \xi_1) = T_{\epsilon'/n} \circ \hat{T}_{t/n} \circ T^*_{\epsilon/n}(x_n, \xi_n),$$
    where $T^*_{\epsilon/n}$ means either $\check{T}_{\epsilon/n}$ or $T_{\epsilon/n}$. When $\epsilon > 0$ is sufficiently small, we know that there exist no closed orbits of length at most $\epsilon + \epsilon'$ for the concatenated positive Hamiltonian flow since the Hamiltonian $T_{\epsilon'/n} \circ T^*_{\epsilon/n}$ are all strictly positive. Therefore, for $0 \leq t \leq \epsilon$, we have
    $$S^*_{\tilde{\Delta}_n}(M \times M)^n \cap \msif(T_{\epsilon'/n} (\mathrm{Fib}(\check{T}_{\epsilon/n} 1_\Delta \to T_{\epsilon/n} 1_\Delta)^{\boxtimes n-2} \boxtimes \mathrm{Fib}(\hat{T}_{t/n} \circ \check{T}_{\epsilon/n} 1_\Delta \to T_{\epsilon/n} 1_\Delta)) = \varnothing.$$
    Then using the gapped non-characteristic deformation Theorem \ref{thm:full-faithful-nearby}, we can conclude that
    \begin{align*}
        \Hom & (T_{\epsilon'/n}\mathrm{Fib}(\check{T}_{\epsilon/n}1_\Delta \to T_{\epsilon/n} 1_\Delta)^{\boxtimes n}, 1_{\tilde{\Delta}_n})) \\
        & \simeq \Hom  (T_{\epsilon'/n} (\mathrm{Fib}(\check{T}_{\epsilon/n}1_\Delta \to T_{\epsilon/n} 1_\Delta)^{\boxtimes n-1} \boxtimes \mathrm{Fib}(\hat{T}_{t/n} \circ \check{T}_{\epsilon/n} 1_\Delta \to T_{\epsilon/n} 1_\Delta)), 1_{\tilde{\Delta}_n})) \\
        & \simeq \Hom  (T_{\epsilon'/n} (\mathrm{Fib}(\check{T}_{\epsilon/n}1_\Delta \to T_{\epsilon/n} 1_\Delta)^{\boxtimes n-1} \boxtimes \mathrm{Fib}(T_{\epsilon/n} 1_\Delta \to T_{\epsilon/n} 1_\Delta)), 1_{\tilde{\Delta}_n}))\simeq 0.
    \end{align*}
    This completes the proof of the lemma.
\end{proof}

\begin{proof}[Proof of Proposition \ref{prop:trivial-action}]
    We prove that the isomorphism $t_n$ given by cyclic permutations
    $$t_n: p_{n !}\widetilde{\Delta}_n^*\mathrm{Fib}(\check{T}_{\epsilon/n}1_\Delta \to T_{\epsilon/n} 1_\Delta)^{\boxtimes n} \xrightarrow{\sim} p_{n !}\widetilde{\Delta}_n^*\mathrm{Fib}(\check{T}_{\epsilon/n}1_\Delta \to T_{\epsilon/n} 1_\Delta)^{\boxtimes n}$$
    is equivalent to the identity morphism. Then it will follow that the $S^1$-action on the cyclic object is trivial. By adjunction, it suffices to show the isomorphism is equivalent to the identity:
    $$t_n^\vee: \Hom(\mathrm{Fib}(\check{T}_{\epsilon/n}1_\Delta \to T_{\epsilon/n} 1_\Delta)^{\boxtimes n}, 1_{\tilde{\Delta}_n}) \xrightarrow{\sim} \Hom(\mathrm{Fib}(\check{T}_{\epsilon/n}1_\Delta \to T_{\epsilon/n} 1_\Delta)^{\boxtimes n}, 1_{\tilde{\Delta}_n}).$$

    Write $\delta_n: T^*M \to T^*(M \times M) \times \dots \times T^*(M \times M), \, (x, \xi) \mapsto (x, \xi, x, -\xi, \dots, x, \xi, x, -\xi)$. Consider the following composition of microlocalization and restrictions which are compatible with the cyclic rotation
    \begin{align*}
    \Hom &(\mathrm{Fib}(\check{T}_{\epsilon/n}1_\Delta \to T_{\epsilon/n} 1_\Delta)^{\boxtimes n}, 1_{\tilde{\Delta}_n}) \\ 
    & \simeq \Gamma(S^*(M \times M)^n, \mhom(\mathrm{Fib}(\check{T}_{\epsilon/n}1_\Delta \to T_{\epsilon/n} 1_\Delta)^{\boxtimes n}, 1_{\tilde{\Delta}_n}) \\
    & \simeq \Gamma(\delta_n(S^*M), \mhom(\mathrm{Fib}(\check{T}_{\epsilon/n}1_\Delta \to T_{\epsilon/n} 1_\Delta)^{\boxtimes n}, 1_{\tilde{\Delta}_n})).
    \end{align*}
    By Lemma \ref{lem:trivial-action-zerosection}, the first morphism by microlocalization is an isomorphism. By Lemma \ref{lem:support-diagonal}, the second morphism by restriction is also an isomorphism.
    
    Under the restriction to the image of $\delta_n$, the smooth map $t_n$ becomes the identity map, and the isomorphism $t_n^\vee$ becomes the identity morphism. Hence the microlocalization of this isomorphism will also be the identity. This thus completes the proof.
\end{proof}

Similar to Proposition \ref{prop:trivial-action}, since the null homotopy of the weak relative Calabi--Yau structure factors through the morphism
$$0 \to \Tr_{cst}(\Id_{\Sh_{\widehat\Lambda}(M)}) \to \Tr(\Id_{\Sh_{\widehat\Lambda}(M)}),$$
we will study that the $S^1$-action on $\Tr_{cst}(\Id_{\Sh_{\widehat\Lambda}(M)})$ and show that it is the trivial action. Recall that when $M$ is a noncompact manifold, we need to consider the compact subset $M_c$ and the compact diagonal $\Delta_c$ of $M_c$, where $M_c$ is the union of the bounded strata of $M \setminus \pi(\Lambda)$.

\begin{proposition}\label{prop:trivial-action-rel}
    For $\Lambda \subseteq S^*M$ be a compact subanalytic Legendrian, the standard $S^1$-action on 
    $$\Tr_{cst}(\Id_{\Sh_{\widehat\Lambda}(M)}) = p_!\Delta^* (\check{T}_{\epsilon}1_{\Delta_c})$$
    is the trivial $S^1$-action.
\end{proposition}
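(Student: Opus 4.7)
The plan is to mirror the proof of Proposition \ref{prop:trivial-action} but simplified, since for the sheaf case we work directly with $\check{T}_\epsilon 1_{\Delta_c}$ rather than the fiber sheaf, so we do not need to separate off the zero section contribution via a Sato--Sabloff vanishing argument.

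First, by adjunction it suffices to prove that the cyclic rotation induces the identity on
$$t_n^\vee: \Hom((\check{T}_{\epsilon/n}1_{\Delta_c})^{\boxtimes n}, 1_{\tilde\Delta_n}) \xrightarrow{\sim} \Hom((\check{T}_{\epsilon/n}1_{\Delta_c})^{\boxtimes n}, 1_{\tilde\Delta_n}).$$
By definition of $\mhom$, this Hom space equals $\Gamma(T^*(M\times M)^n, \mhom((\check{T}_{\epsilon/n}1_{\Delta_c})^{\boxtimes n}, 1_{\tilde\Delta_n}))$ (no reduction to $S^*$ is needed, in contrast to Lemma \ref{lem:trivial-action-zerosection}).

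Second, I would show (analogously to Lemma \ref{lem:support-diagonal}) that the support of this $\mhom$ sheaf is contained in $\delta_n(T^*M)$, where $\delta_n(x,\xi)=((x,\xi),(x,-\xi),\ldots,(x,\xi),(x,-\xi))$. Split the analysis into the zero section and the cosphere bundle. On the zero section, the support of $\mhom$ is contained in the intersection of the supports of the two sheaves, which is $(\Delta_c)^n \cap \tilde\Delta_n$; a direct check shows this equals the small diagonal in $(M_c)^n$, which is exactly $\delta_n(M_c) \subseteq \delta_n(T^*M)$. Away from the zero section, the support of $\mhom$ lies inside
$$S^*_{\tilde\Delta_n}(M\times M)^n \cap \msif((\check{T}_{\epsilon/n}1_{\Delta_c})^{\boxtimes n}).$$
By Lemma \ref{lem:ss-broken-loop}, this intersection is in bijection with $n$-broken closed orbits of $\check T$ of total length $\epsilon$, and by Lemma \ref{lem:non-const-orbit} every such orbit is constant for $\epsilon>0$ small enough. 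Since $\check H=\rho$ vanishes precisely on $\Lambda$, the constant orbits are supported on $\delta_n(\Lambda) \subseteq \delta_n(S^*M)$, as desired.

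Third, since the support of $\mhom$ lies in $\delta_n(T^*M)$, the restriction map
$$\Gamma(T^*(M\times M)^n, \mhom) \xrightarrow{\sim} \Gamma(\delta_n(T^*M), \mhom|_{\delta_n(T^*M)})$$
is an equivalence. Now $t_n$ acts on $T^*(M\times M)^n$ by cyclic rotation of the $n$ factors, and one checks directly from the formula for $\delta_n$ that $t_n$ fixes $\delta_n(T^*M)$ pointwise (on the zero section because the small diagonal is permutation-invariant, and on the sphere bundle because the tuple $((x,\xi),(x,-\xi),\ldots)$ is preserved by the $2n$-cycle). Therefore the induced action on the restricted sections is the identity, which by the previous step gives $t_n^\vee=\Id$ globally, proving the trivial $S^1$-action.

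The main obstacle I expect is ensuring the support calculation on the zero section is rigorous, specifically justifying that $\mhom$ of the two sheaves on the zero section is genuinely concentrated on $(\Delta_c)^n \cap \tilde\Delta_n$ and that the restriction $\mhom|_{\delta_n(T^*M)}$ captures all the sections (including any potentially subtle contributions at the interface between the zero section and the conic part over $\Lambda$). The sphere-bundle calculation is essentially the same as in Proposition \ref{prop:trivial-action} and should go through verbatim using Lemmas \ref{lem:ss-broken-loop} and \ref{lem:non-const-orbit}; the novelty is handling the zero section contribution, which I believe is automatic but needs to be spelled out carefully.
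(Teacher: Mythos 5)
Your sphere-bundle analysis matches the paper's (it is exactly Lemma \ref{lem:support-diagonal-rel}), but the zero-section step has a genuine gap. The support of $\check{T}_{\epsilon/n}1_{\Delta_c}$ is not $\Delta_c$: the GKS quantization of a positive isotopy thickens the diagonal (this is precisely the blue region in the paper's figure), so $\mathrm{supp}\bigl((\check{T}_{\epsilon/n}1_{\Delta_c})^{\boxtimes n}\bigr)\cap \tilde{\Delta}_n$ is a neighbourhood of the small diagonal $\delta_n(M_c)$, not $\delta_n(M_c)$ itself. Your "direct check" identifying it with the small diagonal is therefore false, and the thickened set is only invariant under $t_n$, not pointwise fixed. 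Containment of the support of $\mhom$ (equivalently, of $\sHom$ on the zero section) in a $t_n$-invariant set says nothing about the induced automorphism of its sections, so the step "restrict to the fixed locus, where $t_n$ is the identity" does not apply to the zero-section contribution. This is exactly the obstacle you flagged, and as written it does not go through.

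The paper avoids this by never analyzing $\check{T}_{\epsilon/n}1_{\Delta_c}$ on the zero section directly: it splits $\Hom(\check{T}_{\epsilon/n}1_{\Delta_c}^{\boxtimes n},1_{\tilde{\Delta}_n})$ via the Sato--Sabloff fiber sequence into the fully wrapped term $\Hom(T_{\epsilon/n}1_{\Delta_c}^{\boxtimes n},1_{\tilde{\Delta}_n})$ and the microlocal term over $S^*(M\times M)^n$, treats the latter by Lemma \ref{lem:support-diagonal-rel}, and for the former uses the positively gapped non-characteristic deformation (Proposition \ref{prop:gapped-fully-faithful}) to replace $T_{\epsilon/n}1_{\Delta_c}$ by $1_{\Delta_c}$, whose support intersected with $\tilde{\Delta}_n$ genuinely equals $\delta_n(M)$ (Lemma \ref{lem:support-diagonal-zerosection}); only then is the restriction-to-the-fixed-locus argument valid. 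Note also that you cannot simply deform $\check{T}_{\epsilon/n}1_{\Delta_c}$ back to $1_{\Delta_c}$ inside the full Hom: the flow $\check{T}$ is stationary along $\Lambda$, and the discrepancy between the two Homs is exactly the microlocal term, which is the reason for the fiber-sequence bookkeeping. To repair your argument you would either reproduce this decomposition or prove directly that the rotation acts trivially on sections of $\sHom$ over the thickened small diagonal, which is not a consequence of a support estimate.
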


Again, using adjunction, we can consider the cyclic rotation on $\Hom(\check{T}_{\epsilon/n}1_{\Delta_c}^{\boxtimes n}, 1_{\tilde{\Delta}_n})$. We study the behaviour of the rotation on the zero section and away from the zero section separately.

First, we can show that the support of $\mhom(\check{T}_{\epsilon/n}1_{\Delta_c}^{\boxtimes n}, 1_{\tilde{\Delta}_n})$ is in the fixed point set of the cyclic permutation, similar to Lemma \ref{lem:support-diagonal}.

\begin{lemma}\label{lem:support-diagonal-rel}
    For $\Lambda \subseteq S^*M$ be a compact subanalytic Legendrian, $\delta_n: T^*M \to T^*(M \times M) \times \dots \times T^*(M \times M), \, (x, \xi) \mapsto (x, \xi, x, -\xi, \dots, x, \xi, x, -\xi)$, the restriction functor induces an isomorphism
    \begin{align*}
        \Gamma (S^*(M \times M)^n, \mhom(\check{T}_{\epsilon/n}1_\Delta^{\boxtimes n}, 1_{\tilde{\Delta}_n}))
        \simeq \Gamma(\delta_{n}(S^*M), \mhom(\check{T}_{\epsilon/n}1_\Delta, 1_{\tilde{\Delta}_n})).
    \end{align*}
\end{lemma}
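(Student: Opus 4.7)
The plan is to mimic the strategy of Lemma \ref{lem:support-diagonal} almost verbatim, substituting the sheaf $\check{T}_{\epsilon/n} 1_\Delta$ for $\mathrm{Fib}(\check{T}_{\epsilon/n} 1_\Delta \to T_{\epsilon/n} 1_\Delta)$. First I would reduce the claim to a support estimate: since
$$\mathrm{supp}\,\mhom(\check{T}_{\epsilon/n} 1_\Delta^{\boxtimes n}, 1_{\tilde{\Delta}_n}) \subseteq \msif(\check{T}_{\epsilon/n} 1_\Delta^{\boxtimes n}) \cap \msif(1_{\tilde{\Delta}_n}) = \msif(\check{T}_{\epsilon/n} 1_\Delta^{\boxtimes n}) \cap S^*_{\tilde{\Delta}_n}(M \times M)^n,$$
it suffices to prove that this intersection is contained in $\delta_n(S^*M)$. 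Once that containment is established, the restriction map becomes an isomorphism because we are restricting sections of a sheaf supported on $\delta_n(S^*M)$ to an open set that contains $\delta_n(S^*M)$.

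Next I would identify the intersection with a set of $n$-broken Reeb trajectories. By the sheaf-quantization of Guillermou--Kashiwara--Schapira (Theorem \ref{thm: GKS}), the singular support at infinity of $\check{T}_{\epsilon/n} 1_\Delta$ is the image of $S^*_\Delta(M \times M)$ under the contact flow $\check{T}_{\epsilon/n}$. Applying Lemma \ref{lem:ss-broken-loop} with each $\varphi^j_t = \check{T}_{\epsilon/n}$, the intersection
$$S^*_{\tilde{\Delta}_n}(M \times M)^n \cap \msif(\check{T}_{\epsilon/n} 1_\Delta^{\boxtimes n})$$
is in bijection with tuples $(x_i, \xi_i) \in S^*M$ satisfying the closed cyclic relation $(x_{i+1}, \xi_{i+1}) = \check{T}_{\epsilon/n}(x_i, \xi_i)$ for $1 \leq i \leq n-1$ and $(x_1, \xi_1) = \check{T}_{\epsilon/n}(x_n, \xi_n)$. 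In other words, these are the length-$\leq \epsilon$ closed orbits of the non-negative contact Hamiltonian flow obtained by concatenating $\check{T}_{\epsilon/n}$ with itself $n$ times.

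The final step is the observation that for $\epsilon > 0$ sufficiently small, every such closed orbit is constant. Away from $\Lambda$ the Hamiltonian $\check{H}$ is strictly positive and uniformly bounded below by some $\delta > 0$ on any compact set separated from $\Lambda$, so Lemma \ref{lem:non-const-orbit} gives a uniform lower bound on the period of any non-constant closed orbit; shrinking $\epsilon$ below this bound forces every closed orbit to sit in the fixed-point locus $\Lambda$ of $\check{T}$. Hence the intersection reduces to $\delta_n(\Lambda) \subseteq \delta_n(S^*M)$, which concludes the proof. I do not anticipate any real obstacle here, as the argument is entirely parallel to Lemma \ref{lem:support-diagonal} once one notes that the sheaf $\check{T}_{\epsilon/n} 1_\Delta$ has the same microlocal profile as each factor appearing in the fiber sequence, and removing the fiber does not introduce any new Reeb chords to control.
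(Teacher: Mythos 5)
Your proposal is correct and matches the paper's intended argument: the paper states this lemma without a separate proof, explicitly deferring to the proof of Lemma \ref{lem:support-diagonal}, and your adaptation (support estimate for $\mhom$, identification of the intersection with broken closed orbits of the concatenated $\check{T}$-flow via Lemma \ref{lem:ss-broken-loop}, and constancy of short closed orbits via Lemma \ref{lem:non-const-orbit}) is exactly that argument with every segment given by $\check{T}_{\epsilon/n}$. No gaps beyond those already present at the paper's own level of detail.
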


For the effect on the zero section, we will need the following lemma to show that the support of the following sheaf is on the fixed point set of the rotation.

\begin{lemma}\label{lem:support-diagonal-zerosection}
    For $\Lambda \subseteq S^*M$ be a compact subanalytic Legendrian, $\delta_n: M \to (M \times M) \times \dots \times (M \times M), \, x \mapsto (x, x, \dots, x, x)$, the restriction functor induces an isomorphism
    \begin{align*}
        \Gamma((M \times M)^n, \sHom(1_{\Delta_c}^{\boxtimes n}, 1_{(M \times M)^n}) \otimes 1_{\tilde{\Delta}_n}) \simeq \Gamma(\delta_n(M), \sHom(1_{\Delta_c}^{\boxtimes n}, 1_{(M \times M)^n}) \otimes 1_{\tilde{\Delta}_n}).
    \end{align*}
\end{lemma}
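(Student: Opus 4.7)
The plan is to show that the sheaf $\sHom(1_{\Delta_c}^{\boxtimes n}, 1_{(M \times M)^n}) \otimes 1_{\tilde{\Delta}_n}$ is set-theoretically supported on $\delta_n(M)$, from which the isomorphism follows immediately: for any sheaf $F$ on $U$ whose support lies in a closed subset $K \subseteq U$, the natural restriction map $\Gamma(U, F) \to \Gamma(K, F|_K)$ is an equivalence, since $F \simeq i_{K*} i_K^* F$ where $i_K \colon K \hookrightarrow U$ is the inclusion.

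First, I would identify the support of $\sHom(1_{\Delta_c}^{\boxtimes n}, 1_{(M \times M)^n})$. Since $M_c$ is closed in $M$, the diagonal embedding $i_c \colon \Delta_c \hookrightarrow M \times M$ is a closed inclusion with $1_{\Delta_c} = i_{c*} 1_{\Delta_c}$. The adjunction $(i_{c*}, i_c^!)$ then gives
$$\sHom(1_{\Delta_c}, 1_{M \times M}) \simeq i_{c*} i_c^! 1_{M \times M},$$
which is supported on $\Delta_c$. Taking the $n$-fold external tensor product shows that $\sHom(1_{\Delta_c}^{\boxtimes n}, 1_{(M \times M)^n})$ is supported on $\Delta_c^n \subseteq (M \times M)^n$.

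Second, tensoring with $1_{\tilde{\Delta}_n}$ restricts the support further to $\Delta_c^n \cap \tilde{\Delta}_n$. A direct computation identifies this intersection: writing a point of $(M \times M)^n$ as $((z_i, w_i))_{i=1}^n$, it lies in $\Delta_c^n$ iff $z_i = w_i \in M_c$ for each $i$, and it lies in $\tilde{\Delta}_n$ iff $w_i = z_{i+1}$ cyclically. Together these force all $2n$ coordinates to equal a single point of $M_c$, so $\Delta_c^n \cap \tilde{\Delta}_n = \delta_n(M_c) \subseteq \delta_n(M)$. Applying the support principle above with $K = \delta_n(M)$ yields the desired isomorphism. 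The argument is purely topological — it uses neither Hamiltonian perturbations nor microlocal analysis — so I do not foresee any real obstacle; this lemma plays the role parallel to Lemma \ref{lem:support-diagonal-rel} in the proof of Proposition \ref{prop:trivial-action-rel}, handling the zero-section contribution rather than the cosphere-bundle contribution.
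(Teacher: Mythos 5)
Your proposal is correct and follows essentially the same route as the paper: bound the support of the tensor product by $\mathrm{supp}(\sHom(1_{\Delta_c}^{\boxtimes n}, 1_{(M \times M)^n})) \cap \mathrm{supp}(1_{\tilde{\Delta}_n})$, identify this intersection with the image of $\delta_n$ (your more careful answer $\delta_n(M_c) \subseteq \delta_n(M)$ is fine and only strengthens the conclusion), and conclude since sections of a sheaf supported on a closed subset are computed on that subset. Your extra justifications (the closed-inclusion adjunction $\sHom(1_{\Delta_c},1_{M\times M}) \simeq i_{c*}i_c^!1_{M\times M}$ and the explicit cyclic computation of $\Delta_c^n \cap \tilde{\Delta}_n$) just flesh out what the paper states in one line.
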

\begin{proof}
    Consider the estimation of the support that
    $$\mathrm{supp}(\sHom(1_{\Delta_c}^{\boxtimes n}, 1_{(M \times M)^n}) \otimes 1_{\tilde{\Delta}_n}) \subseteq \mathrm{supp}(\sHom(1_{\Delta_c}^{\boxtimes n}, 1_{(M \times M)^n})) \cap \mathrm{supp}(1_{\tilde{\Delta}_n}) = \delta_n(M).$$
    Then the lemma immediately follows.
\end{proof}

\begin{proof}[Proof of Proposition \ref{prop:trivial-action-rel}]
    We prove that the isomorphism $t_n$ given by cyclic permutations
    $$t_n: p_{n !}\widetilde{\Delta}_n^*(\check{T}_{\epsilon/n}1_{\Delta_c} )^{\boxtimes n} \xrightarrow{\sim} p_{n !}\widetilde{\Delta}_n^*(\check{T}_{\epsilon/n}1_{\Delta_c} )^{\boxtimes n}$$
    is equivalent to the identity morphism. Then it will follow that the $S^1$-action on the cyclic object is trivial. By adjunction, it suffices to show the isomorphism is equivalent to the identity:
    $$t_n^\vee: \Hom(\check{T}_{\epsilon/n}1_{\Delta_c}^{\boxtimes n}, 1_{\tilde{\Delta}_n}) \xrightarrow{\sim} \Hom(\check{T}_{\epsilon/n}1_{\Delta_c}^{\boxtimes n}, 1_{\tilde{\Delta}_n}).$$
    Consider the Sato--Sabloff fiber sequence
    $$\Hom(T_{\epsilon/n}1_{\Delta_c}^{\boxtimes n}, 1_{\tilde{\Delta}_n}) \to \Hom(\check{T}_{\epsilon/n}1_{\Delta_c}^{\boxtimes n}, 1_{\tilde{\Delta}_n}) \to \Gamma(S^*(M \times M)^n, \mhom(\check{T}_{\epsilon/n}1_{\Delta_c}^{\boxtimes n}, 1_{\tilde{\Delta}_n})).$$
    It then suffices to show that the following isomorphisms are both equivalent to the identity:
    \begin{gather*}
        t_n^\vee: \Hom(T_{\epsilon/n}1_{\Delta_c}^{\boxtimes n}, 1_{\tilde{\Delta}_n}) \xrightarrow{\sim} \Hom(T_{\epsilon/n}1_{\Delta_c}^{\boxtimes n}, 1_{\tilde{\Delta}_n}), \\
        t_n^\vee: \Gamma (S^*(M \times M)^n, \mhom(\check{T}_{\epsilon/n}1_\Delta^{\boxtimes n}, 1_{\tilde{\Delta}_n})) \xrightarrow{\sim} \Gamma (S^*(M \times M)^n, \mhom(\check{T}_{\epsilon/n}1_\Delta^{\boxtimes n}, 1_{\tilde{\Delta}_n})).
    \end{gather*}
    For the second isomorphism, it follows from Lemma \ref{lem:support-diagonal-rel} that it is the identity. For the first isomorphism, we have isomorphisms that are compatible with the cylic rotation
    \begin{align*}
        \Hom(T_{\epsilon/n}1_{\Delta_c}^{\boxtimes n}, 1_{\tilde{\Delta}_n}) & \simeq \Gamma((M \times M)^n, \sHom(T_{\epsilon/n}1_{\Delta_c}^{\boxtimes n}, 1_{(M \times M)^n}) \otimes 1_{\tilde{\Delta}_n}) \\
        & \simeq \Gamma((M \times M)^n, \sHom(1_{\Delta_c}^{\boxtimes n}, 1_{(M \times M)^n}) \otimes 1_{\tilde{\Delta}_n}) \\
        & \simeq \Gamma(\delta_n(M), \sHom(1_{\Delta_c}^{\boxtimes n}, 1_{(M \times M)^n}) \otimes 1_{\tilde{\Delta}_n}).
    \end{align*}
    By the gapped non-characteristic deformation Proposition \ref{prop:gapped-fully-faithful}, we know that the first map is an isomorphism. By Lemma \ref{lem:trivial-action-zerosection}, we know that the second map is an isomorphism.

    Under the restriction to the image of $\delta_n$, the smooth map $t_n$ becomes the identity map, and the isomorphism $t_n^\vee$ becomes the identity morphism. Hence the microlocalization of this isomorphism will also be the identity. This thus completes the proof.
\end{proof}

\subsection{Strong Calabi--Yau structures}
In this section, we will show that with the standard $S^1$-action on both sides, the acceleration morphisms are $S^1$-equivariant morphisms, and thus get a canonical strong (relative) Calabi--Yau structure that lifts the weak (relative) Calabi--Yau structure.

\begin{proposition}\label{prop:accelerate-equivariant}
    For $\Lambda \subseteq S^*M$ be a compact subanalytic Legendrian, the acceleration morphism
    $\Tr_{cst}(\Id_{\msh_\Lambda(\Lambda)}) \to \Tr(\Id_{\msh_\Lambda(\Lambda)})$
    is $S^1$-equivariant.
\end{proposition}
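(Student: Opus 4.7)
The plan is to promote the acceleration morphism to a morphism of cyclic objects and then invoke Proposition \ref{prop:paracyclic}. Concretely, for each $n$ I would define the level-$n$ morphism
$$\alpha_n \colon \Tr_{cst}(\Id_{\msh_\Lambda(\Lambda)}, n) \longrightarrow \Tr(\Id_{\msh_\Lambda(\Lambda)}, n)$$
by applying $p_{n!}\widetilde{\Delta}_n^*$ to the external $n$-fold tensor power of the pointwise acceleration arrow
$$\mathrm{Fib}(\check{T}_{\epsilon/n}1_\Delta \to T_{\epsilon/n}1_\Delta) \longrightarrow \iota_{\widehat\Lambda_\cup \times -\widehat\Lambda_\cup}^*\mathrm{Fib}(\check{T}_{\epsilon/n}1_\Delta \to T_{\epsilon/n}1_\Delta) \xrightarrow{\ \sim\ } m_{\widehat\Lambda_\cup \times -\widehat\Lambda_\cup}^l m_{\widehat\Lambda_\cup \times -\widehat\Lambda_\cup} 1_\Delta,$$
where the first arrow is the positive wrapping unit and the second is the identification from the proof of Proposition \ref{prop:accelerate}. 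For $n=1$ this is precisely the acceleration morphism of Proposition \ref{prop:accelerate}.

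Next I would check that $\{\alpha_n\}$ is a map of cyclic (equivalently, $S^1$-invariant para-cyclic) objects. Cyclic rotation equivariance is essentially tautological: on both sides $t_n$ acts by the cyclic permutation of the $n$ external factors of $(M\times M)$, and $\alpha_n$ is the $n$-fold external product of the same pointwise map, so it commutes with this permutation. For the face maps $d_i$, both sides are induced by a ``collapse identity'' of the form $\Delta_2^*(- \boxtimes K \boxtimes -) \to \Delta_1^*(- \boxtimes -)$ where $K$ is the chosen representative of the identity bimodule ($\mathrm{Fib}(\check{T}_{\epsilon/n}1_\Delta \to T_{\epsilon/n}1_\Delta)$ on one side, $m_{\widehat\Lambda_\cup \times -\widehat\Lambda_\cup}^l m_{\widehat\Lambda_\cup \times -\widehat\Lambda_\cup} 1_\Delta$ on the other). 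The acceleration map was built precisely so that it sends one identity bimodule class to the other (this is the unwrapping done in Proposition \ref{prop:accelerate} combined with Remark \ref{rem:orientation-is-id}), so these are intertwined. Degeneracy maps $s_i$ are handled identically, using the degeneration identities coming from units.

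The subtlety here is that on $\Tr_{cst}$ the face map from level $n$ to level $n-1$ requires comparing the push-off scales $\epsilon/n$ and $\epsilon/(n-1)$, as handled by Lemma \ref{lem:face-map-define}. This reindexing is produced by the positively gapped non-characteristic deformation (Proposition \ref{prop:gapped-fully-faithful}) applied to sheaves whose only relevant singular support contribution lies on $\widetilde{\Delta}_{\Lambda,n}$; since the isomorphism $\iota_{\widehat\Lambda_\cup \times -\widehat\Lambda_\cup}^*\mathrm{Fib}(\check{T}_{\epsilon/n}1_\Delta \to T_{\epsilon/n}1_\Delta) \simeq m_{\widehat\Lambda_\cup \times -\widehat\Lambda_\cup}^l m_{\widehat\Lambda_\cup \times -\widehat\Lambda_\cup} 1_\Delta$ is insensitive to the specific positive value of the push-off time (both sides compute the same bimodule in $\msh_\Lambda(\Lambda)$), the reindexing is compatible with $\alpha_n$. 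Once this is verified, $\{\alpha_n\}$ is a morphism of cyclic objects, and Proposition \ref{prop:paracyclic} together with Theorem \ref{thm:AF} yields the claimed $S^1$-equivariance.

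The main obstacle I anticipate is carefully threading the cyclic structure of Ayala--Francis (Theorem \ref{thm:AF}) through the explicit sheaf models of Lemmas \ref{lem:cyclic-homology} and \ref{lem:cyclic-homology-rel}: one must check that the ``iterated unit, rotate, iterated counit'' formula for $\Tr(\Id_X, n)$ in the trace model translates, via Fourier--Mukai and adjunction, to the ``external product pushed to the twisted diagonal'' formula, in a way that intertwines with the levelwise acceleration maps and their reindexing across face maps. This is essentially a combinatorial bookkeeping exercise, but it is where all of the coherences must be matched.
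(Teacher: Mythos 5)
Your proposal is correct and follows essentially the same route as the paper: the acceleration map is promoted to a levelwise morphism of cyclic objects (the wrapping map $\mathrm{Fib}(\check{T}_{\epsilon/n}1_\Delta \to T_{\epsilon/n}1_\Delta) \to \iota_{\widehat\Lambda_{\cup} \times -\widehat\Lambda_{\cup}}^*\mathrm{Fib}(\cdots) \simeq m^l m\, 1_\Delta$ applied factorwise under $p_{n!}\widetilde{\Delta}_n^*$), and one checks compatibility with the face morphisms and the cyclic rotations, which the paper records as two evidently commuting diagrams. Your extra attention to the $\epsilon/n$ versus $\epsilon/(n-1)$ reindexing via Lemma \ref{lem:face-map-define} and Proposition \ref{prop:gapped-fully-faithful} is exactly the bookkeeping the paper has already absorbed into the construction of the cyclic object $\Tr_{cst}$ (Proposition \ref{prop:cyclic-well-define}), so nothing further is needed.
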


\begin{proposition}\label{prop:accelerate-equivariant-rel}
    For $\Lambda \subseteq S^*M$ be a compact subanalytic Legendrian, the acceleration morphism
    $\Tr_{cst}(\Id_{\Sh_{\widehat\Lambda}(M)}) \to \Tr(\Id_{\Sh_{\widehat\Lambda}(M)})$
    is $S^1$-equivariant.
\end{proposition}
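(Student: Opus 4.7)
The plan is to lift the acceleration morphism to a morphism of cyclic (equivalently, $S^1$-invariant para-cyclic) objects realizing the Hochschild homologies, so that Proposition \ref{prop:paracyclic} automatically gives $S^1$-equivariance on the colimits. On each level $n$ we define the morphism
$$\Tr_{cst}(\Id_{\Sh_{\widehat\Lambda}(M)}, n) = p_{n!}\widetilde{\Delta}_n^*(\check{T}_{\epsilon/n}1_{\Delta_c})^{\boxtimes n} \longrightarrow p_{n!}\widetilde{\Delta}_n^*(\iota_{-\widehat\Lambda\times\widehat\Lambda}^* 1_\Delta)^{\boxtimes n} = \Tr(\Id_{\Sh_{\widehat\Lambda}(M)}, n)$$
by applying $p_{n!}\widetilde{\Delta}_n^*$ to the $n$-fold box product of the natural wrapping morphism
$$\check{T}_{\epsilon/n}1_{\Delta_c} \longrightarrow 1_\Delta \longrightarrow \iota_{-\widehat\Lambda\times\widehat\Lambda}^* 1_\Delta,$$
where the first map is the natural extension from the compact diagonal (together with the continuation associated to $\check{T}_t$), and the second is the unit of the adjunction $\iota_{-\widehat\Lambda\times\widehat\Lambda*} \dashv \iota_{-\widehat\Lambda\times\widehat\Lambda}^*$. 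For $n=1$, this is precisely the acceleration morphism of Proposition \ref{prop:accelerate} by construction.

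Next I would verify compatibility with the cyclic structure. Compatibility with the cyclic rotation $t_n$ is immediate: both sides are $n$-fold external tensor products, the morphism on each factor is the same, and $t_n$ is the tautological rotation of $(M\times M)^n$, so the two squares commute strictly. Compatibility with the face maps $d_i$ and degeneracy maps $s_i$ reduces, via the description in Proposition \ref{prop:cyclic-well-define-rel} and Lemma \ref{lem:cyclic-homology}, to the naturality of the counit and unit structure maps $\Delta_2^*(-\boxtimes K\boxtimes -)\to \Delta_1^*(-\boxtimes -)$ and $K\to(\id\times\Delta_1\times\id)^*(K\boxtimes K)$ in the variable $K$. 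Since the wrapping morphism $\check{T}_{\epsilon/n}1_{\Delta_c}\to\iota_{-\widehat\Lambda\times\widehat\Lambda}^* 1_\Delta$ is a morphism in $\Sh(M\times M)$, these natural transformations produce the required commuting squares.

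The main obstacle lies in comparing different cyclic levels: a face map $d_i$ on the $\Tr_{cst}$ side lowers $n$ to $n-1$ while simultaneously changing the push-off parameter from $\epsilon/n$ to $\epsilon/(n-1)$ via the identifications established in Lemmas \ref{lem:face-map-define}, \ref{lem:cyclic-define-isom}, and \ref{lem:cyclic-define-zero}. These identifications are themselves instances of the positively gapped non-characteristic deformation lemma Proposition \ref{prop:gapped-fully-faithful}; the content to check is that under the wrapping to $(\iota_{-\widehat\Lambda\times\widehat\Lambda}^*1_\Delta)^{\boxtimes n-1}$, the corresponding deformation parameters remain non-characteristic with respect to $\widetilde{\Delta}_{n-1}$, which follows from the same Reeb chord analysis used to prove Lemmas \ref{lem:cyclic-define-isom}--\ref{lem:cyclic-define-zero}, since positive wrapping only moves singular support in the positive Reeb direction and cannot create new broken orbits of length $\leq \epsilon$ beyond the constant ones.

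Finally, having assembled a morphism of para-cyclic objects $\boldsymbol{\Delta}_\circlearrowleft \to \End(1_\SC)$ that is $S^1$-invariant on both sides (i.e.~a morphism of cyclic objects $\boldsymbol{\Lambda}\to\End(1_\SC)$), Proposition \ref{prop:paracyclic} together with Theorem \ref{thm:AF} identifies the induced map on geometric realizations with the acceleration morphism and promotes it to an $S^1$-equivariant morphism. The same scheme proves Proposition \ref{prop:accelerate-equivariant} verbatim, replacing $\check{T}_{\epsilon/n}1_{\Delta_c}$ by $\mathrm{Fib}(\check{T}_{\epsilon/n}1_\Delta\to T_{\epsilon/n}1_\Delta)$ and $\iota_{-\widehat\Lambda\times\widehat\Lambda}^*1_\Delta$ by $m_{\widehat\Lambda_\cup\times-\widehat\Lambda_\cup}^l m_{\widehat\Lambda_\cup\times-\widehat\Lambda_\cup}1_\Delta$, and using Proposition \ref{prop:cyclic-well-define} and Lemma \ref{lem:cyclic-homology-rel} in place of their sheaf-level analogues.
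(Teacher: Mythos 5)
Your proposal is correct and follows essentially the same route as the paper: the paper likewise defines the acceleration map levelwise via the wrapping (adjunction-unit) morphism on each $\boxtimes$-factor and verifies the two commutative squares expressing compatibility with the face/degeneracy maps (across levels with push-off parameters $\epsilon/(n+1)$ versus $\epsilon/n$) and with the cyclic rotations $t_n$, the cross-level identifications being exactly those of Lemmas \ref{lem:face-map-define}--\ref{lem:cyclic-define-zero}. Your extra care in packaging this as a morphism of cyclic objects and citing Proposition \ref{prop:paracyclic} and Theorem \ref{thm:AF} is just a more explicit version of what the paper leaves implicit (only note that the natural map runs $1_{\Delta_c}\to\check{T}_{\epsilon/n}1_{\Delta_c}$ via the continuation, so the levelwise map is most cleanly written as the unit $\check{T}_{\epsilon/n}1_{\Delta_c}\to\iota_{*}\iota^{*}\check{T}_{\epsilon/n}1_{\Delta_c}$ followed by the identification of $\iota^{*}\check{T}_{\epsilon/n}1_{\Delta_c}$ with the identity bimodule, rather than through a map $\check{T}_{\epsilon/n}1_{\Delta_c}\to 1_{\Delta}$).
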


Again, as the arguments will be the same, we only write down the proof of the more complcated case, namely Proposition \ref{prop:accelerate-equivariant}.

\begin{proof}[Proof of Proposition \ref{prop:accelerate-equivariant}]
    We need to show the compatibility of the acceleration morphism with the degeneracy morphisms and the translation morphisms. For the degeneracy morphism, compatibility is equivalent to the commutative diagram
    \[\resizebox{\textwidth}{!}{
    \xymatrix@C=4mm{
    p_{n+1 !}\widetilde{\Delta}_{n+1}^*\mathrm{Fib}(\check{T}_{\epsilon/(n+1)}1_\Delta \to T_{\epsilon/(n+1)} 1_\Delta)^{\boxtimes n+1} \ar[r]^-{d_i} \ar[d] & p_{n !}\widetilde{\Delta}_n^*\mathrm{Fib}(\check{T}_{\epsilon/n}1_\Delta \to T_{\epsilon/n} 1_\Delta)^{\boxtimes n} \ar[d] \\
    p_{n+1 !}\widetilde{\Delta}_{n+1}^* \iota_{\widehat\Lambda_{\cup,\epsilon} \times -\widehat\Lambda_{\cup, \epsilon}}^* \mathrm{Fib}(\check{T}_{\epsilon/(n+1)}1_\Delta \to T_{\epsilon/(n+1)} 1_\Delta)^{\boxtimes n+1} \ar[r]^-{d_i} & p_{n !}\widetilde{\Delta}_n^* \iota_{\widehat\Lambda_{\cup,\epsilon} \times -\widehat\Lambda_{\cup, \epsilon}}^* \mathrm{Fib}(\check{T}_{\epsilon/n}1_\Delta \to T_{\epsilon/n} 1_\Delta)^{\boxtimes n},
    }}\]
    while for the translation morphism, compatibility is equivalent to the commutative diagram
    \[\xymatrix{
    p_{n !}\widetilde{\Delta}_{n}^*\mathrm{Fib}(\check{T}_{\epsilon/n}1_\Delta \to T_{\epsilon/n} 1_\Delta)^{\boxtimes n} \ar[r]^{t_n} \ar[d] & p_{n !}\widetilde{\Delta}_n^*\mathrm{Fib}(\check{T}_{\epsilon/n}1_\Delta \to T_{\epsilon/n} 1_\Delta)^{\boxtimes n} \ar[d] \\
    p_{n !}\widetilde{\Delta}_{n}^* \iota_{\widehat\Lambda_{\cup,\epsilon} \times -\widehat\Lambda_{\cup, \epsilon}}^* \mathrm{Fib}(\check{T}_{\epsilon/n}1_\Delta \to T_{\epsilon/n} 1_\Delta)^{\boxtimes n} \ar[r]^{t_n} & p_{n !}\widetilde{\Delta}_n^* \iota_{\widehat\Lambda_{\cup,\epsilon} \times -\widehat\Lambda_{\cup, \epsilon}}^* \mathrm{Fib}(\check{T}_{\epsilon/n}1_\Delta \to T_{\epsilon/n} 1_\Delta)^{\boxtimes n},
    }\]
    Both diagrams are obvious. This shows the $S^1$-equivariancy of the acceleration morphism.
\end{proof}

Using the above result, we can now show the existence of a strong absolute Calabi--Yau structure on microsheaves on Legendrians in the cophere bundle and a strong relative Calabi--Yau structure on the continuous adjunction pair from microlocalization.

\begin{theorem}\label{thm:strong-cy}
    Let $M$ be a connected oriented manifold of dimension $n$ and $\Lambda \subseteq S^*M$ be a compact subanalytic Legendrian. Then the category $\msh_\Lambda(\Lambda)$ admits a canonical $(n-1)$-dimensional strong smooth Calabi--Yau structure.
\end{theorem}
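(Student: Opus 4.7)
The plan is to lift the weak smooth Calabi--Yau class $\phi : 1_\cV[n-1] \to \HH_*(\msh_\Lambda(\Lambda))$ constructed in Theorem \ref{thm:weak-relative-cy} to a class in the negative cyclic homology $\HH_*(\msh_\Lambda(\Lambda))^{S^1}$. The key observation is that by Theorem \ref{thm:accelerate-hochschild} and Remark \ref{rem:orientation-is-id}, the weak Calabi--Yau class is exactly the image of the unit $1 \in \Gamma(\Lambda, 1_\Lambda)$ under the acceleration composition
\[
1_\cV[n-1] \longrightarrow \Gamma(\Lambda, 1_\Lambda)[n-1] \xrightarrow{\;\sim\;} \Tr_{cst}(\Id_{\msh_\Lambda(\Lambda)}) \longrightarrow \Tr(\Id_{\msh_\Lambda(\Lambda)}) = \HH_*(\msh_\Lambda(\Lambda)),
\]
where the middle identification uses the Sabloff--Serre/Poincaré duality isomorphism $\Gamma(\Lambda,1_\Lambda)\simeq p_!\Delta^* \mathrm{Fib}(\check T_\epsilon 1_\Delta \to T_\epsilon 1_\Delta)[1-n]$ from Proposition \ref{prop:accelerate}.

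First I would promote each arrow in this composition to an $S^1$-equivariant morphism. The source $1_\cV[n-1]$ carries the trivial $S^1$-action by construction. The object $\Tr_{cst}(\Id_{\msh_\Lambda(\Lambda)})$ carries an intrinsic $S^1$-action coming from the cyclic object $\Tr_{cst}(\Id_{\msh_\Lambda(\Lambda)}, *)$ established in Proposition \ref{prop:cyclic-well-define}; by Proposition \ref{prop:trivial-action}, this $S^1$-action is (canonically equivalent to) the trivial action, since the cyclic rotation on each term $p_{n!}\widetilde{\Delta}_n^* \mathrm{Fib}(\check T_{\epsilon/n}1_\Delta \to T_{\epsilon/n}1_\Delta)^{\boxtimes n}$ is pinned down by restriction to the fixed locus $\delta_n(S^*M)$. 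Consequently, the map $1_\cV[n-1] \to \Tr_{cst}(\Id_{\msh_\Lambda(\Lambda)})$ upgrades canonically to a map of cyclic objects. Finally, by Proposition \ref{prop:accelerate-equivariant}, the acceleration morphism $\Tr_{cst}(\Id_{\msh_\Lambda(\Lambda)}) \to \Tr(\Id_{\msh_\Lambda(\Lambda)})$ is $S^1$-equivariant because each face, degeneracy and cyclic rotation on the constant-orbit side lifts the corresponding structure morphism on $\Tr$ through the natural wrapping maps.

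Composing these $S^1$-equivariant arrows yields an $S^1$-equivariant morphism $1_\cV[n-1] \to \HH_*(\msh_\Lambda(\Lambda))$; since $1_\cV[n-1]$ has trivial $S^1$-action, this is equivalent by adjunction to the desired lift
\[
\widetilde{\phi} : 1_\cV[n-1] \longrightarrow \HH_*(\msh_\Lambda(\Lambda))^{S^1}
\]
whose underlying class in $\HH_*(\msh_\Lambda(\Lambda))$ is the weak Calabi--Yau class $\phi$. By Proposition \ref{prop:weak-cy-nondeg} this already induces $\Id_{\msh_\Lambda(\Lambda)}^![n-1] \xrightarrow{\sim} \Id_{\msh_\Lambda(\Lambda)}$, so $\widetilde{\phi}$ defines a strong $(n-1)$-dimensional smooth Calabi--Yau structure.

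The last step is canonicity. The choices entering the construction are the auxiliary parameters $\epsilon > 0$ (chosen small enough so that Lemmas \ref{lem:face-map-define}, \ref{lem:cyclic-define-isom}, \ref{lem:cyclic-define-zero} apply) and the non-negative Hamiltonians $\check T_t, T_t$ of Definition \ref{def:hamiltonian}; the orientation of $M$ enters only through the Poincaré duality isomorphism $\omega_M \simeq 1_M[-n]$. Shrinking $\epsilon$ and deforming the contact Hamiltonian through non-negative Hamiltonians preserves each of the non-characteristic intersections computed in the lemmas above, so by the positively gapped deformation invariance (Proposition \ref{prop:gapped-fully-faithful}) the entire cyclic object $\Tr_{cst}(\Id_{\msh_\Lambda(\Lambda)}, *)$ and the acceleration morphism are independent of these choices up to canonical equivalence. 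The anticipated main obstacle is bookkeeping: verifying that the identification $\Gamma(\Lambda,1_\Lambda) \simeq \Tr_{cst}(\Id_{\msh_\Lambda(\Lambda)})[1-n]$ of Proposition \ref{prop:accelerate} is compatible with the cyclic structure (so that $1_\cV[n-1] \to \Tr_{cst}$ is cyclic), and that invariance under Legendrian deformation (Theorem \ref{thm:invariance-deformation}) respects all the data; both should follow by the same positively gapped non-characteristic deformation arguments that underwrite Propositions \ref{prop:cyclic-well-define} and \ref{prop:trivial-action}.
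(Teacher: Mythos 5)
Your proposal follows the same route as the paper's proof: it factors the weak Calabi--Yau class through the acceleration morphism $1_\cV[n-1] \to \Gamma(\Lambda,1_\Lambda)[n-1] \simeq \Tr_{cst}(\Id_{\msh_\Lambda(\Lambda)}) \to \HH_*(\msh_\Lambda(\Lambda))$, invokes Proposition \ref{prop:trivial-action} for the triviality of the $S^1$-action on the constant-orbit trace and Proposition \ref{prop:accelerate-equivariant} for equivariance of the acceleration map, and then lifts the unit canonically to the $S^1$-invariants, exactly as the paper does. This is correct and essentially identical to the paper's argument (the paper defers the deformation-invariance/canonicity discussion to Theorem \ref{thm:canonical-calabi-yau}, but the core lifting argument coincides).
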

\begin{proof}
    Consider the weak smooth Calabi--Yau structure on $\msh_\Lambda(\Lambda)$ determined in Theorem \ref{thm:weak-relative-cy}. By Proposition \ref{prop:accelerate}, we know that the Calabi--Yau structure factors through the acceleration morphism
    $$1_\cV[n-1] \to \Gamma(\Lambda, 1_\Lambda)[n-1] \to  \HH_*(\msh_\Lambda(\Lambda)).$$
    By Proposition \ref{prop:accelerate-equivariant}, the acceleration morphism is $S^1$-equivariant. By Proposition \ref{prop:trivial-action}, the inclusion morphism to constant orbits with trivial action is $S^1$-equivariant. There is a canonical lift of the unit $1_\cV \in \Gamma(\Lambda, 1_\Lambda)$ to the unit
    $$1_\cV \in \Gamma(\Lambda, 1_\Lambda)^{S^1} \simeq \Gamma(\Lambda, 1_\Lambda) \otimes \Gamma(BS^1, 1_{BS^1}).$$
    Therefore, the weak Calabi--Yau structure has a canonical lift to the negative cyclic homology $\HH_*(\msh_\Lambda(\Lambda))^{S^1}$.
\end{proof}

\begin{theorem}\label{thm:strong-rel-cy}
    Let $M$ be a connected oriented manifold of dimension $n$ and $\Lambda \subseteq S^*M$ be a compact subanalytic Legendrian. Then the continuous adjunction
    $$m_\Lambda^l: \msh_\Lambda(\Lambda) \rightleftharpoons \Sh_{\widehat\Lambda}(M) : m_\Lambda$$
    admits a canonical $n$-dimensional strong smooth relative Calabi--Yau structure.
\end{theorem}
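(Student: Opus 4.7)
The plan is to follow the same template as the proof of Theorem \ref{thm:strong-cy}, but working in the relative setting, assembling the $S^1$-equivariance data for both categories together with the functoriality of the acceleration morphism under the adjunction $(m_\Lambda^l, m_\Lambda)$. Recall from Theorem \ref{thm:weak-relative-cy} that the weak relative Calabi--Yau class
$$\phi \colon 1_\cV[n] \to \HH_*(\msh_\Lambda(\Lambda), \Sh_{\widehat\Lambda}(M))$$
has already been produced, so the only thing to verify is the existence of a canonical lift $\widetilde\phi \colon 1_\cV[n] \to \HH_*(\msh_\Lambda(\Lambda), \Sh_{\widehat\Lambda}(M))^{S^1}$ to the relative negative cyclic homology.

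First, I would use the relative version of the acceleration diagram from Theorem \ref{thm:accelerate-hochschild} to factor $\phi$ through the topological fiber sequence: the weak relative Calabi--Yau class is the image of the canonical unit $1 \in \Gamma(\widehat\Lambda, 1_{\widehat\Lambda})$ under the morphism
$$\Gamma(\widehat\Lambda, 1_{\widehat\Lambda})[n] \to \HH^*(\msh_\Lambda(\Lambda), \Sh_{\widehat\Lambda}(M))[n] \xrightarrow{\sim} \HH_*(\msh_\Lambda(\Lambda), \Sh_{\widehat\Lambda}(M)).$$
This uses that the identifications of the constant-orbit parts with $\Gamma(\Lambda, 1_\Lambda)$ and $\Gamma(\widehat\Lambda, 1_{\widehat\Lambda})$ are compatible with the restriction map $\Gamma(\widehat\Lambda, 1_{\widehat\Lambda}) \to \Gamma(\Lambda, 1_\Lambda)$ coming from the inclusion $\Lambda \hookrightarrow \widehat\Lambda$, which in turn matches the map between Hochschild invariants induced by the adjunction $(m_\Lambda^l, m_\Lambda)$ --- this compatibility should be extracted from Lemma \ref{lem:bimod-adjoint-sheaf} together with the construction of the acceleration morphism in Proposition \ref{prop:accelerate}.

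Next, I would package the two $S^1$-equivariance results (Propositions \ref{prop:accelerate-equivariant} and \ref{prop:accelerate-equivariant-rel}) along with the functoriality of the trace under adjunctions into a commutative diagram of $S^1$-equivariant morphisms
\[\xymatrix{
\Tr_{cst}(\Id_{\msh_\Lambda(\Lambda)},\Id_{\Sh_{\widehat\Lambda}(M)}) \ar[r] \ar[d] & \Tr_{cst}(\Id_{\msh_\Lambda(\Lambda)}) \ar[r] \ar[d] & \Tr_{cst}(\Id_{\Sh_{\widehat\Lambda}(M)}) \ar[d] \\
\HH_*(\msh_\Lambda(\Lambda), \Sh_{\widehat\Lambda}(M))[-n] \ar[r] & \HH_*(\msh_\Lambda(\Lambda))[1-n] \ar[r] & \HH_*(\Sh_{\widehat\Lambda}(M))[1-n],
}\]
where the top row is the fiber sequence induced by the topological fiber sequence of $\Gamma(\widehat\Lambda, 1_{\widehat\Lambda}) \to \Gamma(\Lambda, 1_\Lambda) \to \Gamma(\widehat\Lambda, 1_{\widehat\Lambda \setminus \Lambda})[1]$. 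The relative constant-orbit term $\Tr_{cst}(\Id_{\msh_\Lambda(\Lambda)}, \Id_{\Sh_{\widehat\Lambda}(M)})$ is defined as the fiber of the right horizontal arrow, and carries an induced $S^1$-action. By Propositions \ref{prop:trivial-action} and \ref{prop:trivial-action-rel}, the two rightmost objects in the top row carry trivial $S^1$-actions, and since the top horizontal map is $S^1$-equivariant with trivial target, the fiber also inherits the trivial $S^1$-action.

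Finally, since $\Tr_{cst}(\Id_{\msh_\Lambda(\Lambda)}, \Id_{\Sh_{\widehat\Lambda}(M)}) = \Gamma(\widehat\Lambda, 1_{\widehat\Lambda})[n-1]$ has trivial $S^1$-action, there is a canonical lift of the unit
$$1_\cV \in \Gamma(\widehat\Lambda, 1_{\widehat\Lambda})^{S^1} \simeq \Gamma(\widehat\Lambda, 1_{\widehat\Lambda}) \otimes \Gamma(BS^1, 1_{BS^1}).$$
Pushing this lift through the $S^1$-equivariant acceleration map produces the desired lift $\widetilde\phi$ of $\phi$ to the relative negative cyclic homology, giving the strong relative smooth Calabi--Yau structure. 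The main obstacle I anticipate is verifying the compatibility of the topological fiber sequence on the constant-orbit side with the relative Hochschild fiber sequence on the trace side --- one has to check that the naturally defined cyclic objects $\Tr_{cst}$ fit together into a commutative diagram compatible with the degeneracy and cyclic maps, which amounts to tracing through the identifications in Propositions \ref{prop:cyclic-well-define}, \ref{prop:cyclic-well-define-rel} and verifying they intertwine with the functor $\iota_{\widehat\Lambda \times -\widehat\Lambda}^*$ and the identification of $m_\Lambda^l m_\Lambda$ provided by Lemma \ref{lem:dual-bimod}.
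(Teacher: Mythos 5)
Your proposal is correct and follows essentially the same route as the paper: factor the weak relative class through the unit $1 \in \Gamma(\widehat\Lambda, 1_{\widehat\Lambda})$ via the acceleration morphisms, invoke the $S^1$-equivariance of acceleration and the triviality of the action on the constant-orbit terms, and lift the unit through $\Gamma(\widehat\Lambda, 1_{\widehat\Lambda})^{S^1} \simeq \Gamma(\widehat\Lambda, 1_{\widehat\Lambda}) \otimes \Gamma(BS^1, 1_{BS^1})$. The only cosmetic difference is that you present the relative constant-orbit term as the fiber of the horizontal map of constant-orbit traces, whereas the paper takes the cofiber of the vertical maps in the acceleration square (and, like you, it must implicitly treat that square as a square of $S^1$-equivariant maps, which is exactly the compatibility you flag as the remaining check).
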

\begin{proof}
    Consider the weak smooth Calabi--Yau structure on $\msh_\Lambda(\Lambda) \rightleftharpoons \Sh_\Lambda(M)$ determined in Theorem \ref{thm:weak-relative-cy}. By Proposition \ref{prop:accelerate}, we know that the Calabi--Yau structure factors through the acceleration morphism
    \[\xymatrix{
    1_\cV[n-1] \ar[r] \ar[d] & \Gamma(\Lambda, 1_\Lambda)[n-1] \ar[r] \ar[d] & \HH_*(\msh_\Lambda(\Lambda)) \ar[d] \\
    0 \ar[r] & \Gamma(\widehat\Lambda, 1_{\widehat\Lambda \setminus \Lambda})[n] \ar[r] & \HH_*(\Sh_\Lambda(M)).
    }\]
    and the composition is null homotopic. The cofiber of the vertical morphisms defines the class
    $$1_\cV[n] \longrightarrow \Gamma(\widehat\Lambda, 1_{\widehat\Lambda})[n] \longrightarrow \HH_*(\msh_\Lambda(\Lambda), \Sh_{\widehat\Lambda}(M)).$$
    By Proposition \ref{prop:accelerate-equivariant}, the acceleration is $S^1$-equivariant. By Proposition \ref{prop:trivial-action}, the inclusion morphism to constant orbits with trivial action is $S^1$-equivariant. There is a canonical lift of the unit $1_\cV \in \Gamma(\Lambda, 1_{\Lambda})$ and the zero element $0 \in \Gamma(\widehat\Lambda, 1_{\widehat\Lambda \setminus \Lambda})$ to the unit and zero element
    $$1_\cV \in \Gamma(\widehat\Lambda, 1_{\widehat\Lambda})^{S^1} \simeq \Gamma(\widehat\Lambda, 1_{\widehat\Lambda}) \otimes \Gamma(BS^1, 1_{BS^1}), \;\; 0 \in \Gamma(\widehat\Lambda, 1_{\widehat\Lambda \setminus \Lambda})^{S^1} \simeq \Gamma(\widehat\Lambda, 1_{\widehat\Lambda \setminus \Lambda}) \otimes \Gamma(BS^1, 1_{BS^1}).$$
    Therefore, the weak relative Calabi--Yau structure admits a canonical lifting to the relative negative cyclic homology $\HH_*(\msh_\Lambda(\Lambda), \Sh_{\widehat\Lambda}(M))^{S^1}$.
\end{proof}

Finally, by Theorem \ref{thm:smooth-induce-proper}, we can then also conclude the following:

\begin{corollary}
    Let $M$ be an oriented manifold of dimension $n$ and $\Lambda \subseteq S^*M$ be a compact subanalytic Legendrian. Then the continuous functor
    $$m_\Lambda: \Sh_{\widehat\Lambda}^b(M) \to \msh_\Lambda^b(\Lambda)$$
    admits a canonical $n$-dimensional strong proper relative Calabi--Yau structure, and $\msh_\Lambda^b(\Lambda)$ admits a canonical $(n-1)$-dimensional strong proper Calabi--Yau structure.
\end{corollary}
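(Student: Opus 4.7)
The plan is to deduce this corollary as a direct consequence of the strong smooth (relative) Calabi--Yau structures established in Theorems \ref{thm:strong-cy} and \ref{thm:strong-rel-cy}, via the general smooth-to-proper passage of Brav--Dyckerhoff (Theorem \ref{thm:left-right-cy}) and Keller--Wang (Theorem \ref{thm:smooth-induce-proper}), together with the identification of bounded sheaves with proper modules from Theorem \ref{thm:perfcompact}.

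For the absolute statement, I would start from the strong smooth $(n-1)$-dimensional Calabi--Yau structure on $\msh_\Lambda(\Lambda) = \Ind(\msh^c_\Lambda(\Lambda))$ provided by Theorem \ref{thm:strong-cy}. Since this category is compactly generated and smooth, Theorem \ref{thm:left-right-cy} yields a canonical right (i.e.~proper) Calabi--Yau structure of the same dimension on $\Prop \msh_\Lambda(\Lambda) \coloneqq \Ind(\Prop \msh^c_\Lambda(\Lambda))$. Theorem \ref{thm:perfcompact} then identifies this with $\Ind(\msh^b_\Lambda(\Lambda))$, giving the $(n-1)$-dimensional strong proper Calabi--Yau structure on $\msh^b_\Lambda(\Lambda)$.

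For the relative statement, I would apply Theorem \ref{thm:smooth-induce-proper} to the continuous adjunction
$$m_\Lambda^l: \msh_\Lambda(\Lambda) \rightleftharpoons \Sh_{\widehat\Lambda}(M) : m_\Lambda,$$
equipped with the strong smooth relative $n$-dimensional Calabi--Yau structure from Theorem \ref{thm:strong-rel-cy}. Both categories are compactly generated and smooth (by Propositions \ref{prop:weak-rel-cy-nondeg} and \ref{prop:weak-cy-nondeg}, since the weak Calabi--Yau structures witness smoothness), so the theorem produces a canonically induced right Calabi--Yau structure on the adjunction
$$m_\Lambda : \Prop \Sh_{\widehat\Lambda}(M) \rightleftharpoons \Prop \msh_\Lambda(\Lambda) : m_\Lambda^{rr}.$$
Invoking Theorem \ref{thm:perfcompact} again to identify $\Prop \Sh_{\widehat\Lambda}(M) \simeq \Sh^b_{\widehat\Lambda}(M)$ and $\Prop \msh_\Lambda(\Lambda) \simeq \msh^b_\Lambda(\Lambda)$, this gives the desired $n$-dimensional strong proper relative Calabi--Yau structure on $m_\Lambda: \Sh^b_{\widehat\Lambda}(M) \to \msh^b_\Lambda(\Lambda)$.

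There is essentially no obstacle beyond the invocation of the cited theorems; the only point meriting verification is that the proper-module adjunction produced by Keller--Wang truly corresponds, under the Ind-completed Yoneda equivalence of Theorem \ref{thm:perfcompact}, to the microlocalization functor $m_\Lambda$ restricted to bounded sheaves. This follows because $m_\Lambda: \Sh_{\widehat\Lambda}(M) \to \msh_\Lambda(\Lambda)$ is continuous with continuous right adjoint $m_\Lambda^r$, so it preserves compact objects, and the induced functor on proper modules is computed by pre-composition $(M \mapsto M \circ m_\Lambda^c)$ where $m_\Lambda^c$ denotes the restriction to compact objects — this is precisely the functor inherited by the bounded subcategories under the Yoneda equivalence.
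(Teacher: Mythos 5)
Your proposal is correct and follows essentially the same route as the paper, which deduces this corollary directly from the strong smooth (relative) Calabi--Yau structures of Theorems \ref{thm:strong-cy} and \ref{thm:strong-rel-cy} by invoking Theorem \ref{thm:smooth-induce-proper} (and, for the absolute case, Theorem \ref{thm:left-right-cy}), with the identification $\Prop \simeq (-)^b$ supplied by Theorem \ref{thm:perfcompact}. Your extra check that the Keller--Wang adjunction on proper modules is realized by $m_\Lambda$ restricted to bounded objects is a reasonable elaboration of what the paper leaves implicit.
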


\subsection{Canonical Calabi--Yau structures}
We can show naturality of the relative Calabi--Yau structure for deformations of subanalytic Legendrians at infinity.

\begin{theorem}\label{thm:canonical-calabi-yau}
    Let $\Lambda \subseteq S^*(M \times [0, 1])$ be a subanalytic Legendrian such that
    $$\Lambda \cap S^*(M \times [0, \epsilon)) = \Lambda_0 \times [0, \epsilon), \; \Lambda \cap S^*(M \times (1-\epsilon, 1]) = \Lambda_1 \times (1-\epsilon, 1].$$
    Then the Calabi--Yau structures are compatible under the restriction functors
    $$\HH_*(\msh_{\Lambda_0}(\Lambda_0))^{S^1} \leftarrow \HH_*(\msh_{\Lambda}(\Lambda))^{S^1} \rightarrow \HH_*(\msh_{\Lambda_1}(\Lambda_1))^{S^1},$$
    and the relative Calabi--Yau structures are compatible under the restriction functors
    $$\HH_*(\msh_{\Lambda_0}(\Lambda_0), \Sh_{\widehat\Lambda_0}(M))^{S^1} \leftarrow \HH_*(\msh_{\Lambda}(\Lambda), \Sh_{\widehat\Lambda}(M \times [0, 1]))^{S^1} \rightarrow \HH_*(\msh_{\Lambda_1}(\Lambda_1), \Sh_{\widehat\Lambda_1}(M))^{S^1}.$$
\end{theorem}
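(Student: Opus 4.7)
The plan is to trace the construction of the strong (relative) Calabi--Yau classes through the restriction functors and reduce everything to the obvious naturality of the unit section $1 \in \Gamma(\widehat\Lambda, 1_{\widehat\Lambda})$ under pullback. Concretely, the strong classes produced in Theorems \ref{thm:strong-cy} and \ref{thm:strong-rel-cy} are the images of the units under the zig-zag
\[
1_{\cV} \longrightarrow \Gamma(\Lambda, 1_{\Lambda})^{S^1} \longrightarrow \Tr_{cst}(\Id_{\msh_\Lambda(\Lambda)})^{S^1} \longrightarrow \HH_*(\msh_\Lambda(\Lambda))^{S^1},
\]
and similarly in the relative case with the extra datum $1 \in \Gamma(\widehat\Lambda, 1_{\widehat\Lambda})^{S^1}$ together with the null homotopy of its image in $\Gamma(\widehat\Lambda, 1_{\widehat\Lambda \setminus \Lambda})$. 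So what must be verified is that each stage in this zig-zag is functorial with respect to the boundary restrictions $i_j^*$, $j=0,1$, under the invariance equivalences of Theorem \ref{thm:invariance-deformation}.

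First, I will check that pullback along $i_j\colon M \hookrightarrow M \times [0,1]$ carries the constant-orbit cyclic objects $\Tr_{cst}(\Id_{\msh_\Lambda(\Lambda)}, n)$ defined in Proposition \ref{prop:cyclic-well-define} to $\Tr_{cst}(\Id_{\msh_{\Lambda_j}(\Lambda_j)}, n)$, and similarly in the sheaf case (Proposition \ref{prop:cyclic-well-define-rel}). This boils down to compatibility of the contact flows $\check{T}_t, T_t$ with the product structure near the boundary; since by hypothesis $\Lambda$ is a product $\Lambda_j \times I_\epsilon(j)$ near $t = j$, one can choose the cut-off function $\rho$ (Definition \ref{def:hamiltonian}) to be a product in a neighbourhood of $S^*M \times \{j\}$, so that $i_j^* \check{T}_{\epsilon/n} 1_\Delta = \check{T}_{\epsilon/n} 1_{\Delta_j}$ and similarly for $T_{\epsilon/n} 1_\Delta$. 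Base change then identifies
\[
i_j^* p_{n!}\widetilde\Delta_n^* \mathrm{Fib}(\check{T}_{\epsilon/n} 1_\Delta \to T_{\epsilon/n} 1_\Delta)^{\boxtimes n} \simeq p_{n!}\widetilde\Delta_n^* \mathrm{Fib}(\check{T}_{\epsilon/n} 1_{\Delta_j} \to T_{\epsilon/n} 1_{\Delta_j})^{\boxtimes n},
\]
compatibly with face/degeneracy/cyclic rotation morphisms, yielding a map of cyclic objects.

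Second, I will check that the acceleration morphisms of Proposition \ref{prop:accelerate} are likewise natural under $i_j^*$. The vertical identifications in that proposition are built from the Sato--Sabloff fiber sequence (Theorem \ref{thm:sato-sabloff-rel}) and Poincar\'e duality, both of which commute with the restriction $i_j^*$ because $i_j$ is non-characteristic with respect to the singular supports involved (by the product form near the boundary). This gives a commutative square of $S^1$-equivariant maps
\[
\begin{tikzcd}
\Tr_{cst}(\Id_{\msh_\Lambda(\Lambda)}) \ar[r] \ar[d, "i_j^*"] & \Tr(\Id_{\msh_\Lambda(\Lambda)}) \ar[d, "i_j^*"] \\
\Tr_{cst}(\Id_{\msh_{\Lambda_j}(\Lambda_j)}) \ar[r] & \Tr(\Id_{\msh_{\Lambda_j}(\Lambda_j)}),
\end{tikzcd}
\]
and analogously for the relative version.

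Third, the units $1 \in \Gamma(\Lambda, 1_\Lambda)^{S^1}$ and $1 \in \Gamma(\widehat\Lambda, 1_{\widehat\Lambda})^{S^1}$ restrict tautologically to the corresponding units on $\Lambda_j$ and $\widehat\Lambda_j$, and the null homotopy in $\Gamma(\widehat\Lambda, 1_{\widehat\Lambda \setminus \Lambda})^{S^1}$ restricts to the null homotopy used in the boundary construction. Combining the previous two steps with this observation shows that $i_j^*$ intertwines the canonical strong (relative) Calabi--Yau structures, which is the claim.

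The step I expect to be most delicate is the compatibility of the Sato--Sabloff and Poincar\'e duality identifications in Propositions \ref{prop:weak-rel-cy-nondeg} and \ref{prop:weak-cy-nondeg} with $i_j^*$, since the left adjoint $m_\Lambda^l$ and the wrap-once functor $S_\Lambda^+$ are defined by non-local operations (colimits over positive isotopies) and a priori do not commute with pullback to a non-open subset. The point, however, is that near the boundary $t = j$ the Legendrian $\Lambda$ is a product, so all the relevant contact Hamiltonian flows can be chosen to preserve the product structure; the required commutativity then follows from the gapped non-characteristic deformation lemma (Proposition \ref{prop:gapped-fully-faithful}) applied to the family of fibers, exactly as in the proofs of Lemmas \ref{lem:face-map-define}--\ref{lem:cyclic-define-zero}.
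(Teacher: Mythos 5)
Your proposal is correct and follows essentially the same route as the paper's proof: choose the cut-off function (hence the flows $\check{T}_t, T_t$) to be of product form near $t=j$ so the kernels restrict to the boundary kernels, deduce compatible morphisms on the constant-orbit terms and on the acceleration morphisms, note that units restrict to units (and the null homotopy to the null homotopy), and upgrade the whole diagram $S^1$-equivariantly. The only divergence is your "most delicate step": the paper never needs $m_\Lambda^l$ or $S_\Lambda^+$ to commute with restriction to the slice, since it only uses the adjunction-induced natural transformation $\iota_{\widehat\Lambda_j \times -\widehat\Lambda_j}^* \circ i_{(j,j)}^* \to i_{(j,j)}^* \circ \iota_{\widehat\Lambda \times -\widehat\Lambda}^*$ (a lax square suffices because both Calabi--Yau classes are images of the unit sections), so your appeal to the gapped non-characteristic deformation lemma there is superfluous rather than wrong.
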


Let $\Delta: M \to M \times M$ and $\bar{\Delta}: M \times [0, 1] \to M \times M \times [0, 1] \times [0, 1]$ be the diagonal maps.
For $j = 0, 1$, we let $\Omega_j$ be a small neighbourhood of $\Lambda_j \subseteq S^*M$, and $\rho_j: S^*M \to \bR$ be a smooth cut-off function in Definition \ref{def:hamiltonian}. Similarly, we let $\Omega$ be a small neighbourhood of $\Lambda \subseteq S^*(M \times I)$, and $\rho: S^*(M \times [0, 1]) \to \bR$ be a smooth cut-off function in Definition \ref{def:hamiltonian}. Moreover, we require that $\Omega \cap S^*(M \times I_\epsilon(j)) = \Omega_j \times I_\epsilon(j)$, where $I_\epsilon(j)$ is the open neighbourhood of $j \in [0, 1]$ of radius $\epsilon$, and near the boundary of the interval, the cut-off functions satisfy
$$\rho(x, t, \xi, \tau) = \rho_j(x, \xi), \; t \in I_\epsilon(j).$$
Abusing notations, we will denote the corresponding Hamiltonian flow by $\check{T}_t: S^*M \to S^*M$ and $\check{T}_t: S^*(M \times [0, 1]) \to S^*(M \times [0, 1])$.

\begin{proof}[Proof of Theorem \ref{thm:canonical-calabi-yau}]
First, we identify the weak Calabi--Yau classes. We have natural morphisms of diagrams of Hochschild homologies
\[\xymatrix{
\HH_*(\msh_{\Lambda}(\Lambda)) \ar[d] \ar[r] & \HH_*(\Sh_{\widehat\Lambda}(M \times [0, 1])) \ar[d] \\
\HH_*(\msh_{\Lambda_0}(\Lambda_0)) \ar[r] & \HH_*(\Sh_{\widehat\Lambda_0}(M)).
}\]
Then, to show that the weak relative Calabi--Yau classes agree, it suffices to construct the following commutative diagram and identify the Calabi--Yau classes on the left hand side
\[\xymatrix{
\bar{p}_!\bar{\Delta}^* \mathrm{Fib}(\check{T}_{\epsilon}1_{\bar{\Delta}} \to T_\epsilon 1_{\bar{\Delta}}) \ar[r] \ar[d] & \bar{p}_!\bar{\Delta}^*\iota_{-\widehat{\Lambda}_{\cup} \times \widehat{\Lambda}_{\cup}}^* \mathrm{Fib}(\check{T}_{\epsilon}1_{\bar{\Delta}} \to T_\epsilon 1_{\bar{\Delta}}) \ar[r] \ar[d] & \bar{p}_!\bar{\Delta}^*\iota_{-\widehat{\Lambda} \times \widehat{\Lambda}}^* \check{T}_\epsilon 1_{\bar{\Delta}} \ar[d] \\
p_!\Delta^* \mathrm{Fib}(\check{T}_{\epsilon}1_\Delta \to T_\epsilon 1_\Delta) \ar[r] & p_!\Delta^*\iota_{-(\widehat{\Lambda}_0)_{\cup} \times (\widehat{\Lambda}_0)_{\cup}}^* \mathrm{Fib}(\check{T}_{\epsilon}1_\Delta \to T_\epsilon 1_\Delta) \ar[r] & p_!\Delta^*\iota_{-\widehat{\Lambda}_0 \times \widehat{\Lambda}_0}^* \check{T}_\epsilon 1_\Delta.
}\]
First, we construct the left vertical morphism. Since we have chosen the Hamiltonian function such that for $j = 0, 1$ and $t \in I_\epsilon(j)$, $\rho(x, t, \xi, \tau) = \rho_j(x, \xi)$, the Hamiltonian flows satisfy the conditions that
$$\check{T}_t(\Lambda \cap S^*(M \times I_\epsilon(j))) = \check{T}_t(\Lambda_j) \times I_\epsilon(j), \; T_t(\Lambda \cap S^*(M \times I_\epsilon(j))) = T_t(\Lambda_j) \times I_\epsilon(j).$$
Therefore, for the inclusion maps $i_{(j,j)}: M \times M \times \{(j, j)\} \hookrightarrow M \times M \times [0, 1] \times [0, 1]$, we have
$$\check{T}_\epsilon ( i_{(j,j)}^* 1_{\bar{\Delta}}) = i_{(j,j)}^* \check{T}_\epsilon 1_{\bar{\Delta}}, \; {T}_\epsilon ( i_{(j,j)}^* 1_{\bar{\Delta}}) = i_{(j,j)}^* {T}_\epsilon 1_{\bar{\Delta}}.$$
Thus, we can get natural morphisms
\begin{align*}
\bar{p}_!\bar{\Delta}^* \mathrm{Fib}(\check{T}_{\epsilon}1_{\bar{\Delta}} \to T_\epsilon 1_{\bar{\Delta}}) &\to p_!i_j^* \bar{\Delta}^* \mathrm{Fib}(\check{T}_{\epsilon}1_{\bar{\Delta}} \to T_\epsilon 1_{\bar{\Delta}}) \xrightarrow{\sim} p_! \Delta^* i_{(j,j)}^* \mathrm{Fib}(\check{T}_{\epsilon}1_{\bar{\Delta}} \to T_\epsilon 1_{\bar{\Delta}})
\\
&\xrightarrow{\sim} p_! \Delta^* \mathrm{Fib}(\check{T}_{\epsilon} (i_{(j,j)}^* 1_{\bar{\Delta}}) \to T_\epsilon (i_{(j,j)}^* 1_{\bar{\Delta}})) \xrightarrow{\sim} p_!{\Delta}^* \mathrm{Fib}(\check{T}_{\epsilon}1_{{\Delta}} \to T_\epsilon 1_{{\Delta}}),
\end{align*}
which, under the isomorphisms from Proposition \ref{prop:accelerate}, is compatible with the morphism
$$\Gamma(\Lambda, 1_\Lambda) \to \Gamma(\Lambda_j, 1_{\Lambda_j}).$$
Then, we also construct the vertical morphism in the middle and on the right between Hochschild homologies. Consider the commutative diagram
\[\xymatrix{
\Sh_{\widehat\Lambda \times -\widehat\Lambda}(M \times M \times [0, 1] \times [0, 1]) \ar[r]^-{\iota_{\widehat\Lambda \times -\widehat\Lambda}} \ar[d]_{i_{(j,j)}^*} & \Sh(M \times M \times [0, 1] \times [0, 1]) \ar[d]^{i_{(j,j)}^*} \\
\Sh_{\widehat\Lambda_j \times -\widehat\Lambda_j}(M \times M) \ar[r]^-{\iota_{\widehat\Lambda_j \times -\widehat\Lambda_j}} & \Sh(M \times M). 
}\]
Then, by adjunctions, we have the natural morphism 
$$\iota_{\widehat\Lambda_j \times -\widehat\Lambda_j}^* \circ i_{(j,j)}^* \to i_{(j,j)}^* \circ \iota_{\widehat\Lambda \times -\widehat\Lambda}^*.$$
Thus, we can get natural morphisms on Hochschild homologies
\begin{align*}
\bar{p}_!\bar{\Delta}^* \iota_{\widehat\Lambda \times -\widehat\Lambda}^* & \mathrm{Fib}(\check{T}_{\epsilon}1_{\bar{\Delta}} \to T_\epsilon 1_{\bar{\Delta}}) \to p_!i_j^* \bar{\Delta}^* \iota_{\widehat\Lambda \times -\widehat\Lambda}^* \mathrm{Fib}(\check{T}_{\epsilon}1_{\bar{\Delta}} \to T_\epsilon 1_{\bar{\Delta}}) \\
&\xrightarrow{\sim} p_! \Delta^* i_{(j,j)}^* \iota_{\widehat\Lambda \times -\widehat\Lambda}^* \mathrm{Fib}(\check{T}_{\epsilon}1_{\bar{\Delta}} \to T_\epsilon 1_{\bar{\Delta}}) \to p_! \Delta^* \iota_{\widehat\Lambda \times -\widehat\Lambda}^* i_{(j,j)}^*\mathrm{Fib}(\check{T}_{\epsilon} 1_{\bar{\Delta}} \to T_\epsilon 1_{\bar{\Delta}}) \\
&\xrightarrow{\sim} p_! \Delta^* \iota_{\widehat\Lambda \times -\widehat\Lambda}^* \mathrm{Fib}(\check{T}_{\epsilon} (i_{(j,j)}^* 1_{\bar{\Delta}}) \to T_\epsilon (i_{(j,j)}^*1_{\bar{\Delta}}) )
\xrightarrow{\sim} p_!{\Delta}^* \mathrm{Fib}(\check{T}_{\epsilon}1_{{\Delta}} \to T_\epsilon 1_{{\Delta}}),
\end{align*}
which is compatible with the morphisms on the constant orbits. Since the weak (relative) Calabi--Yau classes come from the identity elements on both sides, we know they are naturally identified.

Then, to identify the strong (relative) Calabi--Yau classes, we upgrade the above commutative diagram into an $S^1$-equivariant commutative diagram. The argument is exactly the same as above. Then consider the canonical lifting of the identity
$$1_\cV \in \Gamma(\Lambda, 1_\Lambda)^{S^1} = \Gamma(\Lambda, 1_\cV) \otimes \Gamma(BS^1, 1_{BS^1}), \; 1_\cV \in \Gamma(\Lambda_j, 1_{\Lambda_j})^{S^1} = \Gamma(\Lambda_j, 1_{\Lambda_j}) \otimes \Gamma(BS^1, 1_{BS^1}).$$
Since the (relative) Calabi--Yau classes come from the identity elements on both sides, they are naturally identified.
\end{proof}

\begin{remark}
    We did not require or claim any Calabi--Yau property on the continuous adjunction $m_\Lambda^l: \msh_\Lambda(\Lambda) \rightleftharpoons \Sh_{\widehat\Lambda}(M \times [0, 1]) : m_\Lambda$ in the above proof.
\end{remark}

When one or both of the restriction functors are equivalences, we can get a functor between the pairs of categories and directly compare the (relative) Calabi--Yau structures. The following result is a direct corollary of Theorems \ref{thm:canonical-calabi-yau} and \ref{thm:invariance-deformation}.

\begin{corollary}\label{cor:canonical-cy}
    Let $\Lambda \subseteq S^*(M \times [0, 1])$ be a subanalytic Legendrian deformation between $\Lambda_0$ and $\Lambda_1 \subseteq S^*M$. Then 
    the Calabi--Yau structures are compatible under the isomorphism
    $$\HH_*(\msh_{\Lambda_0}(\Lambda_0))^{S^1} \xrightarrow{\sim} \HH_*(\msh_{\Lambda_1}(\Lambda_1))^{S^1},$$
    and the relative Calabi--Yau structures are compatible under the restriction functors
    $$\HH_*(\msh_{\Lambda_0}(\Lambda_0), \Sh_{\widehat\Lambda_0}(M))^{S^1} \xrightarrow{\sim} \HH_*(\msh_{\Lambda_1}(\Lambda_1), \Sh_{\widehat\Lambda_1}(M))^{S^1}.$$
\end{corollary}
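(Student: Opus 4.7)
The plan is to deduce this directly from the two theorems cited, with the only nontrivial content being the observation that a Legendrian deformation in the sense of the definition preceding Theorem \ref{thm:invariance-deformation} automatically satisfies the hypothesis of Theorem \ref{thm:canonical-calabi-yau}. Indeed, by definition a Legendrian deformation $\Lambda \subseteq S^*(M \times [0,1])$ between $\Lambda_0$ and $\Lambda_1$ restricts to $\Lambda_j \times I_\epsilon(j)$ over neighborhoods of the endpoints, which is exactly what Theorem \ref{thm:canonical-calabi-yau} requires.

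First, I would apply Theorem \ref{thm:canonical-calabi-yau} to obtain commutative diagrams of $S^1$-equivariant morphisms
\[
\HH_*(\msh_{\Lambda_0}(\Lambda_0))^{S^1} \longleftarrow \HH_*(\msh_\Lambda(\Lambda))^{S^1} \longrightarrow \HH_*(\msh_{\Lambda_1}(\Lambda_1))^{S^1}
\]
and analogously for the relative Hochschild homologies, such that the strong (relative) Calabi--Yau classes for $\Lambda$ map to those for $\Lambda_0$ and $\Lambda_1$ under restriction. Next, I would invoke Theorem \ref{thm:invariance-deformation}, which guarantees that in the Legendrian deformation setting the restriction functors $i_j^*: \msh_\Lambda(\Lambda) \xrightarrow{\sim} \msh_{\Lambda_j}(\Lambda_j)$ and $i_j^*: \Sh_{\widehat\Lambda}(M \times [0,1]) \xrightarrow{\sim} \Sh_{\widehat\Lambda_j}(M)$ are equivalences of stable $\infty$-categories, compatible with the microlocalization adjunctions.

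Since Hochschild homology with its canonical $S^1$-action is functorial under equivalences of dualizable categories (and relative Hochschild homology is functorial under equivalences of adjoint pairs), the induced restriction maps on $\HH_*(-)^{S^1}$ and on the relative versions are themselves equivalences. Composing the two restriction equivalences produces the asserted isomorphisms
\[
\HH_*(\msh_{\Lambda_0}(\Lambda_0))^{S^1} \xrightarrow{\sim} \HH_*(\msh_{\Lambda_1}(\Lambda_1))^{S^1}, \quad
\HH_*(\msh_{\Lambda_0}(\Lambda_0), \Sh_{\widehat\Lambda_0}(M))^{S^1} \xrightarrow{\sim} \HH_*(\msh_{\Lambda_1}(\Lambda_1), \Sh_{\widehat\Lambda_1}(M))^{S^1},
\]
and the diagrams from Theorem \ref{thm:canonical-calabi-yau} show that the Calabi--Yau class for $\Lambda_0$ is the image of the class for $\Lambda$, which in turn maps to the class for $\Lambda_1$. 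Hence the two classes correspond under the composite equivalence.

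I do not anticipate any substantive obstacle: the entire content lies in Theorems \ref{thm:canonical-calabi-yau} and \ref{thm:invariance-deformation}, and the corollary is essentially a bookkeeping statement combining them. The only thing to double-check is that the $S^1$-equivariant identifications constructed in the proof of Theorem \ref{thm:canonical-calabi-yau} (via the cyclic-object model and the acceleration morphisms) are strictly natural in the restriction functors, so that the compositions of restrictions across the interval are genuinely the equivalences supplied by Theorem \ref{thm:invariance-deformation}; this is clear from the explicit sheaf-theoretic formulas $\bar{p}_!\bar{\Delta}^*\iota_{\widehat\Lambda\times -\widehat\Lambda}^*\mathrm{Fib}(\check T_\epsilon 1_{\bar\Delta}\to T_\epsilon 1_{\bar\Delta})$ appearing in the proof of Theorem \ref{thm:canonical-calabi-yau}.
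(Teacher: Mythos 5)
Your proposal is correct and follows the paper's own route: the paper treats this corollary as an immediate consequence of Theorems \ref{thm:canonical-calabi-yau} and \ref{thm:invariance-deformation}, exactly as you do, with the observation that a Legendrian deformation satisfies the boundary-collar hypothesis of Theorem \ref{thm:canonical-calabi-yau} and that the equivalences of Theorem \ref{thm:invariance-deformation} turn the restriction maps on $S^1$-invariant (relative) Hochschild homology into isomorphisms identifying the Calabi--Yau classes. Your extra remark about checking that the restriction maps on Hochschild homology agree with those induced by the categorical equivalences is a reasonable point of care, but it matches what the paper implicitly assumes.
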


\begin{remark}\label{rem:shende-takeda}
    The relative Calabi--Yau structure have also been obtained by Shende--Takeda using arborealizations \cite{Shende-Takeda}. However, it seems hard to prove that the Calabi--Yau structure is canonical using the approach of arborealization. For instance, in the absolute case, they constructed the Calabi--Yau class as a global (co)section on the (co)sheafified Hochschild homology, which maps to the Hochschild homology of the global (co)section category \cite[Proposition 4.26 \& 4.27]{Shende-Takeda}:
    \[\xymatrix{
    1_\cV[n-1] \ar[r] \ar[d] & \Gamma(\Lambda, \mathscr{HH}(\msh_\Lambda)) \ar[r] \ar[d] & \HH_*(\msh_\Lambda(\Lambda)) \ar[d] \\
    0 \ar[r] & \Gamma(\Lambda, \mathscr{HH}(\Sh_{\widehat\Lambda})) \ar[r] & \HH_*(\Sh_{\widehat\Lambda}(M)).
    }\]
    We remark that the global (co)section of the (co)sheafified Hochschild homology is definitely not invariant under Liouville or Weinstein homotopies. In fact, we can consider a smooth Legendrian $\Lambda \subseteq S^*\bR^n$ with $\widehat\Lambda \subseteq T^*\bR^n$ such that $\HH_*(\msh_{\widehat\Lambda}(\widehat\Lambda))$ is not a compact object. We can deform the Lagrangian $\widehat\Lambda \subseteq T^*\bR^n$ into the cone singularity $\Lambda \times \bR_{\geq 0} \subseteq \bR^{2n}$. Then it is clear that 
    $$\Gamma(\Lambda \times \bR_{\geq 0}, \mathscr{HH}(\msh_{\Lambda \times \bR_{\geq 0}}^c)) = \HH_*(\msh_{\Lambda \times \bR_{\geq 0}}^c(\Lambda \times \bR_{\geq 0})) = \HH_*(\msh_{\widehat\Lambda}^c(\widehat\Lambda))$$ is not compact. However, for an compact arboreal singularity, as shown in \cite[Lemma 4.35 \& Theorem 6.7]{Shende-Takeda}, $\Gamma(\Lambda, \mathscr{HH}(\msh_\Lambda^c))$ is always a compact object.

    We also remark that the relation between the relative Calabi--Yau structure constructed by Shende--Takeda and the one constructed by us. For a Legendrian $\Lambda \subseteq S^*M$ with $\widehat\Lambda \subseteq T^*M$ such that the Legendrian only has immersed double points in the projection \cite[Section 7.3]{Shende-Takeda}, the Calabi--Yau structure they constructed is comes from a map
    \[\xymatrix{
    1_\cV[n-1] \ar[r] \ar[d] & \Gamma(\Lambda, \omega_\Lambda^\vee) \ar[r] \ar[d] & \HH_*(\msh_\Lambda(\Lambda)) \ar[d] \\
    0 \ar[r] & \Gamma(\widehat\Lambda, \omega_{\widehat\Lambda}^\vee) \ar[r] & \HH_*(\Sh_{\widehat\Lambda}(M)).
    }\]
    Here $\omega_\Lambda = p^!1_\cV$, $\omega_{\widehat\Lambda} = \widehat{p}^!1_\cV$ are the dualizing sheaves for $\Lambda$, $\widehat\Lambda$ where $p: \Lambda \to \{*\}$ and $\widehat p: \widehat\Lambda \to \{*\}$. When $M$ is orientable, then indeed we know that $\omega_M^\vee = 1_M[-n]$ and $\omega_\Lambda^\vee = 1_\Lambda[n-1]$. Taking global sections will give the terms
    $$\Gamma(\Lambda, \omega_\Lambda^\vee) = \Gamma(\Lambda, 1_\cV)[n-1], \; \Gamma(\widehat\Lambda, \omega_{\widehat\Lambda}^\vee) = \Gamma(\widehat\Lambda, 1_{\widehat\Lambda \setminus \Lambda})[n-1].$$
    Thus, checking the compatibility between the Calabi--Yau structure is reduced to checking the compatibility between the acceleration morphisms. We expect that this should follow from local computations if we use partition of unity to decompose the sheaf kernel used to construct the acceleration morphism, as in \cite[Section 4.4 \& 4.5]{Kuo-Li-spherical}. However, writing down the details could be nontrivial. 
\end{remark}



\section{Examples of Calabi--Yau structures}
We explain some examples of (relative) Calabi--Yau structures that fit into our setting. In particular, we recover other known cases of (relative) Calabi--Yau structures in the literature. We also explain some construction of symplectic structures and Lagrangian structures on moduli spaces, which are induced by (relative) Calabi--Yau structures of the corresponding categories.

\subsection{Calabi--Yau structures in smooth topology}
First, we recover the example of relative Calabi--Yau structure coming from smooth topology.

\begin{example}
    Let $(M, \partial M)$ be an oriented manifold with boundary. Let $\overline{M} = M \cup_{\partial M} \partial M \times [0, +\infty)$ be the completion. Consider the inward unit conormal bundle $S^*_{\partial M,-}\overline{M}$ that is diffeomorphic to $\partial M$. Then there is a relative Calabi--Yau structure on
    $$\msh_{S^*_{\partial M,-}\overline{M}}(S^*_{\partial M,-}\overline{M}) \rightleftharpoons \Sh_{S^*_{\partial M,-}\overline{M}}(\overline{M})_0.$$
    Since the projection $S^*_{\partial M,-}\overline{M} \to \overline{M}$ is a smooth embedding, we know that the Legendrian $S^*_{\partial M,-}\overline{M}$ is equipped with a canonical Maslov data. We have a commutative diagram
    \[\xymatrix{
    \msh_{S^*_{\partial M,-}\overline{M}}(S^*_{\partial M,-}\overline{M})  \ar[r] \ar[d]_{\rotatebox{90}{$\sim$}} & \Sh_{S^*_{\partial M,-}\overline{M}}(\overline{M})_0 \ar[d]^{\rotatebox{90}{$\sim$}} \\
    \Loc(\partial M) \ar[r] & \Loc(M).
    }\]
    Therefore, we recover the relative Calabi--Yau structure on $\Loc(\partial M) \rightleftharpoons \Loc(M)$ \cite[Theorem 5.7]{Brav-Dyckerhoff1}.
\end{example}

We consider the examples of relative Calabi--Yau structures that come from the link dg category by Yeung \cite{YeungComplete} and Berest--Eshmatov--Yeung \cite{BerestEshmatovYeung}. This is closely related to the construction of (relative) Calabi--Yau completions by Yeung \cite{YeungComplete}.

\begin{example}
    Let $(D^n, \partial D^n)$ be the $n$-dimensional disk with boundary and $x_1, \dots, x_k \in D^n$ be $k$ distinct points where $n \geq 2$. Then there is a relative Calabi--Yau structure on
    $$\msh_{S^*_{\partial D^n,-}\bR^n}(S^*_{\partial D^n,-}\bR^n) \oplus \bigoplus\nolimits_{i=1}^k \msh_{S^*_{x_i}\bR^n}(S^*_{x_i}\bR^n) \rightleftharpoons \Sh_{S^*_{\partial D^n,-}\bR^n \sqcup\, \bigsqcup_{i=1}^k S^*_{x_i}\bR^n}(\bR^n)_0.$$
    Since the projection $S^*_{\partial M,-}\overline{M} \to \overline{M}$ is a smooth embedding, and the projection $S^*_{x_i}M \to \overline{M}$ is a smooth embedding after a small Reeb pushoff, the Legendrian $S^*_{\partial D^n,-}\bR^n$ and $S^*_{x_i}\bR^n$ are equipped with canonical Maslov data. Therefore, we get a relative Calabi--Yau structure
    $$\Loc(S^{n-1}) \oplus \bigoplus\nolimits_{i=1}^k \Loc(S^*_{x_i}D^n) \rightleftharpoons \Sh_{\bigsqcup_{i=1}^k S^*_{x_i}\bR^n}(D^n).$$
    
    Let $\Vec{I}_k$ be the quiver with $(k+1)$ vertices and $k$ arrows from the first $k$ vertices to the last vertex. Yeung \cite{YeungComplete} constructed a relative Calabi--Yau structure on 
    $$\Bbbk \otimes \Pi_{n-1}(\sqcup_{i=1}^{k+1} *) \rightleftharpoons \Bbbk \otimes \Pi_n(\sqcup_{i=1}^{k+1} *, \Vec{I}_k).$$
    For $n \geq 3$, $\Pi_{n-1}(\sB)$ is the $(n-1)$-Calabi--Yau completion of $\sB$ and $\Pi_n(\sB, \sA)$ is the relative $n$-Calabi--Yau completion of $F: \sB \to \sA$; for $n = 2$, they are the localized Calabi--Yau completion and relative Calabi--Yau completion \cite[Definition 4.33]{YeungComplete}. Here, we abuse the notations (the localized version is denoted by $\Pi_{n-1}^\text{loc}(\sB)$ and $\Pi_n^\text{loc}(\sB, \sA)$ in \cite[Section 4.4]{YeungComplete}). We claim that there is a commutative diagram
    \[\xymatrix{
    \Loc(S^{n-1}) \oplus \bigoplus\nolimits_{i=1}^k \Loc(S^{n-1}) \ar[r] \ar[d]_{\rotatebox{90}{$\sim$}} &  \Sh_{\sqcup_{i=1}^k S^*_{x_i}D^n}(D^n) \ar[d]^{\rotatebox{90}{$\sim$}} \\
    \Bbbk \otimes \Pi_{n-1}(\sqcup_{i=1}^{k+1} *) \ar[r] & \Bbbk \otimes \Pi_n(\sqcup_{i=1}^{k+1} *, \Vec{I}_k).
    }\]
    Recall that for $n \geq 3$, $\Bbbk \otimes \Pi_{n-1}(*) \simeq \Bbbk[s]$, and for $n = 2$, $\Bbbk \otimes \Pi_{n-1}(*) \simeq \Bbbk[s, s^{-1}]$ where $s$ has degree $2-n$. Thus, we know that
    $$\Bbbk \otimes \Pi_{n-1}(*) \simeq C_{-*}(\Omega_*S^{n-1}) \simeq \Loc(S^{n-1}).$$
    On the other hand, objects in $\Bbbk \otimes \Pi_n(\sqcup_{i=1}^{k+1} *, \Vec{I}_k)$ consists of $F_1, \dots, F_{k+1} \in \Mod(\Bbbk)$, maps $\mu_i: F_i \to F_i[2-n]$, $s_i: F_i \to F_i[1-n]$, maps $f_i: F_{k+1} \to F_i$ of degree $0$ and maps $f_i^*: F_i \to F_{k+1}[2 - n]$ such that (the $-1$ term only appears when $n = 2$)
    $$ds_i = \mu_i - f_i f_i^* (- 1), \, 1 \leq i \leq k, \;\; ds_{k+1} = \mu_{k+1} - f_1^* f_1 - \dots - f_k^* f_k (- 1).$$
    For $F \in \Sh_{\bigsqcup_{i=1}^k S^*_{x_i}D^n}(D^n)$, we set $F_{k+1}$ to be the stalk away from $x_1, \dots, x_k$, $G_i$ to be the stalk at $x_i$, and $F_i = \mathrm{Cofib}(G_i \to F_{k+1})$ to be the microstalk on $S^*_{x_i}D^n$. Let $$\mu_i: F_i \to F_i[2 - n]$$
    to be the monodromy of the (microlocal) local system with stalk $F_i$ on $S^{n-1}$. Let $f_i: F_{k+1} \to F_i$ to be the natural map from the stalk to microstalk, and finally we set $$s_i: F_i \to F_i[1-n], \; f_i^*: F_i \to F_{k+1}[2 - n]$$
    to be the nap induced by the trivialization of the local system with stalk $G_i = \mathrm{Fib}(F_{k+1} \to F_i)$ on $S^{n-1}$ which extends to $D^n$. Hence Shende--Treumann--Williams \cite[Proposition 4.9]{Shende-Treumann-Williams}, Etg\"u--Lekili \cite[Theorem 1]{EtguLekiliPreprojective} and Karabas--Lee \cite[Theorem 4.2 \& 4.4]{KarabasLee} will imply that 
    $$\Bbbk \otimes \Pi_n(\sqcup_{i=1}^{k+1} *, \Vec{I}_k) \simeq \Sh_{\bigsqcup_{i=1}^n S^*_{x_i}D^n}(D^n).$$
    Thus, we recover the relative Calabi--Yau structure induced by the relative Calabi--Yau completion of the quiver with two vertices. Fixing the monodromy $\mu_i$ for $1 \leq i \leq k+1$ by taking a homotopy pushout (corresponding to attaching disks to $\partial D^n$ and $S_{x_i}^*D^n$):
    $$\Bbbk \otimes \left((\sqcup_{i=1}^{k+1} *) \otimes_{\Pi_n(\sqcup_{i=1}^{k+1} *)} \Pi_n(\sqcup_{i=1}^{k+1} *, \Vec{I}_k)\right)$$
    we obtain exactly exactly the derived multiplicative preprojective algebra \cite{BezrukavnikovKapranov,BozecCalaqueScherotzke}.
\end{example}

\begin{example}
    Let $(M, \partial M)$ be an oriented manifold with boundary and $N \subseteq M$ be a submanifold with trivialized unit conormal bundle $S^*_NM \cong N \times S^{r-1}$. Let $\overline{M} = M \cup_{\partial M} \partial M \times [0, +\infty)$ be the completion and consider the inward unit conormal bundle $S^*_{\partial M, -}\overline{M}$. Then there is a relative Calabi--Yau structure on 
    $$\msh_{S^*_{\partial M,-}\overline{M}}(S^*_{\partial M,-}\overline{M}) \oplus \msh_{S^*_NM}(S^*_NM) \rightleftharpoons \Sh_{S^*_{\partial M,-}\overline{M} \sqcup S^*_NM}(\overline{M})_0.$$
    Since the projection $S^*_{\partial M,-}\overline{M} \to \overline{M}$ is a smooth embedding, and the projection $S^*_N\overline{M} \to \overline{M}$ is a smooth embedding after a small Reeb pushoff, the Legendrian $S^*_{\partial M,-}\overline{M}$ and $S^*_NM$ are equipped with canonical Maslov data. Therefore, we get a relative Calabi--Yau structure
    $$\Loc(\partial M) \oplus \Loc(N \times S^{r-1}) \rightleftharpoons \Sh_{S^*_NM}(M).$$
    We claim that $\Sh_{S^*_NM}(M)$ is quasi-equivalent to modules over the perversely thickened link dg category $\sA(M, N)$, thus identifying our functors with the construction in Yeung \cite[Section 5]{YeungComplete} and Berest--Eshmatov--Yeung \cite{BerestEshmatovYeung}.

    Consider a tubular neighbourhood $N \times D^{r}$ of $N \subset M$. Then we can write down a homotopy push out formula
    $$\Sh_{S^*_NM}(M) = \operatorname{colim}\left(\Loc(M \backslash N) \leftarrow \Loc(N \times S^{r-1}) \to \Sh_{S^*_N(N \times D^{r})}(N \times D^{r}) \right).$$
    Let $\Vec{I}$ be the quiver with two vertices and one arrow. Consider the functor $C_{-*}(\Omega_*N) \oplus C_{-*}(\Omega_*N) \to C_{-*}(\Omega_*N) \otimes \Vec{I}$. Then the perversely thickened link dg category is defined as the following homotopy push out \cite[Definition 5.30]{YeungComplete}
    $$\sA(M, N) = \operatorname{colim}\left(C_{-*}(\Omega_*(M \backslash N)) \leftarrow C_{-*}(\Omega_*(N \times S^{r-1})) \to \Pi_n(C_{-*}(\Omega_*N)^{\oplus 2}, C_{-*}(\Omega_*N) \otimes \Vec{I}) \right).$$
    For $r \geq 3$, $\Pi_r(\sB, \sA)$ is the relative Calabi--Yau completion of $F: \sB \to \sA$; for $r = 2$, $\Pi_r(\sB, \sA)$ is the localized relative Calabi--Yau completion of $F: \sB \to \sA$ \cite[Definition 4.33]{YeungComplete}. Consider sheaves on a disk $D^r$ with microsupport in $S^*_0D^r$. Since
    $$\Sh_{S^*_0D^{r}}(D^{r}) \simeq \Bbbk \otimes \Pi_{r}(* \sqcup *, \Vec{I}).$$
    Then, by Kunneth formula, it suffices to show that there is an equivalence
    $$C_{-*}(\Omega_*N) \otimes \Pi_{r}(* \sqcup *, \Vec{I}) \simeq \Pi_n(C_{-*}(\Omega_*N)^{\oplus 2}, C_{-*}(\Omega_*N) \otimes \Vec{I}).$$
    This follows from direct computation using the explicit chain models in \cite[Section 3.4 \& 5.7]{YeungComplete} combined with the fact that $C_{-*}(\Omega_*N)$ is smooth Calabi--Yau of dimension $n-r$ \cite{Brav-Dyckerhoff1}.
\end{example}
\begin{remark}
    We compare our result with \cite[Theorem 5.89]{YeungComplete} that $\Mod^\textit{fd}(\sA(M, N)) \simeq \mathrm{Perv}_N(M)$. Our discussion above can easily recover that result by fixing the degree of the microstalk of the sheaves along $S^*_NM$, i.e.~the Moslov potential. Here, we are in the simple case where the projection $S^*_NM \to M$ is a smooth submanifold after a small Reeb pushoff, so fixing a perversity function is the same as fixing the Maslov potential; see also \cite[Section 2.2.1]{Jintilting} for some discussion between perversity and degrees of microstalks.
\end{remark}

\subsection{Symplectic structures on the moduli spaces}
For smooth dg categories, T\"oen--Vaqui\'e \cite{ToenVaquie} defined the derived moduli stacks of pseudo-perfect objects. Brav--Dyckerhoff \cite{Brav-Dyckerhoff2} showed that, a smooth $(n - 1)$-dimensional Calabi--Yau structure on $\sB$ induces a $(3 - n)$-shifted symplectic structure \cite{PTVV} on the derived moduli stack of pseudo-perfect objects $\bR\cM_\sB$ and a smooth relative $n$-dimensional Calabi--Yau structure on the adjunction $f: \sB \to \sA: f^r$ induces a Lagrangian structure \cite{PTVV} on the derived moduli stack of pseudo-perfect objects $\bR\cM_\sA \to \bR\cM_\sB$. In this subsection, we explain some implications when $n = 2$ or $3$.

Let $n = 2$. We discuss some examples of sheaves with microsupport on a Legendrian link, which is related to the moduli spaces of framed local systems on punctured surfaces, moduli spaces of Stokes data on punctured surfaces \cite[Section 3.3]{Shende-Treumann-Williams-Zaslow}, Richardson varieties and braid varieties \cite[Section 3]{CGGS}. See also related discussions in Shende--Takeda \cite[Section 8.2--8.4]{Shende-Takeda}.

\begin{example}
    Let $\Lambda \subseteq S^*\Sigma$ be a smooth Legendrian knot with vanishing Maslov class. Then 
    $$\Loc(\Lambda) \rightleftharpoons \Sh_\Lambda(\Sigma)_0$$
    has a smooth Calabi--Yau structure. Then we have a map between the derived moduli stacks of pseudo-perfect objects $\bR\cM_{\Sh_\Lambda(\Sigma)_0} \to \bR\cM_{\Loc(\Lambda)}$. Consider the locus of microlocal rank 1 sheaves $\bR\cM_1(\Sigma, \Lambda)$ and the locus of rank 1 local systems $\bR\cL oc_1(\Lambda) = [\bC^\times/\bC^\times]$ in the derived moduli stacks. Then $[\bC^\times/\bC^\times]$ admits a $1$-shifted symplectic structure and
    $$\bR\cM_1(\Sigma, \Lambda) \to [\bC^\times/\bC^\times]$$
    admits a shifted Lagrangian structure. Consider the map $[*/\bC^\times] \to [\bC^\times/\bC^\times]$ which fixes a trivialization of the local system on $\Lambda$. This is naturally equipped with a shifted Lagrangian structure \cite{Safronov}. Then the derived intersection
    $$\bR\cM_1(\Sigma, \Lambda) \times_{[\bC^\times/\bC^\times]} [*/\bC^\times]$$
    is equipped with a $0$-shifted symplectic structure \cite[Theorem 2.9]{PTVV}. Suppose that the derived stack $\bR\cM_1(\Sigma, \Lambda) = [\cM_1(\Sigma, \Lambda)/\bC^\times]$ where $\cM_1(\Sigma, \Lambda)$ is a smooth variety. Then
    $$\pi_0([\cM_1(\Sigma, \Lambda)/\bC^\times] \times_{[\bC^\times/\bC^\times]} [*/\bC^\times]) = \cM_1(\Sigma, \Lambda),$$
    and thus there is a symplectic form on the smooth variety $\cM_1(\Sigma, \Lambda)$.
\end{example}

Let $n = 3$. We discuss some examples of sheaves with microsupport on a Legendrian surface, which includes the case of conormal torus of smooth knots \cite{AganagicEkholmNgVafa} and Legendrian weaves \cite{Treumann-Zaslow,SchraderShenZaslow}. See also related discussions in Shende--Takeda \cite[Section 8.5]{Shende-Takeda}.

\begin{example}
    Let $\Lambda \subseteq S^*\Sigma$ be a smooth Legendrian surface of genus $g$ with vanishing Maslov class. Then 
    $$\Loc(\Lambda) \rightleftharpoons \Sh_\Lambda(\Sigma)_0$$
    has a smooth Calabi--Yau structure. We have a map between the derived moduli stacks of pseudo-perfect objects $\bR\cM_{\Sh_\Lambda(\Sigma)_0} \to \bR\cM_{\Loc(\Lambda)}$. Consider the locus of microlocal rank 1 sheaves $\bR\cM_1(\Sigma, \Lambda)$ and the locus of rank 1 local systems $\bR\cL oc_1(\Lambda)$ in the derived moduli stacks. Then $\bR\cL oc_1(\Lambda)$ admits a $0$-shifted symplectic structure and
    $$\bR\cM_1(\Sigma, \Lambda) \to \bR\cL oc_1(\Lambda)$$
    admits a shifted Lagrangian structure. We know that the 0-truncated moduli space of rank~1 local systems $t_0\bR\cL oc_1(\Lambda) = [(\bC^\times)^{2g}/\bC^\times]$. Suppose that $\bR\cM_1(\Sigma, \Lambda) = [\cM_1(\Sigma, \Lambda) /\bC^\times]$ where $\cM_1(\Sigma, \Lambda)$ is a smooth variety. Then
    $$\pi_0([\cM_1(\Sigma, \Lambda)/\bC^\times]) = \cM_1(\Sigma, \Lambda), \;\; \pi_0([(\bC^\times)^{2g}/\bC^\times]) = (\bC^\times)^{2g},$$
    and thus there is a Lagrangian structure on the map $\cM_1(\Sigma, \Lambda) \to (\bC^\times)^{2g}$.
\end{example}


\bibliographystyle{amsplain}
\bibliography{ref_KuoLi}

\end{document}